\newtheorem{theorem}{Theorem}[section]
\newtheorem*{theorem*}{Theorem}
\newtheorem*{lemma*}{Lemma}
\newtheorem*{proposition*}{Proposition}
\newtheorem*{corollary*}{Corollary}
\newtheorem{lemma}[theorem]{Lemma}
\newtheorem{defn}[theorem]{Definition}
\newtheorem{prop}[theorem]{Proposition}
\newtheorem{cor}[theorem]{Corollary}
\newtheorem{example}[theorem]{Example}
\theoremstyle{definition}
\newtheorem{remark}[theorem]{Remark}
\newtheorem{problem}[theorem]{Problem}
\newtheorem{convention}[theorem]{Convention}
\newtheorem{question}[theorem]{Question}
\newtheorem{note}[theorem]{Note}
\newtheorem{notation}[theorem]{Notation}
\def\dl{\delta}
\def\ep{\epsilon}
\def\pr{^\prime}
\def\prr{^{\prime\prime}}
\def\lm{\lambda}
\def\Lm{\Lambda}
\def\ri{\rightarrow}
\def\map{\rightarrow}
\def\sse{\subseteq}
\def\gm{\gamma}
\def\bt{\beta}
\def\al{\alpha}
\def\fa{\forall}
\def\pa{\partial}
\def\L{\mathcal{L}}
\def\F{\mathcal{F}}
\def\H{\mathcal{H}}
\def\Y{\mathcal{Y}}
\def\G{\mathcal{G}}
\def\B{\mathcal{B}}
\def\M{\mathcal{M}}
\def\R{\mathbb {R}}
\def\N{\mathbb {N}}
\def\Z{\mathbb {Z}}
\def\mfY{\mathfrak {Y}}
\def\mfT{\mathfrak {T}}
\def\mfB{\mathfrak {B}}
\def\mfv{\mathfrak {v}}
\def\mfe{\mathfrak {e}}
\def\mfw{\mathfrak {w}}
\begin{document}
	
	\title[A Combination Theorem for Trees of Metric Bundles]{A Combination Theorem for Trees of Metric Bundles}
	\author{Rakesh Halder}
	
\address{Indian Institute of Science Education and Research (IISER) Mohali,Knowledge City,  Sector 81, S.A.S. Nagar 140306, Punjab, India}
\email{rhalder.math@gmail.com}
	
	\thanks{This paper is part of R. Halder’s
		PhD thesis written under the supervision of Dr. P. Sardar.}
	
	\subjclass[2020]{20F65, 20F67, 57M07}
	\keywords{Hyperbolic metric spaces, qi embedding, hyperbolic groups, complexes of groups.}
	
	\maketitle

\begin{abstract}

Motivated by the work of Bestvina--Feighn (\hspace{-.08mm}\cite{BF}) and Mj--Sardar (\hspace{-.08mm}\cite{pranab-mahan}), we
define trees of metric bundles subsuming both the trees of metric spaces and the metric bundles. Then we prove a combination theorem for these spaces. More precisely, we prove that the total space of a tree of metric bundles is hyperbolic if the following hold (see Theorem \ref{main-theorem-com}). $(1)$ The fibers are uniformly hyperbolic metric spaces and the base is also hyperbolic metric space, $(2)$ barycenter maps for the fibers are uniformly coarsely surjective, $(3)$ the edge spaces are uniformly qi embedded in the corresponding fibers and $(4)$ the Bestvina--Feighn hallway flaring condition is satisfied.

As an application, we provide a combination theorem for certain complexes of groups over finite simplicial complex (see Theorem \ref{application-2}).
\end{abstract}
\tableofcontents

\section{Introduction}\label{introduction}
Bestvina--Feighn (\hspace{-.08mm}\cite{BF}) proved that the fundamental group of a finite graph of hyperbolic groups  is hyperbolic provided it satisfies the qi embedded condition and the annuli flare condition (see \cite[Theorem $1.2$]{BF-Adn}). Motivated by the work of Bestvina and Feighn, M. Kapovich posed a question whether their combination theorem for graphs of groups can be extended to complexes of groups (see \cite[Problem $90$]{kap-prob}). (For more detailed exposition in complexes of groups, one is referred to \cite{gersten-stall-tria}, \cite{haefliger-cplx}, \cite{corson1}, \cite{bridson-haefliger}.) One may formulate the problem of M. Kapovich as follows.

\begin{problem}\label{combi-problem}
	Suppose $\G(\Y)$ is a developable complex of groups over a finite connected simplicial complex $\Y$ such that the following hold.
	
	\begin{enumerate}
		
		\item All the local groups are hyperbolic.
		
		\item All the local maps are qi embeddings.
		
		\item The universal cover of $\G(\Y)$ is hyperbolic.
	\end{enumerate}
	
	Under what condition(s) the fundamental group $\pi_1(\G(\Y))$ is hyperbolic.	
\end{problem}
Here is a brief history of the activities around this problem. Suppose $\G(\Y)$ is a complex of groups with the condition as in Problem \ref{combi-problem}. If $\G(\Y)$ is negatively curved and all the local groups are finite then $\pi_1(\G(\Y))$ is hyperbolic due to Gersten--Stallings (\hspace{-.08mm}\cite{gersten-stall-tria}). If the local maps are all isomorphisms onto finite index subgroups of the target groups, local groups are non-elementary hyperbolic and $\G(\Y)$ satisfies the Bestvina--Feighn hallway flaring condition then it follows from the work of Mj--Sardar (\hspace{-.08mm}\cite{pranab-mahan}) that $\pi_1(\G(\Y))$ is hyperbolic. If the universal cover of $\G(\Y)$ is CAT$(0)$ and hyperbolic, and the action of $\pi_1(\G(\Y))$ on the universal cover is acylindrical then $\pi_1(\G(\Y))$  is hyperbolic and local groups are quasiconvex in $\pi_1(\G(\Y))$ due to A. Martin (\hspace{-.08mm}\cite{martin-acycom}). Apart from these extreme cases nothing is known. However, in this article, we attempt this question for yet another type of complexes of groups. Let us first outline the setup.
\begin{defn}[\bf Setup $\mathcal C$]\label{setup C}
\begin{enumerate}\label{setup-combi}
\item Suppose $Y$ is a finite connected graph and $p_Y:\Y\ri Y$ is a graph of spaces $($see \textup{\cite[Section 3.3]{haefliger-cplx}}$)$ where the edge spaces are points. We further assume that $\Y$ is a simplicial complex. Suppose $\G(\Y)$ is a complex of groups over $\Y$ such that all the properties of Problem \ref{combi-problem} hold with the following additional one.
	
\item For all $v\in V(Y)$, $p_Y^{-1}(v)=\Y_v$, say, is a finite connected simplicial complex and the restriction of $\G(\Y)$ on $\Y_v$ is a developable complex of groups, say $\G_v(\Y_v)$, over $\Y_v$. Further, suppose all the local maps in $\G_v(\Y_v)$ are isomorphisms onto finite index subgroups of the target groups.
\end{enumerate}
We denote $\G(\Y)$ in this case as $\G(\Y,Y)$ to emphasize the extra structure on $\Y$.
\end{defn}
Note that if $u,v$ are two vertices in $\Y$ such that $p_Y$ is injective when restricted to the edge $e$ joining $u,v$ then the local homomorphisms $G_e\ri G_u$ and $G_e\ri G_v$ are not necessarily isomorphisms onto finite index subgroups. Then we have the following theorem.



\begin{theorem}\label{application-2}
Suppose $\G(\Y,Y)$ is a complex of groups over $\Y$ as in Definition \ref{setup C}. Further, suppose that in condition $(2)$ of Definition \ref{setup C}, all the local groups of $\G_v(\Y_v)$ are non-elementary (hyperbolic). Moreover, assume that it satisfies the Bestvina--Feighn hallway flaring condition. Then $\pi_1(\G(\Y,Y))$ is hyperbolic.
\end{theorem}

The above Theorem \ref{application-2} follows from a combination theorem for spaces. We now elaborate on this. In \cite{BF}, Bestvina and Feighn proved that a tree of hyperbolic metric spaces is hyperbolic if it satisfies the qi embedded condition and the hallway flaring condition. In \cite{pranab-mahan}, Mj and Sardar proved that if $X$ is a metric bundle over $B$ such that (1) fibers and the base $B$ are uniformly hyperbolic, (2) the barycenter maps for the fibers are uniformly coarsely surjective and (3) the Bestvina--Feighn hallway flaring condition holds then $X$ is hyperbolic. The question that motivated us is
if we can combine these two and still get hyperbolicity. Here is a baby version of the problem we are attempting to solve.

\begin{question}\label{comb-qsn}
	Suppose $\pi_i: X_i\ri B_i$ are metric bundles for $i = 1, 2$. Suppose we join $b_1\in B_1$ and $b_2\in B_2$ by an edge $e$, say. Let $X_e$ be another geodesic metric space. Let $F_{b_1}$ and $F_{b_2}$ be fibers over $b_1$ and $b_2$ of the metric bundles $X_1$ and $X_2$ respectively. Suppose that we have qi embeddings $X_e\ri F_{b_1}$ and $X_e\ri F_{b_2}$, and we form a new space by gluing $X_e\times[0,1]$ to $X_1\sqcup X_2$ as follows: We attach $X_e\times\{0\}$ to $F_{b_1}$ and $X_e\times\{1\}$ to $F_{b_2}$ using the qi embeddings $X_e\ri F_{b_1}$ and $X_e\ri F_{b_2}$ respectively. When is the new space hyperbolic?
\end{question}

In this article, we consider a general version of Question \ref{comb-qsn} and provide the following combination theorem. One is referred to Definition \ref{treesofmetric-bun} for trees of metric bundles.

\begin{theorem}\label{main-theorem-com}
	Suppose $(X,B,T)$ is a tree of metric bundles such that:
	
	\begin{enumerate}
		\item For $v\in V(T)\textrm{ and }a\in B_v$, the fibers, $F_{a,v}$ are uniformly hyperbolic geodesic metric spaces and the barycenter maps $\partial^3F_{a,v}\ri F_{a,v}$ are uniformly coarsely surjective.
		
		\item Let $[v,w]$ be an edge in $T$ and $\mfe=[\mfv,\mfw]$ be the edge joining $\mfv\in B_v$ and $\mfw\in B_w$. Then $\pi_X$ restricted to $\pi_X^{-1}(\mfe)$ is a tree of metric spaces with (uniform) qi embedded condition over $\mfe$.
		
		\item $B$ is hyperbolic geodesic metric space.
		
		\item The Bestvina--Feighn hallway flaring condition is satisfied.
	\end{enumerate}
	Then $X$ is hyperbolic geodesic metric space.
\end{theorem}

We also prove the following combination theorem for trees of metric spaces within an axiomatic framework. This result plays a key role in the proof of Theorem \ref{main-theorem-com}. One is referred to Section \ref{hyp-tree-sps} for the list of hypotheses $(\mathcal{P}0)-(\mathcal{P}4)$.

\begin{theorem}[Theorem \ref{treeofsps-com-thm}]\label{treeofsps-com-thm1}
Suppose $\pi:X\ri T$ is a tree of metric spaces satisfying the properties $(\mathcal{P}0)-(\mathcal{P}4)$. Then $X$ is hyperbolic geodesic metric space.
\end{theorem}

{\bf Necessity of flaring}

Gersten (see \cite[Corollary $6.7$]{gersten}) showed that the annuli flaring is necessary for the fundamental group of a finite graph of hyperbolic groups to be hyperbolic provided the edge groups are qi embedded in the corresponding vertex groups. Mj and Sardar also showed that the (hallway) flaring condition is necessary for metric bundles to be hyperbolic provided fibers are uniformly hyperbolic (see \cite[Proposition $5.8$]{pranab-mahan}). Let us briefly recall the idea of their proof. They first showed that small girth ladders bounded by two qi lifts satisfy flaring condition. Then a general ladder was subdivided into small girth ladders and summing them up it was shown that a general ladder satisfies flaring condition. In doing so they used a crucial lemma (\hspace{-.08mm}\cite[Lemma $5.9$]{pranab-mahan}) which is a specialization, in the context of metric bundles, of the fact that geodesics diverge exponentially in hyperbolic metric spaces. This lemma also holds true in trees of metric bundles. In trees of metric bundles, given two qi lifts over the same base, there is a special ladder (see Definition \ref{K-metric-graph-bundle}) bounded by these qi lifts (see Lemma \ref{getting-metric-graph-bundles}) and this ladder can be subdivided into small girth ladder (see \emph{Subdivision of ladder} in the proof of Theorem \ref{general-ladder-is-hyp}). Therefore, the proof of the following remark is analogous to that of \cite[Proposition $5.8$]{pranab-mahan}, so we omit the full details.

\begin{remark}\label{necessity-of-flaring}
	Suppose $(X,B,T)$ is a tree of metric bundles such that:
	
	\begin{enumerate}
		\item $X$ is hyperbolic.
		
		\item Fibers are uniformly hyperbolic.
		
		\item Edge spaces are uniformly qi embedded in the corresponding fibers.
	\end{enumerate}
	Then $\pi_X:X\ri B$ satisfies the Bestvina--Feighn hallway flaring condition.
\end{remark}

As a consequence of Remark \ref{necessity-of-flaring}, we have the following. 

\begin{cor}\label{necessity-flaring-com-grp}
Suppose $\G(\Y,Y)$ is a complex of groups as in Definition \ref{setup C} such that the fundamental group $\pi_1(\G(\Y,Y))$ of $\G(\Y,Y)$ is hyperbolic. Then $\G(\Y,Y)$ satisfies the Bestvina--Feighn hallway flaring condition. (Note that we do not require the universal cover of $\G(\Y)$ to be hyperbolic.) 
\end{cor}
Finally, we conclude the introduction by explaining why our theorem is nontrivial.

\begin{remark}\label{crusial-words}$\vspace{0.1mm}$
	The overall idea of the proof of Theorem \ref{main-theorem-com} closely follows from that of \cite{ps-kap} and makes crucial use of \cite{pranab-mahan}. We are intellectually indebted to both of these works. However, it is not a direct consequence of the combination theorems of \cite{BF} and \cite{pranab-mahan} in any obvious way for the following reason. Let $v\in V(T)$. As the space $X_v$ over $B_v$ is a metric bundle satisfying all conditions of the main theorem of \cite{pranab-mahan}, $X_v$ is (uniformly) hyperbolic. Now we can think of $X$ as a tree of metric spaces over $T$ where vertex spaces are these bundles and the edge spaces are inverse images under $\pi_B\circ\pi_X$ of the midpoints of the edges in $T$. But we can not apply the main theorem of \cite{BF} to this tree of spaces to conclude our theorem because in this case the edge spaces of the tree of spaces are not, in general, qi embedded in the corresponding vertex spaces.	
\end{remark}

{\bf A few words on the proof of Theorem \ref{main-theorem-com} and organization of the paper:}

(0) In Section \ref{preliminaries} and Section \ref{trees-of-metric-sps}, we recall basics definitions and results which are used in the subsequent sections. We define trees of metric bundles and discuss some of their properties in Section \ref{trees-of-metric-bundles}. Section \ref{hyp-tree-sps} is devoted to proving Theorem \ref{treeofsps-com-thm1}, and this section is independent of later sections. One is not required Section \ref{hyp-tree-sps} to study Section \ref{trees-of-metric-bundles}, \ref{semicontinuous-subgraph}, \ref{hyperbolicity-of-ladder}, \ref{hyp-of-flow-sp}, \ref{union-of-two-flow-sp-is-hyp} but Section \ref{hyp-tmb-sec}.

(1) Motivated by that of \cite{ps-kap}, we construct semicontinuous families, ladders and flow spaces, and more general flow spaces in Section \ref{semicontinuous-subgraph}; whereas ladder was invented by Mitra in \cite{mitra-trees} for trees of metric spaces. Some properties of these subspaces, most importantly, Mitra's retraction of the whole space on these subspaces, are also discussed there. Main construction starts from here in Section \ref{semicontinuous-subgraph}.

(2) The most subtle part was to show the uniform hyperbolicity of ladders and flow spaces. In Section \ref{hyperbolicity-of-ladder}, we prove that the ladders are (uniformly) hyperbolic by
dividing into two cases: small girth and general case. By invoking Bowditch criterion (see Proposition \ref{combing}) for a metric space to be hyperbolic, we prove that small girth ladders are (uniformly) hyperbolic (Subsection \ref{small-ladder}). For general ladder, we first break it up into small girth ladder and then with the help of Proposition \ref{combi-hyp-sps} we conclude its hyperbolicity (Subsection \ref{general-ladder}). Section \ref{hyp-of-flow-sp} is devoted to prove the (uniform) hyperbolicity of flow spaces (Theorem \ref{flow-sp-is-hyp}). To prove this, we follow the strategy elaborated in \cite[Chapter $5$]{ps-kap}.

(3) In Section \ref{union-of-two-flow-sp-is-hyp}, we prove that union of uniform neighborhood of two intersecting flow spaces is uniformly hyperbolic (Proposition \ref{union-of-flow-sp-is-hyp}). In the introduction of this section we elaborate all the properties which are required from the earlier sections to prove this.

(4) In Section \ref{hyp-tmb-sec}, we conclude our main theorem (Theorem \ref{main-theorem-com}) with the help of Theorem \ref{treeofsps-com-thm1}. We think of the tree of metric bundles $(X,B,T)$ as a tree of metric spaces $\pi:X\ri T$ (see Remark \ref{crusial-words}). Then we verify the properties $(\mathcal{P}0)-(\mathcal{P}4)$ of Theorem \ref{treeofsps-com-thm1}, where the flow spaces of metric bundles serve as the specified subspaces as in properties $(\mathcal{P}0)-(\mathcal{P}4)$. This proves the hyperbolicity of $X$ by Theorem \ref{treeofsps-com-thm1}.

(5) In Section \ref{applications}, we explore some applications of Theorem \ref{main-theorem-com} to complexes of groups (Theorem \ref{application-2} and Corollary \ref{necessity-flaring-com-grp}).



{\bf Acknowledgements}. The author is indebted to his supervisor, Dr. Pranab Sardar, for proposing
the problem considered here. Author is also grateful to him for his constant support, encouragement
and many helpful discussions throughout this work. Dr. Sardar helped the author to correct several mistakes and make the paper more readable. Dr. Sardar also suggested the author to formulate Theorem \ref{treeofsps-com-thm1} within an axiomatic framework (see Section \ref{hyp-tree-sps}) and recommanded to work directly with spaces rather than their approximating graphs, which was the approach taken in an earlier version.
The author is very thankful to the anonymous referee also for a thorough reading of the manuscript and many helpful comments, which also greatly contributed to the exposition of this article. Finally, the author would like to acknowledge the support of IISER Mohali institute doctoral fellowship.

\subsection{Flowchart}Reader may find this helpful.
\begin{figure}[h]
	\includegraphics[width=11cm]{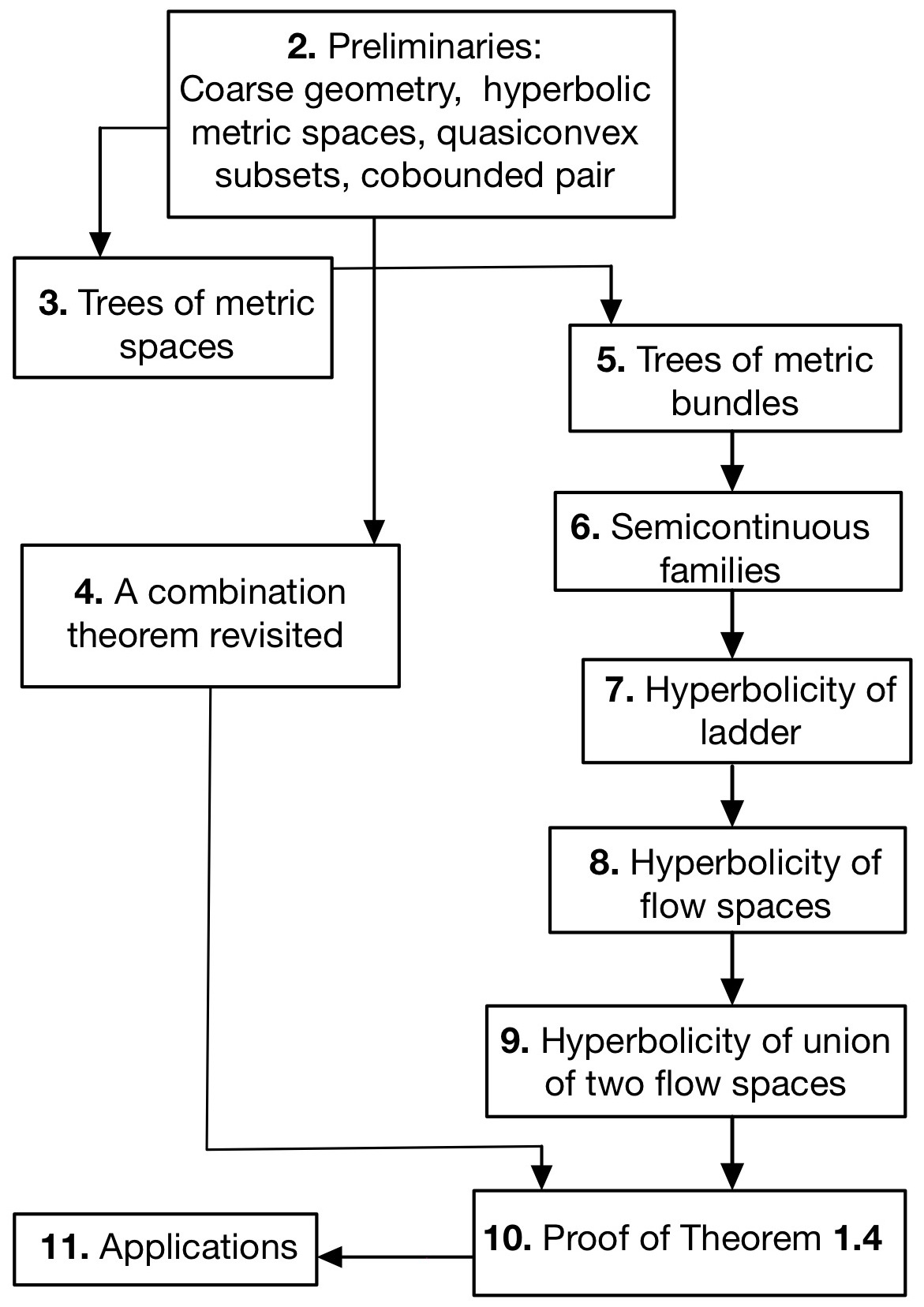}
	\centering
\end{figure}




\section{Preliminaries}\label{preliminaries}

\subsection{Some basic concepts}\label{some-basic-concepts}
We are assuming that the reader is familiar with the basic notions of large-scale geometry (\hspace{-.08mm}\cite{GhH, gromov-hypgps, abc}). However, we will recall some of these for later use. Let $(X,d_X),(Y,d_Y)$ be metric spaces, and let $\epsilon\ge0,r\ge0,C\ge0,D\ge0,R\ge0$. Let $A\sse X$. The subset $A$ is said to be $r$-{\em dense} in $X$ if $X=N_r(A):=\{x\in X:d_X(x,A)\le r\}$. We say that $A$ is {\em rectifiably connected} in $X$ if for all $x, y\in A$, there is a path $\al:[0,1]\ri X$ joining $x,y$ which is of finite length such that $\al([0, 1])\sse A$. A point $a\in A$ is called a {\em nearest point projection} of $x\in X$ if $d_X(x,a)\le d_X(x,a\pr)$ for all $a\pr\in A$. Let $A,B\sse X$. 
The {\bf Hausdorff distance} between $A$ and $B$ is defined to be inf $\{D:A\sse N_D(B), B\sse N_D(A)\}$ and is denoted by $Hd_X(A,B)$. 
Suppose $f:X\ri Y$ is a map.

(1) We say that $f$ is $\epsilon$-{\bf coarsely surjective} if $f(X)$ is $\ep$-dense in $Y$, and $f$ is said to be {\bf coarsely surjective} if it is $\epsilon$-coarsely surjective for some $\epsilon$.

(2) The map $f$ is called $C$-{\bf coarsely Lipschitz} if $d_Y(f(x),f(x\pr))\le Cd_X(x,x\pr)+C$ for all $x,x\pr\in X$. In particular, if $A\sse X$ and there is a map $g:X\ri A$ such that $d_X(g(x),g(x\pr))\le Cd_X(x,x\pr)+C$, $\fa~x,x\pr\in X$ and $g(a)=a$, $\fa~a\in A$ then $g$ is said to be $C$-{\bf coarsely Lipschitz retraction} of $X$ on $A$.

(3) We say that $f$ is a $(k,\epsilon)$-{\bf quasiisometric embedding} (in short $(k,\epsilon)$-{\bf qi embedding}) if $$\frac{1}{k}d_X(x,x\pr)-\epsilon\le d_Y(f(x),f(x\pr))\le kd_X(x,x\pr)+\epsilon \textrm{ for all }x,x\pr\in X.$$The map $f$ is said to be a {\bf quasiisometric (qi) embedding} if it is a $(k,\epsilon)$-qi embedding for some $k\ge1$ and $\epsilon\ge0$. We will refer a $(k,k)$-qi embedding as a $k$-qi embedding. Lastly, we say that $f$ is a $(k,\ep)$-{\bf quasiisometry} (resp. $k$-{\bf quasiisometry}) if it is a $(k,\ep)$-qi embedding (resp. $k$-qi embedding) and $D$-coarsely surjective for some $D\ge0$.

(4) A function $\phi:\R_{\ge0}\ri\R_{\ge0}$ is said to be {\bf proper} if $\phi(r)\ri\infty$ as $r\ri\infty$. Then we say that $f$ is $\phi$-{\bf proper} embedding for some proper function $\phi$ if $d_Y(f(x),f(x\pr))\le r$ implies $d_X(x,x\pr)\le\phi(r)$.


For $x,y\in X$, a {\bf (quasi)geodesic} joining them in $X$ is an (quasi)isometric embedding $\al:[s,t]\sse\R\ri X$ of some interval in $\R$ with $\al(s)=x$ and $\al(t)=y$. We refer to X as a {\bf geodesic metric space} if there exists a geodesic in $X$ joining every pair of points in $X$. A {\bf tree} is a connected graph (see \cite[Section $1.9$, I$.1$]{bridson-haefliger}) without any embedded circle. (Here all the edges are isometric to a closed unit interval of $\R$.) For a tree $T$ and $u,v\in V(T)$ (= the set of vertices of $T$), we usually call the geodesic joining $u,v$ in $T$ as {\bf segment} or {\bf interval} joining $u,v$. 

Suppose $X$ is a geodesic metric space. We denote a geodesic in $X$ joining two points $a,b\in X$ by $[a,b]\sse X$. Suppose $x,y,z\in X$. A {\bf geodesic triangle} in $X$ formed by these three points is the union of chosen geodesic segments $[x,y],[x,z]$ and $[y,z]$, and it is denoted by $\triangle(x,y,z)$. We call those geodesic segments as sides of the triangle. We say that $\triangle(x,y,z)$ is $\dl$-{\bf slim} if any side of $\triangle(x,y,z)$ is contained in the $\dl$-neighborhood of the union of other two sides. Suppose $\triangle(x_1,x_2,x_3)$ is a geodesic triangle in $X$ where $x_1,x_2,x_3\in X$. A point $z\in X$ is called $C$-{\bf center} of this triangle if $z\in N_C([x_i,x_j])$ for $i\ne j$ and $i,j\in\{1,2,3\}$. Sometimes, we call $\cup_{i=1}^{3}[z,x_i]$ a $C$-{\bf tripod} in $X$ with end points $x_1,x_2,x_3$. Let $a,b\in X$ and $x_1,x_2,\cdots,x_n\in[a,b]$. By an {\bf order} $a\le x_1\le x_2\le\cdots\le x_n\le b$, we mean $d_X(a,x_i)\le d_X(a,x_{i+1})$ for $1\le i\le n$. 
\smallskip

{\bf Convention}: {\em Occasionally, we use phrases such as \emph{uniform constants}, \emph{uniformly hyperbolic}, \emph{uniformly Hausdorff close} etc. when we (1) do not require a specific value for the associated parameters, and (2) it is evident that such values exist given the assumptions of Lemma, Proposition or Theorem.

Later on in Sections \ref{semicontinuous-subgraph}, \ref{hyperbolicity-of-ladder}, \ref{hyp-of-flow-sp} and \ref{union-of-two-flow-sp-is-hyp}, we will consider a subset, say $Y$, in a space, say $X$. Suppose $Z=N_D(Y)$ where $D$ is bigger than a certain constant. Then we show that $Z$, with the induced path metric, is hyperbolic (resp. properly embedded or qi embedded in $X$). Now the hyperbolicity constant (resp. proper function or qi embedding constant) depends on some parameters as well as $D$. In such cases, occasionally, we say that a {\em uniform neighborhood of $Y$ is uniformly hyperbolic (resp. uniformly properly embedded or uniformly qi embedded in $X$)}.	

Unless otherwise specified, for a rectifiably connected subset $A$ of a geodesic metric space $X$, the metric on $A$ under consideration is the induced path metric. For a (quasi)geodesic $\al:I\sse\R\ri X$, we often forget the domain of $\al$ and work with the image of $\al$, and use $\al$ to mean $\al(I)\sse X$.}

The following lemmata (Lemmata \ref{imp-coarse-retraction}, \ref{lrape_qi-emb} and \ref{hd-imp-qi}) are standard. So we only sketch their proofs. 

\begin{lemma}\label{imp-coarse-retraction}
Given $D\ge0$ there is a constant $C_{\ref{imp-coarse-retraction}}(D)\ge0$ such that the following holds.
	
Suppose $X$ is a geodesic metric space and $Y$ is a subset of $X$ (not necessarily connected) such that $Y$ is $1$-dense in $X$. Let $U\sse X$ and $\rho:Y\ri U$ be a map such that $d_X(\rho(p),\rho(q))\le D$ for all $p,q\in Y$ with $d_X(p,q)\le1$. Finally, we assume that for any geodesic segment $[x,y]_X$ joining $x,y\in X$, we have points $x=x_0,x_1,\cdots,x_n=y$ on $[x,y]_X$ such that

$(1)$ $d_X(x_i,x_{i+1})=1$ for all $i\in\{1,2,\cdots,n-2\}$,

$(2)$ $d_X(x_i,x_{i+1})\le 1$ for $i=0,n-1$ and

$(3)$ $x_i\in Y$ for all $i$ except possibly for $i=0,n$.

Then $\rho$ can be extended to a map $\rho\pr:X\ri U$ so that $\rho\pr$ is $C_{\ref{imp-coarse-retraction}}$-coarsely Lipschitz.
\end{lemma}
\begin{proof}
Given any $z\in X$, once and for all, we fix $z'\in Y$ such that $d_X(z,z')\le1$; if $z\in Y$, then $z'=z$. Then we define $\rho'(z):=\rho(z')$.

Suppose $x,y\in X$ such that $d_X(x,y)=r$. Then we have $x',y'\in Y$ as mentioned above such that $d_X(x,x')\le 1$, $d_X(y,y')\le 1$ and $\rho'(x)=\rho(x'),~\rho'(y)=\rho(y')$. Thus by triangle inequality, we get $d_X(x',y')\le r+2$. Now by our assumption, we have points $x'=x_0,x_1,\cdots,x_l=y'$ on $[x',y']_X$ such that $d_X(x_i,x_{i+1})=1$ for all $i\in\{1,2,\cdots,l-2\}$ and $d_X(x_0,x_1)\le 1$, $d_X(x_{l-1},x_l)\le 1$ where $l\le r+4$. Note that $x_i\in Y$ for all $i\in\{0,1,2,\cdots,l\}$. Hence, $d_X(\rho'(x),\rho'(y))=d_X(\rho(x'),\rho(y'))\le\sum_{i=0}^{l-1}d_X(\rho(x_i),\rho(x_{i+1}))\le lD\le rD+4D=Dd_X(x,y)+4D$. This completes the proof where $C_{\ref{imp-coarse-retraction}}(D)=4D+1$.
\end{proof}

\begin{lemma}\label{lrape_qi-emb}
	Given a map $\phi:\R_{>0}\ri\R_{>0}$ and constants $C>0$, $R\ge0$ there is a constant $L_{\ref{lrape_qi-emb}}=L_{\ref{lrape_qi-emb}}(\phi,C,R)$ such that we have the following.
	
	Suppose $X$ is a geodesic metric space and $Y\sse X$ such that $N_R(Y)$ is path connected. Let $p:X\ri Y$ be a $C$-coarsely Lipschitz retraction. Further, suppose that the inclusion $i:N_R(Y)\ri X$ is a $\phi$-proper embedding. Then $i:N_R(Y)\ri X$ is a $L_{\ref{lrape_qi-emb}}$-qi embedding.
\end{lemma}
\begin{proof}
We will sketch a proof. (For instance, see \cite[Lemma 1.17]{ps-kap}, \cite[Theorem 11.50]{kap-drutu-book} for particular cases.) Suppose $Z=N_R(Y)$, and the induced path metric on $Z$ is denoted by $d_Z$. Note that $d_X(x,y)\le d_Z(x,y),\fa~ x,y\in Z$. To get the reverse inequality, let $x,y\in Z$ such that $d_X(x,y)=r$. Then there are $x_1,y_1\in Y$ such that $d_Z(x,x_1)\le R$ and $d_Z(y,y_1)\le R$, and so $d_X(x_1,y_1)\le 2R+r$. Now we divide geodesic $[x_1,y_1]_X$ by points on it, say $x_1=w_0,w_1,\cdots,w_l=y_1$, with $d_X(w_{i-1},w_{i})=1$ for $1\le i\le l-1$ and $d_X(w_{l-1},w_{l})\le1$. Then, (1) we apply coarse Lipschitz retraction on these points, and (2) we get a uniform bound on $d_Z(p(w_{i-1}),p(w_i))$ by $\phi$-proper embedding of $Z\map X$. Finally, summation of these, particularly, $l\le\sum_{i=1}^{l} d_Z(p(w_{i-1}),p(w_i))$, will conclude the proof.
\end{proof}

\begin{lemma}\label{hd-imp-qi}
	Given $L\ge1,~D\ge0$, there is a constant $L_{\ref{hd-imp-qi}}=L_{\ref{hd-imp-qi}}(L,D)$ such that we have the following.
	
	Suppose $X$ is a geodesic metric space and $Y\sse Z\sse X$ are geodesic subspaces such that the inclusion $Y\ri X$ is a $L$-qi embedding. Let $Z\sse N_D(Y)$. Then the inclusion $Z\ri X$ is a $L_{\ref{hd-imp-qi}}$-qi embedding.
\end{lemma}
\underline{Sketch of proof of Lemma \ref{hd-imp-qi}}: First of all, note that $Z$ is properly embedded in $X$ since $Z\sse N_D(Y)$ and $Y\map X$ is qi embedding. Then by \cite[Lemma 1.19]{ps-kap}, the inclusion $Y\map Z$ is a quasiisometry as $Hd_X(Y,Z)\le D$. Hence the inclusion $Z\map X$ is a qi embedding as $Y\map X$ is a qi embedding.

\subsection{Hyperbolic metric spaces}
There are several equivalent definitions for hyperbolic geodesic metric spaces (see \cite{abc}, \cite{gromov-hypgps}).
We shall use the following definition.
\begin{defn}
	Suppose $X$ is a geodesic metric space and $\dl\ge0$. We say that $X$ is $\dl$-hyperbolic if all its geodesic triangles are $\dl$-slim. 
	
	A geodesic metric space $X$ is said to be hyperbolic if it is $\dl$-hyperbolic for some $\dl\ge0$.
\end{defn}

{\em Thus, unless otherwise stated, all hyperbolic metric spaces are assumed to be geodesic, as per the definition above}. In a hyperbolic metric space, quasigeodesics and geodesics with the same end points are uniformly Hausdorff close. This is known as Morse lemma or stability of quasigeodesic (see \cite[Theorem $1.7$, III.H]{bridson-haefliger}).

\begin{lemma}[{Stability of quasigeodesic}]\label{ml}
	Given $\dl\ge0,~ k\ge1$ and $\ep\ge0$ there is a constant $D_{\ref{ml}}=D_{\ref{ml}}(\dl,k,\ep)$ such that the following holds.
	
	Suppose $X$ is a $\dl$-hyperbolic geodesic metric space. Then for any geodesic $\alpha$ and $(k,\epsilon)$-quasigeodesic $\beta$ in $X$ with the same end points, we have $Hd_X(\al,\bt)\le D_{\ref{ml}}$.
\end{lemma}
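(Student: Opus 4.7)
The plan is to establish the two one-sided bounds $\beta\sse N_{D_{\ref{ml}}}(\alpha)$ and $\alpha\sse N_{D_{\ref{ml}}}(\beta)$; the former is the substantive half. As a preliminary reduction I would replace the $(k,\ep)$-quasi-geodesic $\beta$ by a continuous, rectifiable ``tame'' $(k,\ep\pr)$-quasi-geodesic at bounded Hausdorff distance (obtained by linearly interpolating on a $1$-dense sample of its image), so that lengths of sub-arcs of $\beta$ are comparable to the endpoint distance.

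For the first bound, suppose for contradiction that some $p=\beta(t_0)$ lies at distance $>D$ from $\alpha$ for a large $D$ to be chosen. Let $t_-\le t_0\le t_+$ be the largest and smallest times bracketing $t_0$ at which $d(\beta(\cdot),\alpha)\le D/2$; on the open interval $(t_-,t_+)$ the curve $\beta$ stays outside the open $D/2$-neighborhood of $\alpha$. Let $a_\pm\in\alpha$ be nearest-point projections of $\beta(t_\pm)$. Concatenating $[\beta(t_-),a_-]\cup\alpha|_{[a_-,a_+]}\cup[a_+,\beta(t_+)]$ gives $d(\beta(t_-),\beta(t_+))\le D+d(a_-,a_+)$, which combined with the quasi-geodesic lower bound $(t_+-t_-)/k-\ep\le d(\beta(t_-),\beta(t_+))$ yields a linear upper bound on the parameter gap $t_+-t_-$, and hence on the $d_X$-length of $\beta|_{[t_-,t_+]}$. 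On the other hand, $\dl$-hyperbolicity forces the nearest-point projection onto $\alpha$ to contract exponentially outside the $\dl$-neighborhood of $\alpha$: a path of length $L$ remaining outside the $R$-neighborhood of $\alpha$ projects onto a subset of $\alpha$ of diameter $\lesssim L\cdot 2^{-R/\dl}+\dl$. Applied to $\beta|_{[t_-,t_+]}$ with $R=D/2$, this forces $d(a_-,a_+)$ to be small once $D$ is of order $k^2(\ep+\dl)$, contradicting the chain of inequalities above; thus no such $p$ exists, giving a bound of the correct order.

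For the reverse containment, given $z\in\alpha$ choose a sub-interval $[s,s']\sse\mathrm{dom}(\beta)$ such that $\beta(s),\beta(s')$ project to points on opposite sides of $z$ on $\alpha$ while no intermediate $\beta(t)$ projects past $z$ (this uses only that $\beta$ shares endpoints with $\alpha$). By the bound already proved, $\beta(s),\beta(s')$ each lie within $D_1:=D_{\ref{ml}}$ of $\alpha$, hence within $D_1+\mathrm{const}$ of $z$. Slimness applied to the geodesic triangle with vertices $\beta(s),\beta(s'),z$, combined with the already-established fact that a geodesic $[\beta(s),\beta(s')]$ is Hausdorff close to $\beta|_{[s,s']}$, places $z$ within a controlled multiple of $D_1+\dl$ of $\beta|_{[s,s']}$, giving the inclusion after absorbing constants.

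The hard part is tracking the exact dependence on $k,\ep,\dl$ to pin down the specific coefficient $92k^2(\ep+3\dl)$. The qualitative argument above gives the right order of magnitude $k^2(\ep+\dl)$, but the precise constant requires an optimal choice of the cutoff (written as $D/2$ above for simplicity, but really a function of $\dl,k,\ep$), the sharp form of the exponential-divergence estimate, and careful bookkeeping of the additive terms arising from the slim-quadrilateral step. For that optimization I would simply follow the analysis of Gou\"ezel--Shchur in \cite{gs} rather than try to improve it.
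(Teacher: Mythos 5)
The paper itself offers no proof of this lemma: it is recorded as a black box with the explicit constant $92k^2(\ep+3\dl)$ taken from Gou\"ezel--Shchur, and the text immediately before the statement says only ``In \cite{gs}, they gave an optimal bound for it.'' So there is no internal argument in the paper against which to compare your proposal; both you and the authors ultimately defer to \cite{gs} for the precise coefficient.

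That said, the qualitative argument you sketch for the containment $\beta\sse N_D(\alpha)$ has a gap in the way the contradiction is supposed to close. You combine (i) $d(\beta(t_-),\beta(t_+))\le D+d(a_-,a_+)$, (ii) the quasi-geodesic lower bound giving $t_+-t_-\le k(D+d(a_-,a_+)+\ep)$, and (iii) the taming bound $L\lesssim k(t_+-t_-)+\ep$, to get $L\lesssim k^2(D+d(a_-,a_+)+\ep)$. You then invoke exponential contraction of projection to conclude $d(a_-,a_+)\lesssim L\cdot 2^{-D/(2\dl)}+\dl$, and claim that once $D\gtrsim k^2(\ep+\dl)$ this makes $d(a_-,a_+)$ small, ``contradicting the chain of inequalities above.'' But the chain of inequalities only gives an \emph{upper} bound on $L$ in terms of $D+d(a_-,a_+)+\ep$; substituting a small value of $d(a_-,a_+)$ into that chain is entirely consistent and produces no contradiction, because you have no \emph{lower} bound on $L$ other than the trivial $L\ge D$ (from $\beta$ climbing from distance $D/2$ to distance $>D$ and back), and $D\le Ck^2(D+\dl+\ep)$ is vacuously true. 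What one actually needs is the exponential \emph{divergence} estimate in the form of a lower bound on the length of a path that stays outside a ball $B(x_0,D)$ centered at a point $x_0\in\alpha$ that has points $y,z\in\alpha$ on both sides at distance $\approx 2D$: then the concatenated detour $[y,\beta(a)]\cup\beta|_{[a,b]}\cup[\beta(b),z]$ has length $\gtrsim 2^{D/\dl}$, while the linear upper bound gives $\lesssim k^2 D+k\ep$, and exponential $\le$ linear is what bounds $D$. Equivalently, one can apply the projection-contraction estimate, but to a path whose projection must span the long interval $[y,z]\sse\alpha$, not to $\beta|_{[t_-,t_+]}$ whose projection has no reason to be long. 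Your choice of $t_\pm$ as the threshold times at distance $D/2$ doesn't set up that tension. The reverse containment sketch (locate a $t$ where the projection jumps across a given $z\in\alpha$, then use the first half and coarse Lipschitzness of projection) is fine, though the slimness step at the end is redundant once you already have $\beta(s),\beta(s')$ within $D_1+O(1)$ of $z$. In short: right ingredients, but the first half needs the divergence lower bound rather than the contraction upper bound to close; and for the actual coefficient $92k^2(\ep+3\dl)$ both you and the paper rely on \cite{gs}.
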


We assume that the reader is familiar with the {\em sequential boundary} or {\em Gromov boundary} of a hyperbolic geodesic metric space that need not be proper (see \cite[Chapter $4$]{abc}, \cite[Chapter III.H]{bridson-haefliger}). We recall that for a hyperbolic space, one can define a barycenter as follows.\smallskip

{\bf The barycenter map} (For more details, one is referred to \cite[Section $2$]{pranab-mahan}): Suppose $X$ is a $\dl$-hyperbolic metric space such that there are more than two points in its Gromov boundary, $\partial X$. Then by \cite[Lemma $2.4$]{pranab-mahan}, for any $\eta\in\partial X$, there is a (uniform) quasigeodesic ray starting at any point of $X$ and representing $\eta$, and for any $\eta\pr,\eta\prr\in \pa X$ with $\eta\pr\ne\eta\prr$, there is a (uniform) bi-infinite quasigeodesic line whose one end represents $\eta\pr$ and the other one represents $\eta\prr$. We denote such a quasigeodesic line by $(\eta\pr,\eta\prr)$. Notice that we do not assume our space to be proper. Let $\partial^3X=\{(\xi_1,\xi_2,\xi_3)\in\partial X\times\partial X\times\partial X:\xi_1\ne\xi_2\ne\xi_3\ne\xi_1\}$. Now for $\xi=(\xi_1,\xi_2,\xi_3)\in\partial^3X$, we consider an ideal quasigeodesic triangle, say $\triangle(\xi_1,\xi_2,\xi_3)$, formed by three (uniform) quasigeodesic lines $\{(\xi_i,\xi_j):i\ne j\textrm{ and }i,j\in\{1,2,3\}\}$. Then by \cite[Lemma $2.7$]{pranab-mahan}, there is a point, say $b_{\xi}$, in $X$ uniformly close to each sides of $\triangle(\xi_1,\xi_2,\xi_3)$, and that $b_{\xi}$ is coarsely well-defined. Thus in this way, we get (coarsely) a map $\psi:\partial^3X\ri X$ sending $\xi$ to $b_{\xi}$. Lastly, by \cite[Lemma $2.9$]{pranab-mahan}, the map $\psi:\partial^3X\ri X$ is coarsely unique and is called \emph{the barycenter map}. Note that for such barycenter map, we always assume that $\partial X$ has more than two elements.\smallskip

For a finitely generated group $G$ with a finite generating set $S$, the Cayley graph of $G$ with respect to $S$ is a graph whose vertex set is $G$ and two vertices, say $g,h\in G$, are joined by an edge if $g^{-1}h\in S\cup S^{-1}$. 

\begin{defn}[{\bf Hyperbolic group}]
A finitely generated group $G$ is said to be hyperbolic if its Cayley graph with respect to some (hence, any) finite generating set is hyperbolic.

A hyperbolic group $G$ is said to be non-elementary if the Gromov boundary of its Cayley graph with respect to some (hence, any) finite generating set contains more than two points.
\end{defn}

It is standard that given two finite generating sets, the Cayley graphs associated with them become quasiisometric to each other. It is easy to prove from the stability of quasigeodesic (see Lemma \ref{ml}) that the hyperbolicity is quasiisometry invariant (see \cite[Theorem $1.9$, III.H]{bridson-haefliger}). Therefore, hyperbolic groups are well-defined.

\begin{remark}\label{nonelementary barycenter map}
It is standard that for a non-elementary hyperbolic group the barycenter map is coarsely surjective.
\end{remark}

Suppose $X$ is a geodesic metric space and $\alpha:[s,t]\sse\R\ri X$ is a continuous injective (rectifiably connected) path. Let $d_{\alpha}$ be the induced path metric on $Im(\alpha)$ from $X$. Then we have the induced order on $Im(\alpha)$ from $[s,t]$. In other words, if $p,q\in[s,t]$ with $p\le q$ then  $\al(p)\le\al(q)$ keeping in mind that $d_{\alpha}(\alpha(s),\alpha(p))\le d_{\alpha}(\alpha(s),\alpha(q))$. 
With this terminology, we have the following.
\begin{lemma}\label{monotonic-map-between-qgs}
	Given $\dl\ge0,~k\ge1$ and $r\ge0$, we have constants $L_{\ref{monotonic-map-between-qgs}}=L_{\ref{monotonic-map-between-qgs}}(\dl,k,r)$ and $k_{\ref{monotonic-map-between-qgs}}=k_{\ref{monotonic-map-between-qgs}}(\dl,k,r)$ such that the following hold.
	
Suppose $(X,d)$ is a $\dl$-hyperbolic metric space. Let $\alpha$ and $\beta$ be continuous injective $k$-quasigeod-\newline esics in $X$ joining points $a_1,a_2$ and $b_1,b_2$ respectively. Further, suppose that $d(a_i,b_i)\le r,~i=1,2$. Let $a_1\le a_2$ and $b_1\le b_2$ be the orders on $\alpha$ and $\beta$ respectively. Then there is a monotonic (piece-wise linear) homeomorphism $\psi:(\alpha,d_{\alpha})\ri(\beta,d_{\beta})$ such that $\psi(a_i)=b_i$ and $d(x,\psi(x))\le k_{\ref{monotonic-map-between-qgs}}$ for all $x\in\al$. Moreover, $\psi$ is a $L_{\ref{monotonic-map-between-qgs}}$-quasiisometry.
	
\end{lemma}
\begin{proof}
	Define a map $\phi:\R_{\ge0}\ri\R_{\ge0}$ such that $\phi(t)=kt+k^2$. Then $\alpha,\beta$ are $\phi$-properly embedded. Now by Lemma \ref{ml} and $\dl$-slimness of geodesic triangles, we have $Hd(\alpha,\beta)\le D_1$, where $D_1=2D_{\ref{ml}}(\dl,k,k)+2\dl+r$. Thus by \cite[Lemma $1.19$]{ps-kap}, we have a map $g:\alpha\ri\beta$ with $d(g(x),x)\le D_1,\fa~ x\in\alpha\setminus\{a_1,a_2\}$ and $g(a_i)=b_i,i=1,2$ such that $g$ is a $L$-quasiisometry, where $L$ depends on $D_1$ and $\phi$. Again, by \cite[Lemma $1.24$]{ps-kap}, we have constants $D_2,D_3$ depending on $L$, and a monotonic (piece-wise linear) homeomorphism $\tilde{g}:\alpha\ri\beta$ such that $\tilde{g}$ is a $D_2$-quasiisometry and $d(g(x),\tilde{g}(x))\le D_3$. So $d(x,\tilde{g}(x))\le d(x,g(x))+d(g(x),\tilde{g}(x))\le D_1+D_3$. Here $\tilde{g}$ serves as the required $\psi$.
	
	
	Therefore, we can take $L_{\ref{monotonic-map-between-qgs}}=D_2$ and $k_{\ref{monotonic-map-between-qgs}}=D_1+D_3$.
\end{proof}

We end this subsection by stating the following results (Lemma \ref{lo_vs_gl}, Proposition \ref{combing} and Proposition \ref{combi-hyp-sps}). One can look at \cite[Theorem $1.4$, Chapter $3$]{CDP} for a proof of Lemma \ref{lo_vs_gl}. Recall that given $k\ge1,~\ep\ge0$, $L>0$ and an interval $I\sse\R$, a map $\al:I\map X$ is said to be $(k,\ep,L)$-{\em local quasigeodesic} if $\al$ restricted to any subinterval of length $\le L$ is $(k,\ep)$-quasigeodesic.

\begin{lemma}[{Local quasigeodesic vs global quasigeodesic}]\label{lo_vs_gl}
Given $\dl\ge0,~k\ge1$ and $\ep\ge0$ there are constants $L_{\ref{lo_vs_gl}}=L_{\ref{lo_vs_gl}}(\dl,k,\ep)$ and $ \lm_{\ref{lo_vs_gl}}=\lm_{\ref{lo_vs_gl}}(\dl,k,\ep)$ such that the following holds.
	
	Suppose $X$ is a $\dl$-hyperbolic metric space. Then any $(k,\ep,L_{\ref{lo_vs_gl}})$-local quasigeodesic in $X$ is a $\lm_{\ref{lo_vs_gl}}$-quasigeodesic.
\end{lemma}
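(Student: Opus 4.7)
The plan is to subdivide the parameter interval and reduce the local condition to a broken-geodesic argument in a $\delta$-hyperbolic space. Given a $(k,\epsilon,L)$-local quasi-geodesic $\alpha:I\to X$, I would partition $I$ into subintervals $[t_i,t_{i+1}]$ with $L/2\le t_{i+1}-t_i\le L$, set $x_i=\alpha(t_i)$, and form the piecewise geodesic path $\gamma=\bigcup_i[x_i,x_{i+1}]$. On each subinterval the restriction of $\alpha$ is a $(k,\epsilon)$-quasi-geodesic, so by Lemma \ref{ml} its image lies within $D_{\ref{ml}}=D_{\ref{ml}}(\delta,k,\epsilon)$ of $[x_i,x_{i+1}]$; hence $\alpha$ and $\gamma$ have uniformly bounded Hausdorff distance. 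It therefore suffices to prove that, once $L$ is chosen large enough, $\gamma$ is a global quasi-geodesic with constants depending only on $\delta,k,\epsilon$.

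The central technical point is a \emph{no-backtracking} estimate at each breakpoint $x_i$. Let $y_i$ be a nearest-point projection of $x_{i+1}$ onto $[x_{i-1},x_i]$. I would first show that $d(x_i,y_i)\le D_0$ for some $D_0=D_0(\delta,k,\epsilon)$: if not, then $\delta$-slimness of the triangle $\triangle(x_{i-1},x_i,x_{i+1})$ together with Lemma \ref{ml} applied to the two adjacent pieces produces points $p\in\alpha([t_{i-1},t_i])$ and $q\in\alpha([t_i,t_{i+1}])$ with $d_X(p,q)$ small while the parameter distance $|t(p)-t(q)|$ is of order $L$. This contradicts the $(k,\epsilon)$-quasi-geodesic inequality on the subinterval $[t(p),t(q)]$ as soon as $L/k-\epsilon$ exceeds a threshold built from $\delta$ and $D_{\ref{ml}}$. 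This is the only place where $L$ must be chosen large.

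Once no-backtracking is in place, a standard Gromov-product computation shows that the triangle inequality is nearly tight at each breakpoint, namely $d(x_{i-1},x_{i+1})\ge d(x_{i-1},x_i)+d(x_i,x_{i+1})-2D_0-O(\delta)$. Since each geodesic piece has length at least $L/k-\epsilon$, iterating this estimate telescopes to a linear lower bound on $d(x_0,x_n)$ in terms of the total length of $\gamma$, provided $L$ is large enough that $L/k-\epsilon\gg D_0+\delta$. Together with the trivial upper bound on each piece, this shows $\gamma$ is a $\lambda$-quasi-geodesic for a $\lambda=\lambda(\delta,k,\epsilon)$; the Hausdorff-closeness then transfers the conclusion to $\alpha$, yielding $\lm_{\ref{lo_vs_gl}}$ and $L_{\ref{lo_vs_gl}}$ as functions of $\delta,k,\epsilon$ alone.

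I expect the main obstacle to be the quantitative no-backtracking step: one must carefully balance $\delta$, the Morse constant $D_{\ref{ml}}(\delta,k,\epsilon)$, and the piece length $L/k-\epsilon$ in order to extract a usable bound on $d(x_i,y_i)$ from a potential violation of the local quasi-geodesic inequality. The subsequent telescoping via Gromov products is then routine.
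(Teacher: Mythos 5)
The paper does not prove this lemma; it simply cites it from the standard references \cite{GhH} (Theorem~21, Chapter~5) and \cite{CDP} (Theorem~1.4, Chapter~3), so there is no paper proof to compare against. Your sketch is a correct outline of the classical argument: replace the local quasi-geodesic by a broken geodesic on breakpoints spaced $\sim L$ apart, prove a no-backtracking bound at each joint (i.e.\ the Gromov product $(x_{i-1}\mid x_{i+1})_{x_i}$ is bounded by a function of $\delta,k,\epsilon$ alone), then propagate this to show the broken geodesic makes definite progress.

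Two technical points in the sketch need care. First, in the no-backtracking contradiction you assert that the close points $p,q$ have parameter separation \emph{of order $L$}; but the local quasi-geodesic hypothesis only applies on intervals of length $\le L$, and if $y_i$ sits near $x_{i-1}$ the naive choice gives $|t(p)-t(q)|$ up to $2L$. The fix is to pick the pair $(z_1,z_2)\subset[x_{i-1},x_i]\times[x_i,x_{i+1}]$ at a \emph{fixed} distance $D_0=D_0(\delta,k,\epsilon)$ from $x_i$; then the Morse lemma gives $|t(p)-t_i|,|t(q)-t_i|\le k(D_0+D_{\ref{ml}}+\epsilon)$, so one may choose $L\ge 2k(D_0+D_{\ref{ml}}+\epsilon)$ and the local estimate yields $d(p,q)\ge \tfrac{2}{k^2}(D_0-D_{\ref{ml}}-\epsilon)-\epsilon$, a contradiction for $D_0$ large. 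Second, the ``telescoping'' is not obtained by merely iterating the three-point estimate $d(x_{i-1},x_{i+1})\ge d(x_{i-1},x_i)+d(x_i,x_{i+1})-2D_0-O(\delta)$; one needs an induction that propagates a bound on $(x_0\mid x_{i+1})_{x_i}$ along the chain via the $\delta$-four-point condition, using that each piece is long ($\ge L/(2k)-\epsilon$) relative to $D_0+\delta$. Both fixes are routine, and your overall strategy is the correct one, but they are exactly where the quantitative content of the lemma lives.
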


In \cite[Proposition $3.1$]{bowditch-com}, Bowditch provided a criterion for hyperbolicity of a metric graph. Earlier, in \cite{hamenst-teich}, Hamenst\text{\"a}dt also gave a similar criterion for a space to be hyperbolic. In Proposition \ref{combing}, we consider Bowditch’s version for metric spaces. 

\begin{prop}\textup{(\hspace{-.08mm}\cite[Corollary $1.63$]{ps-kap})}\label{combing}
	Given $D_0\ge1,~D\ge0$ and a proper map $\psi:\R_{\ge0}\ri\R_{\ge0}$ there exist $\dl_{\ref{combing}}=\dl_{\ref{combing}}(\psi,D,D_0)$ and $K_{\ref{combing}}=K_{\ref{combing}}(\psi,D,D_0)$ such that the following holds.
	
	Suppose $X$ is a geodesic metric space and $X_0\sse X$ is a $D_0$-dense subset of $X$. Suppose that for any pair $(x,y)$ of distinct points in $X_0$ there is a continuous path $c(x,y)$ joining $x$ and $y$. Further, suppose that for all $x,y,z\in X_0$ and $r\in\R_{\ge0}$, we have
	\begin{enumerate}
		\item $d(x,y)\le r$ implies the length of the path $c(x,y)$ is bounded by $\psi(r)$, and
		
		\item $c(x,y)\sse N_D(c(x,z)\cup c(y,z))$.
	\end{enumerate}	
	
	Then $X$ is $\dl_{\ref{combing}}$-hyperbolic metric space and the paths $c(x,y)$ are $K_{\ref{combing}}$-quasigeodesic.	
\end{prop}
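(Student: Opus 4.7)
The plan is to establish the two conclusions — that the combing paths $c(x,y)$ are uniform quasi-geodesics and that $X$ is $\dl$-hyperbolic — in a coupled bootstrap, following the general strategy behind Bowditch's combing criterion. First I would use $D_0$-density to extend the combing from $X_1$ to pairs in $X$: given arbitrary $x,y \in X$, pick $x\pr, y\pr \in X_1$ within $D_0$ of $x,y$ and define $\tilde c(x,y) := [x,x\pr] \cup c(x\pr,y\pr) \cup [y\pr,y]$. The family $\{\tilde c(x,y)\}$ inherits properties (1) and (2) with modified constants depending only on $\eta$, $D$ and $D_0$, and it suffices to work with this extended combing.

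Step 1 (length control in a ball). For each $r$, I would first bound $\mathrm{length}(c(x,y))$ uniformly when $d(x,y) \le r$. The $\eta$-proper embedding hypothesis (1) immediately gives the bound once one knows $c(x,y)$ stays in a bounded neighborhood of $\{x,y\}$, since then the $X$-diameter of the image controls the arc-length via $\eta$. The needed containment is forced by hypothesis (2): if $c(x,y)$ strayed outside a large neighborhood, applying the thin-triangle condition to auxiliary triples $(x,y,z)$ with $z\in X_1$ close to a geodesic $[x,y]$ would place a long subarc of $c(x,y)$ close to a single point, contradicting $\eta$-properness of $c(x,z)$ or $c(y,z)$.

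Step 2 (quasi-geodesic bound). The bound from Step 1 must be upgraded to a linear one, $\mathrm{length}(c(x,y)) \le K\, d(x,y) + K$. Here I would use a midpoint splitting argument: given $x,y \in X_1$ with $d(x,y) \le r$, take the arc-length midpoint $m$ of $c(x,y)$, approximate it by $z \in X_1$ with $d(m,z) \le D_0$, and apply (2) to the triple $(x,y,z)$. The subarcs of $c(x,y)$ near each endpoint then project onto $c(x,z)$ and $c(z,y)$, and one extracts, via $\eta$-properness of the combing paths, a self-improving estimate on the function $\lambda(r) := \sup\{\mathrm{length}(c(x,y)) : x,y \in X_1, d(x,y) \le r\}$. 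Combining the resulting recursion with the uniform bound from Step 1 forces $\lambda(r)$ to grow linearly with constants depending only on $\eta, D, D_0$.

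Step 3 (hyperbolicity). Once the $c(x,y)$ are $K$-quasi-geodesic and triangles among them are $D$-thin, I would deduce that every geodesic triangle in $X$ is $\dl$-slim. The comparison between a geodesic $[x,y]$ and $c(x,y)$ is obtained without a priori assuming hyperbolicity: specialising (2) with $z$ close to $x$ yields a slim bigon condition that, together with the $\eta$-proper embedding, forces geodesics and combing paths with the same endpoints to be uniformly Hausdorff close (this is a Morse-type statement proved directly from (1) and (2), not via Lemma \ref{ml}). Transferring $D$-slimness of combing triangles along this uniform closeness then gives $\dl_{\ref{combing}}$-slim geodesic triangles.

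The main obstacle is Step 2: a naive midpoint recursion typically only yields $\lambda(r) \lesssim r \log r$, so to obtain a genuinely linear bound one has to exploit the projections of $c(x,y)$ onto $c(x,z) \cup c(y,z)$ in a refined way, using the $\eta$-properness to control how much arc-length can accumulate under each projection. Once Step 2 is established, Steps 1 and 3 are routine.
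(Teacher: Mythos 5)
This proposition is cited from Bowditch (\cite[Proposition $3.1$]{bowditch-com}) and the present paper does not reproduce a proof, so the comparison is against Bowditch's argument. Your proposal attempts the two conclusions in the wrong order, and the reversal creates a genuine gap that you partially acknowledge but do not resolve.

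Two concrete issues. First, Step~1 is overcomplicated: $\eta$-proper embedding applied to the endpoints $x = c(0)$, $y = c(L)$ of the parametrized path directly gives $\mathrm{length}(c(x,y)) \le \eta(d(x,y))$, with no need for the auxiliary containment argument; that argument adds nothing. Second and decisively, Steps~2 and~3 are circular as stated. In Step~2 you want a linear length bound before knowing $X$ is hyperbolic, and you correctly observe that the midpoint recursion from (2) alone only gives $\lambda(r) \lesssim r\log r$; the claim that ``one has to exploit the projections of $c(x,y)$ onto $c(x,z)\cup c(y,z)$ in a refined way'' is not a proof, and there is no known way to upgrade $r\log r$ to $r$ from hypotheses (1) and (2) alone without first knowing hyperbolicity. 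In Step~3 you then invoke ``a Morse-type statement proved directly from (1) and (2), not via Lemma~\ref{ml}'' asserting geodesics and combing paths are uniformly Hausdorff-close; this is precisely the stability of quasi-geodesics, which is false in general non-hyperbolic spaces and cannot be established from (1) and (2) without hyperbolicity. So Step~3 presupposes what it is meant to conclude.

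The actual route (Bowditch's, and the closely related arguments of Masur--Minsky and Hamenst\"adt) proves hyperbolicity \emph{first} and the quasi-geodesic estimate \emph{second}. One uses the thin-triangles property (2) together with the local length bound from (1) to construct, by repeated bisection, a singular filling of any loop of length $n$ by $O(n\log n)$ cells; this gives a subquadratic isoperimetric function, and by the Gromov--Olshanskii--Bowditch--Papasoglu theorem a subquadratic isoperimetric inequality already implies a linear one and hence hyperbolicity. The $\log$ factor that blocks your Step~2 is therefore not an obstacle to be removed but a feature: $n\log n$ is subquadratic, which is all the filling estimate needs to produce. Only after hyperbolicity is in hand does one return to show the combing paths are uniformly close to geodesics (using the thin-triangles condition and the now-available Morse lemma), which yields the $K_{\ref{combing}}$-quasi-geodesic conclusion. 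If you reorganize the proposal to establish the subquadratic isoperimetric inequality as the central step, the remaining pieces fall into place.
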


The proposition below is a very special case of the main theorem of \cite{BF} (see also \cite[Theorem $2.59$]{ps-kap}). Here, the space is realized as a tree of metric spaces such that the tree is an interval. (One may look at \cite[Corollary $1.52$]{pranab-mahan} for this result in metric graph.) However, it is true for an arbitrary tree also (see \cite[Theorem $2$]{abhi-sum}).

\begin{prop}\textup{(\hspace{-.08mm}\cite[Theorem $2.59$]{ps-kap})}\label{combi-hyp-sps}
Given $\dl\ge0,~L\ge1,~D\ge0$ there exists $\dl_{\ref{combi-hyp-sps}}=\dl_{\ref{combi-hyp-sps}}(\dl,L,D)$ such that the following holds.
	
Suppose $X=\cup_{i=0}^{n-1}X_i$ is a geodesic metric space with $X_i$'s are geodesic subspaces of $X$ such that:
\begin{enumerate}
\item For $0\le i\le n-1$, $X_i$ is $\dl$-hyperbolic metric space.
		
\item For $0\le i\le n-2$, $Y_{i+1}=X_i\cap X_{i+1}$ is a path connected subspace, and the inclusions $Y_{i+1}\ri X_i$ and $Y_{i+1}\ri X_{i+1}$ are $L$-qi embeddings.
		
\item  For $0\le i\le n-2$, $Y_{i+1}$ separates $X_i$ and $X_{i+1}$ in $X$ in the sense that every path in $X$ joining points in $X_i$ and $X_{i+1}$ passes through $Y_{i+1}$.
		
\item For $1\le i\le n-2$, the pair $(Y_i,Y_{i+1})$ is $D$-cobounded in the metric $X_i$.
		
\item $d_{X_i}(Y_i,Y_{i+1})\ge1$ for $1\le i\le n-2$.
	\end{enumerate}
	Then $X$ is $\dl_{\ref{combi-hyp-sps}}$-hyperbolic metric space.
\end{prop}

\begin{remark}\label{combi-hyp-sps-2}
In Proposition \ref{combi-hyp-sps}, if $n=2$, we only need to check $(1)$ and $(2)$. In that case, $X$ is $\dl_{\ref{combi-hyp-sps-2}}=\dl_{\ref{combi-hyp-sps-2}}(\dl,L)$-hyperbolic (see also Lemma \ref{com-two-hyp-sps}).
\end{remark}

%

\subsection{Quasiconvex subsets}\label{qc subsets}
In this subsection, we will explore various basic results concerning quasiconvex subsets that will be useful in later discussions.
\begin{defn}
	Suppose $X$ is a geodesic metric space and $K\ge0$. A subset $U$ of $X$ is said to be $K$-{\bf quasiconvex} if $[a,b]\sse N_K(U)$ for all $a,b\in U$ and for all geodesics $[a,b]$ joining $a,b$ in $X$. We say that a subset $U$ of $X$ is quasiconvex if it is $K$-quasiconvex for some $K\ge0$.
\end{defn}

In a hyperbolic metric space, a common example of quasiconvex subset is the convex hull of any subset.
\begin{defn}\label{convexhull}
	Suppose $X$ is a geodesic metric space and $U\sse X$. The {\bf quasiconvex hull} of $U$ is defined as hull$(U):=\cup_{a,b\in U}[a,b]$. 
\end{defn}

\begin{remark}\label{hull-is-qc}
	Suppose $X$ is a $\delta$-hyperbolic metric space for some $\dl\ge0$ and $A\subset X$ is any subset. Then hull$(A)$ is $2\dl$-quasiconvex.
\end{remark}

Now we collect some facts related to quasiconvex subsets; some are well known and some are very easy to prove.

\begin{lemma}\label{proj-on-qc}
Given $\dl\ge0$, $K\ge0$ and $D\ge0$, there are constants $C_{\ref{proj-on-qc}}=C_{\ref{proj-on-qc}}(\dl,K)$ and $E_{\ref{proj-on-qc}}=E_{\ref{proj-on-qc}}(\dl,K,D)$ such that the following hold.
	

Suppose $X$ is a $\dl$-hyperbolic metric space. Let $U$ and $V$ be $K$-quasiconvex (closed) subsets of $X$. Then we have the following.
\begin{enumerate}
\item \textup{(\hspace{-.08mm}\cite[Hyperbolic Groups, Lemma $7.3.D$]{gromov-hypgps})} Any map $X\ri U$ sending a point to its nearest point projection is $C_{\ref{proj-on-qc}}$-coarsely Lipschitz retraction.
	
\item \textup{(\hspace{-.08mm}\cite[Corollary 1.105]{ps-kap})} Suppose $x\in X$ and $Hd(U,V)\le D$. If $x_1$ and $x_2$ are nearest point projections of $x$ on $U$ and $V$ respectively, then $d(x_1,x_2)\le E_{\ref{proj-on-qc}}$.
	
\end{enumerate}
\end{lemma}

\begin{lemma}\label{quasi-goes-to-quasi}
For $\dl\ge0,~K\ge0$ and $L\ge1$, we have constants $K_{\ref{quasi-goes-to-quasi}}=K_{\ref{quasi-goes-to-quasi}}(\dl,L,K)$ and  $D_{\ref{quasi-goes-to-quasi}}=D_{\ref{quasi-goes-to-quasi}}(\dl,L,K)$ such that the following hold.
	
Suppose $X$ and $Y$ are $\dl$-hyperbolic metric spaces, and $f:Y\ri X$ is a $L$-qi embedding. Let $U$ be a $K$-quasiconvex subset of $Y$ and $y\in Y$. Then we have the following.
\begin{enumerate}
\item $f(U)$ is $K_{\ref{quasi-goes-to-quasi}}$-quasiconvex in $X$. (For this, we do not need $Y$ to be hyperbolic.)
		
\item \textup{(\hspace{-.08mm}\cite[Lemma $3.5$]{mitra-trees})} If $y\pr$ is a nearest point projection of $y$ on $U$ in $Y$ and $x\pr$ is that of $f(y)$ on $f(U)$ in $X$. Then $d_Y(f(y\pr),x\pr)\le D_{\ref{quasi-goes-to-quasi}}$.
\end{enumerate}
\end{lemma}

\begin{proof}
(1) Suppose $y_1,y_2\in U$ and $x_1=f(y_1)$, $x_2=f(y_2)$. Then by the stability of quasigeodesic (see Lemma \ref{ml}), $Hd_X([x_1,x_2]_X,f([y_1,y_2]_Y))\le D_{\ref{ml}}(\dl,L,L)$. Again $[y_1,y_2]_Y\sse N_K(U)$, and since $f$ is a $L$-qi embedding, $f([y_1,y_2]_Y)\sse N_{(KL+L)}(f(U))$. Hence $[x_1,x_2]_X\sse N_{K_{\ref{quasi-goes-to-quasi}}}(f(U))$ where $K_{\ref{quasi-goes-to-quasi}}:=D_{\ref{ml}}(\dl,L,L)+KL+L$. This completes the proof.
\end{proof}

\begin{lemma}\textup{(\hspace{-.08mm}\cite[Lemma 1.89]{ps-kap})}\label{qi-emb-in-Y-X}
Given $\dl\ge0$, $k\ge0$ and $R\ge k+1$ there is a constant $L_{\ref{qi-emb-in-Y-X}}=L_{\ref{qi-emb-in-Y-X}}(\dl,k,R)$ such that we have the following.
	
Suppose $X$ is a $\dl$-hyperbolic metric space and $A$ is a $k$-quasiconvex subset of $X$. Then $N^X_R(A)$ is path connected and with its induced path metric from $X$, the inclusion $N^X_R(A)\ri X$ is a $L_{\ref{qi-emb-in-Y-X}}$-qi embedding.

\end{lemma}
In reference to the following lemma, it is proven in \cite[Lemma 3.5]{mitra-trees} for a particular case where $A$ is a geodesic segment.
\begin{lemma}\label{for-retraction-flow-space(1)}
Given $\dl\ge0,~L\ge1$ and $K\ge0$, we have constants $K_{\ref{for-retraction-flow-space(1)}}=K_{\ref{for-retraction-flow-space(1)}}(\dl,L,K)$ and $D_{\ref{for-retraction-flow-space(1)}}=D_{\ref{for-retraction-flow-space(1)}}(\dl,L,K)$ such that the following holds.
	
Suppose $X$ is $\dl$-hyperbolic metric space, and $Y\sse X$ is a geodesic subspace such that the inclusion $i:(Y,d_Y)\ri (X,d_X)$ is a $L$-qi embedding  where $d_Y$ is the induced path metric on $Y$ from $X$. Let $A\sse Y$ be $K$-quasiconvex in $Y$. Further, assume that $y\in Y$, and  $y\pr$ is a nearest point projection of $y$ on $A$ in the metric $Y$ and $y\prr$ is that of $y$ on $A$ in the metric $X$. Then $A$ is $K_{\ref{for-retraction-flow-space(1)}}$-quasiconvex in $X$ and $d_X(y\pr,y\prr)\le D_{\ref{for-retraction-flow-space(1)}}$.	
\end{lemma}	
\begin{proof}
By Lemma \ref{quasi-goes-to-quasi} $(1)$, one can take $K_{\ref{for-retraction-flow-space(1)}}=K_{\ref{quasi-goes-to-quasi}}(\dl,L,K)$. For the second part, by \cite[Lemma $1.31$ (2)]{pranab-mahan}, we note that the arc-length parametrization of $[y,y\pr]_Y\cup[y\pr,y\prr]_Y$ is a $(3+2K)$-quasigeodesic in $Y$ and so is $L_1$-quasigeodesic in $X$ for some constant $L_1$ depending on $(3+2K)$ and $L$. Suppose $y_1\in[y,y\prr]_X$ such that $d_X(y\pr,y_1)\le D_{\ref{ml}}(\dl,L_1,L_1)$, and so $d_X(y_1,y\prr)\le D_{\ref{ml}}(\dl,L_1,L_1)$. Hence $d_X(y\pr,y\prr)\le d_X(y\pr,y_1)+d_X(y_1,y\prr)\le2D_{\ref{ml}}(\dl,L_1,L_1)=:D_{\ref{for-retraction-flow-space(1)}}$.
\end{proof}

Given a hyperbolic metric space $X$ and a quasiconvex subset $U$, we denote a map $X\ri U$ sending a point to its nearest point projection by $P_{XU}$ and refer to as a {\em nearest point projection map}. 
Now we recall from \cite[Section $1.18$]{ps-kap}, a small modification in nearest point projection on a path connected quasiconvex subset (\hspace{-.08mm}\cite[Definition $1.121$]{ps-kap}).

\begin{defn}[Modified projection]\label{modified-projection}
Suppose $X$ is a geodesic metric space and $U$ is a path connected quasiconvex subset of $X$. Then for any subset $A\sse X$, the modified projection of $A$ on $U$ is defined as $\bar{P}_{XU}(A):=\textrm{hull}(P_{XU}(A))\sse U$, where the quasiconvex hull is taken in the induced path metric on $U$ from $X$ (see Definition \ref{convexhull} for notation). 
\end{defn}

\begin{lemma}\label{modified-proj-on-qg}
Given $\dl\ge0,~L\ge1$ and $\lm\ge0$ there are constants $\theta_{\ref{modified-proj-on-qg}}=\theta_{\ref{modified-proj-on-qg}}(\dl,L,\lm)$ and $D_{\ref{modified-proj-on-qg}}=D_{\ref{modified-proj-on-qg}}(\dl,L,\lm)$ such that the following hold.
	
Suppose $X$ is a $\dl$-hyperbolic metric space and $Z\sse X$. Further, suppose that with the induced path metric $Z$ is $L$-qi embedded in $X$. Let $Z$ be also $\dl$-hyperbolic. Suppose $x_i\in Z$ ($i=1,2,3$) and $z$ is a $\dl$-center of the triangle $\triangle(x_1,x_2,x_3)$ in $Z$ giving a $\dl$-tripod $Y=\cup_{i=1}^{3}[z,x_i]_Z$ in $Z$. Further, assume that $U$ is a $\lambda$-quasiconvex subset of $X$ and $Y$ is that of $X$. Let $P_{XY}:X\ri Y$ and $P_{XU}:X\ri U$ be nearest point projection maps on $Y$ and on $U$ respectively. Then:
\begin{enumerate}
\item $Hd(P_{XY}(U),\bar{P}_{XY}(U))\le\theta_{\ref{modified-proj-on-qg}}(\dl,L,\lambda)$.
		
\item Let $\bar{Y}=\bar{P}_{XY}(U)$ and $\bar{x}_i\in\bar{Y}$ be the closest to $x_i$ in the intrinsic path metric on $Y$. Then:
		
\begin{enumerate}
\item Let $\bar{Y}\nsubseteq[z,x_i]_Z$ for any $i\in\{1,2,3\}$. Then $d_X(P_{XU}(x_i),P_{XU}(\bar{x}_i))\le D_{\ref{modified-proj-on-qg}}$.
			
\item Let $\bar{Y}\sse [z,x_i]_Z$ for some $i\in\{1,2,3\}$. Note that $\bar{x}_{i+1}=\bar{x}_{i-1}=\bar{z}$ (say). Here $i\pm1$ is calculated in modulo $3$. Then $$d_X(P_{XU}(\bar{x}_i),P_{XU}(x_i)),~d_X(P_{XU}(z),P_{XU}(\bar{z}))\textrm{ and }d_X(P_{XU}(x_{i\pm1}),P_{XU}(\bar{z}))$$ are bounded by $D_{\ref{modified-proj-on-qg}}$.
\end{enumerate}
\end{enumerate} 
\end{lemma}
\begin{proof}
The proof of $(1)$ follows from that of \cite[Lemma $1.125$]{ps-kap}. We only proof $(2)$ $(b)$ since the proof for $(2)$ $(a)$ is a line by line argument of that of $(2)$ $(b)$. In $(2)$ $(b)$, we will specifically address $d_X(P_{XU}(x_{i-1}),P_{XU}(\bar{z}))$ as the other proofs are similar. We fix $i=2$. Then $x_{i-1}=x_1$.
	
Let $P_{XU}(x_1)=x\pr_1$ and $P_{XY}(x\pr_1)=x\prr_1$. Note that $x\prr_1\in[\bar{z},x_2]_Z$. Since $z$ is $\dl$-center of the triangle $\triangle(x_1,x_2,x_3)$ in $Z$, so $d_Z(\bar{z},[x_1,x\prr_1]_Z)\le2\dl$. Thus (by Lemma \ref{ml}) $\exists~z_1\in[x_1,x\prr_1]_X$ such that $d_X(\bar{z},z_1)\le2\dl+D_{\ref{ml}}(\dl,L,L)$. Again, $[x\pr_1,x\prr_1]_X\cup[x\prr_1,x_1]_X$ is $(3+2\lm)$-quasigeodesic in $X$ (\hspace{-.08mm}\cite[Lemma $1.31$ $(2)$]{pranab-mahan}), and so $\exists~z_2\in[x_1,x\pr_1]_X$ such that $d_X(z_1,z_2)\le D_{\ref{ml}}(\dl,3+2\lm,3+2\lm)$. Then by triangle inequality, $d_X(\bar{z},z_2)\le D$, where $D=D_{\ref{ml}}(\dl,3+2\lm,3+2\lm)+2\dl+D_{\ref{ml}}(\dl,L,L)$. Notice that as $P_{XU}(x)=x\pr_1$ and $z_2\in[x_1,x\pr_1]_X$, so $x\pr_1$ is also a nearest point projection of $z_2$ on $U$ in the metric of $X$. Then by Lemma \ref{proj-on-qc} $(1)$, $d_X(P_{XU}(z_2),x\pr_1)\le C_{\ref{proj-on-qc}}(\dl,\lm)$. Hence, $d_X(P_{XU}(\bar{z},x\pr_1)\le d(P_{XU}(\bar{z}),P_{XU}(z_2))+d_X(P_{XU}(z_2),x\pr_1)\le C_{\ref{proj-on-qc}}(\dl,\lm)(D+2)=:D_{\ref{modified-proj-on-qg}}$.
\end{proof}
\begin{remark}\textup{(\hspace{-.08mm}\cite[Remark $1.124$]{ps-kap})}\label{modi-geo}
In the above Lemma \ref{modified-proj-on-qg} $(1)$, if both $U$ and $Y$ are geodesic segments in $X$, one can bound $Hd(P_{XY}(U),\bar{P}_{XY}(U))$ by $4\dl$.
\end{remark}

\subsection{Cobounded pair}
For the rest of this section, we assume that $X$ is a $\dl$-hyperbolic metric space, and $U$ and $V$ are $k$-quasiconvex subsets of $X$ for some $\dl\ge0$ and $k\ge0$.
\begin{defn}
Let $D\ge0$. 
We say that the pair $(U,V)$ is $D$-{\bf cobounded} in $X$ if $$max\{diam(P_{XU}(V)),~diam(P_{XV}(U))\}\le D$$ where $diam(A)$ denotes the diameter of the subset $A\sse X$. The pair $(U,V)$ is said to be cobounded if it is $D$-cobounded for some $D\ge0$.
\end{defn} 

\begin{lemma}\label{R-sep-D-cobdd}\textup{(\hspace{-.08mm}\cite[Lemma $1.139$, Lemma $1.127$]{ps-kap}, \cite[Lemma $1.35$]{pranab-mahan})}
Given $R\ge0$, we have constants $R_{\ref{R-sep-D-cobdd}}= R_{\ref{R-sep-D-cobdd}}(\dl,k)=2k+5\dl$, $D_{\ref{R-sep-D-cobdd}}=D_{\ref{R-sep-D-cobdd}}(\dl,k)=2k+7\dl$ and $R\pr_{\ref{R-sep-D-cobdd}}=2k+3\dl+R$ such that the following hold.
\begin{enumerate}
\item If $d(U,V)>R_{\ref{R-sep-D-cobdd}}$ then the pair $(U,V)$ is $D_{\ref{R-sep-D-cobdd}}$-cobounded.
		
\item If $d(U,V)\le R$ then $P_{XU}(V)\sse N_{R\pr_{\ref{R-sep-D-cobdd}}}(V)\cap U$ and $Hd(P_{XU}(V),P_{XV}(U))\le R\pr_{\ref{R-sep-D-cobdd}}$.
\end{enumerate}
\end{lemma}
One is referred to \cite[Remark $1.142$]{ps-kap} for the upcoming remark. It is possible to minimize those constants similar to what is mentioned in the remark.
\begin{remark}\label{3-re-in-one}
	
\hspace{-8mm}
	
\begin{enumerate}
\item If $U$ and $V$ are geodesic segments in Lemma \ref{R-sep-D-cobdd} $(1)$, then one can take $D_{\ref{R-sep-D-cobdd}}=8\dl$ and $R_{\ref{R-sep-D-cobdd}}=5\dl$.

\item If $U$ and $V$ are geodesic segments in Lemma \ref{R-sep-D-cobdd} $(2)$, then one can take $R\pr_{\ref{R-sep-D-cobdd}}=4\dl+R$.
\end{enumerate} 
\end{remark}

\begin{lemma}\textup{(\hspace{-.08mm}\cite[Corollary $1.140$ (a)]{ps-kap})}\label{small-imp-small}
Let $D\ge0$ and $\{P_{XU}(V)\}\le D$. Then there is a constant $D\pr\ge D$ depending on $\dl,k$ and $D$ such that diam $\{P_{XV}(U)\}\le D\pr$. In particular, the pair $(U,V)$ is $D\pr$-cobounded.
\end{lemma}

\begin{lemma}\label{cobounded-pairs}
Given $\lm\ge0$ and $D\ge0$ there is a constant $C_{\ref{cobounded-pairs}}=C_{\ref{cobounded-pairs}}(\dl,k,\lm,D)$ such that the following holds.
	
Suppose $x,y\in X$ such that $d(P_{XU}(x),P_{XU}(y))\le D$. Let $\al$ be a $\lm$-quasigeodesic in $X$ joining $x$ and $y$. Then the pair $(\al,U)$ is $C_{\ref{cobounded-pairs}}$-cobounded.
\end{lemma}

\begin{proof}
Since quasigeodesics are quasiconvex subsets, so the lemma follows from \cite[Lemma 1.113]{ps-kap} and Lemma \ref{small-imp-small}.
\end{proof}

The proof of the following lemma is standard, so we omit it.

\begin{lemma}\label{fat also cobdd}
Given $D\ge0$ and $R\ge0$ there are constants $D_{\ref{fat also cobdd}}=D_{\ref{fat also cobdd}}(\dl,k,D,R)$ and $K_{\ref{fat also cobdd}}=K_{\ref{fat also cobdd}}(\dl,k,R)$ such that $N_R(U)$ and $N_R(V)$ are $K_{\ref{fat also cobdd}}$-quasiconvex. Moreover, if the pair $(U,V)$ is $D$-cobounded then the pair $(N_R(U),N_R(V))$ is $D_{\ref{fat also cobdd}}$-cobounded.
\end{lemma}

\section{Trees of metric spaces}\label{trees-of-metric-sps}
The notion of trees of metric spaces were introduced by Bestvina and Feighn in \cite{BF}.
A coarsely equivalent definition was given by Mitra in \cite{mitra-trees}. We are going to
adopt the latter definition. 
\begin{defn}\label{tree-of-sps}
Suppose $T$ is a simplicial tree and $X$ is a geodesic metric space. Then a $1$-Lipschitz surjective map $\pi:X\ri T$ is called a {\bf tree of metric spaces} if there is a proper map $\phi: \R_{\ge0}\ri\R_{\ge0}$ with the following properties:
\begin{enumerate}
\item For all $v\in V(T)$, $X_v:=\pi^{-1}(v)$ is a geodesic metric space with the path metric $d_v$ induced from $X$. Moreover, with respect to these metrics, the inclusion $X_v\ri X$ is a $\phi$-proper embedding.
	
\item Suppose $e$ is an edge in $T$ joining $v,w\in V(T)$ and $m_e\in T$ is the midpoint of $e$. Then $X_e:=\pi^{-1}(m_e)$ is a geodesic metric space with respect to the path metric $d_e$ induced from $X$. Moreover, there is a map $\vartheta_e: X_e \times[0,1]\ri\pi^{-1}(e)\sse X$ such that
\begin{enumerate}
\item $\pi\circ\vartheta_e$ is the projection map onto $[v,w]$.
		
\item $\vartheta_e$ restricted to $X_e\times(0,1)$ is an isometry onto $\pi^{-1}(int(e))$ where $int(e)$ denotes the interior of $e$.
		
\item $\vartheta_e$ restricted to $X_e \times \{0\}\simeq X_e$ and $X_e \times \{1\}\simeq X_e$ are $\phi$-proper embeddings from $X_e$ into $X_v$ and $X_w$ respectively with respect to their induced path metrics. We denote these restriction maps by $\vartheta_{e,v}$ and $\vartheta_{e,w}$ respectively and we refer to these maps as incident maps.
	\end{enumerate}
\end{enumerate}

Moreover, we say that $\pi:X\ri T$ is a {\bf tree of hyperbolic metric spaces with the qi embedded condition} if additionally, we have the following. There is $\dl_0\ge0$ and $L_0\ge1$ such that $X_v$ is $\dl_0$-hyperbolic for all $v\in V(T)$ and in $(2)$ $(c)$, $\vartheta_{e,v}$ and $\vartheta_{e,w}$ are $L_0$-qi embeddings.


\end{defn}

In the following lemma we see that if we restrict the tree of metric spaces on some subtree then the inclusion map is uniformly properly embedded.
\begin{lemma}\label{pro-emb}\textup{(\hspace{-.08mm}\cite[Proposition $2.17$]{ps-kap})}
Suppose $\pi:X\ri T$ is a tree of metric spaces. Then there is a function $\eta_{\ref{pro-emb}}=\eta_{\ref{pro-emb}}(\phi):\R_{\ge0}\ri\R_{\ge0}$ depending on $\phi$ as in Definition \ref{tree-of-sps} such that the following holds.

Let $T\pr$ be a subtree of $T$ and $X_{T\pr}:=\pi^{-1}(T\pr)$. Then with respect to the path metric on $X_{T\pr}$ induced from $X$, the inclusion $X_{T\pr}\ri X$ is a $\eta_{\ref{pro-emb}}$-proper embedding.
\end{lemma}

{\bf Convention}: {\em Unless otherwise specified, we always refer to the constants $\dl_0$ and $L_0$ as in Definition \ref{tree-of-sps} for a tree of hyperbolic metric spaces with the qi embedded condition.

Let us fix some secondary constants $\bm{\dl\pr_0,L\pr_0,\lm\pr_0,L\pr_1}$ in the following Lemma \ref{com-two-hyp-sps} and these notations will be used through out the paper.}

\begin{lemma}\textup{(\hspace{-.08mm}\cite[Corollary $2.62$, Lemma $2.27$]{ps-kap})}\label{com-two-hyp-sps}
Suppose $\pi:Z\ri [v,w]$ is a tree of metric spaces over an edge $e=[v,w]$ such that $Z_v$, $Z_w$ are $\dl_0$-hyperbolic and $Z_e$ is $L_0$-qi embedded in both $Z_v$ and $Z_w$. Then

\begin{enumerate}
\item $Z$ is $\bm{\dl\pr_0}$-hyperbolic, and $Z_v$ and $Z_w$ are $\bm{L\pr_0}$-qi embedded in $Z$.
	
\item Suppose $U$ is a $2\dl_0$-quasiconvex subset of the fiber $Z_v$ or $Z_w$. Then $U$ is $\bm{\lambda\pr_0}$-quasiconvex in $Z$ \textrm{$($see Lemma \ref{quasi-goes-to-quasi} $(1))$}, where $\lambda\pr_0=K_{\ref{quasi-goes-to-quasi}}(\dl\pr_0,L\pr_0,2\dl_0)$. In particular, $Z_v,Z_w$ are $\lambda\pr_0$-quasiconvex in $Z$. Thus a nearest point projection map $P_{ZZ_w}:Z\ri Z_w$ in the metric $Z$ is $\bm{L\pr_1}$-coarsely Lipschitz retraction, where $L\pr_1=C_{\ref{proj-on-qc}}(\dl\pr_0,\lambda\pr_0)$.
\end{enumerate}

\end{lemma}

\begin{lemma}\label{need-in-mitra's-proj}
Given $k\ge0,~D\ge0$ and $\ep\ge0$ there is a constant $R_{\ref{need-in-mitra's-proj}}=R_{\ref{need-in-mitra's-proj}}(k,D,\ep)$ 
such that the following holds.

Suppose we have the assumptions of Lemma \ref{com-two-hyp-sps}. Let $A_v$ be a $k$-quasiconvex subsets of $Z_v$ in $Z_v$-metric and $A_w$ be that of $Z_w$ in $Z_w$-metric. Let $x\in Z_v$ and $y\in Z_w$ such that $d_Z(x,y)=1$, and $x'$ be a nearest point projection of $x$ on $A_v$ in $Z_v$-metric and $y'$ be that of $y$ on $A_w$ in $Z_w$-metric. Further, we assume that $Hd_Z(P_{ZZ_w}(A_v),A_w)\le\ep$ and $d_Z(z,A_v)\le D$ for all $z\in A_w$. Then $d_Z(x',y')\le R_{\ref{need-in-mitra's-proj}}$.

\end{lemma}

\begin{proof}
Note that $Z$ is $\dl\pr_0$-hyperbolic, and $Z_v$ and $Z_w$ are $L\pr_0$-qi embedded in $Z$ (see Lemma \ref{com-two-hyp-sps}). Thus $A_v$ and $A_w$ are $K$-quasiconvex in $Z$ for some $K=K_{\ref{quasi-goes-to-quasi}}(
\dl\pr_0,L\pr_0,k)$. Suppose $x_1$ is a nearest point projection of $x$ on $A_v$ and $y_1$ is that of $y$ on $A_w$ in $Z$. Then by Lemma \ref{for-retraction-flow-space(1)}, $d_Z(x\pr,x_1)\le D_{\ref{for-retraction-flow-space(1)}}(\dl\pr_0,L\pr_0,K)$ and $d_Z(y',y_1)\le D_{\ref{for-retraction-flow-space(1)}}(\dl'_0,L\pr_0,K)$. Now we will find a bound on $d_Z(x_1,y_1)$. 

Let $y_2$ and $y_3$ be nearest point projections of $y$ and $y_1$ on $A_v$ respectively in $Z$. By given condition we have $d_Z(y_1,y_3)\le D$ and by Lemma \ref{proj-on-qc} (1) we have $d_Z(x_1,y_2)\le 2C_{\ref{proj-on-qc}}(\dl\pr_0,K)$. Consider the pair $(A_v,Z_w)$ in $Z$. By Lemma \cite[Lemma $1.127$]{ps-kap}, if $y\pr_2$ is a nearest point projection of $y_2$ on $Z_w$,  we have $d_Z(y_2,y\pr_2)\le2K+3\dl\pr_0+D$. Again by given condition we have $d_Z(y\pr_2,A_w)\le\epsilon$, and so $d_Z(y_2,A_w)\le2K+3\dl\pr_0+D+\epsilon$. Now if $y\prr_2$ is a nearest point projection of $y_2$ on $A_w$, then $d_Z(y_2,y\prr_2)\le2K+3\dl\pr_0+D+\epsilon=\epsilon\pr$ (say).

Now we have $d_Z(y,y_3)\le d_Z(y,y_1)+d_Z(y_1,y_3)\le d_Z(y,y\prr_2)+d_Z(y_1,y_3)\le d_Z(y,y_2)+d_Z(y_2,y\prr_2)+D\le d_Z(y,y_2)+\epsilon\pr+D$. Let $z\in[y,y_3]_Z$ such that $d_Z(y_2,z)\le K+2\dl_0\pr$ (see \cite[Lemma $1.102~(i)$]{ps-kap}). Then $d_Z(z,y_3)=d_Z(y,y_3)-d(y,z)\le d_Z(y,y_2)+\epsilon\pr+D-d(y,z)\le d_Z(y_2,z)+\epsilon\pr+D\le K+2\dl_0\pr+\epsilon\pr+D$. So $d_Z(y_2,y_3)\le d_Z(y_2,z)+d_Z(z,y_3)\le 2(K+2\dl_0\pr)+\epsilon\pr+D=D_1$ (say).

Hence combining all inequalities, we have
\begin{align*}
	d_Z(x',y')&\le d_Z(x',x_1)+d_Z(x_1,y_2)+d_Z(y_2,y_3)+d_Z(y_3,y_1)+d_Z(y_1,y')\\
	&\le 2D_{\ref{for-retraction-flow-space(1)}}(\dl\pr_0,L\pr_0,K)+2C_{\ref{proj-on-qc}}(\dl\pr_0,K)+D_1+D=:R_{\ref{need-in-mitra's-proj}}.
\end{align*}

Therefore, we are through.
\end{proof}

\begin{lemma}\label{need-in-mitra's-proj1}
Given $k\ge0$ there is a constant $R_{\ref{need-in-mitra's-proj1}}=R_{\ref{need-in-mitra's-proj1}}(k)$ such that the following holds.

Suppose we have the assumptions of Lemma \ref{com-two-hyp-sps}. Let $A_v$ be a $k$-quasiconvex subset of $Z_v$ in $Z_v$-metric. Let $x\in Z_v$ and $y\in Z_w$ such that $d_Z(x,y)=1$, and $x'$ be a nearest point projection of $x$ on $A_v$ in $Z_v$-metric and $y'$ be that of $y$ on $A_v$ in $Z$-metric. Then $d_Z(x',y')\le R_{\ref{need-in-mitra's-proj1}}$. 
\end{lemma}
\begin{proof}
From the first paragraph of the proof of Lemma \ref{need-in-mitra's-proj}, $A_v$ is $K$-quasiconvex in $Z$ where $K=K_{\ref{quasi-goes-to-quasi}}(
\dl\pr_0,L\pr_0,k)$. If $x_1$ is a nearest point projection of $x$ on $A_v$ in $Z$-metric, then by Lemma \ref{proj-on-qc} (1), $d_Z(x_1,y')\le2 C_{\ref{proj-on-qc}}(\dl\pr_0,K)$. Again by Lemma \ref{for-retraction-flow-space(1)}, $d_Z(x\pr,x_1)\le D_{\ref{for-retraction-flow-space(1)}}(\dl\pr_0,L'_0,K)$. Therefore, we can take $R_{\ref{need-in-mitra's-proj1}}:=D_{\ref{for-retraction-flow-space(1)}}(\dl\pr_0,L'_0,K)+C_{\ref{proj-on-qc}}(\dl\pr_0,K)$.
\end{proof}

%


\section[A combination theorem revisited]{A combination theorem for trees of metric spaces revisited}\label{hyp-tree-sps}

Suppose $\pi:X\ri T$ is a tree of metric spaces (see Definition \ref{tree-of-sps}). In this section, we prove the hyperbolicity of $X$ within an axiomatic framework (Theorem \ref{treeofsps-com-thm}). As a consequence, we get a proof of Theorem \ref{main-theorem-com} in Section \ref{hyp-tmb-sec}. Now we will explain the hypotheses. {\em Unless otherwise specified, by $u\in S$ (or $v\in S$ or $w\in S$) where $S$ is a subtree of $T$, we always mean $u$ (or $v$ or $w$) to be a vertex of $S$}. We use the {\em notation} $X_S:=\pi^{-1}(S)$.

For each vertex $u\in T$ there is a subspace, say $\M(X_u)$, containing $X_u$ and that satisfies the following properties $(\mathcal{P}0)-(\mathcal{P}4)$.

$(\mathcal{P}0)$ Suppose $u,v\in T$ and $e$ is the edge on the geodesic $[u,v]$ incident on $v$. Let $T\pr$ be the maximal subtree of $T$ containing $v$ but not containing $e$. Then $\M(X_u)\cap X_{T\pr}\sse \M(X_v)\cap X_{T\pr}$.

$(\mathcal{P}1)$ Let $L\pr\ge0$. For each $u\in T$, there is a $L\pr$-coarsely Lipschitz retraction $\rho_u:X\ri \M(X_u)$. We also have the following property of $\rho_u$. Let $T_u=\pi(\M(X_u))$ and $e$ be an edge in $T$ intersecting $T_u$ at a vertex. Suppose $v$ is the vertex adjacent to $e$ not in $T_u$ and $S$ is the maximal subtree of $T$ containing $v$ but not containing $e$. Then diam$\{\rho_u(X_S)\}\le C$ for some uniform constant $C\ge0$. 

There is a threshold constant $L\ge0$ and a proper map $\eta:\R_{\ge0}\map\R_{\ge0}$, and a constant $\dl\ge0$ such that the following hold.

$(\mathcal{P}2)$ The subspace $N_L(\M(X_u))$ is path connected, and with the induced path metric from $X$, the inclusion $N_L(\M(X_u))\ri X$ is a $\eta$-proper embedding.


For $u,v\in T$, we say $[u,v]\sse T$ is a \emph{special interval} if either $\M(X_u)\cap X_v\ne\emptyset$ or $X_u\cap \M(X_v)\ne\emptyset$. If $[u,v]$ is a special interval then

$(\mathcal{P}3)$  the inclusion $N_L(\M(X_u))\cup N_L(\M(X_v))\ri X$ is a $\eta$-proper embedding, and

$(\mathcal{P}4)$ $N_L(\M(X_u))\cup N_L(\M(X_v))$ is $\dl$-hyperbolic metric space.

\begin{theorem}\label{treeofsps-com-thm}
Suppose $\pi:X\ri T$ is a tree of metric spaces satisfying properties $(\mathcal{P}0)-(\mathcal{P}4)$. Then $X$ is hyperbolic metric space.
\end{theorem}

{\bf Some remarks on Theorem \ref{treeofsps-com-thm}}: (1) Lemma \ref{gen-abs-flow-qi-emb} below says that $\mathcal{M}_L(X_u)$ are also hyperbolic (by Theorem \ref{treeofsps-com-thm} or by property $(\mathcal{P}4)$).


(2) Suppose $\pi:X\ri T$ is a trees of hyperbolic metric spaces with the qi embedded condition and the Bestvina--Feighn hallway flaring condition (\cite{BF}). Now we think of $\pi:X\ri T$ as trees of metric bundles (such that the natural projection $\pi_B:B\map T$, according to our notation, is isometry). We set $\mathcal{M}(X_u)=\F l_K(X_u)$ (flow space of $X_u$ with certain fixed parameters) (see Subsection \ref{flowsps}). In Section \ref{hyp-tmb-sec}, it is shown, in this case, that $\mathcal{M}(X_u)$ satisfies all the conditions $(\mathcal{P}0)-(\mathcal{P}4)$. This shows that Theorem \ref{treeofsps-com-thm} covers the combination theorem for trees of metric spaces considered in \cite{BF} and particularly, acylindrical trees of metric spaces (\cite{ilya-kap}).


\begin{defn}
	For a subtree $S$ of $T$, we define $\M(X_S):=\cup_{w\in S} \M(X_w)$. Also for finitely many vertices $u_1,u_2,\cdots,u_n$, we define $\M(X_{\{u_1,u_2,\cdots,u_n\}}):=\M(X_{u_1})\cup \M(X_{u_2})\cup\cdots\cup \M(X_{u_n})$.
\end{defn}
{\bf Notation}: {\em For a given subtree $S\sse T$, we denote $N_L(\M(X_S))$ by $\M_L(X_S)$. Unless otherwise specified, in this section, ${\bm L',~\bm C,~\bm L,~\bm\dl}$ and ${\bm \eta}:\R_{\ge0}\map\R_{\ge0}$ are fixed for their values above.}\smallskip

The proof of Theorem \ref{treeofsps-com-thm} is divided into two parts as follows.

$(1)$ Hyperbolicity of $\M_L(X_I)$ where $I$ is a special interval in $T$.

$(2)$ Hyperbolicity of $\M_L(X_I)$ where $I$ is any interval in $T$.

Finally, using $(2)$, we conclude the proof of Theorem \ref{treeofsps-com-thm}. Before going into the proof of $(1)$, let us first prove some lemmata which are required to prove $(1)$ and $(2)$.

\begin{lemma}\label{retraction-gen-abs-flow}
There is a constant $L_{\ref{retraction-gen-abs-flow}}=L_{\ref{retraction-gen-abs-flow}}(L)$ such that for any subtree $S$ of $T$, we have a $L_{\ref{retraction-gen-abs-flow}}$-coarsely Lipschitz retraction $\rho_S:X\ri\mathcal{M}(X_S)$.
\end{lemma}

\begin{proof}
	Let us first define $\rho_S:X\ri\M(X_S)$. Let $x\in X$ and $u$ be the nearest point projection of $\pi(x)$ onto $S$. Then $\rho_S(x)$ is defined to be $\rho_u(x)$. Note that if $x\in\M(X_S)$, then by $(\mathcal{P}1)$, $\rho_S(x)=x$.
	
	Let $X_{vsp}=\cup_{u\in T}X_u$ and $x,y\in X_{vsp}$ such that $d_X(x,y)\le1$. Then by Lemma \ref{imp-coarse-retraction}, we need to show a uniform bound on $d_X(\rho_S(x),\rho_S(y))$. Let $u,v$ be the nearest point projections of $\pi(x),\pi(y)$ on $S$ respectively. If $u=v$, then by definition of $\rho_S$, $\rho_S(x)=\rho_u(x)$ and $\rho_S(y)=\rho_u(y)$. So by $(\mathcal{P}1)$, $d_X(\rho_S(x),\rho_S(y))=d_X(\rho_u(x),\rho_u(y))\le2L\pr$. Now let $u\ne v$. Since $d_T(\pi(x),\pi(y))\le 1$, we have $x,y\in X_S$. So by definition of $\rho_S$, $d_X(\rho_S(x),\rho_S(y))\le 1$. Therefore, we can take $L_{\ref{retraction-gen-abs-flow}}:=C_{\ref{imp-coarse-retraction}}(\textrm{max}\{2L\pr,1\})$.
\end{proof}

\begin{lemma}\label{gen-abs-flow-pro-emb}
There is a proper function $\eta_{\ref{gen-abs-flow-pro-emb}}=\eta_{\ref{gen-abs-flow-pro-emb}}(L):\R_{\ge0}\ri\R_{\ge0}$ such that for any subtree $S$ of $T$, the inclusion $\M_L(X_S)\ri X$ is a $\eta_{\ref{gen-abs-flow-pro-emb}}$-proper embedding.
\end{lemma}

\begin{proof}
We denote the induced path metric on $\M_L(X_S)$ (from $X$) by $d\pr$. Suppose $x_1,y_1\in\M_L(X_S)$ such that $d_X(x_1,y_1)\le r$ for some $r\in\R_{\ge0}$. Then there are points $x,y\in\M(X_S)$ such that $d\pr(x_1,x)\le L$, $d\pr(y_1,y)\le L$, and so $d_X(x,y)\le r+2L$. Let $\pi(x)=u$ and $\pi(y)=v$. Since $\pi$ is $1$-Lipschitz, $d_T(u,v)\le r+2L$. Let $u\pr$ be the nearest point projection of $u$ on $S$ and $v\pr$ be that of $v$ on $S$. We consider the following two cases depending on whether $u\pr=v\pr$ or $u\pr\ne v\pr$.
	
\emph{Case} $1$: Suppose $u\pr\ne v\pr$. Let $x\pr\in X_{u\pr}\cap[x,y]$ and $y\pr\in X_{v\pr}\cap [x,y]$. Then $d_X(x,x\pr)\le r+2L$, $d_X(y,y\pr)\le r+2L$, and also $d_X(x\pr,y\pr)\le r+2L$. Now $x, x\pr\in \M_L(X_{u\pr})$ (see $(\mathcal{P}0)$) implies $d\pr(x,x\pr)\le\eta(r+2L)$ (by $(\mathcal{P}2)$). Similarly, $d\pr(y,y\pr)\le\eta(r+2L)$. Again $x\pr,y\pr\in X_S$ implies $d\pr(x\pr,y\pr)\le d_{X_S}(x\pr,y\pr)\le\eta_{\ref{pro-emb}}(r+2L)$ (see Lemma \ref{pro-emb}). Hence by triangle inequality, $d\pr(x,y)\le2\eta(r+2L)+\eta_{\ref{pro-emb}}(r+2L)$.
	
\emph{Case} $2$: Suppose $u\pr=v\pr$. Then by $(\mathcal{P}0)$, $x,y\in\M(X_{u\pr})$. Hence $d\pr(x,y)\le\eta(r+2L)$ by $(\mathcal{P}2)$.
	
Therefore, by triangle inequality, in both the cases, $d\pr(x_1,y_1)\le2L+2\eta(r+2L)+\eta_{\ref{pro-emb}}(r+2L)=:\eta_{\ref{gen-abs-flow-pro-emb}}(L)(r)$.
\end{proof}

Therefore, combining Lemmata \ref{retraction-gen-abs-flow} and \ref{gen-abs-flow-pro-emb}, we have the following (see Lemma \ref{lrape_qi-emb}).
\begin{lemma}\label{gen-abs-flow-qi-emb}
There is a constant $L_{\ref{gen-abs-flow-qi-emb}}=L_{\ref{gen-abs-flow-qi-emb}}(L)\ge1$ such that for any subtree $S$ of $T$, the inclusion $\M_L(X_S)\ri X$ is a $L_{\ref{gen-abs-flow-qi-emb}}$-qi embedding.	
\end{lemma}


\begin{prop}[Horizontal Subdivision]\label{horizontal-subdivision}
	Let $J=[u,v]\sse T$ be an interval and $n_0\in\N$. Then we can subdivide $J$ into subintervals $J=J_0\cup J_1\cup...\cup J_{n-1}$ such that $J_i=[w_i,w_{i+1}],~w_0=u,~w_{n}=v$, and each $J_i$ is further subdivided into subintervals, $J_i=[w_i,w_{i,1}]\cup[w_{i,1},w_{i,2}]\cup[w_{i,2},w_{i,3}]\cup[w_{i,3},w_{i+1}]$ such that the following hold.
	
	\begin{enumerate}
		\item $\pi(\M(X_{w_i}))\cap J_i=[w_i,w_{i,1}]$, $\fa~0\le i\le n-1$.
		
		\item For all $i$ except possibly $i=n-1$, $d_T(w_{i,1},w_{i,2})\le2n_0$. Also, $[w_i,w_{i,1}],~[w_{i,2},w_{i,3}]$ and $[w_{i,3},w_{i+1}]$ are special intervals. Moreover, $d_T(w_{i,3},w_{i+1})=1$.
		
		\item $d_T(\pi(\M(X_{w_i})),\pi(\M(X_{w_{i+1}})))>2n_0$, $\fa~1\le i\le n-2$.
	\end{enumerate}
\end{prop}

\begin{proof}
	The proof is by induction. Suppose we have constructed $J_{i-1}$ and we want to construct $J_i$.
	
	\emph{Case 1}: Suppose $\M(X_{w_i})\cap X_v\ne\emptyset$. Then we stop the process and set $n-1=i,~J_{n-1}=[w_{n-1},v]$ and $w_{{n-1},s}=v=w_{n}$ for $s=1,2,3$.
	
	\emph{Case 2}: Suppose $\M(X_{w_i})\cap X_v=\emptyset$. Consider the vertex $w_{i,1}\in(w_i,v]$ in $T$, which is the farthest from $w_i$ such that $\pi(\M(X_{w_i}))\cap[w_i,v]=[w_i,w_{i,1}]$. Now we consider the following two subcases.
	
	\emph{Subcase (2A)}: Suppose $d_T(w_{i,1},\pi(\M(X_v)))\le2n_0$. Then we consider $w_{i,2}\in[w_{i,1},v]$ such that $X_{w_{i,2}}\cap\M(X_v)\ne\emptyset$ and $d_T(w_{i,1},w_{i,2})\le2n_0$. Then we stop the process and set $n-1=i$ and $w_{{n-1},3}=v=w_n$.

	\emph{Subcase (2B)}:  Suppose $d_T(w_{i,1},\pi(\M(X_v)))>2n_0$. We take $w_{i+1}\in[w_{i,1},v]$ is the farthest from $v$ such that $d_T(w_{i,1},\pi(\M(X_{w_{i+1}})))>2n_0$. Let $w_{i,3}\in[w_{i,1},w_{i+1}]$ such that $d_T(w_{i,3},w_{i+1})=1$. Then by our choices, $d_T(w_{i,1},\pi(\M(X_{w_{i,3}})))\le2n_0$. Now we fix $w_{i,2}\in[w_{i,1},w_{i,3}]$ such that $d_T(w_{i,1},w_{i,2})\le2n_0$ and $X_{w_{i,2}}\cap\M(X_{w_{i,3}})\ne\emptyset$. We also note that $d_T(\pi(\M(X_{w_i})),\pi(\M(X_{w_{i+1}})))$ $>2n_0$, otherwise, $$d_T(w_{i,1},\pi(\M(X_{w_{i+1}})))\le2n_0.$$
	
	Therefore, we get $J_i=[w_i,w_{i,1}]\cup[w_{i,1},w_{i,2}]\cup[w_{i,2},w_{i,3}]\cup[w_{i,3},w_{i+1}]$ with the required properties.
	
	The induction stops at $(n-1)^{\textrm{th}}$ step if $w_n=v$. Therefore, we are through.
\end{proof}

\begin{lemma}\label{intersection-of-generalized-flow-sps}
Suppose $S_1$ and $S_2$ are two subtrees in $T$ such that $S_1\cap S_2=\{u\}$. Then for all $L\ge0,~ \M_L(X_{S_1})\cap  \M_L(X_{S_2})= \M_L(X_u)$.
	
	
\end{lemma}

\begin{proof}
	It is clear that $ \M_L(X_u)\sse  \M_L(X_{S_1})\cap  \M_L(X_{S_2})$. For the reverse inclusion, let $x\in  \M_L(X_{S_1})\cap  \M_L(X_{S_2})$. Then there exists $x_i\in \M(X_{S_i})$ such that $d_X(x,x_i)\le L$ for $i=1,2$. Let $\pi(x_i)=t_i$ for $i=1,2$. Now by $(\mathcal{P}0)$, if $d_T(t_2,S_1)\le d_T(t_2,S_2)$ then $x_2\in \M(X_u)$ or if $d_T(t_1,S_2)\le d_T(t_1,S_1)$ then $x_1\in \M(X_u)$. In either case, $x\in  \M_L(X_u)$. Now suppose $d_T(t_2,S_1)>d_T(t_2,S_2)$ and $d_T(t_1,S_2)>d_T(t_1,S_1)$. Then since $T$ is a tree, at least one of the geodesics $[x,x_1]_X$ or $[x,x_2]_X$ has to pass through $X_u$. Hence $d_X(x,X_u)\le L$ and so $x\in  \M_L(X_u)$. Therefore, we are done.
\end{proof}

The proof of the upcoming lemma follows from a similar line of reasoning as Lemma \ref{intersection-of-generalized-flow-sps}. So we omit the proof.

\begin{lemma}\label{intersection-of-union-of-flow-sps}
Suppose $u,v$ and $w$ are vertices of $T$ and that $u,v,w$ lie on an interval in $T$ such that $d_T(u,v)\le d_T(u,w)$. Then for all $L\ge0,~ \M_L(X_{\{u,v\}})\cap  \M_L(X_{\{v,w\}})= \M_L(X_v)$.
	
	
\end{lemma}



\begin{lemma}\label{union-uvw-is-hyperbolic}
We have constants $\dl_{\ref{union-uvw-is-hyperbolic}}=\dl_{\ref{union-uvw-is-hyperbolic}}(L)\ge0$ and $K_{\ref{union-uvw-is-hyperbolic}}=K_{\ref{union-uvw-is-hyperbolic}}(L)\ge0$ satisfying the following. Suppose $u,v$ and $w$ lie on an interval in $T$ such that $d_T(u,v)\le d_T(u,w)$ and $ \M(X_u)\cap X_v\ne\emptyset$, $ \M(X_v)\cap X_w\ne\emptyset$. Then $ \M_L(X_{\{u,v,w\}})$ is $\dl_{\ref{union-uvw-is-hyperbolic}}$-hyperbolic metric space with the induced path metric from $X$. Further, the union of any two intersecting spaces among $\{ \M_L(X_u), \M_L(X_v), \M_L(X_w)\}$ is $K_{\ref{union-uvw-is-hyperbolic}}$-quasiconvex in $ \M_L(X_{\{u,v,w\}})$.
\end{lemma}

\begin{proof}
For the first part, we will apply Proposition \ref{combi-hyp-sps} for $n=2$ (see Remark \ref{combi-hyp-sps-2}). Since $\M(X_u)\cap X_v\ne\emptyset$ and $\M(X_v)\cap X_w\ne\emptyset$, by $(\mathcal{P}4)$, $ \M_L(X_{\{u,v\}})$ and $ \M_L(X_{\{v,w\}})$ are $\dl(L)$-hyperbolic. Now by Lemma \ref{intersection-of-union-of-flow-sps}, $ \M_L(X_{\{u,v\}})\cap  \M_L(X_{\{v,w\}})= \M_L(X_v)$. Again, by Lemma \ref{gen-abs-flow-qi-emb}, $ \M_L(X_v)$ is $L_{\ref{gen-abs-flow-qi-emb}}(L)$-qi embedded in $X$ and so is in both $ \M_L(X_{\{u,v\}})$ and $ \M_L(X_{\{v,w\}})$ with respect to their corresponding path metrics.
Thus by Remark \ref{combi-hyp-sps-2}, $ \M_L(X_{\{u,v\}})\cup\M_L(X_{\{v,w\}})= \M_L(X_{\{u,v,w\}})$ is $\dl_{\ref{union-uvw-is-hyperbolic}}$-hyperbolic, where $\dl_{\ref{union-uvw-is-hyperbolic}}:=\dl_{\ref{combi-hyp-sps-2}}(\dl,L_{\ref{gen-abs-flow-qi-emb}}(L))$.
	
For the second part, we prove that $ \M_L(X_{\{u,w\}})$ is a quasiconvex in $ \M_L(X_{\{u,v,w\}})$ provided $ \M_L(X_u)\cap  \M_L(X_w)\ne\emptyset$, and the proof is same for other intersections. By Lemma \ref{gen-abs-flow-qi-emb}, $ \M_L(X_u)$ and $ \M_L(X_w)$ are $L_{\ref{gen-abs-flow-qi-emb}}(L)$-qi embedded in $X$ and so are in $ \M_L(X_{\{u,v,w\}})$. Then by Lemma \ref{quasi-goes-to-quasi} $(1)$, $ \M_L(X_u)$ and $ \M_L(X_w)$ are $K_1$--quasiconvex in $ \M_L(X_{\{u,v,w\}})$, where $K_1=K_{\ref{quasi-goes-to-quasi}}(\dl_{\ref{union-uvw-is-hyperbolic}}(L),L_{\ref{gen-abs-flow-qi-emb}}(L),0)$. Therefore, $ \M_L(X_u)\cup  \M_L(X_w)$ is $K_{\ref{union-uvw-is-hyperbolic}}$-quasiconvex in $ \M_L(X_{\{u,v,w\}})$, where $K_{\ref{union-uvw-is-hyperbolic}}:=K_1+\dl_{\ref{union-uvw-is-hyperbolic}}$.
\end{proof}

\underline{{\bf Hyperbolicity of $\boldmath{\M_L(X_I)}$ where $\boldmath{I}$ is a special interval}}:

\begin{prop}\label{special-int-sp-is-hyp}
We have a constant $\dl_{\ref{special-int-sp-is-hyp}}=\dl_{\ref{special-int-sp-is-hyp}}(L)$ such that for any special interval $I$ in $T$ the space $ \M_L(X_I)$ is $\dl_{\ref{special-int-sp-is-hyp}}$-hyperbolic with the induced path metric from $X$.
\end{prop}

\begin{proof}
We will apply Proposition \ref{combing}. Let $I=[u\pr,v\pr]$. Without loss of generality, we assume that $ \M(X_{u\pr})\cap X_{v\pr}\ne\emptyset$.
	
\emph{Choices:} For a given $x\in \M(X_I)$, we fix once and for all $u_x\in I$ corresponding to $x$ such that $x\in \M(X_{u_x})$. For a pair $(x,y)$ of distinct points in $ \M(X_I)$, without loss of generality, we assume that $d_T(u\pr,u_x)\le d_T(u\pr,u_y)$. Since $ \M(X_{u\pr})\cap X_{v\pr}\ne\emptyset$, so by $(\mathcal{P}0)$, $ \M(X_{u_x})\cap X_{u_y}\ne\emptyset$. We take $c(x,y)$ a fixed geodesic path in $\M_L(X_{\{u_x,u_y\}})$. These paths serve as family of paths for Proposition \ref{combing}.
	
Note that $ \M(X_I)$ is $L$-dense subset in $ \M_L(X_I)$. Let $x,y,z\in \M(X_I)$. Without loss of generality, we assume that $x\in \M(X_u),~y\in \M(X_v)$ and $z\in \M(X_w)$ for some $u,v,w\in[u\pr,v\pr]$, and $d_T(u\pr,u)\le d_T(u\pr,v)\le d_T(u\pr,w)$. So by $(\mathcal{P}0)$, $ \M(X_u)\cap X_v\ne\emptyset,~ \M(X_v)\cap X_w\ne\emptyset$ and $ \M(X_u)\cap X_w\ne\emptyset$.
	
{\bf Condition (1)}: Suppose $s,t\in\{u,v,w\}$. Then $[s,t]$ is a special interval. Hence by $(\mathcal{P}3)$, $ \M_L(X_{\{s,t\}})$ is $\eta$-properly embedded in $X$ and so is in $ \M_L(X_I)$. Hence the family of paths are $\eta$-properly embedded in $ \M_L(X_I)$.
	
{\bf Condition (2)}: We want to prove that the triangle formed by the paths $c(x,y),~c(y,z)$ and $c(x,z)$ is uniformly slim.
Then by Lemma \ref{union-uvw-is-hyperbolic}, $ \M_L(X_{\{u,v,w\}})\sse  \M_L(X_I)$ is $\dl_{\ref{union-uvw-is-hyperbolic}}(L)$-hyperbolic with the induced path metric and for all $s,t\in\{u,v,w\}$, $ \M_L(X_{\{s,t\}})$ is $K_1$-quasiconvex in $ \M_L(X_{\{u,v,w\}})$, where $K_1=K_{\ref{union-uvw-is-hyperbolic}}(L)$. Now we will show that $ \M_L(X_{\{s,t\}})$ is uniformly qi embedded in $ \M_L(X_{\{u,v,w\}})$ where $s,t\in\{u,v,w\}$. Let $N\pr_{K_1+1}(\M_L(X_{\{s,t\}}))\sse \M_L(X_{\{u,v,w\}})$ denote $(K_1+1)$-neighborhood of $\M_L(X_{\{s,t\}})$ in metric of $ \M_L(X_{\{u,v,w\}})$. Hence by Lemma \ref{qi-emb-in-Y-X}, $N\pr_{(K_1+1)}( \M_L(X_{\{s,t\}}))$ is $L_1$-qi embedded in $ \M_L(X_{\{u,v,w\}})$ for some $L_1$ depending on $\dl_{\ref{union-uvw-is-hyperbolic}}(L)$ and $K_1$. Hence by Lemma \ref{hd-imp-qi}, the inclusion $ \M_L(X_{\{s,t\}})\ri  \M_L(X_{\{u,v,w\}})$ is a $L_2$-qi embedding, where $L_2=L_{\ref{hd-imp-qi}}(\dl_{\ref{union-uvw-is-hyperbolic}}(L),K_1+1)$.
So by the stability of quasigeodesic (see Lemma \ref{ml}), there is a constant $D=2D_{\ref{ml}}(\dl_{\ref{union-uvw-is-hyperbolic}},L_2,L_2)+\dl_{\ref{union-uvw-is-hyperbolic}}(L)$ such that the triangle formed by paths $c(x,y)$, $c(y,z)$, $c(x,z)$ is $D$-slim in the metric of $ \M_L(X_{\{u,v,w\}})$ and so is in the metric of $ \M_L(X_I)$. 
	
Therefore, by Proposition \ref{combing}, $ \M_L(X_I)$ is $\dl_{\ref{special-int-sp-is-hyp}}$-hyperbolic, where $\dl_{\ref{special-int-sp-is-hyp}}=\dl_{\ref{combing}}(\eta,D,L)$.
\end{proof}
As an iterated application of Proposition \ref{special-int-sp-is-hyp} along with Proposition \ref{combi-hyp-sps} for $n=2$ (see Remark \ref{combi-hyp-sps-2}), we obtain the following. The proof is omitted.

\begin{lemma}\label{hyp-of-depending-on-length}
Given $l\in\N$ there is a constant $\dl_{\ref{hyp-of-depending-on-length}}=\dl_{\ref{hyp-of-depending-on-length}}(L,l)$ satisfying the following. Let $I=[u,v]$ for $u,v\in T$ such that $d_T(u,v)\le l$. Then $ \M_L(X_I)$ is a $\dl_{\ref{hyp-of-depending-on-length}}$-hyperbolic metric space with the induced path metric from $X$.
\end{lemma}

\underline{{\bf Hyperbolicity of $\M_L(X_I)$ where $I$ is any interval}}: Before going into the proof, we first prove the following two lemmata which will be used in the proof.

Let $l\in\N$. Suppose $J=\cup_{i=1}^{4}J_i\sse T$ is an interval in $T$ such that the length of $J_2\le l$. Further $J_1,J_3,J_4$ are special intervals, and $J_i\cap J_{i+1}$ is a single vertex for $1\le i\le 3$. 

\begin{lemma}\label{union-of-three-special-int-is-hyp}
Suppose $J=\cup_{i=1}^{4}J_i$ is an interval as described above. For all $l\in\N$ there exists $\dl_{\ref{union-of-three-special-int-is-hyp}}=\dl_{\ref{union-of-three-special-int-is-hyp}}(L,l)$ such that $ \M_L(X_J)$ is $\dl_{\ref{union-of-three-special-int-is-hyp}}$-hyperbolic metric space with the induced path metric from $X$.
\end{lemma}
\begin{proof}
We will apply Proposition \ref{combi-hyp-sps} for $n=2$ (see Remark \ref{combi-hyp-sps-2}) three times, successively on the pairs $( \M_L(X_{J_1}), \M_L(X_{J_2}))$, $( \M_L(X_{J_1\cup J_2}), \M_L(X_{J_3}))$ and $( \M_L(X_{J_1\cup J_2\cup J_3}), \M_L(X_{J_4}))$. Since $J_1,J_3,J_4$ are special interval, by Proposition \ref{special-int-sp-is-hyp}, $ \M_L(X_{J_i})$ is $\dl_{\ref{special-int-sp-is-hyp}}(L)$-hyperbolic for $i=1,3,4$; and by Lemma \ref{hyp-of-depending-on-length}, $ \M_L(X_{J_2})$ is $\dl_{\ref{hyp-of-depending-on-length}}(L,l)$-hyperbolic. Let $\dl_1=max\{\dl_{\ref{special-int-sp-is-hyp}}(L),\dl_{\ref{hyp-of-depending-on-length}}(L,l)\}$. Suppose $\{u\}=J_1\cap J_2$. Then by Lemma \ref{intersection-of-generalized-flow-sps}, $ \M_L(X_u)= \M_L(X_{J_1})\cap  \M_L(X_{J_2})$. Again by Lemma \ref{gen-abs-flow-qi-emb}, $ \M_L(X_u)$ is $L_{\ref{gen-abs-flow-qi-emb}}(L)$-qi embedded in $X$ and so is in both $ \M_L(X_{J_1})$ and $ \M_L(X_{J_2})$ in their respective path metric.
Therefore, by Remark \ref{combi-hyp-sps-2}, $ \M_L(X_{J_1})\cup  \M_L(X_{J_2})$ is $\dl_{\ref{combi-hyp-sps-2}}(\dl_1,L_{\ref{gen-abs-flow-qi-emb}}(L))$-hyperbolic.
	
Applying the similar argument as above on the remaining pairs mentioned above, we conclude that $ \M_L(X_{J_1\cup J_2\cup J_3})\cup  \M_L(X_{J_4})= \M_L(X_J)$ is uniformly hyperbolic metric space with the induced path metric from $X$. Let the uniform constant be $\dl_{\ref{union-of-three-special-int-is-hyp}}=\dl_{\ref{union-of-three-special-int-is-hyp}}(L,l)$.
\end{proof}	

\begin{lemma}\label{cobounded-flow-sps}
Given $\dl\ge0$ and a proper function $g:\R_{\ge0}\ri\R_{\ge0}$, there is a constant $D_{\ref{cobounded-flow-sps}}=D_{\ref{cobounded-flow-sps}}(\dl,L,g)\ge0$ such that the following holds.
	
Let $Y\sse X$ be a $\dl$-hyperbolic subspace of $X$ such that $\M_L(X_u)\cup \M_L(X_v)\sse Y$ and $\pi(\M(X_u))\cap\pi(\M(X_v))=\emptyset$. Suppose that $Y$ is $g$-properly embedded in $X$. Then the pair $(\M_L(X_u),\M_L(X_v))$ is $D_{\ref{cobounded-flow-sps}}$-cobounded in $Y$.
\end{lemma}

\begin{proof}
We first note that by Lemma \ref{gen-abs-flow-qi-emb}, $\M_L(X_u)$ is $L_{\ref{gen-abs-flow-qi-emb}}(L)$-qi embedded in $X$ and so is in $Y$. Then by Lemma \ref{quasi-goes-to-quasi} $(1)$, $\M_L(X_u)$ is $K_1$-quasiconvex in $Y$, where $K_1=K_{\ref{quasi-goes-to-quasi}}(\dl_1,L_{\ref{gen-abs-flow-qi-emb}}(L),0)$. Hence $\M(X_u)$ is $K_2$-quasiconvex in $Y$, where $K_2=K_1+L$. Similarly, $\M(X_v)$ is $K_2$-quasiconvex in $Y$.
	
Let $p:Y\ri \M(X_u)$ be a nearest point projection map on $\M(X_u)$ in the metric of $Y$. Let $x,y\in \M(X_v)$ such that $p(x)=x_1,~p(y)=y_1$. By Lemma \ref{fat also cobdd} and the symmetry of the proof, it is enough to show that $d_Y(x_1,y_1)$ is uniformly bounded. 
	
Now by \cite[Lemma $1.31$ $(2)$]{pranab-mahan}, the arc-length parametrizations of $[x,x_1]_Y\cup[x_1,y_1]_Y$ and $[y,y_1]_Y\cup[y_1,x_1]_Y$ are $(3+2K_2)$-quasigeodesic in $Y$. If $d_Y(x_1,y_1)\le L_{\ref{lo_vs_gl}}(\dl,3+2K_2,3+2K_2)$, then we are done. Suppose $d_Y(x_1,y_1)>L_{\ref{lo_vs_gl}}(\dl,3+2K_2,3+2K_2)$.  So by Lemma \ref{lo_vs_gl}, $[x,x_1]_Y\cup[x_1,y_1]_Y\cup[y_1,y]_Y$ is $\lambda_1$-quasigeodesic in $Y$, where  $\lambda_1=\lambda_{\ref{lo_vs_gl}}(\dl,3+2K_2,3+2K_2)$. Therefore, by the stability of quasigeodesic (see Lemma \ref{ml}) and $K_2$-quasiconvexity of $\M(X_v)$ in $Y$, there exist $x_2,y_2\in \M(X_v)$ such that $d_Y(x_1,x_2)\le D_{\ref{ml}}(\dl,\lambda_1,\lambda_1)+K_2=L_1$ (say) and $d_Y(y_1,y_2)\le L_1$. Since $\pi(\M(X_u))\cap\pi(\M(X_v))=\emptyset$, by $(\mathcal{P}1)$, diam$\{\rho_u(\M(X_v))\}\le C$ in $X$. Therefore, $x_2,y_2\in \M(X_v)$ implies $d_X(\rho_u(x_2),\rho_u(y_2))\le C$. Again $\rho_u$ is $L\pr$-coarsely Lipschitz retraction of $X$ on $\M(X_u)$ (see $(\mathcal{P}1)$). Since $x_1,y_1\in \M(X_u)$, so $\rho_u(x_1)=x_1,~\rho_u(y_1)=y_1$. Then $d_X(x_1,\rho_u(x_2))=d_X(\rho_u(x_1),\rho_u(x_2))\le L\pr L_1+L\pr=L_2$ (say). Similarly, $d_X(y_1,\rho_u(y_2))\le L_2$. Therefore, $d_X(x_1,y_1)\le d_X(x_1,\rho(x_2))+d_X(\rho_u(x_2),\rho_u(y_2))+d_X(\rho_u(y_2),y_1)\le2L_2+C$ $\Rightarrow d_Y(x_1,y_1)\le g(2L_2+C)$ since $Y$ is $g$-properly embedded in $X$.
	
Hence diam$\{p(\M(X_v))\}\le L_3$ in $Y$, where $L_3=\textrm{max}\{g(2L_2+C),~L_{\ref{lo_vs_gl}}(\dl,3+2K_2,3+2K_2)\}$. Therefore, (by the symmetry of the proof) the pair $(\M(X_u),\M(X_v))$ is $L_3$-cobounded in $Y$. Then by Lemma \ref{fat also cobdd}, the pair $(\M_L(X_u),\M_L(X_v))$ is $D_{\ref{cobounded-flow-sps}}$-cobounded in $Y$, where $D_{\ref{cobounded-flow-sps}}=D_{\ref{fat also cobdd}}(\dl_1,K_2,L_3,L)$.
\end{proof}

Now we are ready to proof the main result.

\begin{prop}\label{general-int-sp-is-hyp}
We have a constant $\dl_{\ref{general-int-sp-is-hyp}}=\dl_{\ref{general-int-sp-is-hyp}}(L)$ such that for any interval $I$ in $T$ the space $ \M_L(X_I)$ is $\dl_{\ref{general-int-sp-is-hyp}}$-hyperbolic with the induced path metric from $X$.
\end{prop}

\begin{proof}
Let $I=J_0\cup J_1\cup\cdots\cup J_{n-1}$ be a subdivision of the interval $I$ coming from horizontal subdivision, i.e., Proposition \ref{horizontal-subdivision}, with $n_0=[L]+2$, where $[L]$ is the greatest integer not greater than $L$. We refer to Proposition \ref{horizontal-subdivision} for the description of $J_i=[w_i,w_{i+1}]$. Then by Lemma \ref{union-of-three-special-int-is-hyp}, $ \M_L(X_{J_i})$ is $\dl_{\ref{union-of-three-special-int-is-hyp}}(L,2n_0)$-hyperbolic metric space for all $i\in\{0,1,\cdots,n-1\}$.
	
Now we will verify all the conditions of Proposition \ref{combi-hyp-sps}. Let $X_i= \M_L(X_{J_i})$ for $0\le i\le n-1$ and $Y_{i+1}=X_i\cap X_{i+1}= \M_L(X_{w_{i+1}})$ for $0\le i\le n-2$ (see Lemma \ref{intersection-of-generalized-flow-sps}).
	
$(1)$ For $0\le i\le n-1$, $X_i$ is $\dl_1$-hyperbolic metric space, where $\dl_1=\dl_{\ref{union-of-three-special-int-is-hyp}}(L,2n_0)$.
	
$(2)$ By Lemma \ref{gen-abs-flow-qi-emb}, $Y_{i+1}=\M_L(X_{w_{i+1}})$ is $L_{\ref{gen-abs-flow-qi-emb}}(L)$-qi embedded in $X$ so is in both $X_i$ and $X_{i+1}$ for $0\le i\le n-2$.
	
$(3)$ Let $i\in\{0,1,\cdots,n-2\}$. Note that by $(\mathcal{P}0)$, if $x\in X_i\setminus Y_{i+1}$ and $y\in X_{i+1}\setminus Y_{i+1}$ then $w_{i+1}\in[\pi(x),\pi(y)]\setminus\{\pi(x),\pi(y)\}$. Hence every path in $\M_L(X_I)$ joining points $X_i$ and $X_{i+1}$ passes through $Y_{i+1}$.
	
$(4)$ Suppose $i\in\{1,2,\cdots,n-2\}$. Note that $\pi( \M(X_{w_{i}}))\cap\pi( \M(X_{w_{i+1}}))=\emptyset$ (by Proposition \ref{horizontal-subdivision} $(3)$). Again $X_i$ is $\eta_{\ref{gen-abs-flow-pro-emb}}(L)$-properly embedded in $X$ and so is in $\M_L(X_i)$. Also, $ \M_L(X_{w_{i}})\cup \M_L(X_{w_{i+1}})\sse X_i$. Then by Lemma \ref{cobounded-flow-sps}, there is $D$ depending on $\dl_1$, $L$ and $\eta_{\ref{gen-abs-flow-pro-emb}}(L)$ such that the pair $(Y_i,Y_{i+1})$ is $D$-cobounded in $X_i$.
	
$(5)$ Suppose $1\le i\le n-2$. On contrary, let $d_{X_i}(Y_i,Y_{i+1})<1$. Then $d_X(\M(X_{w_i}),\M(X_{w_{i+1}}))\le2L+1$, and so $d_T(\pi(\M(X_{w_i})),\pi(\M(X_{w_{i+1}})))\le2L+1\le2n_0$ (by our choice of $n_0$). This contradicts to $(3)$ of Proposition \ref{horizontal-subdivision}.

	%
	%
	
Hence by Proposition \ref{combi-hyp-sps}, $ \M_L(X_I)$ is $\dl_{\ref{general-int-sp-is-hyp}}$-hyperbolic, where $\dl_{\ref{general-int-sp-is-hyp}}=\dl_{\ref{combi-hyp-sps}}(\dl_1,L_{\ref{gen-abs-flow-qi-emb}}(L),D)$.
\end{proof}

As a consequence of Proposition \ref{general-int-sp-is-hyp} along with Proposition \ref{combi-hyp-sps}, we obtain following. We omit the proof. 
\begin{lemma}\label{tripod-sp-is-hyp}
We have a constant $\dl_{\ref{tripod-sp-is-hyp}}=\dl_{\ref{tripod-sp-is-hyp}}(L)$ satisfying the following. Let $u,v,w\in T$ and $T_{uvw}$ be the tripod in $T$ with vertices $u,v,w$. Then $ \M_L(X_{T_{uvw}})$ is $\dl_{\ref{tripod-sp-is-hyp}}$-hyperbolic metric space with the induced path metric from $X$.
\end{lemma}

\subsection{Proof of Theorem \ref{treeofsps-com-thm}} We show that $X$ satisfies all the conditions of Proposition \ref{combing}. Let $X_{vsp}=\cup_{u\in T}X_u$. Note that $X_{vsp}$ is a $1$-dense subset of $X$. So given any two points $x,y\in X_{vsp}$, we define path joining them as follows.

Let $x\in X_u$ and $y\in X_v$ for some $u,v\in T$. Note that $X_{[u,v]}=\pi^{-1}([u,v])$. We fix once and for all, a geodesic path $c(x,y)$ in $\M_L(X_{[u,v]})$ joining $x$ and $y$. These paths serve as family of paths for Proposition \ref{combing}.

Let $x,y,z\in X_{vsp}$ such that $\pi(x)=u,~\pi(y)=v$ and $\pi(z)=w$.

{\bf Condition (1)}: For all $s,t\in\{u,v,w\}$, by Proposition \ref{gen-abs-flow-pro-emb}, $\M_L(X_{[s,t]})$ is $\eta_{\ref{gen-abs-flow-pro-emb}}(L)$-properly embedded in $X$ and so are the paths.

{\bf Condition (2)}: Let $T_{uvw}$ be the tripod in $T$ with vertices $\{u,v,w\}$. By Lemma \ref{tripod-sp-is-hyp}, $\M_L(X_{T_{uvw}})$ is $\dl_{\ref{tripod-sp-is-hyp}}(L)$-hyperbolic metric space. Again by Lemma \ref{gen-abs-flow-qi-emb}, $\M_L(X_{[u,v]}),~\M_L(X_{[v,w]})$ and $\M_L(X_{[u,w]})$ are $L_{\ref{gen-abs-flow-qi-emb}}(L)$-qi embedded subspaces of $X$ and so are of $\M_L(X_{T_{uvw}})$. Then the hyperbolicity of $\M_L(X_{T_{uvw}})$ and the stability of quasigeodesic (see Lemma \ref{ml}) in $\M_L(X_{T_{uvw}})$ imply that the triangle formed by paths $c(x,y),~c(y,z)$ and $c(x,z)$ is $D$-slim in $\M_L(X_{T_{uvw}})$ and so is in $X$, where $$D=2D_{\ref{ml}}(\dl_{\ref{tripod-sp-is-hyp}}(L),L_{\ref{gen-abs-flow-qi-emb}}(L),L_{\ref{gen-abs-flow-qi-emb}}(L))+\dl_{\ref{tripod-sp-is-hyp}}(L).$$

Hence by Proposition \ref{combing}, $X$ is $\dl_{\ref{combing}}(\eta_{\ref{gen-abs-flow-pro-emb}}(L),D,1)$-hyperbolic. This completes the proof.\qed



\section{Trees of metric bundles and their properties}\label{trees-of-metric-bundles}
The notion of metric bundles (see Definition \ref{metric-bundle-defn}) were introduced by Mj and Sardar in \cite{pranab-mahan}. 
Subsuming both metric bundles and trees of metric spaces, we define trees of metric bundles in Definition \ref{treesofmetric-bun}.

\begin{defn}\textup{(\hspace{-.08mm}\cite[Definition $1.2$]{pranab-mahan})}\label{metric-bundle-defn}
Suppose $(X,d)$ and $(B,d_B)$ are geodesic metric spaces. Let $c_0\ge1$, and let $\phi:\R_{\ge0}\ri\R_{\ge0}$ be a proper map. We say that $X$ is a $(\phi,c_0)$-{\bf metric bundle} over $B$ if there is a $1$-Lipschitz and surjective map $p:X\ri B$ such that the following hold.

\begin{enumerate}
\item Let $z\in B$. Then $F_z:=p^{-1}(z)$, called fiber, is a geodesic metric space with the induced path metric from $X$ and the inclusion $F_z\ri X$ is a $\phi$-proper embedding.
	
\item Let $z_1,z_2\in B$ such that $d_B(z_1,z_2)\le 1$. Let $\al$ be a geodesic joining $z_1$ and $z_2$. Then for all $z\in\al$ and $x\in F_z$, there are paths in $p^{-1}(\al)$ of length at most $c_0$ joining $x$ to points in $F_{z_1}$ and $F_{z_2}$.
\end{enumerate}
\end{defn}

\begin{defn}[\bf Trees of metric bundles]\label{treesofmetric-bun}
Let $(X,d)$ be a geodesic metric space. Suppose $\pi_B:(B,d_B)\ri T$ is a tree of metric spaces over a tree $T$ such that the edge spaces are points. Let $c_0\ge1$, and let $\phi:\R_{\ge0}\ri\R_{\ge0}$ be a proper map. A tree of metric bundles is a $1$-Lipschitz surjective map $\pi_X:X\ri B$ such that the following hold (see Figure \ref{grps}).
\begin{enumerate}
	
	\item For all $u\in V(T)$ let $B_u:=\pi_B^{-1}(u)$ and $X_u:=\pi_X^{-1}(B_u)$. Then $X_u$ is geodesic metric space with the induced path metric and the restriction of $\pi_X$ to $X_u$ gives a $(\phi,c_0)$-metric bundle $X_u\ri B_u$ (see Definition \ref{metric-bundle-defn}).

	
	\item Let $e=[v,w]$ be an edge in $T$, and $\mfe=[\mfv,\mfw]$ be the lift of $e$ joining $\mfv\in B_v$ and $\mfw\in B_w$. Then $\pi_X$ restricted to $\pi_X^{-1}(\mfe)$ is a tree metric spaces over $\mfe$ with parameter $\phi$ (see Definition \ref{tree-of-sps})
	
	\item For $u\in V(T)$ and $a\in B_u$, we denote the fiber corresponding to $a$ by $F_{a,u}(:=\pi_X^{-1}(a))$. Then the inclusion $F_{a,u}\ri X$ is a $\phi$-proper embedding.
\end{enumerate}
Most of the time, we say that $(X,B,T)$ is a tree of metric bundles keeping the structural maps $\pi_X$, $\pi_B$ and the parameters $\phi$, $c_0$, and other structures implicit. We denote the composition of $\pi_X:X\ri B$ and $\pi_B:B\ri T$ by $\pi:X\ri T$. 
\end{defn}

\begin{figure}[h]
\includegraphics[width=10cm]{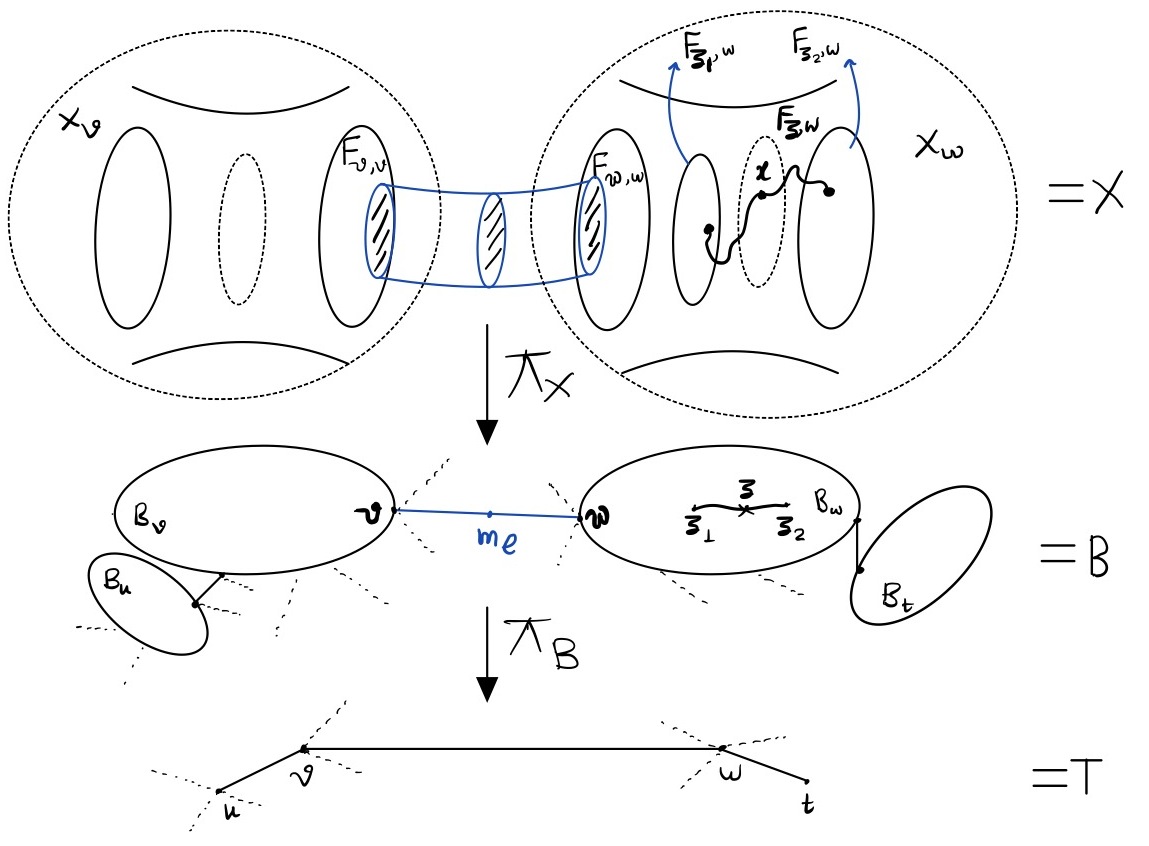}
\centering
\caption{Trees of Metric Bundles}
\label{grps}
\end{figure}

\underline{{\bf Disclaimer}}: {\em The term `trees of metric bundles' may be misleading for the map $\pi_X:X\ri B$ since $B$ is not a tree in general. However, it is not misleading for $\pi=\pi_B\circ\pi_X:X\ri T$. To maintain consistency with existing literature, we will adhere to our chosen nomenclature.}

In this section, we will see some properties of a tree of metric bundles $(X,B,T)$ that are used in our proofs. In our statements, we make the structural parameters $\phi,c_0$ implicit.

For $u\in V(T)$, since fibers are $\phi$-properly embedded in $X$, we can show (along the same line of arguments given in the proof of \cite[Proposition $2.17$]{ps-kap}; see Lemma \ref{pro-emb}) that $X_u:=\pi^{-1}(u)$ is uniformly properly embedded in $X$. Then considering $(X,B,T)$ as a tree of metric spaces $\pi:=\pi_B\circ\pi_X:X\ri T$, we get the following as corollary of Lemma \ref{pro-emb}.

\begin{prop}\label{subsp-is-proper-emb}
	Suppose $(X,B,T)$ is a tree of metric bundles. Then there exists a proper function $\eta_{\ref{subsp-is-proper-emb}}:\R_{\ge0}\ri\R_{\ge0}$ depending only on the structural parameters of $(X,B,T)$ such that for any subtree $S$ of $T$, the inclusion $X_S:=\pi^{-1}(S)\ri X$ is a $\eta_{\ref{subsp-is-proper-emb}}$-proper embedding.
\end{prop}

\subsection{An example of tree of metric bundles}

Now we will see a motivational example of a tree of metric bundles in Example \ref{trees of metric bundles exps}. Construction of our example is based on the free product with amalgamation of groups. We start by recalling some results from \cite[Section $5.2$]{NirMaMj-commen}.

Recall that a subgroup $N$ of a group $G$ is said to be {\bf {\em commensurated}} if for all $g\in G$, the subgroup $N\cap gNg^{-1}$ has finite index in both $N$ and $gNg^{-1}$.

\begin{defn}\label{cayley-abels graph}
Let $G$ be a finitely generated group with finite generating set $S$, and let $N<G$ be a commensurated subgroup. The Cayley--Abels graph $\G(G,N,S)$ of the pair $(G,N)$ with respect to $S$ is a simplicial $($connected$)$ graph whose vertices are cosets of $N$ in $G$ and edges are $\{(gN,gsN):g\in G,~s\in S\}$.
\end{defn}

Note that if $N$ is normal in $G$ in above definition, then $\G(G,N,S)$ is precisely the Cayley graph of $G/N$. Again, for any group $G$ with a generating set $S$, the Cayley--Abels graph $\G(G,1,S)$ for the pair $(G,1)$ is precisely a Cayley graph of $G$ with respect to the generating set $S$.

Suppose $G,~N$ and $S$ are as in Definition \ref{cayley-abels graph}. Then there is a natural simplicial {\em quotient map} $p:\G(G,1,S)\map\G(G,N,S)$ that sends a vertex $g$ to a vertex $gN$ and an edge $(g,gs)$ to an edge $(gH,gsH)$ if $gH\ne gsH$ or $gH$ if $gH=gsH$.

We do not use {\em metric graph bundle} in this paper except in the following proposition. Interested reader may have a look at \cite[Definition 1.5]{pranab-mahan}. Metric graph bundle is a graph version of metric bundle where spaces are metric graphs and fibers are defined only over vertex of the base space.
\begin{prop}\textup{(\cite[Proposition 5.12]{NirMaMj-commen})}\label{metric bundle}
Suppose $G$ is a finitely generated group and $N$ is a finitely generated commensurated subgroup. Given finite generating sets $S_N$ and $S_G$ respectively for $N$ and $G$ with $S_N\sse S_G$, we can find a finite generating set $S$ of $G$ such that $S_G\sse S$ and the map $p:\G(G,1,S)\map\G(G,N,S)$ is a metric graph bundle with fibers are isometric to $\G(H,1,H\cap S)$.
\end{prop}

Suppose $G=\langle S_G|R_G \rangle$ is a group and $H=\langle S_H|R_H \rangle$ is a subgroup of $G$. We say that these presentations is a {\bf {\em compatible presentation}} for the pair $(H,G)$ if $S_H\sse S_G$ and $R_H\sse R_G$.

Let $\bigvee_{\al}S^1_{\al}$ denote the wedge of circles. Given a group $G=\langle S_G|R_G \rangle$, a CW complex $\mathcal X_G$ is obtained from $\bigvee_{\al\in S_G} S^1_{\al}$ by attaching $2$-cells by reading off relators. The CW complex $\mathcal X_G$ is called the {\bf {\em presentation complex}} of $G$ for the given presentation $\langle S_G|R_G \rangle$.

Note that given groups $H<G$ and a compatible presentation of the pair $(H,G)$, there is a natural inclusion $\mathcal X_H\map \mathcal X_G$, and we denote this inclusion by $i_{H,G}$.

Now we will see the construction of trees of metric bundles from amalgamations of groups.

\begin{example}\label{trees of metric bundles exps}
Suppose $G_i$ is a finitely generated group and $N_i$ is a finitely generated commensurated subgroup of $G_i$, $i=1,2$. Let $H$ be a common finitely generated subgroup of $N_1$ and $N_2$ giving the free product with amalgamation $G=G_1*_HG_2$. Now we will show that the tree of metric spaces corresponding to $G$ admits a tree of metric bundles structure as follows. The construction is two folded. (Note that the more general construction of trees of metric bundles that come from complexes of groups is given in Subsection \ref{graphs-tree-gra-grphs-comp}.)

{\bf Tree of metric spaces corresponding to $G$:} Suppose $S_H$, $S_{N_i}$ and $S_{G_i}$ are finite generating sets of compatible presentations of the pairs $(H,N_i)$ and $(N_i,G_i)$ where $i=1,2$. Let $\mathcal X_H$, $\mathcal X_{N_i}$ and $\mathcal X_{G_i}$ be the corresponding presentation complexes where $i=1,2$. Suppose $$\mathcal X=\mathcal X_{G_1}\sqcup\mathcal X_H\times[0,1]\sqcup\mathcal X_{G_2}/\sim$$ where $(x,0)\sim i_{H,G_1}(x)$ and $(x,1)\sim i_{H,G_2}(x)$ for all $x\in\mathcal X_H$ $($see \textup{\cite[Chapter $1.B$]{hatcher-book},\cite{scott-wall})}. Note that $\mathcal X$ is a finite CW complex and by the Seifert--Van Kampen theorem, the fundamental group of $\mathcal X$ is isomorphic to $G$. Let $X$ is the universal cover of $\mathcal X$ with the standard CW complex structure coming from $\mathcal X$. Note that by the theorem of Bridson \textup{(\cite{bridson-com})}, $X$ is a complete geodesic metric space. Suppose $T$ is the Bass--Serre tree of the amalgamation $G$ \textup{(\cite{serre-trees})} giving the tree of metric spaces $\pi:X\map T$ \textup{(\cite{BF,mitra-trees})}.

{\bf Blowing up of the Bass-Serre tree:} Now we will blow up $T$ to the base $B$ of the tree of metric bundles $\pi_X:X\map B$ we are looking for. Let $X_{N_i}$ and $X_{G_i}$ are the universal covers of $\mathcal X_{N_i}$ and $\mathcal X_{G_i}$ respectively with the standard CW complex structures coming from $\mathcal X_{N_i}$ and $\mathcal X_{G_i}$ where $i=1,2$. By Proposition \ref{metric bundle}, after extending the generating set, if necessary, we assume that $p_i:X_{G_i}\map\mathcal G(G_i,N_i,S_{G_i})$ is a metric bundle with fibers are isometric to $X_{N_i}$. Let $\mathcal G_i=\mathcal G(G_i,N_i,S_{G_i})$, $i=1,2$ and $$B'=\{g_1\mathcal G_1:g_1\in G\}\sqcup\{g_2\mathcal G_2:g_2\in G\}/\sim$$where $g\mathcal G_i\sim h\mathcal G_i$, $g,h\in G$ if and only if $g^{-1}h\in G_i$, $i\in\{1,2\}$. We obtain a connected graph $B$ from $B'$ by attaching an edge joining $gN_1\in V(\G_1)$ and $gN_2\in V(\G_2)$ corresponding to each edge $gH$ joining $gG_1\in V(T)$ and $gG_2\in V(T)$ where $g\in G$.

So we have a natural map $\pi_B:B\map T$ by collapsing the space $g\mathcal G_i$ to the vertex $gG_i\in T$ and sending an edge joining $gN_1$ and $gN_2$ isometrically onto the edge joining $gG_1$ and $gG_2$.

Note that $\pi^{-1}(gG_i)$ is isometric to $X_{G_i}$ where $g\in G$, $gG_i\in V(T)$ and $i=1,2$. Finally, we also have a map $\pi_X:X\map B$ induced by restricting it to $p_i$ on $\pi^{-1}(gG_i)$, $i=1,2$. It follows that $\pi_X:X\map B$ satisfies conditions $(1)$ and $(2)$ of Definition \ref{treesofmetric-bun}. Since $p_i:X_{G_i}\map\mathcal G_i$ is a metric bundle where $i=1,2$ and $\pi:X\map T$ is a tree of metric spaces, it follows that fibers for $\pi_X:X\map B$ are uniformly properly embedded whence condition $(3)$ of Definition \ref{treesofmetric-bun} is satisfied.
\end{example}

\subsection{On the Bestvina--Feighn hallway flaring}
In a hyperbolic metric space, (quasi)geodesics diverge exponentially. In a tree of metric bundles $X$, qi lifts (see Definition \ref{qi-lift}) are quasigeodesics. So, they diverge exponentially provided $X$ is hyperbolic. This property is captured in Definition \ref{flaring-defn} for special types of quasigeodesics, namely, qi lifts. (See also \emph{necessity of flaring} in Introduction \ref{introduction} to get more on this.) This definition is a generalization of the Bestvina--Feighn hallway flaring condition (\hspace{-.08mm}\cite{BF}) in a natural way as defined in \cite[Definition $1.12$]{pranab-mahan} for metric bundles. To define that we need the following definitions. 

\begin{defn}[Quasiisometric (qi) section]\label{K-qi-section}
	Let $K\ge1$. Suppose $(X,B,T)$ is a tree of metric bundles. Let $B_1$ be an isometrically embedded subspace in $B$ and $X_1\sse X$. We say $X_1$ is $K$-qi section in $X$ over $B_1$ if there is a $K$-qi embedding $s:B_1\ri X$ such that $\pi_X \circ s=id$ on $B_1$ and $X_1=Im(s)$. Further, we say that it is compatible if the following hold.
	\begin{enumerate}
		
		\item For all vertex $w\in\pi_B(B_1)$, $X_1\cap X_w$ is a $K$-qi section over $B_1\cap B_w$ in the path metric of $X_w$ and $X_1\cap X=Im(s|_{B_1\cap B_w})$.
		
		\item Suppose $[v,w]\sse \pi_B(B_1)$ is an edge, and $[\mfv,\mfw]$ is the edge joining $\mfv\in B_v$ and $\mfw\in B_w$. Then $s(\mfv)$ and $s(\mfw)$ are $K$-apart in the path metric on $\pi_X^{-1}([\mfv,\mfw])$ induced from $X$.
	\end{enumerate}
\end{defn}

Here we mention that existence of uniform qi section in metric bundle was one of the difficult jobs in \cite{pranab-mahan}. We are going to use it frequently in our paper (see Lemma \ref{qi-sec-inside-lad-len-lift} $(1)$). For a short exact sequence, existence of such qi section was proved earlier by Mosher \cite{mosher-hypextns}.

\begin{defn}\label{qi-lift}
	If $B_1$, in Definition \ref{K-qi-section}, is a geodesic segment, say $\alpha:[0,r]\sse\R\ri B$, then we call the section a $K$-{\bf qi lift} of the geodesic $\alpha$.
	
	According to our definition, a $K$-qi lift of a geodesic $\al:[0,r]\sse\R\ri B$ is $Im(\tilde{\al})$ where $\tilde{\al}:Im(\al)\sse B\ri X$ is a $K$-qi embedding. We will simultaneously use $Im(\al)$ and $[0,r]$ as the domain of $\tilde{\al}$.
\end{defn}

\begin{defn}\label{flaring-defn}
Suppose $k\ge1$. A tree of metric bundles $(X,B,T)$ is said to satisfy $k$-{\bf flaring condition} $($see Figure \ref{picture4}$)$ if there are constants $M_k>0$, $n_k\in\N$ and $\lm_k>1$ depending on $k$ such that the following holds.

For every pair $(\gm_0, \gm_1)$ of $k$-qi lifts of a geodesic $\gm:[-n_k,n_k]\ri[a,b]\sse B$ joining $a,b\in B$ with $d^f(\gm_0(0),\gm_1(0))>M_k$, we have $$\lm_kd^f(\gm_0(0),\gm_1(0))<max\{d^f(\gm_0(-n_k),\gm_1(-n_k)),d^f(\gm_0(n_k),\gm_1(n_k))\}$$
where $d^f$ denotes the fiber distance in the corresponding fiber. Occasionally, we say that $(X,B,T)$ satisfies $k$-flaring condition suppressing the constants $M_k,n_k,\lm_k$. We say that $(X,B,T)$ satisfies a flaring condition if it satisfies $k$-flaring condition for all $k\ge1$.
\end{defn}
\begin{figure}[h]
\includegraphics[width=8cm]{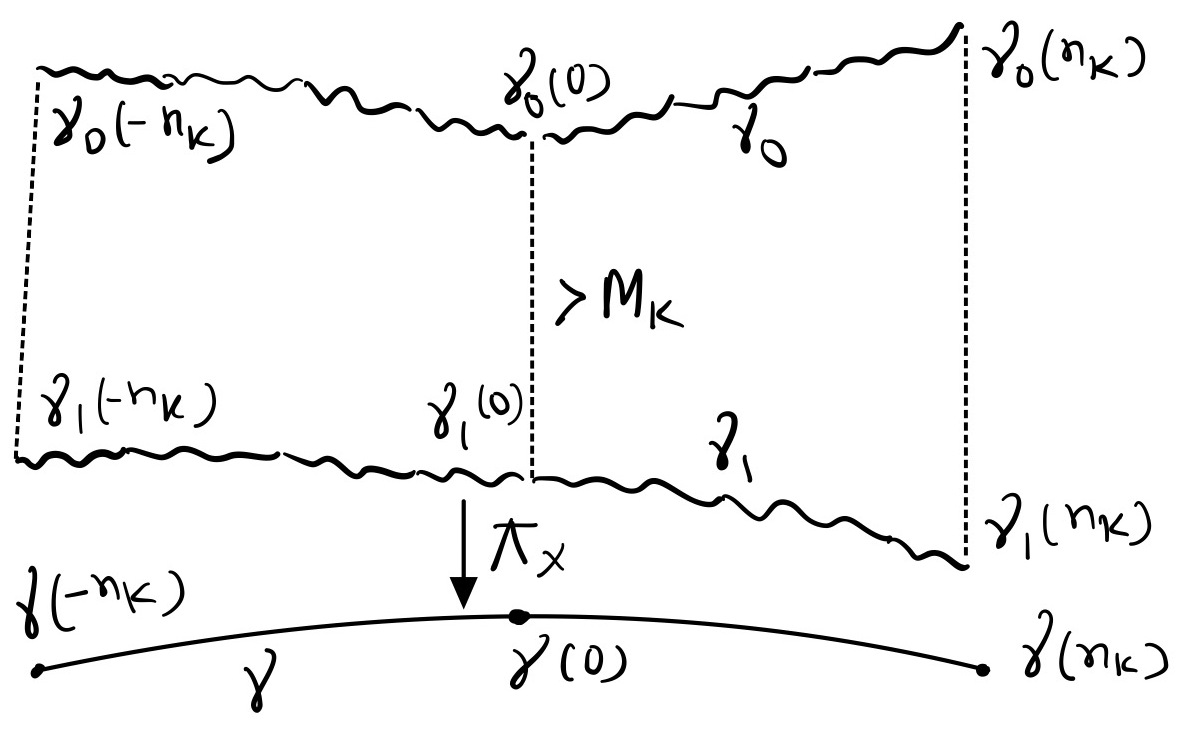}
\centering
\caption{Flaring Condition}
\label{picture4}
\end{figure}

Now we will state Lemma \ref{flaring-lemma} without a proof since these results correspond to \cite[Lemma $2.17$, Lemma $2.18$]{pranab-mahan} in metric bundles situation and one has a similar proofs for these in trees of metric bundles. The result $(1)$ of Lemma \ref{flaring-lemma} is defined as \emph{uniform flaring condition} in the book \cite{ps-kap}, and $(2)$ says that the neck are quasiconvex subset of the base in the sense of \cite{pranab-mahan}. Throughout the paper, we directly will not use flaring condition above in proving results instead Lemma \ref{flaring-lemma}.

\begin{lemma}\label{flaring-lemma}
Let $k\ge1$. Suppose $(X,B,T)$ is a tree of metric bundles satisfying $k$-flaring condition with constants $M_k,n_k,\lm_k$. Then for all $D\ge0$ there are constants $\tau_{\ref{flaring-lemma}}=\tau_{\ref{flaring-lemma}}(k,D)$ and $R_{\ref{flaring-lemma}}=R_{\ref{flaring-lemma}}(k,D)$ satisfying the following.

Let $\gm_0,\gm_1$ be two $k$-qi lifts of a geodesic $\gm:[0,r]\ri[a,b]\sse B$ joining $a,b\in B$. Let $0=r_0<r_1<\cdots<r_n=r$ such that $r_{i+1}-r_i=1$ for $0\le i\le n-2$ and $r_n-r_{n-1}\le 1$. Then:

\begin{enumerate}
\item $d^f(\gm_0(r_i),\gm_1(r_i))>M_k$ for all $1\le i\le n-1$ and $$max\{d^f(\gm_0(0),\gm_1(0)),d^f(\gm_0(r),\gm_1(r))\}\le D$$ together imply $r=d_B(a,b)\le\tau_{\ref{flaring-lemma}}$.
	
\item $max\{d^f(\gm_0(0),\gm_1(0)),d^f(\gm_0(r),\gm_1(r))\}\le D$ implies for all $0\le i\le n$, $$d^f(\gm_0(r_i),\gm_1(r_i))\le R_{\ref{flaring-lemma}}.$$
\end{enumerate}
\end{lemma}

We make the following remark for later use, and it follows from the Definition \ref{flaring-defn}.  
\begin{remark}\label{bigsmall-subsp}
Let $k\ge1$. Suppose $(X,B,T)$ is a tree of metric bundles satisfying $k$-flaring condition with constants $M_k,n_k,\lm_k$. Let $S$ be a subtree of $T$, and let $X_S=\pi_X^{-1}(S)$ and $B_S=\pi_B^{-1}(S)$. Note that the restriction $\pi_X|_{X_S}:X_S\ri B_S$ is a tree of metric bundles by Proposition \ref{subsp-is-proper-emb}. Then $(a)$ $(X,B,T)$ satisfies $k\pr$-flaring condition for all $k\pr\le k$ with the same constants, and $(b)$ the restriction $\pi_X|_{X_S}:X_S\ri B_S$ also satisfies $k$-flaring condition with the same constants. 
\end{remark}

\subsection{Some properties}

Before discussing properties, we first make the following definition, and then fix some conventions and notations which are used throughout the paper.

Motivated by the main theorems of \cite{BF} and \cite{pranab-mahan}, we define the following.
\begin{defn}\label{axiomH}
	We say that a tree of metric bundles $(X,B,T)$ satisfies the hyperbolic axioms, in short axiom {\bf H}, with parameters $\dl_0\ge0,N\ge0$ and $L_0\ge1$ if the following hold.
	\begin{enumerate}
		\item Let $u\in V(T)$ and $a\in B_u$. Then $F_{a,u}$ is $\dl_0$-hyperbolic and the barycenter map $\partial^3F_{a,u}\ri F_{a,u}$ is $N$-coarsely surjective.
		
		\item  Let $e=[v,w]$ be an edge in $T$. Let $\mfe=[\mfv,\mfw]$ be the edge joining $\mfv\in B_v$ and $\mfw\in B_w$, and $m_{\mfe}$ be the mid point of $\mfe$. Then the incident maps $\vartheta_{\mfe\mfv}:\pi_X^{-1}(m_{\mfe})\ri F_{\mfv, v}$ and $\vartheta_{\mfe\mfw}:\pi_X^{-1}(m_{\mfe})\ri F_{\mfw, w}$ are $L_0$-qi embeddings $($see Definition \ref{tree-of-sps} $2~(c))$.
		
		\item Lastly, let $B$ be $\dl_0$-hyperbolic. This assumption is the same as $B_v$ is $\dl_0$-hyperbolic for all $v\in V(T)$. 
	\end{enumerate}	
\end{defn}

\underline{{\bf Convention and Notations}}
\begin{convention}\label{convention}
	\begin{enumerate}
		\item {\em Unless otherwise specified, our tree of metric bundles $(X,B,T)$ always satisfies the axiom {\bf H} with constants $\dl_0,N,L_0$ as in Definition \ref{axiomH}. We will use $\dl\pr_0$, $L\pr_0$, $\lm\pr_0$ and $L\pr_1$ for the constants depending on $\dl_0$ and $L_0$ as defined in Lemma \ref{com-two-hyp-sps}.}
		
		\item {\em Let $B\pr$ be a path connected subspace of $B$, and let $S$ be a subtree of $T$. Unless otherwise specified, by $b\in B\pr$ and $u\in S$, we always mean $b\in B\pr\cap B_{\pi_B(b)}$ and $u\in V(S)$ respectively.}
		
		\item {\em If $X_1$ is a $K$-qi section over $B_1$, then it is a compatible $\eta_{\ref{subsp-is-proper-emb}}(2K)$-qi section over $B_1$. Thus, now onward, by a $K$-qi section, we always mean a compatible $K$-qi section.}
	\end{enumerate}
\end{convention}

\begin{notation}\label{connecting-edge-spaces}
	{\em We use these notations for the rest of the paper. We denote the composition map $\bm{\pi_B\circ\pi_X}$ by $\bm{\pi}$. For a subtree $S\sse T$, $\bm{X_S:=\pi^{-1}(S)}$ and $\bm{B_S:=\pi_B^{-1}(S)}$. In particular, for $v\in V(T)$, $\bm{X_v:=\pi^{-1}(v)}$ and $\bm{B_v:=\pi_B^{-1}(v)}$. Let $v,w\in T$, $d_T(v,w)=1$, and let $[\mfv,\mfw]$ be the edge joining $\mfv\in B_v$ and $\mfw\in B_w$. We denote $\bm{F_{\mfv\mfw}:=\pi_X^{-1}([\mfv,\mfw])}$. The induced path metric on $F_{\mfv\mfw}$ is denoted by $\bm{d_{\mfv\mfw}}$, and we use $\bm{N^{\mfv\mfw}}$ to mean neighborhood of subsets of $F_{\mfv\mfw}$ in the $d_{\mfv\mfw}$-metric. For a fiber $F_{a,u}$, where $u\in V(T)$ and $a\in B_u$, we simply use $\bm{d^f}$, $\bm{diam^f}$, $\bm{N^f}$ and $\bm{[x,y]^f}$ $($or $\bm{[x,y]_{F_{a,u}}})$ respectively, to denote the induced path metric on $F_{a,u}$, the diameter of a subset of $F_{a,u}$ in $d^f$-metric, the neighborhood of a subset of $F_{a,u}$ in $d^f$-metric and a geodesic inside $F_{a,u}$ joining $x,y\in F_{a,u}$. Since it will be clear from the context which fiber we are working with, we are not being more specific on $d^f,~N^f$ etc. Lastly, $\bm{P_{\mfw}:=P_{F_{\mfv\mfw},F_{\mfw,w}}}$.}
\end{notation} 

Now we put metric bundle (see Definition \ref{metric-bundle-defn}) structure on subspace of $X$. 

\begin{defn}[K-metric bundle and special $K$-ladder]\label{K-metric-graph-bundle}
Suppose $X_1\sse X$ and $S$ is a subtree of $T$ such that the restriction map $\pi_X|_{X_1}:X_1\ri B_S$ is surjective. We say that $X_1$ forms a {\bf $K$-metric bundle} over $B_S$ if there is a $K$-qi section through each point of $X_1$ over $B_S$ such that the image lies inside $X_1$. Further, we say that $X_1$ forms a {\bf special $K$-ladder} if $X_1$ forms a $K$-metric bundle along with two $K$-qi sections, say $\Sigma_1$ and $\Sigma_2$, over $B_S$ such that $X_1=\bigcup\limits_{v\in S,~b\in B_v}[F_{b,v}\cap\Sigma_1,F_{b,v}\cap\Sigma_2]_{F_{b,v}}$. In this case, occasionally, we denote $X_1$ by $\L_K(\Sigma_1,\Sigma_2)$ or simply by $\L(\Sigma_1,\Sigma_2)$ when $K$ is understood.
\end{defn}

Lemma \ref{qi-sec-inside-lad-len-lift} (2) is proven for metric graph bundles (see \cite[Definition $1.5$]{pranab-mahan}) in \cite[Lemma 3.1]{pranab-mahan} and the proof works for metric bundles as well. So we omit its proof.

%

\begin{lemma}\textup{(\hspace{-.08mm}\cite[Proposition $2.10$, Proposition $2.12$, Lemma $3.1$, Lemma $3.3$]{pranab-mahan})}\label{qi-sec-inside-lad-len-lift}
Suppose $(X,B,T)$ is a tree of metric bundles satisfying axiom {\bf H}. (For $(3)$ below, axiom {\bf H} is not required.) Given $K\ge1$ and $R\ge2K$ there is a constant $C_{\ref{qi-sec-inside-lad-len-lift}}=C_{\ref{qi-sec-inside-lad-len-lift}}(K)>K$ such that the following hold.

$(1)$ There exists a constant $K_{\ref{qi-sec-inside-lad-len-lift}}\ge1$ depending only on $\dl_0,N$ (constants of axiom {\bf H}) such that through each point $x\in X_v$ there is a $K_{\ref{qi-sec-inside-lad-len-lift}}$-qi section over $B_v$ in the path metric of $X_v$, where $v\in V(T)$.

$(2)$ Suppose $v\in V(T)$, and $\Sigma_1$ and $\Sigma_2$ are two $K$-qi sections over $B_v$. Then $\L(\Sigma_1,\Sigma_2)$ is a special $C_{\ref{qi-sec-inside-lad-len-lift}}(K)$-ladder over $B_v$.

$(3)$ Suppose $\Sigma$ is a $K$-qi section over an isometrically embedded subspace $B_1\sse B$. Let $s:[a,b]\ri X$ be a qi lift  of a geodesic segment $[a,b]\sse B_1$ such that $Im(s)\sse \Sigma$. Then $N_R(\Sigma)$ is path connected and if the induced path metric on $N_R(\Sigma)$ is $d\pr$ then $d\pr(s(a),s(b))\le2K d_B(a,b)$. Moreover, $N_{2K}(\Sigma)$ is $K(2K+1)$-qi embedded in any geodesic subspace that contains $N_R(\Sigma)$, with respect to the induced path metric from $X$.
\end{lemma}

\begin{remark}
Note that the sole purpose of the condition that the barycenter map $\pa^3F_{a,u}\map F_{a,u}$ is (uniformly) coarsely surjective (in Definition \ref{axiomH} $(1)$) is to get a (uniform) qi section through each point of $X_u$ over $B_u$. This is assured by Lemma \ref{qi-sec-inside-lad-len-lift} $(1)$.

\noindent We will not use this condition explicitly anywhere instead Lemma \ref{qi-sec-inside-lad-len-lift} $(1)$.
\end{remark}

\begin{remark}\label{iteration}
In the view of Lemma \ref{qi-sec-inside-lad-len-lift} $~(2)$, for $i\in\N$, we denote the $i^{th}$ iteration by $C^{(i)}_{\ref{qi-sec-inside-lad-len-lift}}(K):=C_{\ref{qi-sec-inside-lad-len-lift}}(C^{(i-1)}_{\ref{qi-sec-inside-lad-len-lift}}(K)),$ where $C^{(0)}_{\ref{qi-sec-inside-lad-len-lift}}(K)=K$. In other words, if $\L(\Sigma_1,\Sigma_2)$ is bounded by two $C^{(i-1)}_{\ref{qi-sec-inside-lad-len-lift}}(K)$-qi sections, say $\Sigma_1$ and $\Sigma_2$, over $B_v$, then $\L(\Sigma_1,\Sigma_2)$ is a special $C^{(i)}_{\ref{qi-sec-inside-lad-len-lift}}(K)$-ladder over $B_v$.
\end{remark}
As an application of Lemma \ref{qi-sec-inside-lad-len-lift} $(2)$ along with the fact that quadrilaterals are slim in hyperbolic spaces, we have the following. We omit the proof. 
\begin{lemma}\label{getting-metric-graph-bundles}
Given $K\ge1$, there is a constant $K_{\ref{getting-metric-graph-bundles}}=K_{\ref{getting-metric-graph-bundles}}(K)$ such that the following holds. 

Suppose $(X,B,T)$ is a tree of metric bundles satisfying axiom {\bf H}. Let $S$ be a subtree of $T$. Let $\mathcal{G}_{K,S}=\{\gm:\gm \textrm{ is a }K\textrm{-qi section over } B_S\}\ne\emptyset$. For $w\in S$, $b\in B_w$, let $H_{b,w}=\textrm{hull}\{\gm(b):\gm\in\mathcal{G}_{K,S}\}\sse F_{b,w}$ and $H=\bigcup_{w\in S,~b\in B_w} H_{b,w}$. (Here quasiconvex hull is considered in the corresponding fiber.) Then $H$ is $K_{\ref{getting-metric-graph-bundles}}$-metric bundle over $B_S$. In particular, if $\mathcal{G}_{K,S}=\{\gm_1,\gm_2\}$ then $\bigcup_{w\in S,~b\in B_w}[\gm_1(b),\gm_2(b)]^f$ forms a special $K_{\ref{getting-metric-graph-bundles}}$-ladder over $B_S$.
\end{lemma}

\begin{lemma}\label{centers-forms-qi-section}
Given $k\ge1$, there is a constant $k_{\ref{centers-forms-qi-section}}=k_{\ref{centers-forms-qi-section}}(k)$ such that the following holds.

Suppose $(X,B,T)$ is a tree of metric bundles satisfying axiom {\bf H}. Let $v\in T$ and $\Sigma_i$ be $k$-qi section over $B_v$ for $i=1,2,3$. Suppose  $\{x^{(i)}_{b,v}\}=\Sigma_i\cap F_{b,v},i=1,2,3$ and $z_{b,v}$ is a $\dl_0$-center of geodesic triangle $\triangle(x^{(1)}_{b,v},x^{(2)}_{b,v},x^{(3)}_{b,v})\sse F_{b,v},~b\in B_v$. Then the map $s:B_v\ri X$ defined by $b\mapsto z_{b,v}$ is a $k_{\ref{centers-forms-qi-section}}$-qi section over $B_v$.
\end{lemma}

\begin{proof}
Let $b_1,b_2\in B_v$ such that $d_B(b_1,b_2)\le1$. Since $\pi_X:X\ri B$ is $1$-Lipschitz, we only have to prove that $d_{X_v}(z_{b_1,v},z_{b_2,v})$ is uniformly bounded. Define a map $\psi:F_{b_1,v}\ri F_{b_2,v}$ by $\psi(x^{(i)}_{b_1,v})=x^{(i)}_{b_2,v}$ for $i=1,2,3$ and $\fa~z\in F_{b_1,v}\setminus\{x^{(i)}_{b_1,v}:i=1,2,3\}$, we take $\psi(z)\in F_{b_2,v}$ such that $d(z,\psi(z))\le c_0$ (as in Definition \ref{treesofmetric-bun}). Note that $d_{X_v}(x,\psi(x))\le 2k+c_0$ for all $x\in F_{b_1,v}$. Then by \cite[Lemma $1.15$]{pranab-mahan}, $\psi$ extends to a $g(2k+c_0)$-quasiisometry from $F_{b_1,v}$ to $F_{b_2,v}$ for some function $g:\R_{\ge0}\ri\R_{\ge0}$. Therefore, by \cite[Lemma $1.29$ $(2)$]{pranab-mahan}, $d^f(\psi(z_{b_1,v}),z_{b_2,v})$ is bounded by a constant $D$, depending only on $\dl_0$ (hyperbolicity constant of $F_{b_i,v}$) and $g(2k+c_0)$. Hence, we can take $k_{\ref{centers-forms-qi-section}}:=D+2k$.
\end{proof}

\begin{lemma}\label{modified-projection-gives-qi-sections}
Given $K\ge1$, there is a constant $K_{\ref{modified-projection-gives-qi-sections}}=K_{\ref{modified-projection-gives-qi-sections}}(K)$ such that the following holds.

Suppose $(X,B,T)$ is a tree of metric bundles satisfying axiom {\bf H}. Let $v\in T$ and $\Sigma_i$ be $K$-qi section over $B_v$ for $i=1,2,3,4$. Let $\L_1=\L(\Sigma_1,\Sigma_2)$ and $\L_2=\L(\Sigma_3,\Sigma_4)$ be special ladders over $B_v$ formed by these sections (see Lemma \ref{qi-sec-inside-lad-len-lift} (2)). Let $a\in B_v$ and $\bar{P}_a:\L_1\cap F_{a,v}\ri\L_2\cap F_{a,v}$ be modified projection in the metric $F_{a,v}$ (see Definition \ref{modified-projection}). Further, suppose $\bar{P}_a(\L_1\cap F_{a,v})=[p_a,q_a]_{F_{a,v}}\sse \L_2\cap F_{a,v}$ such that $p_a$ is closest to $\Sigma_3\cap F_{a,v}$ and $q_a$ is that to $\Sigma_4\cap F_{a,v}$ in the metric $F_{a,v}$. Moreover, we define $s_1:B_v\ri \L_2$ and $s_2:B_v\ri\L_2$ by $a\mapsto p_a$ and $a\mapsto q_a$ respectively, where $a\in B_v$.

Then $s_1$ and $s_2$ are $K_{\ref{modified-projection-gives-qi-sections}}$-qi sections over $B_v$ such that their images lie inside $\L_2$.
\end{lemma}
\begin{proof}
Suppose $b,c\in B_v$ such that $d_B(b,c)\le1$. Let $\L_i\cap F_{b,v}=[x_i,y_i]$ and $\L_i\cap F_{c,v}=[s_i,t_i]$ for $i=1,2$, where $\Sigma_1(b)=x_1,~\Sigma_1(c)=s_1$, $\Sigma_2(b)=y_1,~\Sigma_2(c)=t_1$ and $\Sigma_3(b)=x_2,~\Sigma_3(c)=s_2$, $\Sigma_4(b)=y_2,~\Sigma_4(c)=t_2$. Suppose $\bar{P}_b([x_1,y_1])=[p_1,q_1]$ and $\bar{P}_c([s_1,t_1])=[p_2,q_2]$ such that $p_1$, $p_2$ are closest to $x_2$, $s_2$ respectively in the metric $F_{b,v}$ and $q_1$, $q_2$ are closest to $y_2$, $t_2$ respectively in the metric $F_{c,v}$. Since $\pi_X:X\ri B$ is $1$-Lipschitz, we only need to show that $d_{X_v}(p_1,p_2)$ and $d_{X_v}(q_1,q_2)$ are uniformly bounded. We will show only the former one as a similar proof works for the later one.

Let $\bar{P}_b(x_1)=x_1\pr,\bar{P}_b(y_1)=y_1\pr,\bar{P}_c(s_1)=s_1\pr,\bar{P}_c(t_1)=t_1\pr$. Note that $x\pr_1\in[p_1,y\pr_1]\sse[x_2,y_2]$ or $x\pr_1\in[y\pr_1,q_1]\sse[x_2,y_2]$ and $s\pr_1\in[p_2,t\pr_1]\sse[s_2,t_2]$ or $s\pr_1\in[t\pr_1,q_2]\sse[s_2,t_2]$. Depending on the position on $x\pr_1$ and $s\pr_1$, we consider the following four cases.

Like in Lemma \ref{centers-forms-qi-section}, we define a map $\psi:F_{b,v}\ri F_{c,v}$ such that $\psi(x_i)=s_i,~\psi(y_i)=t_i,~i=1,2$ and for all other points $x\in F_{b,v}$, we take $\psi(x)\in F_{c,v}$ such that $d_{X_v}(x,\psi(x))\le c_0$ (as in Definition \ref{treesofmetric-bun}). Then $d_{X_v}(x,\psi(x))\le2K+c_0,\fa~x\in F_{b,v}$, and so by \cite[Lemma $1.15$]{pranab-mahan}, $\psi$ extends to a $g(2K+c_0)$-quasiisometry from $F_{b,v}$ to $F_{c,v}$ for some function $g:\R_{\ge0}\ri\R_{\ge0}$. Thus by Lemma \ref{quasi-goes-to-quasi} $(2)$, there is $k=2K+D_{\ref{quasi-goes-to-quasi}}(\dl_0,g(2K+c_0),\dl_0)$ such that
\begin{eqnarray}\label{same-same}
d_X(x\pr_1,s\pr_1)\le k \textrm{ and } d_X(y\pr_1,t\pr_1)\le k.
\end{eqnarray}

\emph{Case} 1: Suppose $x_1\pr\in[p_1,y\pr_1]$ and $s\pr_1\in[p_2,t\pr_1]$. Then by \cite[Corollary $1.116$]{ps-kap}, there is constant $C_1$ depending on $\dl_0$ such that $d^f(p_1,x\pr_1)\le C_1$ and $d^f(p_2,s\pr_1)\le C_1$. Combining with inequation \ref{same-same}, we have $d_{X_v}(p_1,p_2)\le k+2C_1$.

\begin{figure}[h]
	\includegraphics[width=12cm]{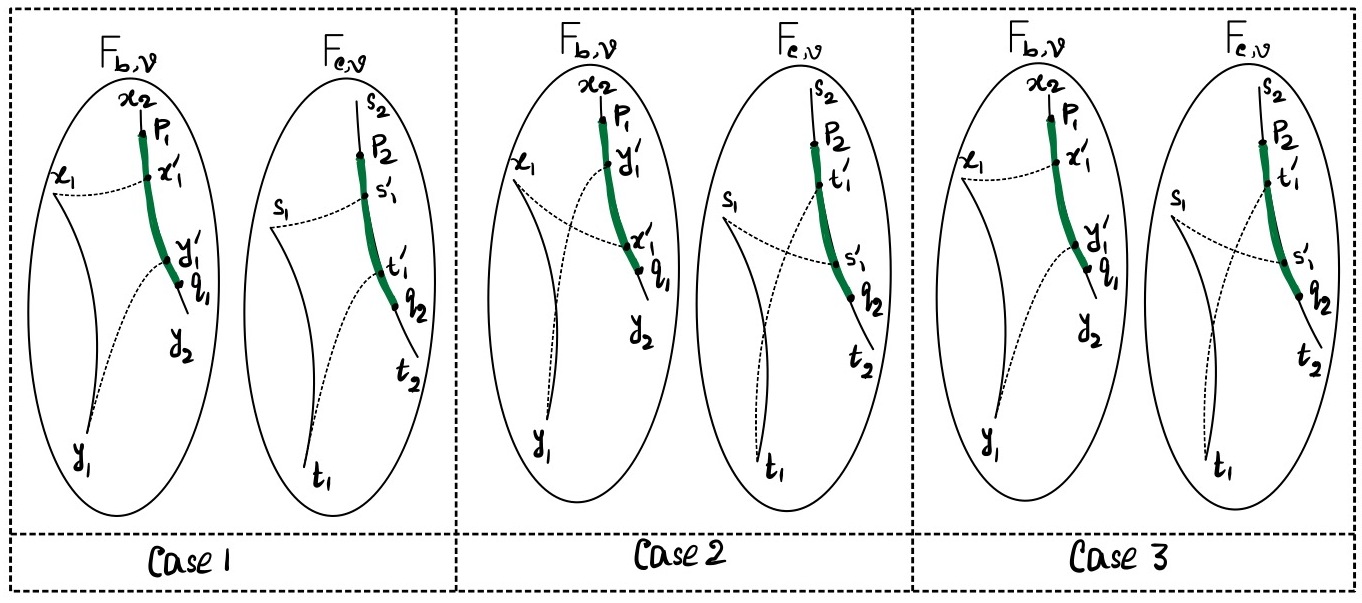}
	\centering
	\caption{}
\end{figure}
\emph{Case} 2: Suppose $x_1\pr\in[y\pr_1,q_1]$ and $s\pr_1\in[t\pr_1,q_2]$. In this case, $y\pr_1\in[p_1,x\pr_1]$ and $t\pr_1\in[p_2,s\pr_1]$. Again by \cite[Corollary $1.116$]{ps-kap}, $d^f(p_1,y\pr_1)\le C_1$ and  $d^f(p_2,t\pr_1)\le C_1$. Combining with inequation \ref{same-same}, we get $d_{X_v}(p_1,p_2)\le k+2C_1$.

\emph{Case} 3: Suppose $x_1\pr\in[p_1,y\pr_1]$ and $s\pr_1\in[t\pr_1,q_2]$. In this case, $t\pr_1\in[p_2,s\pr_1]$. Applying \cite[Corollary $1.116$]{ps-kap}, we get $d_{X_v}(p_1,x\pr_1)\le C_1$ and $d_{X_v}(p_2,t\pr_1)\le C_1$. We define a map (like above) $\psi: F_{b,v}\ri F_{c,v}$ such that $\psi(x\pr_1)=s\pr_1,~\psi(y_2)=t_2$ and for all other points $x\in F_{b,v}$, we take $\psi(x)\in F_{c,v}$ such that $d_{X_v}(x,\psi(x))\le c_0$ (as in Definition \ref{treesofmetric-bun}). Then $\fa~x\in F_{b,v}$, $d_{X_v}(\psi(x),x)\le k+c_0$ (see inequation \ref{same-same}). Thus by \cite[Lemma $1.15$]{pranab-mahan}, $\psi$ extends to a $g(k+c_0)$-quasiisometry from $F_{b,v}$ to $F_{c,v}$ for some function $g:\R_{\ge0}\ri\R_{\ge0}$. Applying Morse Lemma \ref{ml}, $\exists~\zeta\in[s\pr_1,t_2]\sse[s_2,t_2]$ such that $d_{X_v}(y\pr_1,\zeta)\le  c_0+D_{\ref{ml}}(\dl_0,g(k+c_0),g(k+c_0))=k\pr$ (say). Then $d_{X_v}(\zeta,t\pr_1)\le d_{X_v}(\zeta,y\pr_1)+d_{X_v}(y\pr_1,t\pr_1)\le k+k\pr$. Since fibers are $\phi$-properly embedded in $X$, so $d^f(\zeta,t\pr_1)\le \phi(k+k\pr)$. In particular, $d^f(s\pr_1,t\pr_1)\le \phi(k+k\pr)$. Thus $d_{X_v}(p_1,p_2)\le d_{X_v}(p_1,x\pr_1)+d_{X_v}(x\pr_1,s\pr_1)+d_{X_v}(s\pr_1,t\pr_1)+d_{X_v}(t\pr_1,p_2)\le2C_1+k+\phi(k+k\pr)$.

\emph{Case} 4: Lastly, we assume that $x_1\pr\in[y\pr_1,q_1]$ and $s\pr_1\in[p_2,t\pr_1]$. But this is same as Case $3$.

Therefore, we can take $K_{\ref{modified-projection-gives-qi-sections}}:=2C_1+k+\phi(k+k\pr)$ (maximum of all constants we get in the above four cases).
\end{proof}

\section{Semicontinuous families: flow space and ladder}\label{semicontinuous-subgraph}
Our standard assumptions in this section are that the tree of metric bundles $(X,B,T)$ must satisfy axiom {\bf H} and the Bestvina--Feighn hallway flaring condition. However, by Remark \ref{bigsmall-subsp} $(a)$ one can observe that $k$-flaring condition for a large $k$ is enough for our combination theorem to hold (see introduction of Sections \ref{hyperbolicity-of-ladder}, \ref{hyp-of-flow-sp}, \ref{union-of-two-flow-sp-is-hyp}).

Motivated by the construction of the semicontinuous families of spaces in \cite[Chapter $3$]{ps-kap}, we build subspaces analogous to that in trees of metric bundles. We follow the same terminology used in \cite{ps-kap}. Also, following \cite{ps-kap}, (\hspace{-.08mm}\cite{mitra-trees} and \cite{pranab-mahan}), we will see two special kinds of subspaces: flow spaces and ladders. These are the building blocks, which will be shown to be hyperbolic, towards proving the main theorem (Theorem \ref{main-theorem-com}).

Suppose $(X,B,T)$ is a tree of metric bundles as in Definition \ref{treesofmetric-bun}. Suppose $T_{\mfY}$ is a (connected, nonempty) subtree of $T$ and $\mfY=\bigcup_{v\in V(T_{\mfY}),b\in B_v}Q_{b,v}$ where $Q_{b,v}\sse F_{b,v}$ is nonempty. With this, we define the following.

\begin{defn}[Semicontinuous families]\label{semicts-subgrphs}
Let $K\ge1,C\ge0$ and $\epsilon\ge0$. We say that $\mfY\sse X$ is a $(K,C,\epsilon)$-semicontinuous family in $X$ with a central base $\mfB=\pi_B^{-1}(\mfT)$ for some central subtree $\mfT\sse T_{\mfY}$ if the following hold.
\begin{enumerate}
		
\item Let $v\in T_{\mfY}$ and $b\in B_v$. Then $Q_{b,v}$ is a $2\dl_0$-quasiconvex subset of $F_{b,v}$ and $\bigcup_{b\in B_v}^{}Q_{b,v}\sse X_v$ forms a $K$-metric bundle (see Definition \ref{K-metric-graph-bundle}) over $B_v$. Moreover, $\bigcup_{v\in T_{\mfY},~b\in B_v} Q_{b,v}$ forms a $K$-metric bundle over $\mfB$ in $X$.
		
\item Let $v,w\in T_{\mfY}$ such that $w\notin {\mfT},d_T(v,w)=1,d_T({\mfT},w)>d_T({\mfT},v)$. Let $[\mfv,\mfw]$ be the edge joining $\mfv\in B_v$ and $\mfw\in B_w$. Then $Hd_{\mfv\mfw}(P_{\mfw}(Q_{\mfv,v}),Q_{\mfw,w})\le\epsilon$  (see Notation \ref{connecting-edge-spaces} for $P_{\mfw}$) and $d_{\mfv\mfw}(x,Q_{\mfv,v})\le K,\fa~x\in Q_{\mfw,w}$. Moreover, if both $v,w\in {\mfT}$ then $Hd_{\mfv\mfw}(Q_{\mfv,v},Q_{\mfw,w})\le K$.
		
\item Suppose $w\notin T_{\mfY},v\in T_{\mfY}$ such that $d_T(v,w)=1$. Let $[\mfv,\mfw]$ be the edge joining $\mfv\in B_v$ and $\mfw\in B_w$. Then the pair $(Q_{\mfv,v},F_{\mfw.w})$ is $C$-cobounded in the metric $F_{\mfv\mfw}$.
		
\item Additionally, let $B\pr\sse \pi_B^{-1}(T_{\mfY})$ be a $(1,6\dl_0)$-qi embedded subspace in $B$. Suppose $v\in T_{\mfY}$ and $B\pr_v:=B_v\cap B\pr$. Then $\fa~v\in T_{\mfY}\textrm{ and }\fa~b\in B_v\setminus B\pr_v$, diam$^f(Q_{b,v})\le C$. Let $\mfY\pr:=\pi_X^{-1}(B\pr)\cap\mfY$.
	\end{enumerate}
\end{defn}

\begin{remark}
$(a)$ {\em The condition $(4)$ is used in Section \ref{hyp-of-flow-sp} (more precisely, in Lemma \ref{Li-bar-is-qi-emb-in-Li}), otherwise, all the time $B\pr=\pi_B^{-1}(T_{\mfY})$ and so $\mfY\pr=\mfY$.}
	
$(b)$ If $T$ is a single vertex, say $\{u\}$, then $\mfY$ is $K$-metric bundle over $B_u$. Existence of such $K$-metric bundle, for some $K\ge1$, is ensured by Lemma \ref{qi-sec-inside-lad-len-lift} (1).
	
$(c)$ If $\pi_B:B\ri T$ is a graph isomorphism, then $\mfY$ is the same as the semicontinuous family defined in the book \cite[Chapter $3$]{ps-kap}.
	
$(d)$ ({\bf Maximality}) `Moreover part' in conditions $(1)$ and $(2)$ are equivalent provided first parts of $(1)$ and $(2)$ hold. Let $z\in\mfY$ and $t_z$ be the nearest point projection of $\pi(z)$ on $\mfT$. Suppose $B_z=\pi_B^{-1}([t_z,\pi(z)])\cup\mfB$. Then it follows from the conditions $(1)$ and $(2)$ that there is a (compatible) $K$-qi section (see Definition \ref{K-qi-section}), say $\Sigma_z$, over $B_z$ lying inside $\mfY$.  Sometimes (more precisely, in  Subsection \ref{general-ladder}), we work with maximal qi sections through points in $\mfY$ in following sense. Let $S$ be a subtree of $T_{\mfY}$ containing $\mfT\cup[t_z,\pi(z)]$. Note that $B_z\sse B_S$. Let $\mathcal{G}=\{\eta:\eta\textrm{ is a }K\textrm{-qi section}\textrm{ over } B_S \textrm{ through } z\textrm{ lying inside }\mfY\}$. We put an order `$\le$' (inclusion) on $\mathcal{G}$ as follows. For $\eta,\eta\pr\in\mathcal{G}$, we say $\eta\le\eta\pr$ if and only if $\eta\sse\eta\pr$. This order `$\le$' makes $\mathcal{G}$ a poset. It is easy to see every chain has an upper bound in $\mathcal{G}$. Therefore, by Zorn's lemma, we get a maximal compatible $K$-qi section through $z$ over a base, say $B_S$, containing $B_z$ and contained in $\pi_B^{-1}(T_{\mfY})$. By abusing notation, we still denote the base for this maximal section by $B_z$.
	
$(e)$ Note that $Q_{b,v}$ are $2\dl_0$-quasiconvex in $F_{b,v}$ and $B\pr$ is $(1,6\dl_0)$-qi embedded in $\pi_B^{-1}(T_{\mfY})$. One also can introduce uniform constants for these. However, for simplicity, we will exclusively work with these constants.
	
$(f)$ {\em Later on, in our statements, we suppress the dependence on the constants $C,\epsilon$ and the other structural constants of the tree of metric bundles when dealing with semicontinuous family.}
	
\end{remark}

Now we will see nice properties (see Theorem \ref{mitra's-retraction-on-semicts-subsp}, Proposition \ref{semicts-is-proper-emb} and Corollary \ref{semicts-is-qi-emb}) enjoyed by semicontinuous families. In the proof, we use the same notations as in Definition \ref{semicts-subgrphs}. First, we prove that there is uniformly coarsely Lipschitz retraction of $X$ on semicontinuous families. This is motivated by Mitra's retraction in \cite{mitra-ct} (see also \cite[Theorem $3.2$]{pranab-mahan} and \cite[Theorem $3.3$]{ps-kap}). In this paper, we refer to this retraction as \emph{Mitra's retraction}. 


\begin{theorem}\label{mitra's-retraction-on-semicts-subsp}
Given $K\ge1,~C\ge0$ and $\ep\ge0$ there is a constant $L_{\ref{mitra's-retraction-on-semicts-subsp}}=L_{\ref{mitra's-retraction-on-semicts-subsp}}(K)$ such that the following holds.
	
If $\mfY$ is a $(K,C,\ep)$-semicontinuous family (as in Definition \ref{semicts-subgrphs}) in $X$, then there is a $L_{\ref{mitra's-retraction-on-semicts-subsp}}$-coarsely Lipschitz retraction $\rho_{\ref{mitra's-retraction-on-semicts-subsp}}=\rho_{\mfY\pr}:X\ri\mfY\pr$ of $X$ on $\mfY\pr$.
\end{theorem}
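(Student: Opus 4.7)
The plan is to adapt Mitra's retraction \cite{mitra-ct} (compare \cite[Theorem $3.2$]{pranab-mahan} and \cite[Theorem $3.3$]{ps-kap}) to the setting of a graph of tree-graded graphs, building $\rho_{\Y'}$ as a composition of three coarsely Lipschitz maps: a horizontal Mitra-type retraction $\rho_1:X\to X_{T_{\Y}}$, a fiberwise projection $\rho_2:X_{T_{\Y}}\to\Y$, and a refinement $\rho_3:\Y\to\Y'$.

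\textbf{Step 1 (horizontal retraction).} For $x\in X$ with $v=\Pi(x)\in T_{\Y}$ set $\rho_1(x)=x$. Otherwise let $v_0$ be the unique nearest vertex of $T_{\Y}$ to $v$ and parametrise the $T$-geodesic as $v=u_m,u_{m-1},\ldots,u_0=v_0$. I would iterate nearest-point projections through the edge spaces: by Lemma \ref{joining-of-two-hyp-sp-along-qi-emb-subsp} each $F_{u_iu_{i-1}}$ is $\dl\pr_0$-hyperbolic and each $F_{c_{u_{i-1}},u_{i-1}}$ is $\lm\pr_0$-quasiconvex therein (Remark \ref{secondary-const}), so successive applications of $P_{u_{i-1}}$ are coarsely Lipschitz by Lemma \ref{retraction-on-qc-subset}. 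At the terminal step, the coboundedness of $(Q_{c_{v_0},v_0},F_{c_{u_1},u_1})$ in $F_{v_0u_1}$ from condition~(4) guarantees the projected point is uniformly close to $Q_{c_{v_0},v_0}$ in $F_{c_{v_0},v_0}$.

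\textbf{Step 2 (fiberwise projection).} On each fiber $F_{b,v}$ with $v\in T_{\Y}$, the set $Q_{b,v}$ is $4\dl_0$-quasiconvex by condition~(2), so $\rho_2|_{F_{b,v}}:=P_{F_{b,v},Q_{b,v}}$ is coarsely Lipschitz by Lemma \ref{retraction-on-qc-subset}. To control $\rho_2$ across adjacent fibers in the same piece, I use that $\bigcup_{b\in B_v}Q_{b,v}$ is a $K$-metric graph bundle, so nearby fibers' projections are comparable via a $K$-qi section; across an edge $[v,w]$ with $v,w\in\T$, the uniform Hausdorff bound $\mathrm{Hd}^e(Q_{c_v,v},Q_{c_w,w})\le K$ combined with Lemma \ref{npp-onto-Hd-close-subsets} inside $F_{vw}$ does the same job. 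Proposition \ref{subsp-is-proper-emb} gives the necessary properness to glue these local estimates into a global coarse Lipschitz bound.

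\textbf{Step 3 (refinement to $\Y\pr$).} For $y\in\Y$ with $v=\Pi(y)$ and $b=\Pi_B(y)$: if $(v,b)\in T_{\Y\pr}\times B\pr_v$ set $\rho_3(y)=y$. Otherwise condition~(5) gives $\mathrm{diam}^{fib}(Q_{b,v})\le C$, so $y$ is $C$-close in its fiber to any chosen reference point of $Q_{b,v}$; I apply Proposition \ref{uniform-qi-sec-in-bundle} to pick a uniform qi section through that point and transport $y$ along this section to the value over the nearest-point projection $b\pr\in B\pr_v$ of $b$ to $B_{\Y\pr}$ in $B$, handling the case $v\notin T_{\Y\pr}$ by a further horizontal step inside $T_{\Y}$ analogous to Step~1. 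The $(1,6\dl_0)$-qi embedding of $B_{\Y\pr}$ from condition~(5) together with Lemma \ref{length-of-lift} bounds the displacement introduced.

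\textbf{Main obstacle.} The bound $L_{\ref{mitra's-retraction-on-semicts-subsp}}$ must depend only on $K$ (absorbing $C,\epsilon$ and the ambient constants from axiom~\textbf{H}). The delicate calculation is comparing $\rho_1(x)$ and $\rho_1(x\pr)$ when $x,x\pr$ are $1$-apart but their geodesics to $T_{\Y}$ branch at some intermediate vertex; this is treated by a slim-quadrilateral argument in the relevant $\dl\pr_0$-hyperbolic $F_{u_iu_{i-1}}$ together with Lemma \ref{npp-onto-Hd-close-subsets} and Corollary \ref{not-C-codd-not-R-sep}. Equally subtle is Step~3, where the qi-section transport must be coarsely consistent over varying $b$'s; property $\mathcal{C}$ (Definition \ref{K-qi-section}) together with Lemma \ref{length-of-lift} is exactly what enables this, and exhibiting the uniform bound here is the technical heart of the proof.
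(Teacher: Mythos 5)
Your three-step factorization — iterate nearest-point projections edge space by edge space along the $T$-geodesic to $T_{\Y}$, project fiberwise onto the $Q_{b,v}$, then refine to $\Y\pr$ — is in the spirit of the classical Mitra construction, but it is not what the paper does, and the paper's route is what makes the uniform constant transparent. The paper never iterates. It takes $a$ to be a single nearest-point projection of $b=\Pi_B(x)$ onto $B_{\Y\pr}$ inside the hyperbolic base $B$ (using that $B_{\Y\pr}$ is quasiconvex in $B$, via the $(1,6\dl_0)$-qi embedding of condition (5)), picks the unique $a_-\in[a,b]$ with $d_B(a,a_-)=1$, takes one nearest-point projection $x\pr$ of $x$ onto $F_{a_-,v_-}$ in $X$, and then projects $x\pr$ onto $Q_{a,v}$ inside the single edge-space $F_{aa_-}$. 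Each of these is one-shot, so the Lipschitz bound never sees a depth parameter $m$; the rest of the proof is a case analysis for adjacent $x,y$.

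Two places in your plan have genuine gaps. First, Step~1 is underspecified: the point $x$ lives in the whole metric bundle $X_v$ over the piece $B_v$, whereas the first edge-space projection $P_{u_{m-1}}$ expects a point of $F_{u_mu_{m-1}}$, which only sees the boundary fiber $F_{c_{u_m},u_m}$. Some preliminary move across the piece $B_v$ (exactly what the paper's base projection onto $B_{\Y\pr}$ accomplishes) is needed before any edge-space projection applies, and you never describe it. Second, the chain of projections compounds. Lemma~\ref{retraction-on-qc-subset} as stated gives a multiplicative constant $C_{\ref{retraction-on-qc-subset}}$, so after $m$ steps the naive estimate is exponential in $m$ — and even the sharper coarse $1$-Lipschitz form of nearest-point projection only yields a bound linear in $m$, which is still not uniform. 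You cannot invoke the terminal coboundedness to erase this: condition~(4) bounds the projection of $F_{c_{u_1},u_1}$ onto $Q_{c_{v_0},v_0}$, but your $\rho_1$ maps into $X_{T_\Y}$, i.e.\ into the full fiber $F_{c_{v_0},v_0}$, and coboundedness against the full fiber is not part of the definition. A Mitra-style stabilization argument might repair this, but it must actually be argued; it does not follow from each stage being coarsely Lipschitz. The paper's decision to use the tree-graded geometry of $B$ once, at the start, is exactly what avoids these problems.
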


\begin{proof}	
	Let $X_{vsp}=\cup_{u\in T}X_u$ and $x,y\in X_{vsp}$ such that $d_X(x,y)\le1$. Then by Lemma \ref{imp-coarse-retraction}, it is enough to define a map $\rho:X_{vsp}\ri\mfY\pr$ for which $d_X(\rho(x),\rho(y))$ is uniformly bounded. 
	
	Let us define $\rho:X_{vsp}\ri\mfY\pr$ as follows. Suppose $x\in X_{vsp}$ and $b=\pi_X(x),u=\pi(x)$. 
	If $b\in B\pr$, then $\rho(x)$ is defined to be a nearest point projection on $Q_{b,u}$ in the metric $F_{b,u}$. Now suppose $b\notin B\pr$. Let $a$ be a nearest point projection of $b$ on $B\pr$ and $\pi_B(a)=v$. Since $B\pr$ is $(1,6\dl_0)$-qi embedded in $\pi_B^{-1}(T_{\mfY})$, so is in $B$. Then $B\pr$ is $K\pr$-quasiconvex in $B$, where $K\pr=K_{\ref{quasi-goes-to-quasi}}(\dl_0,max\{1,6\dl_0\},0)$. Note that $a$ is coarsely well defined. We also assume that $a_-\in[a,b]_B$ such that $a\ne a_-$ and $d_B(a,a_-)\le1$, and let $\pi_B(a_-)=w$. Let $x\pr$ be a nearest point projection of $x$ on $F_{a_-,w}$ in the metric $X$. Then we define $\rho(x)$ as nearest point projection of $x\pr$ on $Q_{a,v}$ in the path metric $F_{aa_-}:=\pi_X^{-1}([a,a_-]_B)$.
	
	Now we prove $d_X(\rho(x),\rho(y))$ is uniformly bounded where $x,y\in X_{vsp}$ and $d_X(x,y)\le1$. Let $\pi_X(x)=a,~\pi_X(y)=b$ and $\pi(x)=v,~\pi(y)=w$. We consider the following cases, depending on the position of $a,b,v$ and $w$.
	
	{\bf Case 1}: Suppose $a,b\in B\pr$. We consider two subcases, depending on whether $v=w$ or $v\ne w$.
	
	\emph{Subcase} (1A): Suppose $v=w$. We proof it by dividing into two parts, when $a=b$ and $a\ne b$.
	
	\emph{Subsubcase} (1AA): Suppose $a=b$. Since $Q_{a,v}$ is $2\dl_0$-quasiconvex in $F_{a,v}$, by Lemma \ref{proj-on-qc} $(1)$, $d^f(\rho(x),\rho(y))\le2C_{\ref{proj-on-qc}}(\dl_0,2\dl_0)=L_1$ (say).
	
	\emph{Subsubcase} (1AB): Suppose $a\ne b$. Note that $d_B(a,b)\le d_X(x,y)\le1$. Now through each point in $\mfY\cap X_v$ there is $K$-qi section over $B_v$ lying inside $\mfY\cap X_v$. Define a map $\psi:F_{a,v}\ri F_{b,v}$ as follows. For $z\in Q_{a,v}$ take $\psi(z)\in Q_{b,v}$ such that $d_{X_v}(\psi(z),z)\le2K$. For $z\notin Q_{a,v}$ take $\psi(z)\in F_{b,v}$ such that $d(\psi(z),z)\le c_0$ (as in Definition \ref{treesofmetric-bun}). In either case, $d_{X_v}(\psi(z),z)\le2K,\fa~z\in F_{a,v}$. Then by \cite[Lemma $1.15$]{pranab-mahan}, $\psi$ is a $g(2K+c_0)$-quasiisometry for some function $g:\R_{\ge0}\ri \R_{\ge0}$. Again $\fa~\xi\in Q_{b,v},\exists~\eta\in Q_{a,v}$ such that $d_{X_v}(\xi,\eta)\le 2K$ and $\eta$ is further $2K$-close to a point in $\psi(Q_{a,v})$, i.e. $Q_{b,v}\sse N_{4K}(\psi(Q_{a,v}))$ in the metric of $X_v$. Since fibers are $\phi$-properly embedded, $Q_{b,v}\sse N^f_{\phi(4K)}(Q_{a,v})$ (see \ref{connecting-edge-spaces} for notation). Then $\psi(Q_{a,v})\sse Q_{b,v}$ implies $Hd^f(\psi(Q_{a,v}),Q_{b,v})\le \phi(4K)$ in the metric of $F_{b,v}$.
	
	Let $y_1$ be a nearest point projection of $y$ on $\psi(Q_{a,v})$ in the metric of $F_{b,v}$. Now by Lemma \ref{quasi-goes-to-quasi} $(1)$, there is a constant $K_1=K_{\ref{quasi-goes-to-quasi}}(\dl_0,g(2K+c_0),2\dl_0)\ge2\dl_0$ such that $\psi(Q_{a,v})$ is $K_1$-quasiconvex in $F_{a,v}$, and so by Lemma \ref{proj-on-qc} $(2)$, $d^f(y_1,\rho(y))\le E_{\ref{proj-on-qc}}(\dl_0,K_1,\phi(4K))$. Again Lemma \ref{quasi-goes-to-quasi} $(2)$ says that there is $D=D_{\ref{quasi-goes-to-quasi}}(\dl_0,g(2K+c_0),2\dl_0)$ for which $d_{X_v}(\psi(\rho(x)),y_1)\le D$. By the definition of $\psi$, we also have $d_{X_v}(\rho(x),\psi(\rho(x)))\le2K$. Hence combining these four inequalities, we have $L_2=2K+D_{\ref{quasi-goes-to-quasi}}(\dl_0,g(2K+c_0),2\dl_0)+E_{\ref{proj-on-qc}}(\dl_0,K_1,\phi(4K))$ such that $d_{X_v}(\rho(x),\rho(y))\le L_2$.
	
	\emph{Subcase} (1B): Suppose $v\ne w$. Then it follows that $d_X(x,y)=1$, $d_B(a,b)=1$, $x\in F_{\mfv,v},~y\in F_{\mfw,w}$ and $a\in B_v$, $b\in B_w$. To make things notationally consistent, we assume that $a=\mfv,b=\mfw$. Note that $[\mfv,\mfw]\sse B\pr\sse\pi_B^{-1}(T_{\mfY})$. Now irrespective of whether $[\mfv,\mfw]$ is an edge in $\mfB$ or not, we have $Hd_{\mfv\mfw}(P_{\mfw}(Q_{\mfv,v}),Q_{\mfw,w})\le max\{2K,\ep\}$. Then by Lemma \ref{need-in-mitra's-proj}, $d_X(\rho(x),\rho(y))\le d_{\mfv\mfw}(\rho(x),\rho(y))\le R_{\ref{need-in-mitra's-proj}}(2\dl_0,K,max\{2K,\ep\})=L_3$ (say).

	{\bf Case 2}: Suppose one of $a,b$ belongs to $B\pr$. Without loss of generality, we assume that $a\in B\pr$ and $b\notin B\pr$. Here we also consider the following subcases, depending on whether $v=w$ or $v\ne w$.
	
	\emph{Subcase} (2A): Suppose $v=w$. Let $\pi_X(\rho(y))=a\pr$. Then $d_B(a,a\pr)\le2$, and so $Hd_{X_v}(Q_{a,v},Q_{a\pr,v})$ $\le2K+K=3K$. Again, since diam$^f(Q_{b,v})\le C$ then diam$(Q_{a,v})\le4K+C$. Thus diam$^f(Q_{a,v})\le \phi(4K+C)$, and so $d_X(\rho(x),\rho(y))\le3K+\phi(4K+C)=L_4$ (say).
	
	\emph{Subcase} (2B): Suppose $v\ne w$. For the consistency of notation, we assume that $a=\mfv,b=\mfw$. Without loss of generality, we let $d_T(\pi_B(\mfB\cap B\pr),v)<d_T(\pi_B(\mfB\cap B\pr),w)$. Note that in this case, $\rho(y)$ is nearest point projection of $y$ on $Q_{\mfv,v}$ in the metric of $F_{\mfv\mfw}$. Then by Lemma \ref{need-in-mitra's-proj1}, $d_X(\rho(x),\rho(y))\le d_{\mfv\mfw}(\rho(x),\rho(y))\le R_{\ref{need-in-mitra's-proj1}}(2\dl_0)=L_5$ (say). 
	
{\bf Case 3}: Suppose $a,b\notin B\pr$. Let $a\pr=\pi_X(\rho(x)),b\pr=\pi_X(\rho(y))$ and $a\pr_-\in[a\pr,a]_B$, $b\pr_-\in[b\pr,b]_B$ such that $a\pr_-\ne a\pr$, $b\pr_-\ne b\pr$ and $d_B(a\pr,a\pr_-)\le1,d_B(b\pr,b\pr_-)\le1$. Since $d_X(x,y)\le1$, then $\pi_B(a\pr)=\pi_B(b\pr)$ and $\pi_B(a\pr_-)=\pi_B(b\pr_-)$. Let us rename $\pi_B(a\pr)$ as $v$ and $\pi_B(a\pr_-)$ as $w$ not to make notation-heavy. We consider the following subcases depending on whether $v=w$ or $v\ne w$.
	
\emph{Subcase} (3A): Suppose $v=w$. Note that $K\pr=K_{\ref{quasi-goes-to-quasi}}(\dl_0,max\{1,6\dl_0\},0)$. Since $B\pr$ is $K\pr$-quasiconvex in $B$ and $d_B(a,b)\le d_X(x,y)\le1$, so by Lemma \ref{proj-on-qc} $(1)$, $d_B(a\pr,b\pr)\le2C_{\ref{proj-on-qc}}(\dl_0,K\pr)$. Note that $\mfY\cap X_{v}$ forms a $K$-metric bundle and so $Hd_{X_{v}}(Q_{a\pr,v},Q_{b\pr,v})\le2KC_{\ref{proj-on-qc}}(\dl_0,K\pr)+K$. Since $a\pr_-,b\pr_-\in B_v$ then diam$^f(Q_{a\pr,v})\le \phi(4K+C)$ and diam$^f(Q_{b\pr,v})\le \phi(4K+C)$ (see \emph{Subcase} (2A) for instance). Hence $d_X(\rho(x),\rho(y))\le2KC_{\ref{proj-on-qc}}(\dl_0,K\pr)+K+\phi(4K+C)=L_6$ (say).
	
\emph{Subcase} (3B): Suppose $v\ne w$. For the consistency of notation, we assume that $a\pr=b\pr=\mfv$ and $a\pr_-=b\pr_-=\mfw$. In the same way, we consider the following two subcases.
	
\emph{Subsubcase} (3BA): Let $[\mfv,\mfw]$ be an edge in $\pi_B^{-1}(T_{\mfY})$. Without loss of generality, we assume that $d_B(\pi_B(\mfB\cap B\pr),v)<d_B(\pi_B(\mfB\cap B\pr),w)$. By the assumption diam$^f(Q_{\mfw,w})\le C$ and so $diam(P_{\mfw}(Q_{\mfv,v}))\le2\epsilon+C$ in the metric of $F_{\mfv\mfw}$. Then by Lemma \ref{small-imp-small}, there is a constant $C_1$ depending on $\lm\pr_0,\dl\pr_0$ and $2\ep+C$ such that the pair $(Q_{\mfv,v},F_{\mfw,w})$ is $C_1$-cobounded in the metric of $F_{\mfv\mfw}$. Therefore, $d_X(\rho(x),\rho(y))\le C_1=L_7$ (say).
	
\emph{Subsubcase} (3BB): Suppose $[\mfv,\mfw]$ is not an edge in $\pi_B^{-1}(T_{\mfY})$. Then by definition of semicontinuous family $\mfY$, the pair $(Q_{\mfv,v},F_{\mfw,w})$ is $C$-cobounded in the path metric of $F_{\mfv\mfw}$. So $d_X(\rho(x),\rho(y))\le C$.
	
Suppose $L=max\{L_i, C:1\le i\le 7\}=max\{L_i:1\le i\le7\}$. Therefore, by Lemma \ref{imp-coarse-retraction}, one can take $L_{\ref{mitra's-retraction-on-semicts-subsp}}=C_{\ref{imp-coarse-retraction}}(L)$.
\end{proof}

Next we show that a uniform neighborhood of semicontinuous families are path connected in $X$ and with the induced path metric from $X$, they are uniformly properly embedded in $X$. As a consequence, we will see that they are also (uniformly) qi embedded in $X$ (see Corollary \ref{semicts-is-qi-emb}).

\begin{prop}\label{semicts-is-proper-emb}
Suppose $K\ge1,~C\ge0$ and $\epsilon\ge0$. Then for all $L\ge max\{2\dl_0+1,2K\}$ there exists a proper map $\eta_{\ref{semicts-is-proper-emb}}=\eta_{\ref{semicts-is-proper-emb}}(K,L):\R_{\ge0}\ri\R_{\ge0}$ such that the following holds.
	
If $\mfY$ is a $(K,C,\epsilon)$-semicontinuous family (as in Definition \ref{semicts-subgrphs}) in $X$, then $N_L(\mfY\pr)$ is path connected and with the path metric on $N_L(\mfY')$ induced from $X$, the inclusion $N_L(\mfY\pr)\ri X$ is a $\eta_{\ref{semicts-is-proper-emb}}$-proper embedding.
\end{prop}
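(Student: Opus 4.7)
The plan is to combine Mitra's retraction $\rho_{\Y'} : X \to \Y'$ from Theorem \ref{mitra's-retraction-on-semicts-subsp} with an explicit construction of short paths in $N_L(\Y')$ between nearby points of $\Y'$, proving path-connectedness and proper embedding simultaneously. The crux is a sub-claim: for every $u, v \in \Y'$ with $d_X(u, v) \le M$, there is a path from $u$ to $v$ in $N_L(\Y')$ of length bounded by some $\nu(M)$ depending only on $M, K, L$ and the structural constants.

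For the sub-claim, I would use the $(1, 6\delta_0)$-qi embedding $B_{\Y'} \hookrightarrow B$ from condition $(5)$ of Definition \ref{semicts-subgrphs} to obtain a discrete path $\gamma = a_0 a_1 \cdots a_N$ in $B_{\Y'}$ from $\Pi_B(u)$ to $\Pi_B(v)$ with $N \le M + 6\delta_0$, and then lift $\gamma$ edge-by-edge to $u = u_0, u_1, \ldots, u_N$ with $u_i \in Q_{a_i, \Pi_T(a_i)}$, joining consecutive $u_i$'s by $X$-geodesics of uniformly bounded length. Three types of hops arise according to the edge $[a_i, a_{i+1}]$: (a) inside a single piece $B_v$, where condition $(2)$ furnishes a $K$-qi section through $u_i$ and $d_X(u_i, u_{i+1}) \le 2K$; (b) across $[c_v, c_w]$ with $v, w \in \T$, where the moreover part of condition $(3)$ gives $\mathrm{Hd}^e(Q_{c_v,v}, Q_{c_w,w}) \le K$ and so $d^e(u_i, u_{i+1}) \le K$; (c) across $[c_v, c_w]$ with $w \in T_\Y \setminus \T$ strictly further from $\T$, where condition $(3)$ gives $Q_{c_w,w} \subseteq N^e_K(Q_{c_v,v})$ and $\mathrm{Hd}^e(P_w(Q_{c_v,v}), Q_{c_w,w}) \le \epsilon$; in this last case, the hop toward $\T$ is immediate, while the hop away from $\T$ requires first a fiber repositioning of $u_i$ inside $Q_{c_v,v}$, whose length is controlled using the $f$-properness of the fiber inclusion together with the $\epsilon$-estimate and the global bound $M$. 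Each hop lies in $N_L(\Y')$ since $L \ge 2K$, each in-fiber move lies in $N_{4\delta_0}(\Y') \subseteq N_L(\Y')$ by $4\delta_0$-quasiconvexity of the relevant $Q_{b,v}$ together with $L \ge 4\delta_0 + 1$, and the final in-fiber connection from $u_N$ to $v$ inside $Q_{b_v, v_v}$ is handled identically. Summing over the $N$ edges gives the bound $\nu(M)$.

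Given the sub-claim, for $x, y \in N_L(\Y')$ with $d_X(x, y) = r$, I pick $x', y' \in \Y'$ within $L$ of $x, y$ and join $x \to x', y \to y'$ by $X$-geodesics (which lie in $N_L(\Y')$ automatically since every point is within $L$ of the $\Y'$-endpoint). Discretizing the $X$-geodesic from $x'$ to $y'$ into unit steps $z_0 = x', \ldots, z_n = y'$ with $n \le r + 2L + 1$ and applying $\rho_{\Y'}$ yields $\rho(z_i) \in \Y'$ with $d_X(\rho(z_i), \rho(z_{i+1})) \le 2 L_{\ref{mitra's-retraction-on-semicts-subsp}}$; invoking the sub-claim with the constant $M = 2 L_{\ref{mitra's-retraction-on-semicts-subsp}}$ joins each consecutive pair in $N_L(\Y')$ by a path of length $\le \nu(2 L_{\ref{mitra's-retraction-on-semicts-subsp}})$. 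Concatenation gives a path in $N_L(\Y')$ of total length at most $2L + (r + 2L + 1)\,\nu(2 L_{\ref{mitra's-retraction-on-semicts-subsp}})$, linear (hence proper) in $r$; path-connectedness of $N_L(\Y')$ is automatic from the same construction.

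The main obstacle is the outward hop in case (c): because condition $(3)$ bounds the Hausdorff distance in only one direction when $w \notin \T$, the fiber repositioning before such a hop has no obvious a priori bound. The resolution exploits that the sub-claim is applied only with the constant $M = 2 L_{\ref{mitra's-retraction-on-semicts-subsp}}$, so the entire $X$-trajectory is bounded by $2 L_{\ref{mitra's-retraction-on-semicts-subsp}}$ plus the cumulative hop length, and this $X$-bound passes to a fiber bound via the $f$-proper embedding of fibers in $X$, while the estimate $\mathrm{Hd}^e(P_w(Q_{c_v,v}), Q_{c_w,w}) \le \epsilon$ ensures that after repositioning the actual edge crossing has length at most $K + \epsilon$.
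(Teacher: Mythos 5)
Your overall strategy — discretize an $X$-geodesic between $x$ and $y$, push the breakpoints down to $\Y'$ via Mitra's retraction $\rho_{\Y'}$, and reduce to a sub-claim about joining two points of $\Y'$ that are a \emph{fixed} $X$-distance $M = 2L_{\ref{mitra's-retraction-on-semicts-subsp}}$ apart — is a genuinely different route from the paper. The paper does not invoke Mitra's retraction at all in this proof; it proves the full estimate $d'(x,y)\le 4K(n+6\delta_0) + D(n)$ directly for arbitrary $n$, by picking a tripod centre $u_{xy}$ in $T$ and a point $\tilde z \in B'_{u_{xy}}$ on $[\tilde x,\tilde y]_{B_{\Y'}}$, taking \emph{two} $K$-qi lifts $\gamma_{xz_1}$ and $\gamma_{z_2 y}$, one through $x$ over $[\tilde x,\tilde z]_{B_{\Y'}}$ and one through $y$ over $[\tilde z,\tilde y]_{B_{\Y'}}$, and then joining $z_1$ to $z_2$ inside the single fibre $F_{\tilde z, u_{xy}}$ using the $f$-properness of the fibre plus the $4\delta_0$-quasiconvexity of $Q_{\tilde z,u_{xy}}$ and Lemma~\ref{qc_is_qi_emb}. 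Your framework, if the sub-claim holds, would actually yield a linear bound in $r$ and thus qi-embedding directly, which is a modest bonus; but the sub-claim is exactly where the difficulty lies.

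The gap is in your proposed proof of the sub-claim. You lift the discrete path $a_0\cdots a_N$ in $B_{\Y'}$ edge-by-edge starting from $u = u_0$. Once the lift passes the branching vertex $u_{xy}$ and starts moving away from $\T$, each hop $u_i \in Q_{c_v,v} \to u_{i+1} \in Q_{c_w,w}$ is of type~(c) in the outward direction, and here condition~(3) only bounds $d^e(x, Q_{c_v,v}) \le K$ for $x\in Q_{c_w,w}$ and $\mathrm{Hd}^e(P_w(Q_{c_v,v}),Q_{c_w,w})\le\epsilon$ — it gives \emph{no} bound on $d^e(u_i, Q_{c_w,w})$ for a given $u_i\in Q_{c_v,v}$. (The fibre $Q_{c_v,v}$ can be much larger than $Q_{c_w,w}$, which is roughly the projection of $Q_{c_v,v}$.) Your ``resolution'' appeals to the bound $M$ on $d_X(u,v)$ together with $f$-properness, but this is circular: the statement ``the $X$-trajectory is bounded by $M$ plus the cumulative hop length'' only delivers a fibre bound on the repositioning at step $i$ once the cumulative hop length up to step $i$ is \emph{already} bounded, and that is precisely what you have not yet established for outward hops. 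Concretely, nothing in your argument forbids the first outward hop from requiring an arbitrarily long in-fibre repositioning.

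The natural fix — which is exactly what the paper does — is to lift from \emph{both} ends toward the branching vertex rather than from one end all the way to the other. Over $[\tilde x,\tilde z]_{B_{\Y'}}$ you use the $K$-qi section through $x$, over $[\tilde z,\tilde y]_{B_{\Y'}}$ the $K$-qi section through $y$; both of these move toward $\T$ from their respective endpoints, so every hop is of the ``toward $\T$'' type and hence bounded by $2K$. The two lifted endpoints $z_1,z_2$ then lie in the single fibre over $\tilde z$, and only there does one need the $X$-distance $d_X(z_1,z_2)\le 4K(n+6\delta_0)+n$ translated via $f$ into a fibre bound, which is exactly the non-circular use of $f$-properness you were aiming for. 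As written, your sub-claim proof cannot be completed without this two-sided modification.
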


\begin{proof}
It is clear that $N_L(\mfY\pr)$ is path connected. We denote the path metric on $N_L(\mfY\pr)$ induced from $X$ by $d\pr$.
	
For second part, we first show that for $r\in\R_{\ge0}$, $x,y\in\mfY\pr$ and $d_X(x,y)\le r$ we have a bound on $d\pr(x,y)$ in terms of $r$, and in the end, we show the same for points in $N_L(\mfY\pr)$. Fix $u\in\pi_B(\mfB\cap B\pr)$. We take $t$, the center of the tripod in $T$ with vertices $\pi(x)$, $\pi(y)$, $u$ if $[\pi(x),\pi(y)]_T\cap\pi_B(\mfB\cap B\pr)=\emptyset$, otherwise, $t\in[\pi(x),\pi(y)]_T\cap\pi_B(\mfB\cap B\pr)$ arbitrary. Let $a=\pi_X(x),b=\pi_X(y)$. Then $d_B(a,b)\le d_X(x,y)\le r$. Since the inclusion $B\pr\ri \pi_B^{-1}(T_{\mfY})$ is a $(1,6\dl_0)$-qi embedding, $d_{B\pr}(a,b)\le r+6\dl_0$. Let $c\in B_t\cap [a,b]_{B\pr}$ be arbitrary. Then $d_{B\pr}(a,c)\le r+6\dl_0$ and $d_{B\pr}(c,b)\le r+6\dl_0$. By taking $K$-qi lifts of geodesics $[a,c]_{B\pr}$ and $[c,b]_{B\pr}$ in $\mfY$ (more precisely, in $\mfY\pr$), we get $x_1,y_1\in Q_{c,t}$ such that $d\pr(x,x_1)\le2K(r+6\dl_0)$ and $d\pr(y,y_1)\le2K(r+6\dl_0)$ (see Lemma \ref{qi-sec-inside-lad-len-lift} $(3)$). Now $d_X(x_1,y_1)\le d_X(x_1,x)+d_X(x,y)+d_X(y,y_1)\le d\pr(x_1,x)+d_X(x,y)+d\pr(y,y_1)\le4K(r+6\dl_0)+r$ $\Rightarrow d^f(x_1,y_1)\le \phi(4K(r+6\dl_0)+r)$.
	
Note that $N^f_{2\dl_0+1}(Q_{c,t})\sse {N_L(\mfY\pr)}\cap F_{c,t}$  as $L\ge2\dl_0+1$, and $x_1,y_1\in Q_{c,t}$. Then by Lemma \ref{qi-emb-in-Y-X}, there is $D(r)$ depending on $r$ such that $d\pr(x_1,y_1)\le D(r)$. Hence $d\pr(x,y)\le d\pr(x,x_1)+d\pr(x_1,y_1)+d\pr(y_1,y)\le4K(r+6\dl_0)+D(r)$.
	
Now suppose $x,y\in N_L(\mfY\pr)$ such that $d_X(x,y)\le r$. Then $\exists~x_1,y_1\in\mfY\pr$ such that $d\pr(x,x_1)\le L$ and $d\pr(y,y_1)\le L$. So $d_X(x_1,y_1)\le2L+r$. Thus by above, $d\pr(x_1,y_1)\le 4K(2L+r+6\dl_0)+D(2L+nr)$. Hence combining these inequalities, we get $d\pr(x,y)\le 4K(2L+r+6\dl_0)+D(2L+r)+2L$.
	
Therefore, we can take $\eta_{\ref{semicts-is-proper-emb}}:\R_{\ge0}\ri\R_{\ge0}$ sending $r\mapsto4K(2L+r+6\dl_0)+D(2L+r)+2L$.
\end{proof}
As a consequence we have the following corollary (see Lemma \ref{lrape_qi-emb}).
\begin{cor}\label{semicts-is-qi-emb}
Suppose $K\ge1,~C\ge0$ and $\epsilon\ge0$. Then for all $L\ge max\{2\dl_0+1,2K\}$ there exists a constant $L_{\ref{semicts-is-qi-emb}}=L_{\ref{semicts-is-qi-emb}}(K,L):=L_{\ref{lrape_qi-emb}}(\eta_{\ref{semicts-is-proper-emb}}(K,L),L_{\ref{mitra's-retraction-on-semicts-subsp}}(K),L)$ such that the following holds.
	
If $\mfY$ is a $(K,C,\epsilon)$-semicontinuous family (as in Definition \ref{semicts-subgrphs}) in $X$, then the inclusion $N_L(\mfY\pr)\ri X$ is a $L_{\ref{semicts-is-qi-emb}}$-qi embedding in $X$.
\end{cor}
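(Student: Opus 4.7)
The plan is to combine three earlier results, essentially as a one-line deduction. The target is to verify the hypotheses of Lemma \ref{lrape_qi-emb} with the subspace $\Y\pr \subseteq X$; that lemma then produces the qi-embedding of $N_R(\Y\pr)$ into $X$ whose constant depends only on $K$ and $R$.

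First, I would invoke Theorem \ref{mitra's-retraction-on-semicts-subsp} to produce a retraction $\rho_{\Y\pr}: X \to \Y\pr$ that is $L_{\ref{mitra's-retraction-on-semicts-subsp}}(K)$-coarsely Lipschitz. A brief look at the construction in that proof confirms that $\rho_{\Y\pr}$ fixes $\Y\pr$ pointwise: if $y \in \Y\pr$ then $\Pi_B(y) \in B_{\Y\pr}$, and the definition of $\rho_{\Y\pr}(y)$ is a nearest-point projection onto $Q_{\Pi_B(y), \Pi(y)} \ni y$, which may be taken to be $y$ itself. This supplies the coarsely Lipschitz retraction required by Lemma \ref{lrape_qi-emb}.

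Next, since by hypothesis $R \ge \max\{4\dl_0 + 1,\, 2K\}$, Proposition \ref{semicts-is-proper-emb} applies directly: it yields a proper function $\eta_{\ref{semicts-is-proper-emb}}(K,R)$ such that the inclusion $i: N_R(\Y\pr) \hookrightarrow X$ is $\eta_{\ref{semicts-is-proper-emb}}$-proper embedding, and moreover that $N_R(\Y\pr)$ is path-connected (in particular rectifiably connected), which is the second hypothesis Lemma \ref{lrape_qi-emb} needs.

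With both ingredients in place, I would apply Lemma \ref{lrape_qi-emb} taking the subspace there to be $\Y\pr$, the neighborhood radius to be $R$, the coarse-Lipschitz constant to be $L_{\ref{mitra's-retraction-on-semicts-subsp}}(K)$, and the proper function to be $\eta_{\ref{semicts-is-proper-emb}}(K,R)$. This produces a qi-embedding constant
\[
L_{\ref{semicts-is-qi-emb}} := L_{\ref{lrape_qi-emb}}\bigl(\eta_{\ref{semicts-is-proper-emb}}(K,R),\, L_{\ref{mitra's-retraction-on-semicts-subsp}}(K),\, R\bigr),
\]
depending only on $K$ and $R$, for the inclusion $Y\pr_{KR} = N_R(\Y\pr) \hookrightarrow X$. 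There is no real obstacle here; all the work has already been carried out in the two preceding statements and in Lemma \ref{lrape_qi-emb}, and this corollary is simply their formal combination.
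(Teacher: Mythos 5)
Your proposal matches the paper's proof exactly: it invokes Lemma \ref{lrape_qi-emb} with the coarsely Lipschitz retraction from Theorem \ref{mitra's-retraction-on-semicts-subsp} and the proper embedding from Proposition \ref{semicts-is-proper-emb}, arriving at the same constant $L_{\ref{lrape_qi-emb}}\bigl(\eta_{\ref{semicts-is-proper-emb}}(K,R),\, L_{\ref{mitra's-retraction-on-semicts-subsp}}(K),\, R\bigr)$. You also helpfully spell out the verification that the retraction fixes $\Y\pr$ pointwise and that $N_R(\Y\pr)$ is rectifiably connected, which the paper leaves implicit.
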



\begin{remark}\label{3-in-one}
Conclusion of Theorem \ref{mitra's-retraction-on-semicts-subsp}, Proposition \ref{semicts-is-proper-emb} and Corollary \ref{semicts-is-qi-emb} holds for $\mfY$ as well.
\end{remark}

\subsection{Flow space}\label{flowsps}
Suppose $(X,B,T)$ is a tree of metric bundles as in Definition \ref{treesofmetric-bun}. Let $u\in T,a\in B_u$. Given a subset $A_{a,u}$ of $F_{a,u}$, we define (rather, construct) the flow space of $A_{a,u}$, which is a semicontinuous family in $X$ with a central base (possibly bigger than) $B_u$. The construction of this flow space is by induction as follows. Let $k\ge K_{\ref{qi-sec-inside-lad-len-lift}}$ be fixed.

{\bf Step 1}: $\mathcal{G}_{a,u}=\{\gm:\gm \textrm{ is a } k\textrm{-qi section over }B_u\textrm{ through a point in }A_{a,u}\}$. Let $b\in B_u$ and $Q_{b,u}=\textrm{hull}\{\gm(b):\gm\in\mathcal{G}_{a,u}\}\sse F_{b,u}$, where quasiconvex hull is considered in the corresponding fiber. Note that $Q_{b,u}$ is $2\dl_0$-quasiconvex in $F_{b,u}$ (see Remark \ref{hull-is-qc}) and by Lemma \ref{qi-sec-inside-lad-len-lift} $(2)$, $\bigcup_{b\in B_u}Q_{b,u}$ forms a $C_{\ref{qi-sec-inside-lad-len-lift}}(k)$-metric bundle over $B_u$.

{\bf Step 2}: We extend this to other $X_v$ by induction on $d_T(u,v)$. Suppose we have extended it till $X_v$, where $d_T(u,v)=n$. Let $w\in T$ such that $d_T(u,w)=n+1$ and $d_T(v,w)=1$. Let $[\mfv,\mfw]$ be the edge joining $\mfv\in B_v$ and $\mfw\in B_w$. We denote $Q_{b,t}$ as the intersection of the flow space we are constructing with $F_{b,t}$ for $t\in T,b\in B_t$. We first flow $Q_{\mfv,v}$ in $F_{\mfw,w}$ and then by Step $1$ above in the entire $X_w$, provided $Q_{\mfw,w}\ne\emptyset$.

Let us fix $R\ge R_{\ref{R-sep-D-cobdd}}(\dl\pr_0,\lambda\pr_0)$, and let $R\pr=R\pr_{\ref{R-sep-D-cobdd}}(\dl\pr_0,\lambda\pr_0,R)$. Note that $Q_{\mfv,v}$ is $2\dl_0$-quasiconvex in $F_{\mfv,v}$ and so is $\lm\pr_0$-quasiconvex in $F_{\mfv\mfw}$ (see Lemma \ref{com-two-hyp-sps}$~(2)$). Suppose $N^{\mfv\mfw}_R(Q_{\mfv,v})\cap F_{\mfw,w}\ne\emptyset$. Then by Lemma \ref{R-sep-D-cobdd} $(2)$, $P_{\mfw}(Q_{\mfv,v})\sse N^{\mfv\mfw}_{R\pr}(Q_{\mfv,v})\cap F_{\mfw,w}=:Q\pr_{\mfw,w}$ (say). Let $Q_{\mfw,w}:=\textrm{hull}(Q\pr_{\mfw,w})\sse F_{\mfw,w}$, where quasiconvex hull is considered in $F_{\mfw,w}$. Note that $Q_{\mfw,w}$ is $2\dl_0$-quasiconvex in $F_{\mfw,w}$. 
Now we apply Step $1$ to $Q_{\mfw,w}$ by considering all $k$-qi section over $B_w$ through points in $Q_{\mfw,w}$.

If $N^{\mfv\mfw}_R(Q_{\mfv,v})\cap F_{\mfw,w}=\emptyset$, then we will not `flow' $Q_{\mfv,v}$ in that direction. In other words, let $S$ be the connected component of $T\setminus\{v\}$ containing $w$. Then $\fa~ t\in S\textrm{ and }\fa~b\in B_t$, we have $Q_{b,t}=\emptyset$.

Now we prove the following properties which verify that the subspace we are constructing is a semicontinuous family. Let $v,w\in T$ such that $d_T(u,v)<d_T(u,w)$.

{\bf Property 1}: Suppose $Q_{\mfv,v}\ne\emptyset$ and $N^{\mfv\mfw}_R(Q_{\mfv,v})\cap F_{\mfw,w}=\emptyset$. Then the pair $(Q_{\mfv,v},F_{\mfw,w})$ is $C:=D_{\ref{R-sep-D-cobdd}}(\dl\pr_0,\lambda\pr_0)$-cobounded in $F_{\mfv\mfw}$. Indeed, $Q_{\mfv,v}$ and $F_{\mfw,w}$ are $\lambda\pr_0$-quasiconvex in $F_{\mfv\mfw}$ (see Lemma \ref{com-two-hyp-sps}$~(2)$). So by Lemma \ref{R-sep-D-cobdd} (1), we are done.

{\bf Property 2}: Suppose both $Q_{\mfv,v}$ and $Q_{\mfw,w}$ are nonempty. Then $Q_{\mfw,w}\sse N^{\mfv\mfw}_{K\pr}(Q_{\mfv,v})$ for some uniform constant $K\pr\ge0$.

\emph{Proof.} Let $x\in Q_{\mfw,w}$. Then $\exists~x_1,x_2\in Q\pr_{\mfw,w}$ and $x\pr\in[x_1,x_2]_{F_{\mfw,w}}$ such that $d^f(x,x\pr)\le\dl_0$. Let $y_1,y_2\in Q_{\mfv,v}$ such that $d_{\mfv\mfw}(x_i,y_i)\le R\pr,~i=1,2$. Note that $Q_{\mfv,v}$ is $L\pr_0$-qi embedded in $F_{\mfv\mfw}$ (Lemma \ref{com-two-hyp-sps}). Then by slimness of quadrilateral in $F_{\mfv\mfw}$ with vertices $x_1,x_2,y_1$ and $y_2$, there is $x\prr\in Q_{\mfv,v}$ such that $d_{\mfv\mfw}(x\pr,x\prr)\le 2D_{\ref{ml}}(\dl\pr_0,L\pr_0,L\pr_0)+R\pr+2\dl\pr_0+2\dl_0$. Thus $d(x,x\prr)\le2D_{\ref{ml}}(\dl\pr_0,L\pr_0,L\pr_0)+R\pr+2\dl\pr_0+2\dl_0+\dl_0=:{K\pr}$.

{\bf Property 3}: Suppose both $Q_{\mfv,v}$ and $Q_{\mfw,w}$ are nonempty. Then $Hd_{\mfv\mfw}(P_{\mfw}(Q_{\mfv,v}),Q_{\mfw,w})\le\ep$ for some uniform constant $\ep\ge0$.

\emph{Proof.} Property (2) tells that $Q_{\mfw,w}\sse N^{\mfv\mfw}_{2K\pr}(P_{\mfw}(Q_{\mfv,v}))$. Again by construction $P_{\mfw}(Q_{\mfv,v})\sse Q_{\mfw,w}$. So $Hd_{\mfv\mfw}(P_{\mfw}(Q_{\mfv,v}),Q_{\mfw,w})\le2K\pr=:\ep$.

\smallskip
We denote $\mathcal{F}l_K(A_{a,u}):=\bigcup\limits_{v\in T,~b\in B_v}Q_{b,v}$, where $K=max\{K\pr,C_{\ref{qi-sec-inside-lad-len-lift}}(k)\}$.

\begin{defn}[\bf Flow space]\label{flow-space-def}
We say that $\F l_K(A_{a,u})=\bigcup_{v\in T,~b\in B_v}Q_{b,v}$ is the flow space of $A_{a,u}$ with parameters $k\ge K_{\ref{qi-sec-inside-lad-len-lift}}$ and $R\ge R_{\ref{R-sep-D-cobdd}}(\dl\pr_0,\lambda\pr_0)$. It is clear from the construction that $\F l_K(A_{a,u})$ is a $(K,C,\ep)$-semicontinuous family, where $K=K_{\ref{flow-space-def}}(k,R)=max\{K\pr,C_{\ref{qi-sec-inside-lad-len-lift}}(k)\},~C=D_{\ref{R-sep-D-cobdd}}(\dl\pr_0,\lambda\pr_0)$ and $\epsilon=\epsilon_{\ref{flow-space-def}}(R)$ are as in the above properties.
	
In particular, for any $u\in T$ and $a\in B_u$, suppose $\F l_K(F_{a,u})$ is the flow space of $F_{a,u}$ with parameters $k\ge K_{\ref{qi-sec-inside-lad-len-lift}},~R\ge R_{\ref{R-sep-D-cobdd}}(\dl\pr_0,\lambda\pr_0)$, where $K=K_{\ref{flow-space-def}}(k,R)$. Then by Lemma \ref{qi-sec-inside-lad-len-lift} $(1)$, $X_u\sse\F l_K(F_{a,u})$. In this case, we denote the flow space by $\F l_K(X_u)$ and we say that $\F l_K(X_u)$ is the flow space of $X_u$ with parameters $k,R$.

Although $\F l_K(A_{a,u})$ $($or $\F l_K(X_u))$ depends on the constants $C,\epsilon$ and the other structural constants of $(X,B,T)$, we make them implicit in our notation.
\end{defn}

We have defined the flow space of a subset of a fiber and of $X_u$ for $u\in T$ in Definition \ref{flow-space-def}. Below we make it a bit general, and this will be used in Section \ref{union-of-two-flow-sp-is-hyp}.


\begin{defn}[\bf Flow space of metric bundles]\label{gen-flow-sp-1}
Fix $k\ge K_{\ref{qi-sec-inside-lad-len-lift}}$. Let $S$ be a subtree of $T$ and $R\ge max\{R_{\ref{R-sep-D-cobdd}}(\dl\pr_0,\lambda\pr_0),k\}$. 
Suppose $H$ is a $k$-metric bundle over $B_S$ (see Definition \ref{K-metric-graph-bundle}). Let $H_{b,u}:=H\cap F_{b,u}$ for $u\in S,~b\in B_u$. We also assume that $H_{b,u}$ is $2\dl_0$-quasiconvex in $F_{b,u}$. Suppose $\mathcal{S}(S,1)=\{w\in T:d_T(S,w)=1\}$. Let $w\in\mathcal{S}(S,1)$ and $v\in S$ such that $d_T(v,w)=1$. Let $T_{wv}$ be the union of the connected component of $T\setminus \{v\}$ containing $w$ and the edge $[v,w]$. Suppose $[\mfv,\mfw]$ is the edge joining $\mfv\in B_v$ and $\mfw\in B_w$. Let $\F l^{T_{wv}}_K(H_{\mfv,v})$ be the flow space of $H_{\mfv,v}$ considered only in $X_{T_{wv}}$ (with the parameters $k,R$) such that $\F l^{T_{wv}}_K(H_{\mfv,v})\cap X_{v}=H\cap X_{v}$, where $K=K_{\ref{flow-space-def}}(k,R)$. We define $\F l_K(H):=\bigcup_{w\in\mathcal{S}(S,1)}\F l^{T_{wv}}_K(H_{\mfv,v})$ as flow space of $H$.
	
It is clear that $\F l_K(H)$ is a $(K,C,\epsilon)$-semicontinuous family with a central base $B_S$, where $K=K_{\ref{flow-space-def}}(k,R)$,  $C=D_{\ref{R-sep-D-cobdd}}(\dl\pr_0,\lambda\pr_0)$, $\epsilon=\epsilon_{\ref{flow-space-def}}(R)$.
	
\end{defn}


\begin{remark}
$(1)$ In the construction of flow space above, in Step $(2)$, when we go from one metric bundle to the next metric bundle away from $u$, we took $R$-neighborhood of $Q_{\mfv,v}$ in $F_{\mfv\mfw}$ and intersected it with $F_{\mfw,w}$. If the intersection is nonempty then we proceed in the metric bundle $X_w$, otherwise, we do not. Equivalently, we can do the following in Step $(2)$ instead: we will check if the pair $(Q_{\mfv,v},F_{\mfw,w})$ is not $C$-cobounded in $F_{\mfv\mfw}$ for some pre-assumed constant $C\ge0$. If so, we will proceed in the metric bundle $X_w$, otherwise, we will not. With the help of Lemma \ref{R-sep-D-cobdd} $(1)$, one can easily check that these two definitions are equivalent. In this definition, flow spaces, in some special cases, looks very simple as in $(2)$ below. Of course, through out the paper, we will stick to Definitions \ref{flow-space-def} and \ref{gen-flow-sp-1} for flow spaces.

$(2)$ In Example \ref{trees of metric bundles exps}, suppose $G_i$ and $N_i$ are hyperbolic groups where $i=1,2$. Let $\pi_X:X\map B$ be the corresponding tree of metric bundles. In addition, if we take $H=\{1\}$, then flow space of $X_u$ coincide with $X_u$ for all $u\in V(T)$ with respect to the above equivalent definition for any fixed $C>0$. On the other hand, if $R$ as in Definition \ref{gen-flow-sp-1} is greater than or equal to $1$, then according to Definition \ref{gen-flow-sp-1}, $X_u$ is strictly contained in $\F l_K(X_u)$, and if $R<1$ then $\F l_K(X_u)=X_u$. Note that $R<1$ can possibly happen when the fibers are trees.
\end{remark} 

Now we will record some constants from the above discussion in the following lemma for later use.

\begin{lemma}\label{const-of-flow-sp}
Given $k\ge K_{\ref{qi-sec-inside-lad-len-lift}}$ and $R\ge max\{R_{\ref{R-sep-D-cobdd}}(\dl\pr_0,\lambda\pr_0),k\}$ there are constants $K_{\ref{const-of-flow-sp}}=K_{\ref{const-of-flow-sp}}(k,R)=K_{\ref{flow-space-def}}(k,R),~ \epsilon_{\ref{const-of-flow-sp}}=\epsilon_{\ref{const-of-flow-sp}}(R)=\epsilon_{\ref{flow-space-def}}(R)$ and $C_{\ref{const-of-flow-sp}}=D_{\ref{R-sep-D-cobdd}}(\dl\pr_0,\lambda\pr_0)$ such that the following hold.
\begin{enumerate}
\item Let $u\in T$. Then $\F l_{K_{\ref{const-of-flow-sp}}}(X_u)$ is a $(K_{\ref{const-of-flow-sp}},C_{\ref{const-of-flow-sp}},\epsilon_{\ref{const-of-flow-sp}})$-semicontinuous family with a central base $B_u$.
		
		
\item Let $S$ be a subtree of $T$ and $H$ be a $k$-metric bundle over $B_S$. Then $\F l_{K_{\ref{const-of-flow-sp}}}(H)$ is a $(K_{\ref{const-of-flow-sp}},C_{\ref{const-of-flow-sp}},\epsilon_{\ref{const-of-flow-sp}})$-semicontinuous family with a central base $B_S$.
		
	\end{enumerate}
\end{lemma}

Note that in Lemma \ref{const-of-flow-sp}, $(2)$ needs the condition $R\ge max\{R_{\ref{R-sep-D-cobdd}}(\dl\pr_0,\lambda\pr_0),k\}$ whereas $(1)$ holds for $R\ge R_{\ref{R-sep-D-cobdd}}(\dl\pr_0,\lambda\pr_0)$.

Consider the flow spaces with parameters $k,R$ as taken in Lemma \ref{const-of-flow-sp}. 
Let $\bm{K}=K_{\ref{const-of-flow-sp}}(k,R)$, $\bm{C}=C_{\ref{const-of-flow-sp}}$ and $\bm{\epsilon}=\epsilon_{\ref{const-of-flow-sp}}(R)$. 
Flow spaces being semicontinuous families, we have and restate the following three results for flow spaces (see Proposition \ref{mitra's-retraction-on-semicts-subsp}, Proposition \ref{semicts-is-proper-emb}, Corollary \ref{semicts-is-qi-emb} and also Remark \ref{3-in-one}) as they will be utilized extensively in Sections \ref{hyp-of-flow-sp} and \ref{union-of-two-flow-sp-is-hyp}.

\begin{prop}\label{mitra's-retraction-on-fsandgss}
	There exists $L_{\ref{mitra's-retraction-on-fsandgss}}=L_{\ref{mitra's-retraction-on-fsandgss}}(K)$-coarsely Lipschitz retraction $\rho_{\ref{mitra's-retraction-on-fsandgss}}:X\ri\F l_K(Z)$ where $Z\in\{X_u,H\}$.
\end{prop}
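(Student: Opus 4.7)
When $Z=X_u$ or $Z=H$, Lemma \ref{const-of-flow-sp} identifies $\F l_K(Z)$ as a single $(K,C_{\ref{const-of-flow-sp}},\epsilon_{\ref{const-of-flow-sp}})$-semicontinuous subgraph of $\mathsf{X}$, so the desired retraction is immediate from Theorem \ref{mitra's-retraction-on-semicts-subsp} (taking $\Y=\Y\pr=\F l_K(Z)$), and we may set $L_{\ref{mitra's-retraction-on-fsandgss}}=L_{\ref{mitra's-retraction-on-semicts-subsp}}(K)$. The only content is the case $Z=X_S$, where $\F l_K(X_S)=\bigcup_{u\in V(S)}\F l_K(X_u)$ is merely a union of semicontinuous subgraphs and carries no single central base in the sense of Definition \ref{semicts-subgrphs}. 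For each $u\in V(S)$ let $\rho_u:X\ri\F l_K(X_u)$ denote the $L_{\ref{mitra's-retraction-on-semicts-subsp}}(K)$-coarsely Lipschitz Mitra retraction furnished by Theorem \ref{mitra's-retraction-on-semicts-subsp}. Given $x\in X$ with $v:=\Pi(x)$, let $\mathfrak{u}(v)\in V(S)$ be the (unique) nearest-point projection of $v$ onto the subtree $S$, and define
\[
\rho(x)\;:=\;\rho_{\mathfrak{u}(v)}(x)\;\in\;\F l_K(X_{\mathfrak{u}(v)})\;\sse\;\F l_K(X_S).
\]

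\textbf{Retraction property.} Suppose $x\in\F l_K(X_S)$, so $x\in\F l_K(X_{u\pr})\cap X_v$ for some $u\pr\in V(S)$, and observe that $\mathfrak{u}(v)$ lies on the tree geodesic $[u\pr,v]\sse T$. The key input is that both steps of the flow-space construction are monotone in the seed set: Step $1$ evaluates $k$-qi sections through the seed and takes a $\dl_0$-neighborhood of the convex hull, while Step $2$ applies an edge-space nearest-point projection followed by an $R\pr$-neighborhood and a hull; each of these three operations respects inclusions. Consequently if $A\sse A\pr$ lie in a common fiber then $Q^A_{b,v}\sse Q^{A\pr}_{b,v}$ at every $(b,v)$ reached by the smaller seed. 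Applying this with $A=\F l_K(X_{u\pr})\cap X_{\mathfrak{u}(v)}\sse X_{\mathfrak{u}(v)}=A\pr$ and iterating the flow along $[\mathfrak{u}(v),v]$ gives $\F l_K(X_{u\pr})\cap X_v\sse\F l_K(X_{\mathfrak{u}(v)})\cap X_v$. Thus $x\in\F l_K(X_{\mathfrak{u}(v)})$, and since $\rho_{\mathfrak{u}(v)}$ is the identity on $\F l_K(X_{\mathfrak{u}(v)})$, we conclude $\rho(x)=x$.

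\textbf{Coarse Lipschitz estimate and main obstacle.} It suffices to bound $d_X(\rho(x),\rho(y))$ for $x,y\in V(X)$ with $d_X(x,y)\le 1$. Put $v=\Pi(x)$, $w=\Pi(y)$, so $d_T(v,w)\le 1$ because $\Pi$ is $1$-Lipschitz. Nearest-point projection onto the subtree $S$ is $1$-Lipschitz on $T$, and a short tree argument shows that the only way to have $\mathfrak{u}(v)\ne\mathfrak{u}(w)$ is $v,w\in V(S)$ with $v\ne w$; in that case $x\in X_v\sse\F l_K(X_v)$ and $y\in X_w\sse\F l_K(X_w)$, so the retraction property yields $\rho(x)=x$, $\rho(y)=y$, and $d_X(\rho(x),\rho(y))=1$. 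Otherwise $\mathfrak{u}(v)=\mathfrak{u}(w)=:u$ and the coarse Lipschitz property of the single retraction $\rho_u$ gives $d_X(\rho(x),\rho(y))\le 2L_{\ref{mitra's-retraction-on-semicts-subsp}}(K)$. Taking $L_{\ref{mitra's-retraction-on-fsandgss}}=2L_{\ref{mitra's-retraction-on-semicts-subsp}}(K)$ finishes the proof. The principal technical point is carefully establishing monotonicity of the flow-space construction across Steps $1$ and $2$ and propagating it along the tree geodesic $[\mathfrak{u}(v),v]$; once this is in place, both the identity property on $\F l_K(X_S)$ and the coarse Lipschitz bound follow from the above tree-projection case analysis.
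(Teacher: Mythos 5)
Your proof is correct, but it takes a genuinely different route from the paper. The paper disposes of the case $Z=X_S$ in one sentence by asserting that the fiberwise projection construction and its coarse-Lipschitz verification from Theorem \ref{mitra's-retraction-on-semicts-subsp} carry over verbatim to the union $\F l_K(X_S)=\bigcup_{u\in V(S)}\F l_K(X_u)$, even though Lemma \ref{const-of-flow-sp} records that $\F l_K(X_S)$ is only a \emph{union} of semicontinuous subgraphs rather than a single one with central base $B_S$; the burden there is on the reader to re-check all the cases. You instead reuse Theorem \ref{mitra's-retraction-on-semicts-subsp} as a black box, defining $\rho(x):=\rho_{\mathfrak{u}(\Pi(x))}(x)$ via the tree nearest-point projection $\mathfrak{u}$ onto $S$, and discharging the two required properties by (i) a monotonicity-of-flow observation to get the identity property, and (ii) a tree-combinatorial case split (adjacent $v,w$ have $\mathfrak{u}(v)\ne\mathfrak{u}(w)$ only when both lie in $S$) to get the coarse Lipschitz bound. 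Both the monotonicity claim and the tree argument are correct; the Markov nature of the flow construction (the flow beyond $X_{\mathfrak{u}(v)}$ depends only on the set $Q_{c_{\mathfrak{u}(v)},\mathfrak{u}(v)}$ in the connecting fiber) together with the facts that nearest-point projection, $R\pr$-neighborhoods, and convex hulls all preserve inclusions, and that $\F l_K(X_{\mathfrak{u}(v)})\cap X_{\mathfrak{u}(v)}=X_{\mathfrak{u}(v)}$, do give the stated containment. One small imprecision worth tightening: you state monotonicity for seeds \emph{in a common fiber} but then apply it to the whole bundles $\F l_K(X_{u\pr})\cap X_{\mathfrak{u}(v)}\sse X_{\mathfrak{u}(v)}$; the honest statement is about the two sets $Q_{c_{\mathfrak{u}(v)},\mathfrak{u}(v)}\sse F_{c_{\mathfrak{u}(v)},\mathfrak{u}(v)}$ governing the continuation of each flow across the edge out of $\mathfrak{u}(v)$, which is what the Markov property actually reduces the comparison to. The trade-off between the two approaches: the paper's is shorter on the page but requires re-inspecting the Mitra retraction case analysis; yours is more modular and conceptually cleaner but requires the monotonicity lemma, which the paper never states, to be written out carefully. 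Your constant $2L_{\ref{mitra's-retraction-on-semicts-subsp}}(K)$ is slightly worse than the paper's $L_{\ref{mitra's-retraction-on-semicts-subsp}}(K)$, which is harmless.
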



\begin{prop}\label{fsandgss-are-proper-emb}
Given $L\ge max\{2K,2\dl_0+1\}$, there is a proper map $\eta_{\ref{fsandgss-are-proper-emb}}=\eta_{\ref{fsandgss-are-proper-emb}}(K,L):\R_{\ge0}\ri\R_{\ge0}$ such that the inclusion $N_L(\F l_K(Z))\ri X$ is a $\eta_{\ref{fsandgss-are-proper-emb}}$-proper embedding in $X$, where $Z\in\{X_u,H\}$.
\end{prop}

\begin{cor}\label{fsandgss-are-qi-emb}
Given $L\ge max\{2K,2\dl_0+1\}$, there is a constant $L_{\ref{fsandgss-are-qi-emb}}=L_{\ref{fsandgss-are-qi-emb}}(K,L)$ such that the inclusion $N_L(\F l_K(Z))\ri X$ is a $L_{\ref{fsandgss-are-qi-emb}}$-qi embedding, where $Z\in\{X_u,H\}$.
\end{cor}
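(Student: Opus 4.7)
The plan is to invoke Lemma \ref{lrape_qi-emb} directly, with $Y=\F l_K(Z)$ playing the role of the ``core'' subset, $R=L$, and with the two technical hypotheses (properness and existence of a coarsely Lipschitz retraction) furnished by the two preceding results Proposition \ref{fsandgss-are-proper-emb} and Proposition \ref{mitra's-retraction-on-fsandgss}. The statement is essentially a mechanical combination, in the same spirit as Corollary \ref{semicts-is-qi-emb} was deduced from its predecessors.

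Concretely, fix $Z\in\{X_u,X_S,H\}$ and $L\ge\max\{2K,4\dl_0+1\}$. First, I verify that $Fl_{KL}(Z)=N_L(\F l_K(Z))$ is rectifiably connected. For $Z=X_u$ or $Z=H$ the flow space is a $(K,C,\epsilon)$-semicontinuous subgraph (Lemma \ref{const-of-flow-sp}, parts (1) and (2)), so the conclusion of Proposition \ref{semicts-is-proper-emb} (which needs exactly $L\ge\max\{2K,4\dl_0+1\}$) yields path-connectedness of $N_L$. For $Z=X_S$, $\F l_K(X_S)=\bigcup_{u\in S}\F l_K(X_u)$ is a union of such semicontinuous pieces indexed by a connected tree $S$, so $N_L$ is again connected; alternatively this is embedded in the proof of Proposition \ref{generalized-flow-sp-is-proper-emb}.

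Second, I read off the two analytic inputs: the inclusion $Fl_{KL}(Z)\hookrightarrow X$ is $\eta_{\ref{fsandgss-are-proper-emb}}(K,L)$-proper embedding by Proposition \ref{fsandgss-are-proper-emb}, and there is an $L_{\ref{mitra's-retraction-on-fsandgss}}(k,R)$-coarsely Lipschitz retraction $\rho:X\to\F l_K(Z)$ that is the identity on $\F l_K(Z)$ by Proposition \ref{mitra's-retraction-on-fsandgss}. These are precisely the hypotheses of Lemma \ref{lrape_qi-emb} applied to the pair $(X,\F l_K(Z))$ with retraction constant $C=L_{\ref{mitra's-retraction-on-fsandgss}}(k,R)$, proper-embedding function $f=\eta_{\ref{fsandgss-are-proper-emb}}(K,L)$, and neighborhood radius $R=L$.

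Finally, applying Lemma \ref{lrape_qi-emb} gives the required qi embedding with the explicit constant
\[
L_{\ref{fsandgss-are-qi-emb}}(K,L)\;:=\;L_{\ref{lrape_qi-emb}}\bigl(\eta_{\ref{fsandgss-are-proper-emb}}(K,L),\,L_{\ref{mitra's-retraction-on-fsandgss}}(k,R),\,L\bigr).
\]
There is no genuine obstacle here: the hard work was done in Proposition \ref{mitra's-retraction-on-fsandgss} (Mitra's retraction on semicontinuous subgraphs, which required the careful case analysis inside Theorem \ref{mitra's-retraction-on-semicts-subsp}) and in Proposition \ref{fsandgss-are-proper-emb} (properness, which ultimately rests on the $K$-qi lifts inside a flow space having bounded length, Lemma \ref{length-of-lift}). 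The only point that requires a moment's care is the case $Z=X_S$, since $\F l_K(X_S)$ is not itself a single semicontinuous subgraph; but both the retraction statement and the properness statement have already been established for this case in Proposition \ref{mitra's-retraction-on-fsandgss} and Proposition \ref{generalized-flow-sp-is-proper-emb} respectively, so Lemma \ref{lrape_qi-emb} applies to it without modification.
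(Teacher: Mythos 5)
Your proof is correct and takes exactly the approach the paper intends: invoke Lemma \ref{lrape_qi-emb} with the properness from Proposition \ref{fsandgss-are-proper-emb} and the coarsely Lipschitz retraction from Proposition \ref{mitra's-retraction-on-fsandgss}, exactly as Corollary \ref{semicts-is-qi-emb} was obtained from its predecessors. Your attention to the $Z=X_S$ case — noting that $\F l_K(X_S)$ is not a single semicontinuous subgraph but that the two inputs have nevertheless been established directly for it — is the one point worth flagging, and you handled it correctly.
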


Flow space being semicontinuous family, the fundamental and crucial property is the existence of qi sections through each point over the respective domain. If one carefully analyses the construction, one will realize the following. Given a qi section, one can construct, by taking larger neighborhood at the junction (more precisely, in $F_{\mfv\mfw}$), a flow space containing the qi section. The following lemma captures this property. Since the proof is straightforward, we omit it.
\begin{lemma}\label{flow-sp-contains-qi-sec}
Given $K\ge K_{\ref{const-of-flow-sp}}(K_{\ref{qi-sec-inside-lad-len-lift}},R_{\ref{R-sep-D-cobdd}}(\dl\pr_0,\lambda\pr_0))$ there are constants $K_{\ref{flow-sp-contains-qi-sec}}=K_{\ref{flow-sp-contains-qi-sec}}(K)=K_{\ref{const-of-flow-sp}}(K,K)$, $C_{\ref{flow-sp-contains-qi-sec}}=C_{\ref{const-of-flow-sp}}(\dl\pr_0,\lambda\pr_0)$ and $\epsilon_{\ref{flow-sp-contains-qi-sec}}=\epsilon_{\ref{flow-sp-contains-qi-sec}}(K)=\epsilon_{\ref{const-of-flow-sp}}(K)$ such that the following holds.
	
Suppose $S\sse T$ is a subtree and $u\in S$. Let $\gm$ be a $K$-qi section over $B_S$. Suppose $\F l_{K_{\ref{flow-sp-contains-qi-sec}}}(X_u)$ is the flow space of $X_u$ with parameters $K$ and $K$. Then $\gm\sse\F l_{K_{\ref{flow-sp-contains-qi-sec}}}(X_u)$ and $\F l_{K_{\ref{flow-sp-contains-qi-sec}}}(X_u)$ is a $(K_{\ref{flow-sp-contains-qi-sec}},C_{\ref{flow-sp-contains-qi-sec}},\epsilon_{\ref{flow-sp-contains-qi-sec}})$-semicontinuous family. 
\end{lemma}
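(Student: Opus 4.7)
The plan is to build $\F l_K(X_u)$ with parameters $k=R=K$ (so that $\F l_{K_{\ref{flow-sp-contains-qi-sec}}}(X_u)=\F l_{K_{\ref{const-of-flow-sp}}(K,K)}(X_u)$ and the asserted semicontinuous constants are obtained just by invoking Lemma~\ref{const-of-flow-sp}(1) with $k=R=K$), and then to prove by induction on $d_T(u,v)$, $v\in S$, that $\gm(b)\in Q_{b,v}$ for every $b\in B_v$. Before starting the induction I would check that $k=R=K$ is a legal choice: the hypothesis $K\ge K_{\ref{const-of-flow-sp}}(K_{\ref{uniform-qi-sec-in-bundle}},R_{\ref{R-sep-C-cobdd}}(\dl'_0,\lm'_0))=K_{\ref{K-section}}(K_{\ref{uniform-qi-sec-in-bundle}},R_{\ref{R-sep-C-cobdd}}(\dl'_0,\lm'_0))$ together with the explicit formulas $K_{\ref{K-section}}=\max\{C_{\ref{qi-section-inside-ladder}}(k)+\dl_0,K_{\ref{K-close}}(R)\}$ and $K_{\ref{K-close}}(R)\ge R$ instantly give $K\ge K_{\ref{uniform-qi-sec-in-bundle}}$ and $K\ge R_{\ref{R-sep-C-cobdd}}(\dl'_0,\lm'_0)$, which are exactly the admissibility constraints on the flow parameters.

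For the base case $v=u$, property $\mathcal{C}$(a) of Definition~\ref{K-qi-section} says that $\gm|_{B_u}:B_u\ri X_u$ is a $K$-qi section in the path metric of $X_u$. Since we flow $X_u$ with parameter $k=K$, this section is a member of the family $\mathcal{G}_{a,u}$ (for any chosen $a\in B_u$) used in Step~1 of the flow-space construction, so $\gm(b)\in\text{Hull}_{\dl_0}\{\eta(b):\eta\in\mathcal{G}_{a,u}\}=Q_{b,u}$ for every $b\in B_u$. For the inductive step, assume the claim at $v$ and take $w\in S$ with $d_T(v,w)=1$, $d_T(u,w)=d_T(u,v)+1$, and let $[c_v,c_w]$ be the connecting edge. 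By the induction hypothesis $\gm(c_v)\in Q_{c_v,v}$, and property $\mathcal{C}$(b) gives $d^e(\gm(c_v),\gm(c_w))\le K=R$. Therefore $\gm(c_w)\in N^e_R(Q_{c_v,v})\cap F_{c_w,w}=\tilde{Q}_{c_w,w}\ne\emptyset$, so the construction \emph{does} flow into $B_w$, and via the inclusion $N^e_R(Q_{c_v,v})\sse N^e_{R'}(Q_{c_v,v})$ (where $R'=R_{\ref{npp-are-Hd-close-betn-qc-subsets}}(\dl'_0,\lm'_0,R)$) we obtain $\gm(c_w)\in Q'_{c_w,w}\sse Q_{c_w,w}$. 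Now property $\mathcal{C}$(a) at $w$ says $\gm|_{B_w}$ is a $K$-qi section over $B_w$ in the path metric of $X_w$ passing through $\gm(c_w)\in Q_{c_w,w}$, so applying Step~1 in the bundle $X_w$ with seed set $A_{c_w,w}=Q_{c_w,w}$ places $\gm(b)\in Q_{b,w}$ for every $b\in B_w$, closing the induction.

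The only really delicate point is the interlocking use of property $\mathcal{C}$: part (b) is what guarantees $\gm(c_w)\in\tilde{Q}_{c_w,w}$ (and hence lands inside the Hull-thickening defining $Q_{c_w,w}$), while part (a) is what lets us re-feed $\gm|_{B_w}$ into Step~1 as an admissible $K$-qi section in the bundle metric of $X_w$. Without the compatibility property the inductive step would break down at both places, which is precisely why the standing convention (made just before Lemma~\ref{joining-of-two-hyp-sp-along-qi-emb-subsp}) that all $K$-qi sections carry property $\mathcal{C}$ is essential here.
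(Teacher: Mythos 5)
The paper deliberately omits a proof of this lemma, calling it ``straightforward from the construction of flow spaces,'' and your argument is exactly that straightforward proof. You correctly check the parameter admissibility (from $K\ge K_{\ref{K-section}}(K_{\ref{uniform-qi-sec-in-bundle}},R_{\ref{R-sep-C-cobdd}})$ and the inequalities $K_{\ref{K-close}}(R)\ge R$, $C_{\ref{qi-section-inside-ladder}}(k)>k$ one indeed gets $K\ge K_{\ref{uniform-qi-sec-in-bundle}}$ and $K\ge R_{\ref{R-sep-C-cobdd}}$, so $k=R=K$ is a legal choice), and the induction on $d_T(u,v)$ mirrors the flow-space construction itself: $\mathcal{C}$(b) keeps $\gamma(c_w)$ within $N^e_R(Q_{c_v,v})\cap F_{c_w,w}=\tilde Q_{c_w,w}\subseteq Q'_{c_w,w}\subseteq Q_{c_w,w}$ so the flow continues, and $\mathcal{C}$(a) then re-inserts $\gamma|_{B_w}$ into the Step-1 family $\mathcal{G}_{c_w,w}$ so that $\gamma(b)\in Q_{b,w}$ for all $b\in B_w$. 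This is correct and is the argument the authors had in mind.
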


\begin{notation}\label{iteration-fn-for-qi-sec}
We define a function $\kappa^{(i)}\longmapsto\kappa^{(i+1)}$ to measure the iteration in Lemma \ref{flow-sp-contains-qi-sec}. In other words, suppose $\kappa\ge K_{\ref{const-of-flow-sp}}(K_{\ref{qi-sec-inside-lad-len-lift}},R_{\ref{R-sep-D-cobdd}}(\dl\pr_0,\lambda\pr_0))$. Define $\kappa^{(0)}=\kappa,~\kappa^{(i+1)}=K_{\ref{flow-sp-contains-qi-sec}}(\kappa^{(i)},\kappa^{(i)})$. Then for the flow space $\F l_{\kappa^{(i)}}(X_u)$ of $X_u$ with parameters $k=\kappa^{(i-1)}$ and $R=\kappa^{(i-1)}$, we have $\F l_{\kappa^{(i)}}(X_u)\sse\F l_{\kappa^{(i+1)}}(X_u)$.  
\end{notation}

%

\subsection{Ladder}\label{discussion-on-ladder}
First, we will outline the additional properties (L1 and L2 below) that a semicontinuous family must satisfy in order to be considered as ladder. Then we will formally define a ladder in Definition \ref{ladder} for future reference.

Suppose $(X,B,T)$ is a tree of metric bundles. Let $K\ge1,~C\ge0$ and $\ep\ge0$. Let $\L\sse X$ be a $(K,C,\ep)$-semicontinuous family with a central base, say $\mfB$, such that the fibers are geodesic segments. In addition, suppose we have the following properties L1 and L2 in $\L$.

{\bf (L1)}: For all $v\in T_{\L}$, where $T_{\L}=\textrm{hull}(\pi(\L))$, $\L\cap X_v$ is a special $K$-ladder (see Definition \ref{K-metric-graph-bundle}) over $B_v$. Moreover, $\L\cap X_{\mfB}$ forms a special $K$-ladder over $\mfB$.\smallskip

We will need a family of monotonic maps at the junction. In doing so, we will put an orientation on the fiber geodesics as follows (see Figure \ref{ladderF}).

\begin{notation}\label{notation-ladder} $\mfT:=\pi_B(\mfB)$, $\L_{a,v}:=\L\cap F_{a,v}~\fa~a\in B_v,\fa~v\in T_{\L}$.
\end{notation}

Let $\Sigma$ and $\Sigma'$ be $K$-qi sections over $\mfB$ which bounds $\L\cap X_{\mfB}$ (see $(L1)$ above). We set, say $\Sigma$, as $top$ and $\Sigma'$ as $bot$ to give an orientation on $\L\cap X_{\mfB}$, where the abbreviation $top$ and $bot$ is coming from `top' and `bottom' respectively. Then we have an orientation for each fiber geodesics of $\L\cap X_{\mfB}$ as `$bot$ to $top$'. In other words, let $t\in\mfT$ and $a\in B_t$. Then $bot(\L_{a,t}):=\Sigma'(a)$ and $top(\L_{a,t}):=\Sigma(a)$, and the geodesic $\L_{a,t}$ is oriented from `$bot(\L_{a,t})$ to $top(\L_{a,t})$'. 

Now we fix $u\in \mfT$ once and for all.  We define monotonic maps by induction on $d_T(u,v)$ as follows, where $v\in T_{\L}$. Let $v,w\in T_{\L}$ such that $d_T(u,v)<d_T(u,w)$ and $d_T(v,w)=1$. We also assume that $w\notin\mfT$, and it is mentioned in the end for the case $w\in\mfT$. Again the orientation to fiber geodesics of $\L\cap X_w$ depend on that of $\L\cap X_v$. Let $[\mfv,\mfw]$ be the edge joining $\mfv\in B_v$ and $\mfw\in B_w$. Let $\L_{\mfv,v}:=[x_{\mfv,v},y_{\mfv,v}]^f$ and $\L_{\mfw,w}:=[x_{\mfw,w},y_{\mfw,w}]^f$ such that $top(\L_{\mfv,v})=x_{\mfv,v}$ and $bot(\L_{\mfv,v})=y_{\mfv,v}$. Let $\bar{x}_{\mfv,v},~\bar{y}_{\mfv,v}\in\L_{\mfv,v}$ such that $d_{\mfv\mfw}(\bar{x}_{\mfv,v},x_{\mfw,w})\le K$ and $d_{\mfv\mfw}(\bar{y}_{\mfv,v},y_{\mfw,w})\le K$. (The existence of such points clear as $\L$ is a $(K,C,\ep)$-semicontinuous family.) Let $h_{wv}:\L_{\mfw,w}\ri\L_{\mfv,v}$ be a monotonic map (see Lemma \ref{monotonic-map-between-qgs}) sending $x_{\mfw,w}$ to $\bar{x}_{\mfv,v}$ and $y_{\mfw,w}$ to $\bar{y}_{\mfv,v}$ such that $d_{\mfv\mfw}(h_{wv}(x),x)\le k_{\ref{monotonic-map-between-qgs}}(\dl\pr_0,L\pr_0,K)$ for all $x\in\L_{\mfw,w}$. We fix this $h_{wv}$ once and for all for such $v,w$. The orientation in $\L\cap X_w$ depends on the order of the appearance of $\bar{x}_{\mfv,v}$ and $\bar{y}_{\mfv,v}$ in $\L_{\mfv,v}$. Let the $K$-qi sections $\gm_1$ and $\gm_2$ bound $\L\cap X_w$ such that $\gm_1(\mfw)=x_{\mfw,w}$ and $\gm_2(\mfw)=y_{\mfw,w}$. If there is an order $y_{\mfv,v}\le \bar{y}_{\mfv,v}<\bar{x}_{\mfv,v}\le x_{\mfv,v}$, then we set $\gm_1$ to be $top$ and $\gm_2$ to be $bot$ for $\L\cap X_w$. In other words, $top(\L_{a,w})=\gm_1(a)$ and $bot(\L_{a,w})=\gm_2(a)$, $a\in B_w$. If the order is $y_{\mfv,v}\le \bar{x}_{\mfv,v}<\bar{y}_{\mfv,v}\le x_{\mfv,v}$, then we set $\gm_2$ to be $top$ and $\gm_1$ to be $bot$ for $\L\cap X_w$. In other words, $top(\L_{a,w})=\gm_2(a)$ and $bot(\L_{a,w})=\gm_1(a)$, $a\in B_w$. However, by renaming, we always denote $\bar{x}_{\mfv,v}\in Im(h_{wv})$ for the closest point (in the induced path metric on $\L_{\mfv,v}$) to $x_{\mfv,v}$ and $\bar{y}_{\mfv,v}\in Im(h_{wv})$ for the closest point to $y_{\mfv,v}$. Otherwise, i.e., if $\bar{x}_{\mfv,v}=\bar{y}_{\mfv,v}$, then we set any one of $\gm_1,\gm_2$ as $top$ and the other one as $bot$. If $w\in\mfT$ then $v\in\mfT$. Then the monotonic map $h_{wv}:\L_{\mfw,w}\ri \L_{\mfv,v}$ sends $x_{\mfw,w}$ to $x_{\mfv,v}$ and $y_{\mfw,w}$ to $y_{\mfv,v}$. We let $top(\L):=\cup_{a\in B_v,~v\in T_{\L}} top(\L_{a,v})$ and $bot(\L):=\cup_{a\in B_v,~v\in T_{\L}}bot(\L_{a,v})$.

\begin{figure}[h]
	\includegraphics[width=10cm]{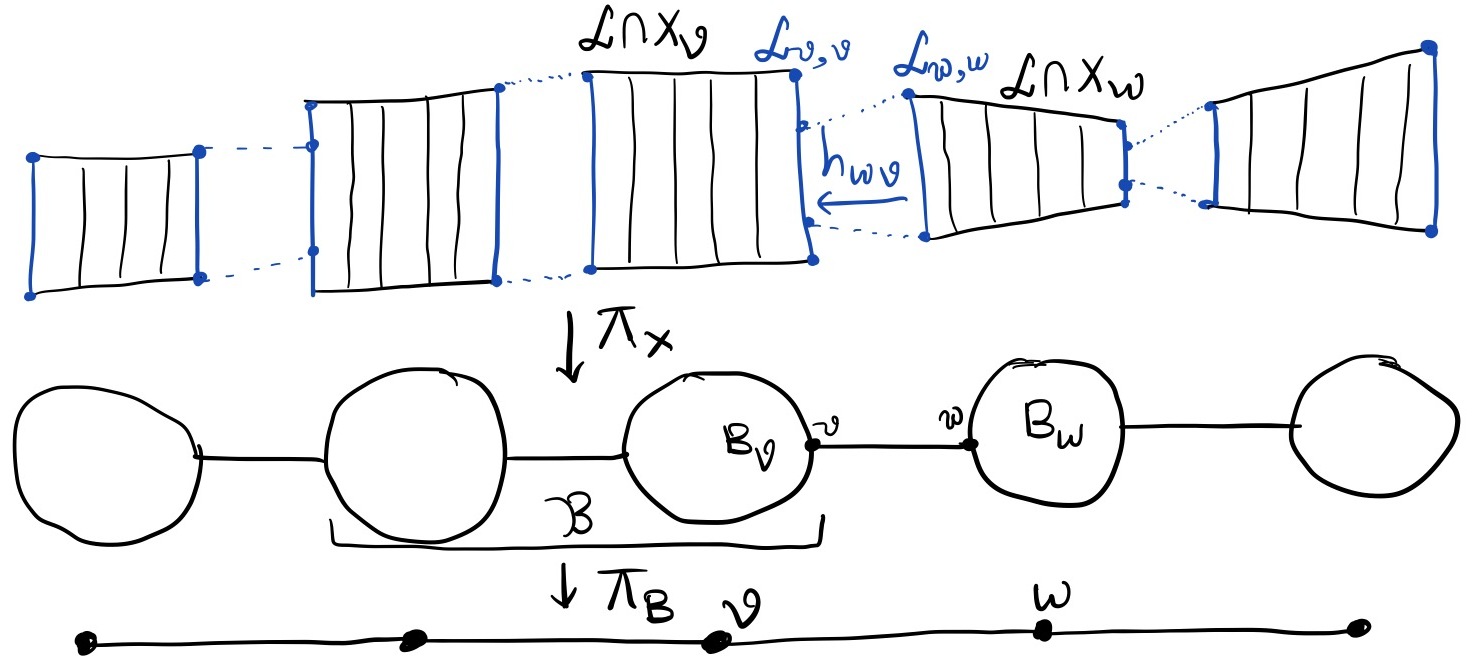}
	\centering
	\caption{Ladder}
	\label{ladderF}
\end{figure}

{\bf (L2)} \underline{{\bf Quasiisometric (qi) section in $\L$}}: Let $x\in \L$ such that $t=\pi(x)$. Suppose $s$ is the nearest point projection of $t$ on $\mfT$. Then there is a $K$-qi section through $x$ lying inside $\L$ over $B_x:=\mfB\cup B_{[s,t]}$. By a qi section in $\L$, we always mean that it obeys the order at the junction between two metric bundles given by the family $\{h_{wv}\}$. In other words, suppose $u\in\mfT$ is fixed (as mentioned above) and $\Sigma$ is a qi section in $\L$ over $B_1$, say. Let $[\mfv,\mfw]$ be the edge in $B_1$ joining $\mfv\in B_v$ and $\mfw\in B_w$ corresponding to the edge $[v,w]$ in $T$ such that $d_T(u,v)<d_T(u,w)$. Then $\Sigma(\mfv)=h_{wv}(\Sigma(\mfw))$. Thus $d_{\mfv\mfw}(\Sigma(\mfv),\Sigma(\mfw))\le k_{\ref{monotonic-map-between-qgs}}(\dl\pr_0,L\pr_0,K)$. As $max\{K,k_{\ref{monotonic-map-between-qgs}}(\dl\pr_0,L\pr_0,K)\}=k_{\ref{monotonic-map-between-qgs}}(\dl\pr_0,L\pr_0,K)$, $\Sigma$ would be a $k_{\ref{monotonic-map-between-qgs}}(\dl\pr_0,L\pr_0,K)$-qi section. By abusing notation, we still refer to $\Sigma$ as a $K$-qi section.\smallskip

For our reference, we state the following definition as explained above.
\begin{defn}[\bf Ladder]\label{ladder}
A ladder $\L\sse X$ with parameters $K\ge1,~C\ge0$ and $\epsilon\ge0$ is a $(K,C,\epsilon)$-semicontinuous family with a central base, say $\mfB$, along with a family of monotonic maps $\{h_{wv}\}$ (as described above) and satisfying $(L1)$ and $(L2)$.
	
Most of the time, we say that $\L$ is a $(K,C,\epsilon)$-ladder with a central base $\mfB$ keeping other things implicit. Occasionally, we denote $\L$ by $\L_K$ to emphasise $K$.
\end{defn}

One can think of $x_{\mfw,w}$ and $y_{\mfw,w}$ as uniformly close to nearest point projections (in $d_{\mfv\mfw}$-metric) of $x_{\mfv,v}$ and $y_{\mfv,v}$ on $F_{\mfw,w}$ respectively. (This uniform bound is measured in terms of $\epsilon$.) Again it follows from the definition of ladder that the pairs $([x_{\mfv,v},\bar{x}_{\mfv,v}]^f,F_{\mfw,w})$ and $([y_{\mfv,v},\bar{y}_{\mfv,v}]^f,F_{\mfw,w})$ are uniformly cobounded in $F_{\mfv\mfw}$. This is proved in the lemma below.

\begin{lemma}\label{for-subladder}
Given $K\ge1,~C\ge0$ and $\epsilon\ge0$ there are constants $C_{\ref{for-subladder}}=C_{\ref{for-subladder}}(K,C,\epsilon)$ and $\epsilon_{\ref{for-subladder}}=\epsilon_{\ref{for-subladder}}(K,C,\epsilon)$ such that the following hold.
	
Suppose $\L$ is a $(K,C,\epsilon)$-ladder with a central base $\mfB$ (as in Definition \ref{ladder}). Let $[v,w]$ be an edge in $T$. Suppose $[\mfv,\mfw]$ is the edge joining $\mfv\in B_v$ and $\mfw\in B_w$ such that $w\notin\mfT$ and $d_T(\mfT,v)<d_T(\mfT,w)$. Let $z\in[\bar{x}_{\mfv,v},\bar{y}_{\mfv,v}]^f\sse \L_{\mfv,v}$ and $z\pr\in\L_{\mfw,w}$ such that $h_{wv}(z\pr)=z$. (The notations are same as in Definition \ref{ladder}). Then:
	
$(1)$ The pairs $([x_{\mfv,v},\bar{x}_{\mfv,v}]^f,F_{\mfw,w})$ and $([y_{\mfv,v},\bar{y}_{\mfv,v}]^f,F_{\mfw,w})$ are $C_{\ref{for-subladder}}$-cobounded in $F_{\mfv\mfw}$.
	
$(2)$ $Hd_{\mfv\mfw}(P_{\mfw}([x_{\mfv,v},z]^f),[x_{\mfw,w},z\pr]^f)\le\epsilon_{\ref{for-subladder}}$ and $Hd_{\mfv\mfw}(P_{\mfw}([y_{\mfv,v},z]^f),[y_{\mfw,w},z\pr]^f)\le\epsilon_{\ref{for-subladder}}$.
\end{lemma}

\begin{proof}	
$(1)$ We prove only for the pair $([x_{\mfv,v},\bar{x}_{\mfv,v}]^f,F_{\mfw,w})$ as the other one has a similar proof. For ease of notation, let $x_1=x_{\mfv,v},x_2=x_{\mfw,w},x_3=\bar{x}_{\mfv,v}$. Let $x\pr_1=P_{\mfw}(x_1)$ and $x\prr_1\in\L_{\mfw,w}$ such that $d_{\mfv\mfw}(x\pr_1,x\prr_1)\le\epsilon$. Suppose $x_4=h_{wv}(x\prr_1)$ and $x\pr_4=P_{\mfw}(x_4)$. Note that $x_3\in[x_4,x_1]^f$. Now $d_{\mfv\mfw}(x\prr_1,x_4)\le K$ implies $d_{\mfv\mfw}(x\prr_1,x\pr_4)\le2K$. So $d_{\mfv\mfw}(x\pr_1,x\pr_4)\le\epsilon+2K$. Again, $[x_1,x_4]^f$ is a $L\pr_0$-quasigeodesic in $F_{\mfv\mfw}$ (see Lemma \ref{com-two-hyp-sps}). Then by \cite[Corollary $1.116$]{ps-kap}, there is a constant $C_1$ depending on $\dl\pr_0,\lm\pr_0,L\pr_0$ such that  $Hd_{\mfv\mfw}(P_{\mfw}([x_1,x_4]^f),[x\pr_1,x\pr_4]_{F_{\mfv\mfw}})\le C_1$. Let $x\pr_3=P_{\mfw}(x_3)$. Then $d_{\mfv\mfw}(x\pr_3,x\pr_1)\le d_{\mfv\mfw}(x\pr_3,[x\pr_1,x\pr_4]_{F_{\mfv\mfw}})+d_{\mfv\mfw}(x\pr_1,x\pr_4)\le C_1+\epsilon+2K$. Again applying \cite[Corollary $1.116$]{ps-kap} to $[x_1,x_3]^f$, the diameter of $P_{\mfw}([x_1,x_3]^f)$ in the metric $F_{\mfv\mfw}$ is bounded by $\le2C_1+(C_1+\epsilon+2K)=3C_1+\epsilon+2K$. Therefore, by Lemma \ref{small-imp-small}, we can take a constant depending on $\lm\pr_0,\dl\pr_0$ and $3C_1+\ep+2K$ as our required constant $C_{\ref{for-subladder}}$. So, we are done.
	
	$(2)$ We only prove that $Hd_{\mfv\mfw}(P_{\mfw}([x_{\mfv,v},z]^f),[x_{\mfw,w},z\pr]^f)$ is uniformly bounded as the other one has a similar proof. We continue with the notations used in $(1)$. From the above proof, we note that $d_{\mfv\mfw}(x\pr_1,x\pr_3)\le C_1+\epsilon+2K$. So $d_{\mfv\mfw}(x\pr_1,x_2)\le d_{\mfv\mfw}(x\pr_1,x\pr_3)+d_{\mfv\mfw}(x\pr_3,x_3)+d_{\mfv\mfw}(x_3,x_2)\le C_1+\epsilon+2K+K+K=C_1+\epsilon+4K$. Let $x\pr=P_{\mfw}(x)$ for $x\in[x_1,z]^f\sse\L_{\mfv,v}$. If $x\in[x_3,z]^f$, then $\exists~y\in[x_2,z\pr]^f\sse\L_{\mfw,w}$ such that $h_{wv}(y)=x$ and $d_{\mfv\mfw}(y,x)\le K$. So $d_{\mfv\mfw}(y,x\pr)\le2K$. If $x\in[x_1,x_3]$, then $d_{\mfv\mfw}(x\pr,x_2)\le d_{\mfv\mfw}(x\pr,x\pr_1)+d_{\mfv\mfw}(x\pr_1,x_2)\le C_{\ref{for-subladder}}+C_1+\epsilon+4K=2(2C_1+\epsilon+3K)$. So $P_{\mfw}([x_{\mfv,v},z]^f)$ is contained in $2(2C_1+\epsilon+3K)$-neighborhood of $[x_2,z\pr]^f\sse\L_{\mfw,w}$ in the metric of $F_{\mfv\mfw}$. For the other inclusion, let $\xi\pr\in[x_2,z\pr]^f\sse\L_{\mfw,w}$. Then there is $\xi\in[x_3,z]^f\sse\L_{\mfv,v}$ such that $h_{wv}(\xi\pr)=\xi$ and $d_{\mfv\mfw}(\xi,\xi\pr)\le K$. Then $\xi\pr$ is contained in $2K$-neighborhood of $P_{\mfw}([x_1,z]^f)$ in the path metric of $F_{\mfv\mfw}$. As $\xi\pr$ is arbitrary in $[x_2,z\pr]^f\sse\L_{\mfw,w}$. So, we take $\epsilon_{\ref{for-subladder}}=2(2C_1+\epsilon+3K)$.
\end{proof}

\begin{defn}[\bf Subladder]
Suppose $\L$ is a $(K,C,\ep)$-ladder. A subladder $\L\pr$ in $\L$ is a $(K\pr,C\pr,\epsilon\pr)$-ladder whose fiber geodesics are subsegments of the corresponding fiber geodesics of the ladder $\L$ and the family of monotonic maps are restrictions of the given family $\{h_{wv}\}$. The constants $K\pr$, $C\pr$ and $\ep\pr$ depend on the given ones (see Subsection \ref{general-ladder} for examples).
\end{defn}

\begin{defn}[\bf Girth and neck]\label{girth-neck}
Suppose $\L$ is a $(K,C,\epsilon)$-ladder with a central base $\mfB$. Let $B_1\sse \mfB$ and let $\ell(\al)$ denote the length of a fiber geodesic $\al$ in the corresponding fiber metric. The girth of the ladder $\L$ over $B_1$ is denoted by $\L^g|_{B_1}$ and defined as inf$\{\ell(\L_{b,v}):v\in \pi_B(B_1),~b\in B_v\cap B_1\}$. For $A\ge0$, the $A$-neck of the ladder $\L$ inside $B_1$ is denoted by $\L^n(A)|_{B_1}$ and defined as $\{b\in B_1:\ell(\L_{b,v})\le A,~\pi_B(b)=v\}$.
\end{defn}

Let $\L$ be a $(K,C,\epsilon)$-ladder. Let $x,y\in\L$ and  $\Sigma_x,\Sigma_y$ be $K$-qi sections through $x,y$ over $B_x,B_y$ respectively. Suppose $B_{xy}=B_x\cap B_y$. Then  $\bigcup\limits_{v\in\pi_B(B_{xy}),~b\in B_v}[F_{b,v}\cap\Sigma_x,F_{b,v}\cap\Sigma_y]_{F_{b,v}}$ forms a special $K_{\ref{getting-metric-graph-bundles}}(K)$-ladder over $B_{xy}$ (see Definition \ref{K-metric-graph-bundle}). 
We denote this special ladder by $\L_{xy}$. With these notations, we have the following lemma.
\begin{lemma}\label{neck-is-qc}
Given $K\ge1$, $C\ge0,~\epsilon\ge0$ and $A\ge M_K$ there is a constant $K_{\ref{neck-is-qc}}=K_{\ref{neck-is-qc}}(K,A)$ such that the following holds, where $M_K$ is coming from $K$-flaring condition.
	
Let $\L$ be a $(K,C,\epsilon)$-ladder with a central base $\mfB$. Then for $x,y\in\L,~\L_{xy}^n(A)|_{B_{xy}}$ is $K_{\ref{neck-is-qc}}$-quasiconvex in $B_{xy}$, and consequently, in $B$ as well.
\end{lemma}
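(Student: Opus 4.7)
My plan is to apply the uniform $K$-flaring hypothesis directly to the two bounding $K$-qi sections $\Sigma_x$ and $\Sigma_y$ of the sub-ladder $\L_{xy}$. Since $B_x$ and $B_y$ are each of the form $\Pi_T^{-1}(S)\cup\B$ for a connected subtree $S\subseteq T$, the intersection $B_{xy}=B_x\cap B_y$ is itself a tree-graded subgraph of $B$ and is isometrically embedded in $B$; therefore any $B$-geodesic between points of $B_{xy}$ lies inside $B_{xy}$, and it suffices to prove quasiconvexity of the neck inside $B_{xy}$, which then transfers to $B$ for free.

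For $b\in B_{xy}$ I write $\phi(b):=d^{fib}(\Sigma_x(b),\Sigma_y(b))$. By the construction of the ladder this equals $\ell(\L_{b,\Pi_T(b)})$, so $b\in\L_{xy}^n(A)|_{B_{xy}}$ if and only if $\phi(b)\le A$. I fix $b_1,b_2\in\L_{xy}^n(A)|_{B_{xy}}$ and let $\gamma=[b_1,b_2]$ be a geodesic in $B_{xy}$. Restricting the $K$-qi sections gives a pair $(\Sigma_x|_\gamma,\Sigma_y|_\gamma)$ of $K$-qi lifts of $\gamma$ in $X$, which is precisely the configuration on which the flaring condition is designed to act.

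Now suppose some $b\in\gamma$ satisfies $\phi(b)>A$. Walking along $\gamma$ from $b$ toward $b_1$, I let $b_1'\in[b_1,b]$ be the vertex closest to $b$ at which $\phi\le A$; such a vertex exists because $\phi(b_1)\le A$ while $\phi(b)>A$. Symmetrically define $b_2'\in[b,b_2]$. Then at every interior vertex strictly between $b_1'$ and $b_2'$ along $\gamma$ one has $\phi>A\ge M_K$, while $\max\{\phi(b_1'),\phi(b_2')\}\le A$. Uniform $K$-flaring (Definition \ref{flaring-condition}) applied with $D=A$ to the pair $(\Sigma_x|_{[b_1',b_2']},\Sigma_y|_{[b_1',b_2']})$ yields
$$d_B(b_1',b_2')\le\tau_{\ref{flaring-condition}}(K,A),$$
so $d_B(b,\L_{xy}^n(A)|_{B_{xy}})\le\tau_{\ref{flaring-condition}}(K,A)$. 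Setting $K_{\ref{neck-is-qc}}:=\tau_{\ref{flaring-condition}}(K,A)$ completes the proof of quasiconvexity in $B_{xy}$, and by the first paragraph the same constant works in $B$.

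The argument is short and I do not expect a genuine obstacle. The hypothesis $A\ge M_K$ is used exactly where it is needed, namely to guarantee that the interior fiber-distances along $[b_1',b_2']$ exceed the flaring threshold $M_K$, which is the only non-trivial input required to invoke Definition \ref{flaring-condition}.
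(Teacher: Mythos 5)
Your proof is correct and follows essentially the same approach as the paper: restrict to $B_{xy}$ (isometrically embedded in $B$), characterize the neck via the fiber-distance function $\phi$, and apply uniform $K$-flaring to a maximal subinterval of $[b_1,b_2]$ whose interior lies entirely outside the neck. The paper compresses the maximal-subinterval step into a ``without loss of generality'' clause (and also handles the trivial empty-neck case explicitly), but the two arguments are the same; your version just makes the reduction explicit.
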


\begin{proof}
If $\L_{xy}^n(A)|_{B_{xy}}=\emptyset$, then there is nothing to prove. Suppose $\L_{xy}^n(A)|_{B_{xy}}\ne\emptyset$ and $a,b\in\L_{xy}^n(A)|_{B_{xy}}$. Without loss of generality, we assume that $d^f(\Sigma_x(s),\Sigma_y(s))>A\ge M_K,~\fa~ s\in[a,b]\setminus\{a,b\}$. So by Lemma \ref{flaring-lemma} $(1)$, $d_B(a,b)\le\tau_{\ref{flaring-lemma}}(K,A)$. Hence one can take $K_{\ref{neck-is-qc}}:=\tau_{\ref{flaring-lemma}}(K,A)$.
\end{proof}

We finish this subsection by noting an interesting fact, which gives a criterion for a family of geodesic segments in the fibers to form a ladder.

\begin{lemma}\label{promoting-of-ladder}
Given $K\pr\ge1,~C\pr\ge0$ and $\epsilon\pr\ge0$, we have constants $k_{\ref{promoting-of-ladder}}=k_{\ref{promoting-of-ladder}}(K\pr)$, $c_{\ref{promoting-of-ladder}}=c_{\ref{promoting-of-ladder}}(C\pr)$ and $\varepsilon_{\ref{promoting-of-ladder}}=\varepsilon_{\ref{promoting-of-ladder}}(\epsilon\pr)$ such that the following holds.
	
Suppose $\L$ is a collection of geodesic segments in the fibers such that:
	
$(1)$ $T_{\L}:=\textrm{hull}(\pi(\L))$. For all $v\in T_{\L}$, $\L\cap X_v$ forms a special $K\pr$-ladder in $X_v$ over $B_v$ bounded by two $K\pr$-qi sections (see Definition \ref{K-metric-graph-bundle}). We also have a subtree $\mfT$ in $T_{\L}$ with the following. Suppose $v,w\in T_{\L}$ with $d_T(v,w)=1$. Let $[\mfv,\mfw]$ be the edge joining $\mfv\in B_v$ and $\mfw\in B_w$. Let $\L_{\mfv,v}:=\L\cap F_{\mfv,v}=[x_{\mfv,v},y_{\mfv,v}]^f$ and $\L_{\mfw,w}:=\L\cap F_{\mfw,w}=[x_{\mfw,w},y_{\mfw,w}]^f$. If $v,w\in\mfT$, then $d_{\mfv\mfw}(x_{\mfv,v},x_{\mfw,w})$ $\le K\pr$ and $d_{\mfv\mfw}(y_{\mfv,v},y_{\mfw,w})\le K\pr$. Otherwise, if $d_T(v,\mfT)<d_T(w,\mfT)$, then $x_{\mfw,w},y_{\mfw,w}\in N^{\mfv\mfw}_{K\pr}(\L_{\mfv,v})$.
	
$(2)$ In the second part of $(1)$, where $d_T(v,\mfT)<d_T(w,\mfT)$, we have $Hd_{\mfv\mfw}(P_{\mfw}(\L_{\mfv,v}),\L_{\mfw,w})\le\epsilon\pr.$
	
$(3)$ Let $v\in T_{\L},w\notin T_{\L}$ such that $d_T(v,w)=1$. Let $[\mfv,\mfw]$ be the edge joining $\mfv\in B_v$ and $\mfw\in B_w$. Then the pair $(\L_{\mfv,v},F_{\mfw,w})$ is $C\pr$-cobounded in the path metric of $F_{\mfv\mfw}$.
	
	Then $\L$ is a $(k_{\ref{promoting-of-ladder}},c_{\ref{promoting-of-ladder}},\varepsilon_{\ref{promoting-of-ladder}})$-ladder with a central base $\mfB:=\pi_B^{-1}(\mfT)$.
\end{lemma}

\begin{proof}
We only need to find $k_{\ref{promoting-of-ladder}}$ and set an orientation on the fiber geodesics along with the family $\{h_{wv}\}$ of monotonic maps. Fix $u\in\mfT$. Suppose $[v,w]$ is an edge in $T_{\L}$ with $d_T(u,v)<d_T(u,w)$. Let us fix once and for all  $\bar{x}_{\mfv,v},\bar{y}_{\mfv,v}\in\L_{\mfv,v}$ such that $d_{\mfv\mfw}(x_{\mfw,w},\bar{x}_{\mfv,v})\le K\pr,~d_{\mfv\mfw}(y_{\mfw,w},\bar{y}_{\mfw,w})\le K\pr$ with $\bar{x}_{\mfv,v}\in\L_{\mfv,v}\cap[x_{\mfv,v},\bar{y}_{\mfv,v}]^f$, and in the case if $v,w\in\mfT$, then $\bar{x}_{\mfv,v}=x_{\mfv,v}$ and $\bar{y}_{\mfv,v}=y_{\mfv,v}$. We inductively fix an orientation as discussed in Subsection \ref{discussion-on-ladder}.
	
Now we apply Lemma \ref{monotonic-map-between-qgs} on $F_{\mfv\mfw}$ and $L\pr_0$-quasigeodesics $\L_{\mfv,v}$ and $\L_{\mfw,w}$. So we get a monotonic map, say $h_{wv}:\L_{\mfw,w}\ri\L_{\mfv,v}$, such that $h_{wv}$ is a $L_{\ref{monotonic-map-between-qgs}}(\dl\pr_0,L\pr_0,K\pr)$-quasiisometry. Also, we have $d_{\mfv\mfw}(x,h_{wv}(x))\le k_{\ref{monotonic-map-between-qgs}}(\dl\pr_0,L\pr_0,K\pr)$ for all $x\in \L_{\mfw,w}$, and $h_{wv}(x_{\mfw,w})=\bar{x}_{\mfv,v},~~h_{wv}(y_{\mfw,w})=\bar{y}_{\mfv,v}$. We fix once and for all such maps $h_{wv}$.
	
Therefore, one can take $k_{\ref{promoting-of-ladder}}=max\{k_{\ref{monotonic-map-between-qgs}}(\dl\pr_0,L\pr_0,K\pr),K\pr\}$ and $c_{\ref{promoting-of-ladder}}=C\pr$, $\varepsilon_{\ref{promoting-of-ladder}}=\epsilon\pr$.
\end{proof}


\section{Hyperbolicity of ladder}\label{hyperbolicity-of-ladder}

In this section, we show that a uniform neighborhood of a ladder (see Definition \ref{ladder}) is uniformly hyperbolic with the induced path metric. We divide the proof into two cases. $(1)$ Ladder with small girth (see Definition \ref{girth-neck}); here we construct paths for any pair of points in the ladder and prove that they satisfy the conditions of Proposition \ref{combing} (see Proposition \ref{hyp_small_ladder}). $(2)$ For a general ladder, we subdivide it into (uniformly) small (but not too small) girth ladders and show that they satisfy all conditions of Proposition \ref{combi-hyp-sps} (see Theorem \ref{general-ladder-is-hyp}). 
Let $K\ge1,~C\ge0,~\epsilon\ge0$, and let $\L_K$ be a $(K,C,\ep)$-ladder with a central base $\mfB$. In this section, we fix the {\bf notation $\bm{L}_{\bm{Kr}}:=\bm{N}_{\bm{r}}(\bm{\L}_{\bm{K}})$} for $r\ge0$. Additionally, we will use the same notations as introduced in Definition \ref{ladder} and in Notation \ref{notation-ladder} for ladders. In view of Remark \ref{bigsmall-subsp}, in this section, we require the tree of metric bundles $(X,B,T)$ to satisfy $C^{(9)}_{\ref{qi-sec-inside-lad-len-lift}}(K)$-flaring condition (see below). 

\subsection{Hyperbolicity of ladders (small girth)}\label{small-ladder}
We refer to Remark \ref{iteration} for the notation of $C^{(i)}_{\ref{qi-sec-inside-lad-len-lift}}(K)$, $i\in\N\cup\{0\}$. Fix $\kappa=C^{(3)}_{\ref{qi-sec-inside-lad-len-lift}}(K)$ for this Subsection \ref{small-ladder}. Given $A_0\ge0$, we set $A=max\{M_{C^{(i)}_{\ref{qi-sec-inside-lad-len-lift}}(K)},A_0:i=0,1,2,3\}$, where $M_{C^{(i)}_{\ref{qi-sec-inside-lad-len-lift}}(K)}$ is coming from $C^{(i)}_{\ref{qi-sec-inside-lad-len-lift}}(K)$-flaring condition. Note that $C^{(i+1)}_{\ref{qi-sec-inside-lad-len-lift}}(K)\ge C^{(i)}_{\ref{qi-sec-inside-lad-len-lift}}(K)$.
\begin{prop}\label{hyp_small_ladder}
	Suppose $R\ge2\kappa$ and $A$ as above. Then there exists $\dl_{\ref{hyp_small_ladder}}=\dl_{\ref{hyp_small_ladder}}(K,A_0,R)$ such that the following holds.
	
	Suppose $\L^g|_{\mfB}\le A_0$ (see Definition \ref{girth-neck}). Then $L_{KR}:=N_R(\L_K)$ is $\dl_{\ref{hyp_small_ladder}}$-hyperbolic with respect to the path metric induced from $X$.
\end{prop}

\begin{proof}
	{\bf Idea of the proof.}
The proof of this proposition is long. So we will break it up into several lemmata. We first define a family of discrete paths, say $c(x,y)$, for a pair of distinct points $x,y\in\mathcal{L}$. These paths are concatenation of three subpaths: two of them lie in two qi sections in $\L$ through $x$ and $y$, and the other one is a uniform bounded length path in fiber. Now we fix such paths once and for all. Then we show that this family of paths (after making them continuous) satisfies Proposition \ref{combing}. Hence the hyperbolicity of $L_{KR}$ follows. This construction of paths would very be helpful when it comes to the computation of their lengths since it would easily come from qi sections. For instance, showing Hausdorff closeness or slimness of triangles of such paths would become easy when their parts lie in the same qi sections. But for the parts which do not lie in the same qi sections, we use the geometry of their necks in $B$.
	
	\begin{notation}\label{notation-hyp-of-small-ladder}
		Let $x,y,z\in\L$. We use the following notations for Proposition \ref{hyp_small_ladder}. For a fixed $u\in\mfT$,  $B_s=\mfB\cup\pi_B^{-1}([u,\pi(s)]),~s\in\{x,y,z\};~B_{xy}=B_x\cap B_y,~B_{xyz}=B_x\cap B_y\cap B_z$. We use $\bar{x}$ to denote $\pi_X(x)$ (projection under $\pi_X$), and the same way we have $\bar{y}=\pi_X(y)$ and $\bar{z}=\pi_X(z)$. 
		We will denote the path metric on $L_{KR}$ induced from $X$ by $d\pr$.
	\end{notation}
\begin{defn}[Family of paths]\label{family-of-paths}
	Let $x,y\in\L$. Suppose $\Sigma_x,\Sigma_y$ are $K$-qi sections in $\L$ over $B_x, B_y$ through $x,y$ respectively. Let $u_{xy}$ be the center of the triangle $\triangle(u,\pi(x),\pi(y))$ for some $u\in\mfT$ provided $[\pi(x),\pi(y)]\cap\mfT=\emptyset$, otherwise, $u_{xy}$ be the nearest point projection of $\pi(x)$ on $\mfT$. Let $U_{xy}=\L_{xy}^n(A)|_{B_{xy}}$ be the $A$-neck of the special ladder $\L_{xy}$ bounded by $\Sigma_x,\Sigma_y$ (see Definition \ref{girth-neck}) over the common base $B_{xy}$. 
	Then $U_{xy}$ is $K_{\ref{neck-is-qc}}(K,A)$-quasiconvex (see Lemma \ref{neck-is-qc}). Let $\bm{t}_{\bm{xy}}$ be a nearest point projection of $\bar{x}$ on $U_{xy}$ and let  $\bm{v}_{\bm{xy}}:=\pi_B(t_{xy})$. We take $K$-qi lifts $\tilde{\al}_{xy}$ and $\tilde{\gm}_{xy}$ of geodesics $\bm{\al}_{\bm{xy}}:=[\bar{x},t_{xy}]_B$ and $\bm{\gm}_{\bm{xy}}:=[\bar{y},t_{xy}]_B$ in $\Sigma_x$ and $\Sigma_y$ respectively. Denote $\mu_{xy}=[\Sigma_x(t_{xy}),\Sigma_y(t_{xy})]^f\sse\L_{t_{xy},v_{xy}}$.
	
In general, $\tilde{\al}_{xy}$ and $\tilde{\gm}_{xy}$ are not continuous paths. According to our requirement in Proposition \ref{combing}, we make them continuous as following. Fix points $\bar{x}=a_1,a_2,\cdots,a_n=t_{xy}$ on $[\bar{x},t_{xy}]_B$ such that $d_B(a_i,a_{i+1})=1$ for $1\le i\le n-2$ and $d_B(a_{n-1},a_n)\le 1$.
	Consider the path $[\tilde{\al}_{xy}]=[\Sigma_x(a_1),\Sigma_x(a_2)]\cup[\Sigma_x(a_2),\Sigma_x(a_3)]\cup\cdots\cup[\Sigma_x(a_{n-1}),\Sigma_x(a_n)]$. Now $[\tilde{\al}_{xy}]$ is a continuous path. Similarly, we have the continuous path $[\tilde{\gm}_{xy}]$ path corresponding to $\tilde{\gm}_{xy}$. 
	
We define $c(x,y):=\tilde{\al}_{xy}\cup\mu_{xy}\cup\tilde{\gm}_{xy}$ and $[c(x,y)]=[\tilde{\al}_{xy}]\cup\mu_{xy}\cup[\tilde{\gm}_{xy}]^{-}$, where $[\tilde{\gm}_{xy}]^-$ denotes the path corresponding to $[\tilde{\gm}_{xy}]$ with opposite orientation. We see that there is an asymmetry in the definition of $c(x,y)$ $($resp. $[c(x,y)])$ and the number of choices are involved. However, for each unordered pair $\{x,y\}$, we fix once and for all a choices and choose either $[c(x,y)]$ or $[c(y,x)]$ $($corresponding to $c(x,y)$ or $c(y,x))$ as the path joining $x$ and $y$. So our family of paths consists of path $[c(x,y)]$ for all distinct $x, y\in\L$.
\end{defn}
	
{\bf What we will do.} {\em Note that $Hd_{L_{KR}}(c(x,y),[c(x,y)])$ is uniformly bounded. Therefore, it is enough to prove $(a)$ the arc-length parametrization of $[c(x,y)]$ is properly embedded for condition $(1)$ and $(b)$ the slimness of paths $c(x,y)$ for condition $(2)$ of Proposition \ref{combing} to conclude the hyperbolicity of $L_{KR}$. So all the time we will work with discrete paths $c(x,y)$ except for the condition $(1)$ as mentioned above}.\medskip

	
	Different choices of geodesics joining $\bar{x},t_{xy}$ and $\bar{y},t_{xy}$ give rise to path joining $x,y$, that are $2K\dl_0$ (uniformly) Hausdorff close to $c(x,y)$. However, we will have to consider the following two natural questions.
	\begin{enumerate}
		\item  Are $c(x,y)$ and $c(y,x)$ uniformly Hausdorff close?
		
		\item Suppose $\Sigma\pr_x$ and $\Sigma\pr_y$ are two different $K$-qi sections through $x$ and $y$ respectively lying inside $\L$. Let $c\pr(x,y)$ be a path joining $x$ and $y$ coming from the construction above for the qi sections $\Sigma\pr_x,~\Sigma\pr_y$. Are $c(x,y)$ and $c\pr(x,y)$ uniformly Hausdorff close?
	\end{enumerate}

These two questions are proven in \cite[Section $3$]{pranab-mahan} for the case of metric graph bundles (see \cite[Definition $1.5$]{pranab-mahan}). However, we will establish that these are also true in our case (see Lemma \ref{Hd-close-of-c(x,y)-c(y,x)} and Corollary \ref{Hd-close-of-c(x,y)-c(x,y)}). The proof idea involves playing with quasiconvex subsets $U_{xy}$ and lifts in qi sections.
	
	\begin{lemma}\label{Hd-close-of-c(x,y)-c(y,x)}
With the hypothesis of Proposition \ref{hyp_small_ladder}, there exists a constant $D_{\ref{Hd-close-of-c(x,y)-c(y,x)}}=D_{\ref{Hd-close-of-c(x,y)-c(y,x)}}(\kappa,A)$ such that $$Hd\pr(c(x,y),c(y,x))\le D_{\ref{Hd-close-of-c(x,y)-c(y,x)}}.$$
	\end{lemma}
	\begin{proof}
We can think of $\Sigma_x,\Sigma_y$ as $(K\le)~\kappa$-qi sections in $\L$ through $x,y$ respectively. By our notation, $t_{yx}$ is a nearest point projection of $\bar{y}$ on $U_{yx}(=U_{xy})$. Let $\al=[t_{xy},t_{yx}]$. Also, $\al_{yx}=[\bar{y},t_{yx}],\gm_{yx}=[t_{yx},\bar{x}]$; and $\tilde{\al}_{yx},\tilde{\gm}_{yx}$ are lifts of $\al_{yx},\gm_{yx}$ in $\Sigma_y,\Sigma_x$ respectively. Further, suppose  $v_{yx}:=\pi_B(t_{yx})$ and $\mu_{yx}:=[\Sigma_y(t_{yx}),\Sigma_x(t_{yx})]^f\sse \L_{t_{yx},v_{yx}}$. Finally, $c(y,x)=\tilde{\al}_{yx}\cup\mu_{yx}\cup\tilde{\gm}_{yx}$. Since $U_{xy}$ is $~K_{\ref{neck-is-qc}}(\kappa,A)$-quasiconvex, the arc-length parametrizations of $\al_{xy}\cup\al$ and $\al_{yx}\cup\al$ are $(3+2K_{\ref{neck-is-qc}}(\kappa,A))$-quasigeodesics (by \cite[Lemma $1.31$ $(2)$]{pranab-mahan}). So by Lemma \ref{ml}, there is $D$ depending on $\dl_0$, $3+2K_{\ref{neck-is-qc}}(\kappa,A)$ such that $Hd_B(\gm_{yx},\al_{xy}\cup\al)\le D$ and $Hd_B(\gm_{xy},\al_{yx}\cup\al)\le D$. Again, $\al\sse B_{xy}$ and $t_{xy},t_{yx}\in U_{xy}$. So by Lemma \ref{flaring-lemma} $(2)$, $\fa ~s\in\al,~d^f(\Sigma_x(s),\Sigma_y(s))\le R_{\ref{flaring-lemma}}(\kappa,A)$. Below, we prove that $c(x,y)$ lies inside uniform neighborhood of $c(y,x)$. Then by the symmetry of proof we will be done.
		
Let $\xi\in c(x,y)\cap\tilde{\al}_{xy}$ and $\eta=\pi_X(\xi)$. Then $\exists~\eta\pr\in\gm_{yx}$ such that $d_B(\eta,\eta\pr)=d_{B_{xy}}(\eta,\eta\pr)\le D$. So by taking $\kappa$-qi lift of $[\eta,\eta\pr]$ in $\Sigma_x$ (see Lemma \ref{qi-sec-inside-lad-len-lift} $(3)$), we get $d\pr(\xi,c(y,x))\le d\pr(\xi,\tilde{\gm}_{yx})\le2\kappa D$.
		
Let $\xi\in c(x,y)\cap\mu_{xy}$. Then from above, $d\pr(\xi,c(y,x))\le2\kappa D+A$.
		
Finally, let $\xi\in c(x,y)\cap\tilde{\gm}_{xy}$ and $\eta=\pi_X(\xi)$. Then $\exists~\eta\pr\in\al_{yx}\cup\al$ such that $d_B(\eta,\eta\pr)\le\dl_0$. If $\eta\pr\in\al_{yx}$, then by taking $\kappa$-qi lift of $[\eta,\eta\pr]$ in $\Sigma_y$, we get $d\pr(\xi,c(y,x))\le d\pr(\xi,\tilde{\al}_{yx})\le2\kappa\dl_0$. Again if $\eta\pr\in\al$, then $\eta\pr$ is further $D$-close to $\gm_{yx}$, i.e. $\exists~\eta\prr\in\gm_{yx}$ such that $d_B(\eta\pr,\eta\prr)\le D$. Therefore, by taking lifts of geodesics $[\eta,\eta\pr]$ and $[\eta\pr,\eta\prr]$ in $\Sigma_y$ and $\Sigma_x$ respectively, we get
		\begin{eqnarray*}
			d\pr(\xi,\tilde{\gm}_{yx})&\le&d\pr(\Sigma_y(\eta),\Sigma_x(\eta\prr))\\
			&\le&d\pr(\Sigma_y(\eta),\Sigma_y(\eta\pr))+d\pr(\Sigma_y(\eta\pr),\Sigma_x(\eta\pr))+d\pr(\Sigma_x(\eta\pr),\Sigma_x(\eta\prr))\\
			&\le&2\kappa\dl_0+R_{\ref{flaring-lemma}}(\kappa,A)+2\kappa D \textrm{ (since $\eta\pr\in\al$)}\\
			&=&2\kappa(\dl_0+D)+R_{\ref{flaring-lemma}}(\kappa,A)
		\end{eqnarray*}
		Therefore, we can take $D_{\ref{Hd-close-of-c(x,y)-c(y,x)}}:=2\kappa(\dl_0+D)+R_{\ref{flaring-lemma}}(\kappa,A)$ so that $Hd\pr(c(x,y),c(y,x))\le D_{\ref{Hd-close-of-c(x,y)-c(y,x)}}$.
	\end{proof}
	
	To prove (2), we first show that if we change one of the qi sections, then the path we get is uniformly Hausdorff close to the other one. In other words, suppose $\Sigma\pr_x$ is another qi section through $x$. Let $c(x,y)$ and $c_1(x,y)$ be paths coming from the pairs $(\Sigma_x,\Sigma_y)$ and $(\Sigma\pr_x,\Sigma_y)$ respectively. Then $Hd\pr(c(x,y),c_1(x,y))\le D$ for some uniform constant $D\ge0$. Hence we complete (2) by applying twice this process. Indeed, $Hd\pr(c(x,y),c\pr(x,y))\le Hd\pr(c(x,y),c_1(x,y))+Hd\pr(c_1(x,y),c\pr(x,y))\le2D$, where $\Sigma\pr_y$ is another qi section through $y$ and the path $c\pr(x,y)$ is coming from the pair $(\Sigma\pr_x,\Sigma\pr_y)$.
	
	\begin{lemma}\label{Hd-close-of-c(x,y)-c_1(x,y)}
		With the hypothesis of Proposition \ref{hyp_small_ladder}, there is a constant $D_{\ref{Hd-close-of-c(x,y)-c_1(x,y)}}=D_{\ref{Hd-close-of-c(x,y)-c_1(x,y)}}(\kappa,A)$ such that $$Hd\pr(c(x,y),c_1(x,y))\le D_{\ref{Hd-close-of-c(x,y)-c_1(x,y)}}.$$
	\end{lemma}
	\begin{proof}
		Here also we consider $\Sigma_x,\Sigma\pr_x$ and $\Sigma_y$ as $\kappa$-qi sections. Let the special ladder formed by pair $(\Sigma\pr_x,\Sigma_y)$ restricted over $B_{xy}$  be $\L\pr_{xy}$ (see Lemma \ref{neck-is-qc}). Let $V$ be the $A$-neck of the ladder $\L\cap X_{\mfB}$ (restriction of $\L$ on $\mfB$, see also Notation \ref{notation-ladder}) and $U\pr_{xy}$ be that of $\L\pr_{xy}$. Notice that $V\sse U_{xy}\cap U\pr_{xy}$. We assume that $t\pr_{xy}$ is a nearest point projection of $\bar{x}$ on $U\pr_{xy}$. Let  $\al\pr_{xy}=[\bar{x},t\pr_{xy}]$ and $\gm\pr_{xy}=[\bar{y},t\pr_{xy}]$. First, we prove that $d_B(t_{xy},t\pr_{xy})$ is uniformly small.
		
		\emph{$d_B(t_{xy},t\pr_{xy})$ is uniformly small}: We fix a point $t\in V$ and a geodesic $\al=[\bar{x},t]$. Now for $s\in\{t,\bar{x}\}$, $d^f(\Sigma_x(s),\Sigma\pr_x(s))\le A$.  So by Lemma \ref{flaring-lemma} $(2)$, for all $s\in\al$, $d^f(\Sigma_x(s),\Sigma\pr_x(s))\le R_{\ref{flaring-lemma}}(\kappa,A)$. Again $U_{xy},U\pr_{xy}$ are $K_{\ref{neck-is-qc}}(\kappa,A)$-quasiconvex and so by \cite[Lemma $1.31$ $(2)$]{pranab-mahan}, the arc-length parametrizations of $\al_{xy}\cup[t_{xy},t]$ and $\al\pr_{xy}\cup[t\pr_{xy},t]$ are $(3+2K_{\ref{neck-is-qc}}(\kappa,A))$-quasigeodesics. Thus by Lemma \ref{ml}, there is $D$ depending on $\dl_0$, $3+2K_{\ref{neck-is-qc}}(\kappa,A)$ such that $Hd_B(\al,\al_{xy}\cup[t_{xy},t])\le D$ and $Hd_B(\al,\al\pr_{xy}\cup[t\pr_{xy},t])\le D$. Then $Hd(\al_{xy}\cup[t_{xy},t],\al\pr_{xy}\cup[t\pr_{xy},t])\le2D$. Hence $\exists~t_0\in\al_{xy}$ such that $d_B(t\pr_{xy},t_0)\le3D+\dl_0$ or $\exists~t_0\in\al\pr_{xy}$  such that $d_B(t_{xy},t_0)\le3D+\dl_0$. Without loss of generality, we assume that $d_B(t_{xy},t_0)\le3D+\dl_0$ for $t_0\in\al\pr_{xy}$. Since $T$ is tree and $B_v$'s are isometrically embedded in $B$, we can take $t_0\in B_y$. In particular, $[t_{xy},t_0]\sse B_{xy}$.
		
		Again for $s\in\al\pr_{xy},~\exists~s\pr\in\al$ such that $d_B(s,s\pr)\le D$. By taking lifts of geodesic $[s,s\pr]$ in $\Sigma_x$ and $\Sigma\pr_x$ (see Lemma \ref{qi-sec-inside-lad-len-lift} $(3)$), $d_X(\Sigma_x(s),\Sigma\pr_x(s))\le d_X(\Sigma_x(s),\Sigma_x(s\pr))+d_X(\Sigma_x(s\pr),\Sigma\pr_x(s\pr))+d_X(\Sigma\pr_x(s\pr),\Sigma\pr_x(s))\le2D\dl_0+R_{\ref{flaring-lemma}}(\kappa,A)+2D\dl_0=D_1$ (say). As fibers are $\phi$-properly embedded, $d^f(\Sigma_x(s),\Sigma\pr_x(s))$ $\le \phi(D_1)$. In particular, $$d_X(\Sigma_x(t_0),\Sigma\pr_x(t_0))\le D_1\textrm{ and }d^f(\Sigma_x(t_0),\Sigma\pr_x(t_0))\le \phi(D_1).$$
		
Note that $[t_{xy},t_0]\sse B_{xy}$. Now taking lifts of $[t_0,t_{xy}]$ in $\Sigma_x$ and $\Sigma_y$, we have $d_X(\Sigma_x(t_0),\Sigma_x(t_{xy}))$ $\le2\kappa(3D+\dl_0)$ and $d_X(\Sigma_y(t_{xy}),\Sigma_y(t_0))\le2\kappa(3D+\dl_0)$. Again, $d^f(\Sigma_y(t_{xy}),\Sigma_x(t_{xy}))\le A$. Therefore, combining all these inequalities, we have $d_X(\Sigma\pr_x(t_0),\Sigma_y(t_0))\le D_1+4\kappa(3D+\dl_0)+A=D_2$ (say). So $d^f(\Sigma\pr_x(t_0),\Sigma_y(t_0))\le \phi(D_2)$. Then by Lemma \ref{flaring-lemma} $(1)$, $d_B(t_0,t\pr_{xy})\le\tau_{\ref{flaring-lemma}}(\kappa,\phi(D_2))$. Hence $d_B(t_{xy},t\pr_{xy})\le d_B(t_{xy},t_0)+d_B(t_0,t\pr_{xy})\le D_3$ where $D_3=3D+\dl_0+\tau_{\ref{flaring-lemma}}(\kappa,\phi(D_2))$.
		
		Let us come back to the proof of Hausdorff closeness of $c(x,y)$ and $c_1(x,y)$. We only prove that $c(x,y)$ lies inside uniform neighborhood of $c_1(x,y)$. Then by the symmetry of the proof we will be done.
		
		Let $\xi\in c(x,y)\cap\tilde{\al}_{xy}$ and $\eta=\pi_X(\xi)$. Then $d_B(\eta,\eta\pr)\le D_3+\dl_0$ for some $\eta\pr\in\al\pr_{xy}$. Since $\eta\pr\in\al\pr_{xy}$, from the above paragraph, $d^f(\Sigma_x(\eta\pr),\Sigma\pr_x(\eta\pr))\le \phi(D_1)$. Therefore, by taking lift of geodesic $[\eta,\eta\pr]$ in $\Sigma_x$ (see Lemma \ref{qi-sec-inside-lad-len-lift} $(3)$), we get $d\pr(\xi,c_1(x,y))\le d\pr(\Sigma_x(\eta),\Sigma\pr_x(\eta\pr))\le d\pr(\Sigma_x(\eta),\Sigma_x(\eta\pr))+d^f(\Sigma_x(\eta\pr),\Sigma\pr_x(\eta\pr))\le2\kappa(D_3+\dl_0)+\phi(D_1)$.
		
		Now let $\xi\in c(x,y)\cap\tilde{\gm}_{xy}$ and $\eta=\pi_X(\xi)$. Then $\exists~\eta\pr\in\gm\pr_{xy}$ such that $d_B(\eta,\eta\pr)\le D_3+\dl_0$. Taking lift of $[\eta,\eta\pr]$ in $\Sigma_y$, we get $d\pr(\xi,c_1(x,y))\le d\pr(\Sigma_y(\eta),\Sigma_y(\eta\pr))\le2\kappa(D_3+\dl_0)$.
		
		Finally, we assume that $\xi\in c(x,y)\cap\mu_{xy}$. Then $d\pr(\xi,c_1(x,y))\le2\kappa(D_3+\dl_0)+A$.
		
		We note that $\phi(D_1)>A$. Therefore, $c(x,y)\sse N_{2\kappa(D_3+\dl_0)+\phi(D_1)}(c_1(x,y))$. Hence, we can take $D_{\ref{Hd-close-of-c(x,y)-c_1(x,y)}}:=2\kappa(D_2+\dl_0)+\phi(D_1)$.
	\end{proof}
	
	\begin{cor}\label{Hd-close-of-c(x,y)-c(x,y)}
With the hypothesis of Proposition \ref{hyp_small_ladder}, there exists a constant $D_{\ref{Hd-close-of-c(x,y)-c(x,y)}}=D_{\ref{Hd-close-of-c(x,y)-c(x,y)}}(\kappa,A)$ such that Hausdorff distance between any two paths joining $x,y\in\L$ coming from the path-construction with $\kappa$-qi sections through $x,y$ is bounded by $D_{\ref{Hd-close-of-c(x,y)-c(x,y)}}$ in the path metric of $L_{KR}$.
	\end{cor}
	\begin{proof}
		We can take $D_{\ref{Hd-close-of-c(x,y)-c(x,y)}}=D_{\ref{Hd-close-of-c(x,y)-c(y,x)}}(\kappa,A)+2D_{\ref{Hd-close-of-c(x,y)-c_1(x,y)}}(\kappa,A)$.
	\end{proof}
	Now we show that this family of paths satisfies the fellow-traveling property (\hspace{-.08mm}\cite[Definition $1.60$]{ps-kap}). In other words, any two such paths whose starting points are same and ending points are at uniform distance are uniformly Hausdorff close.
	
\begin{prop}[Fellow-traveling property]\label{fellow-travelling-property}
For all $r\ge0$ there exists a constant $D_{\ref{fellow-travelling-property}}=D_{\ref{fellow-travelling-property}}(\kappa,A,r)$ such that the following holds.
		
With the hypothesis of Proposition \ref{hyp_small_ladder}, if $x,y,z\in\L$ such that $d_X(x,y)\le r$, then $$Hd\pr(c(x,z),c(y,z))\le D_{\ref{fellow-travelling-property}}.$$
	\end{prop}
	
	\begin{proof}
We will be working with $\Sigma_x,\Sigma_y,\Sigma_z$ as $\kappa$-qi sections respectively through $x,y,z$ over $B_x$, $B_y$, $B_z$ inside the ladder $\L$ (explained in Case $1$ below). We consider the following three cases depending on the position of $\bar{x}=\pi_X(x)$ and $\bar{y}=\pi_X(y)$.
		
{\bf Case 1}: Let $\bar{x}=\bar{y}$. Since the fibers are $\phi$-properly embedded, without loss of generality, we assume that $d^f(x,y)\le r$. Applying Lemma \ref{qi-sec-inside-lad-len-lift} $(2)$, we may assume that these sections satisfy the following inclusion property when we restrict them to $B_{xyz}$. We have three possibilities $\Sigma_y|_{B_{xyz}}\sse\L_{xz}|_{B_{xyz}},$ $\Sigma_x|_{B_{xyz}}\sse\L_{yz}|_{B_{xyz}}$ or $\Sigma_z|_{B_{xyz}}\sse\L_{xy}|_{B_{xyz}}$. (To get this one has to consider $\Sigma_x,\Sigma_y,\Sigma_z$ as $\kappa=C^{(3)}_{\ref{qi-sec-inside-lad-len-lift}}(K)$-qi sections instead $K$-qi sections.) In the Subcase $(1A)$ below, we will see that the proof for the inclusions $\Sigma_y|_{B_{xyz}}\sse\L_{xz}|_{B_{xyz}}$ and $\Sigma_x|_{B_{xyz}}\sse\L_{yz}|_{B_{xyz}}$ are similar. So we proof this proposition when $\Sigma_y|_{B_{xyz}}\sse\L_{xz}|_{B_{xyz}}$ and $\Sigma_z|_{B_{xyz}}\sse\L_{xy}|_{B_{xyz}}$ in the following subcases.
		
\emph{Subcase} (1A): Suppose $\Sigma_y|_{B_{xyz}}\sse\L_{xz}|_{B_{xyz}}$. Recall that $U_{xy}=\L^n_{xy}(A)|_{B_{xy}},~U_{xz}=\L^n_{xz}(A)|_{B_{xz}}$ and $U_{yz}=\L^n_{yz}(A)|_{B_{yz}}$ are $K_{\ref{neck-is-qc}}(\kappa,A)$-quasiconvex (Lemma \ref{neck-is-qc}). Since $\Sigma_y|_{B_{xyz}}\sse\L_{xz}|_{B_{xyz}}$, so we have $U_{xz}\sse U_{xy}\cap U_{yz}$. By our notation, $t_{xz}$ and $t_{yz}$ are nearest point projections of $\bar{x}$ and $\bar{y}$ on $U_{xz}$ and $U_{yz}$ respectively. Hence for $M=max\{r,A\}$, $\fa~s\in\al_{xz},~d^f(\Sigma_x(s),\Sigma_y(s))\le R_{\ref{flaring-lemma}}(\kappa,M)$ (Lemma \ref{flaring-lemma} $(2)$).
		
		\emph{Claim}: $d_B(t_{xz},t_{yz})$ is uniformly bounded.
		
\emph{Proof of the claim}: Since $\al_{yz}\cup[t_{yz},t_{xz}]$ is $(3+2K_{\ref{neck-is-qc}}(\kappa,A))$-quasigeodesic, $\exists$ $t\in\al_{xz}$ such that $d_B(t_{yz},t)\le D$ for some $D$ depending on $\dl_0$ and $3+2K_{\ref{neck-is-qc}}(\kappa,A)$ (Lemma \ref{ml}). Since $T$ is a tree and $B_v$'s are isometrically embedded in $B$, we can take $t\in\al_{xz}\cap B_{yz}$. Then by taking lifts of geodesic $[t_{yz},t]$ in $\Sigma_y$ and $\Sigma_z$, we get $d\pr(\Sigma_y(t),\Sigma_z(t))\le d\pr(\Sigma_y(t),\Sigma_y(t_{yz}))+d^f(\Sigma_y(t_{yz}),\Sigma_z(t_{yz}))+d\pr(\Sigma_z(t_{yz}),\Sigma_z(t))\le2\kappa D+A+2\kappa D=4\kappa D+A$ $\Rightarrow d^f(\Sigma_y(t),\Sigma_z(t))\le\phi(4\kappa D+A)$. So, $d^f(\Sigma_x(t),\Sigma_z(t))\le d^f(\Sigma_x(t),\Sigma_y(t))+d^f(\Sigma_y(t),\Sigma_z(t))\le R_{\ref{flaring-lemma}}(\kappa,M)+\phi(4\kappa D+A)=R_1$ (say). Then by Lemma \ref{flaring-lemma} $(1)$, $d_B(t,t_{xz})\le\tau_{\ref{flaring-lemma}}(\kappa,R_1)$. Hence by triangle inequality, $d_B(t_{yz},t_{xz})\le d_B(t_{yz},t)+d_B(t,t_{xz})\le D_1$, where $D_1=D+\tau_{\ref{flaring-lemma}}(\kappa,R_1)$.\qed\smallskip
		
Now we show the Hausdorff closeness of paths. We only prove that $c(x,z)$ lies in uniform neighborhood of $c(y,z)$. Then by the symmetry of the proof we will be done.
		
Let $\xi\in c(x,z)\cap\tilde{\al}_{xz}$ and $\eta=\pi_X(\xi)$. Then $\exists~\eta\pr\in\al_{yz}$ such that $d_B(\eta,\eta\pr)\le D_1+\dl_0$. Note that $[\eta,\eta\pr]\sse B_y$. Then by taking lift of geodesic $[\eta,\eta\pr]$ in $\Sigma_y$, we get $d\pr(\xi,c(y,z))\le d\pr(\Sigma_x(\eta),\Sigma_y(\eta\pr))\le d\pr(\Sigma_x(\eta),\Sigma_y(\eta))+d\pr(\Sigma_y(\eta),\Sigma_y(\eta\pr))\le R_{\ref{flaring-lemma}}(\kappa,M)+2\kappa(D_1+\dl_0)$.
		
Now let $\xi\in c(x,z)\cap\tilde{\gm}_{xz}$ and $\eta=\pi_X(\xi)$. Then $\exists~\eta\pr\in\gm_{yz}$ such that $d_B(\eta,\eta\pr)\le D_1+\dl_0$. Note that $[\eta,\eta\pr]\sse B_z$. So, by taking lift of $[\eta,\eta\pr]$ in $\Sigma_z$, $d\pr(\xi,c(y,z))\le d\pr(\Sigma_z(\eta),\Sigma_z(\eta\pr))\le2\kappa(D_1+\dl_0)$ (Lemma \ref{qi-sec-inside-lad-len-lift} $(3)$).
		
Finally, let $\xi\in c(x,z)\cap\mu_{xz}$. Then $d\pr(\xi,c(y,z))\le2\kappa(D_1+\dl_0)+A$.
		
Let $R_2:=max\{R_{\ref{flaring-lemma}}(\kappa,M)+2\kappa(D_1+\dl_0),2\kappa(D_1+\dl_0)+A\}=R_{\ref{flaring-lemma}}(\kappa,M)+2\kappa(D_1+\dl_0)$. Hence $c(x,z)\sse N_{R_2}(c(y,z))$. Therefore,  $Hd\pr(c(x,z),c(y,z))\le R_2$.
		
\emph{Subcase} (1B): Suppose $\Sigma_z|_{B_{xyz}}\sse\L_{xy}|_{B_{xyz}}$. Here also we will do the same as in Subcase $(1A)$. Let $a$ be the nearest point projection of $\bar{x}=\bar{y}$ on $B_{xyz}$. Since $\L\cap X_{\mfB}$ has girth $\le A_0\le A$, by Lemma \ref{flaring-lemma} $(2)$, $d^f(\Sigma_x(s),\Sigma_y(s))\le R_{\ref{flaring-lemma}}(\kappa,M),\fa~s\in[a,\bar{x}]$. (Note that $[a,\bar{x}]$ could be $\{\bar{x}\}=\{a\}$ if $\bar{x}\in B_{xyz}$.) In particular, $d^f(\Sigma_x(a),\Sigma_y(a))\le R_{\ref{flaring-lemma}}(\kappa,M)$. Again, since $\Sigma_z\sse\L_{xy}|_{B_{xyz}}$, we have $d^f(\Sigma_x(a),\Sigma_z(a))\le R_{\ref{flaring-lemma}}(\kappa,M)$ and $d^f(\Sigma_y(a),\Sigma_z(a))\le R_{\ref{flaring-lemma}}(\kappa,M)$. Note that $t_{yz}$ and $t_{xz}$ are also nearest point projections of $a$ on $U_{yz}$ and $U_{xz}$ respectively. So by Lemma \ref{flaring-lemma} $(1)$, we have $D_2=\tau_{\ref{flaring-lemma}}(\kappa,R_{\ref{flaring-lemma}}(\kappa,M))$ such that $d_B(a,t_{xz})\le D_2$ and $d_B(a,t_{yz})\le D_2$. Thus $d_B(t_{xz},t_{yz})\le2D_2$. Now we only show that $c(x,z)$ lies in uniform neighborhood of $c(y,z)$. Then by symmetry of the proof we will be done.
		
Let $\xi\in c(x,z)\cap\tilde{\al}_{xz}$ and $\eta=\pi_X(\xi)$. Note that $\eta\in[t_{xz},a]\cup[a,\bar{x}]$. If $\eta\in[a,\bar{x}]$, then $d\pr(\xi,c(y,z))\le d^f(\Sigma_x(\eta),\Sigma_y(\eta))\le R_{\ref{flaring-lemma}}(\kappa,M)$. If $\eta\in[t_{xz},a]$, then $d\pr(\xi,c(y,z))\le d\pr(\Sigma_x(\eta),\Sigma_x(a))$ $+d^f(\Sigma_x(a),\Sigma_y(a))\le2\kappa D_2+R_{\ref{flaring-lemma}}(\kappa,M)$.
		
Now let $\xi\in c(x,z)\cap\tilde{\gm}_{xz}$ and $\eta=\pi_X(\xi)$. Then $\exists~\eta\pr\in\gm_{yz}$ such that $d_B(\eta,\eta\pr)\le2D_2+\dl_0$. So taking lift of $[\eta,\eta\pr]$ in $\Sigma_z$, we get $d\pr(\xi,c(y,z))\le d\pr(\Sigma_z(\eta),\Sigma_z(\eta\pr))\le2\kappa(2D_2+\dl_0)$.
		
Finally, if $\xi\in c(x,z)\cap\mu_{xz}$, then $d\pr(\xi,c(y,z))\le2\kappa(2D_2+\dl_0)+A$. 
		
Therefore, $Hd\pr(c(x,z),c(y,z))\le max\{2\kappa(2D_2+\dl_0)+A,2\kappa D_2+R_{\ref{flaring-lemma}}(\kappa,M)\}$ $=R_3$ (say).
		
Let $\bm{R_4(\kappa,A,r)}:=max\{R_2,R_3\}$.
		
Now for the rest of the proof for this proposition, we assume that all the paths $c(\zeta,\zeta\pr)$ are constructed using the qi sections $\Sigma_x,\Sigma_y$ and $\Sigma_z$, where $\zeta,\zeta\pr\in\Sigma_x\cup\Sigma_y\cup\Sigma_z$.

{\bf Case 2}: Let $\pi(x)=\pi(y)$. Suppose $\Sigma_y(\bar{x})=y_1$. We also assume that $\Sigma_{y_1}=\Sigma_y$. Since $d_X(x,y)\le r$, so $d_B(\bar{x},\bar{y})\le r$. Now by taking lift of geodesic $[\bar{x},\bar{y}]$ in $\Sigma_y$, we get $d\pr(y,y_1)\le2\kappa r$. Thus $d_X(y_1,x)\le2\kappa r+r$ and so $d^f(y_1,x)\le\phi(2\kappa r+r)$. Therefore, by Case $1$, $Hd\pr(c(y_1,z),c(x,z))$ $\le R_4(\kappa,A,\phi(2\kappa r+r))$.
		
Now we investigate on $Hd\pr(c(y_1,z),c(y,z))$. For the consistency of notation, we let $\bar{y}_1=\pi_X(y_1)$. Let $t_{y_1z}$ be a nearest point projection of $\bar{y}_1$ on $U_{y_1z}=U_{yz}$ (since $\Sigma_y=\Sigma_{y_1}$). Again $U_{y_1z}$ is $K_{\ref{neck-is-qc}}(\kappa,A)$-quasiconvex and $d_B(\bar{y}_1,\bar{y})\le2\kappa r$, so by lemma \ref{proj-on-qc} $(1)$, we have $d_B(t_{y_1z},t_{yz})\le D_3$, where $D_3=(2\kappa r+1)C_{\ref{proj-on-qc}}(\dl_0,K_{\ref{neck-is-qc}}(\kappa,A))$. Note that $\bar{z}=\pi_X(z)$ and $\al_{y_1z}=[\bar{y}_1,t_{y_1z}]$, $\gm_{y_1z}=[\bar{z},t_{y_1z}]$. Then $Hd_B(\gm_{y_1z},\gm_{yz})\le D_3+\dl_0$ and $Hd_B(\al_{y_1z},\al_{yz})\le D_3+2\dl_0$ (note that $D_3>2\kappa r$). Thus $Hd\pr(\tilde{\al}_{yz},\tilde{\al}_{y_1z})\le2\kappa(D_3+2\dl_0)$ and $Hd\pr(\tilde{\gm}_{yz},\tilde{\gm}_{y_1z})\le2\kappa(D_3+\dl_0)$. Hence, $Hd\pr(c(y,z),c(y_1,z))\le2\kappa(D_3+2\dl_0)+A$.
		
Therefore, $Hd\pr(c(x,z),c(y,z))\le R_4(\kappa,A,\phi(2\kappa r+r))+2\kappa(D_3+2\dl_0)+A=:\bm{R_5(\kappa,A,r)}$ (say).
		
{\bf Case 3}: Now we consider the general case. Let $\bar{x}_1$ and $\bar{y}_1$ be the nearest point projections of $\bar{x}$ and $\bar{y}$ on $B_{u_{xy}}$ respectively (see Definition of family of paths for $u_{xy}$). Let $\Sigma_x(\bar{x}_1)=x_1$ and $\Sigma_y(\bar{y}_1)=y_1$. Since $d_B(\bar{x},\bar{y})\le r$, so $d_B(\bar{x},\bar{x}_1)\le r$ and $d_B(\bar{y}_1,\bar{y})\le r$. Thus by taking lifts of geodesics $[\bar{x},\bar{x}_1]$ and $[\bar{y},\bar{y}_1]$ in $\Sigma_x$ and $\Sigma_y$ respectively, we get $d\pr(x,x_1)\le2\kappa r$ and $d\pr(y,y_1)\le2\kappa r$. So by triangle inequality, $d_X(x_1,y_1)\le4\kappa r+r$. Note that $\pi(x_1)=\pi(y_1)$. Hence by Case $2$, $Hd\pr(c(x_1,z),c(y_1,z))\le R_5(\kappa,A,4\kappa r+r)$. Since $d\pr(x,x_1)\le2\kappa r$ and $d\pr(y,y_1)\le2\kappa r$, we have $Hd\pr(c(x,z),c(y,z))\le R_5(\kappa,A,4\kappa r+r)+2\kappa r=:\bm{R_6(\kappa,A,r)}$ (say).
		
Therefore, we can take $D_{\ref{fellow-travelling-property}}=max\{R_i(\kappa,A,r):i=4,5,6\}=R_6(\kappa,A,r)$.
\end{proof}
The proof for slimness of triangle formed by three paths (as in path construction) inside special $K$-ladder (see Definition \ref{K-metric-graph-bundle}) was done in \cite[Lemma $3.11$ for small girth ladder]{pranab-mahan} in case of metric graph bundles (see \cite[Definition $1.5$]{pranab-mahan}). The same proof works in case of metric bundles (Lemma \ref{slimness-in-metric-ladder(special case)}); which we will use in Condition $(2)$ below. We state it (for small girth ladder) without proof.
\begin{lemma}\textup{(\hspace{-.08mm}\cite[Lemma $3.11$]{pranab-mahan})}\label{slimness-in-metric-ladder(special case)}
Given $k\ge1,\mathcal{A}\ge0$, there is a constant $D_{\ref{slimness-in-metric-ladder(special case)}}=D_{\ref{slimness-in-metric-ladder(special case)}}(k,\mathcal{A})$ such that the following holds.
		
Suppose $\L(\Sigma,\Sigma\pr)$ is a special $k$-ladder (in a tree of metric bundles $(X,B,T)$) bounded by two $k$-qi sections $\Sigma,\Sigma\pr$ over an isometrically embedded subspace $B_1\sse B$ such that inf $\{d^f(\Sigma(a),\Sigma\pr(a)):a\in B_1\}\le \mathcal{A}$. Let $x,y,z\in \L(\Sigma,\Sigma\pr)$. Then the triangle formed by paths $c(x,y),c(x,z)$ and $c(y,z)$, coming from the path construction, are $D_{\ref{slimness-in-metric-ladder(special case)}}$-slim in the induced path metric on $N_{2C^{(3)}_{\ref{qi-sec-inside-lad-len-lift}}(k)}(\L(\Sigma,\Sigma\pr))\sse X$.
	\end{lemma}
	
{\bf \emph{Continuation of the proof of Proposition \ref{hyp_small_ladder}}}: We verify the condition $(1)$ and $(2)$ of Proposition \ref{combing} for our family of paths. Here $\L$ is $R$-dense in $L_{KR}$. We will be working with $\Sigma_x,\Sigma_y,\Sigma_z$ as $\kappa$-qi sections (explained in Condition $(2)$, Case $1$ below). 
	
{\bf Condition (1)}: Let $x,y\in\L$ such that $d_X(x,y)=r$ for $r\in\R_{\ge0}$. We want to show that the length of $[c(x,y)]$ in the path metric of $(L_{KR},d\pr)$ is bounded in terms of $r$. Let $c\in[\bar{x},\bar{y}]\cap B_{u_{xy}}$ and $c_1\in[\bar{x},t_{xy}]\cap B_{u_{xy}},c_2\in[\bar{y},t_{xy}]\cap B_{u_{xy}}$ such that $d_B(c,c_i)\le\dl_0,~i=1,2$. (We refer to the `definition of family of paths' for $u_{xy}$.) Since $d_B(\bar{x},\bar{y})\le d_X(x,y)\le r$, so $d_B(\bar{x},c)\le r$ and $d_B(\bar{y},c)\le r$. Let $\Sigma_x(c)=\{x_1\}$ and $\Sigma_y(c)=\{y_1\}$. Now taking lifts of $[\bar{x},c]$ and $[\bar{y},c]$ in $\Sigma_x$ and $\Sigma_y$ respectively, we have $d\pr(x,x_1)\le 2r\kappa$ and $d\pr(y,y_1)\le2r\kappa$ (see Lemma \ref{qi-sec-inside-lad-len-lift} $(3)$). Then by triangle inequality, $d_X(x_1,y_1)\le r(4\kappa+1)$. So $d_X(\Sigma_x(c_1),\Sigma_y(c_1))\le d\pr(\Sigma_x(c_1),\Sigma_x(c))+d_X(\Sigma_x(c),\Sigma_y(c))+d\pr(\Sigma_y(c),\Sigma_y(c_1))\le2\kappa\dl_0+r(4\kappa+1)+2\kappa\dl_0=4\kappa\dl_0+r(4\kappa+1)$. Thus $d^f(\Sigma_x(c_1),\Sigma_y(c_1))\le \phi(4\kappa\dl_0+r(4\kappa+1))$. Since $t_{xy}$ is a nearest point projection of $\bar{x}$ on $U_{xy}$, by Lemma \ref{flaring-lemma} $(1)$, $d_B(c_1,t_{xy})\le D$, where $D=\tau_{\ref{flaring-lemma}}(\kappa,\phi(4\kappa\dl_0+n(4\kappa+1)))$. So $d_B(c_2,t_{xy})\le d_B(c_2,c_1)+d_B(c_1,t_{xy})\le2\dl_0+D$. Again $d_B(\bar{x},c_1)\le r+\dl_0$ and $d_B(\bar{y},c_2)\le r+\dl_0$. Hence $d_B(\bar{x},t_{xy})\le d_B(\bar{x},c_1)+d_B(c_1,t_{xy})\le r+\dl_0+D$ and $d_B(\bar{y},t_{xy})\le d_B(\bar{y},c_2)+d_B(c_2,t_{xy})\le r+3\dl_0+D$. Note that $\al_{xy}=[\bar{x},t_{xy}]$ and $\gm_{xy}=[\bar{y},t_{xy}]$. Therefore, by taking lifts of $\al_{xy}$ and $\gm_{xy}$ in $\Sigma_x$ and $\Sigma_y$ respectively, we see that the length of $[c(x,y)]$ is bounded by $2\kappa(r+\dl_0+D)+A+2\kappa(r+3\dl_0+D)=4\kappa(r+D+2\dl_0)+A$. So the paths $[c(x,y)]$ are $\psi$-properly embedded, where $\psi:\R_{\ge0}\ri\R_{\ge0}$ is a function such that
	\begin{eqnarray}\label{proper-fn}
		\psi(r)=4\kappa(r+D+2\dl_0)+A
	\end{eqnarray}
	
	{\bf Condition (2)}:
	Recall $U_{xy}=\L^n_{xy}(A)|_{B_{xy}},~U_{xz}=\L^n_{xz}(A)|_{B_{xz}}$ and $U_{yz}=\L^n_{yz}(A)|_{B_{yz}}$ are $K_{\ref{neck-is-qc}}(\kappa,A)$-quasiconvex and so is in $B$. We show that paths $c(x,y),c(x,z),c(y,z)$, coming from the above qi sections, are uniformly slim in the path metric of $(L_{KR},d\pr)$. Depending on the position of $t_{xy},t_{xz},t_{yz}$ with respect to $B_{xyz}$, we consider the following two cases. Note that by the definition of $B_{xyz}$, either all of $t_{xy},t_{xz},t_{yz}$ are in $B_{xyz}$ or at most one of them is outside of $B_{xyz}$.
	
	{\bf Case 1}: All of $t_{xy},t_{xz},t_{yz}$ are in $B_{xyz}$.
	
	In this case, without loss of generality, we assume that $\Sigma_y|_{B_{xyz}}\sse\L_{xz}|_{B_{xyz}}$, i.e. in the ladder $\L$, we have the order, $bot(\L_{a,v})\le \Sigma_x(a)\le\Sigma_y(a)\le\Sigma_z(a)\le top(\L_{a,v})$, where $a\in B_{xyz}$ and $v=\pi_B(a)$. (To get this one has to consider $\Sigma_x,\Sigma_y,\Sigma_z$ as $\kappa=C^{(3)}_{\ref{qi-sec-inside-lad-len-lift}}(K)$-qi sections instead $K$-qi sections, see Lemma \ref{qi-sec-inside-lad-len-lift} $(2)$). Let $\bar{x}_1,\bar{y}_1,\bar{z}_1$ be (the) nearest point projections of $\bar{x},\bar{y},\bar{z}$ on $B_{xyz}$ respectively. Let $\Sigma_x(\bar{x}_1)=x_1,\Sigma_y(\bar{y}_1)=y_1$ and $\Sigma_z(\bar{z}_1)=z_1$. Further, we assume that the restriction of $c(x,y)$ from $x_1$ to $y_1$ is $c(x_1,y_1)$. Likewise, we have $c(x_1,z_1)$ and $c(y_1,z_1)$. Note that restriction of $\Sigma_x$ and $\Sigma_z$ over $B_{xyz}$ form a special $C_{\ref{qi-sec-inside-lad-len-lift}}(\kappa)=C^{(4)}_{\ref{qi-sec-inside-lad-len-lift}}(K)$-ladder over $B_{xyz}$ bounded by two qi sections $\Sigma_x|_{B_{xyz}}$ and $\Sigma_z|_{B_{xyz}}$ such that $inf\{d^f(\Sigma_x(s),\Sigma_z(s)):s\in B_{xyz}\}\le A$. Since $(X,B,T)$ satisfies $C^{(7)}_{\ref{qi-sec-inside-lad-len-lift}}(K)=C^{(3)}_{\ref{qi-sec-inside-lad-len-lift}}(C^{(4)}_{\ref{qi-sec-inside-lad-len-lift}}(K))$-flaring condition, so by Lemma \ref{slimness-in-metric-ladder(special case)}, the triangle formed by the paths $c(x_1,y_1),c(x_1,z_1)$ and $c(y_1,z_1)$ are $D_{\ref{slimness-in-metric-ladder(special case)}}(C^{(4)}_{\ref{qi-sec-inside-lad-len-lift}}(K),A)$-slim in the path metric of $L_{KR}$. Let $D_1=D_{\ref{slimness-in-metric-ladder(special case)}}(C^{(4)}_{\ref{qi-sec-inside-lad-len-lift}}(K),A)$.
	
	For the point $\xi\in c(x,y)$ such that $\xi\notin c(x_1,y_1)$, $\xi$ is $2\kappa\dl_0$-close to $c(x,z)\cup c(y,z)$ in the path metric of $L_{KR}$. Same for others.
	
	Note that $2\kappa\dl_0\le D_1$. Therefore, the triangle formed by the paths $c(x,y),~c(x,z)$ and $c(y,z)$ are $D_1$-slim in the path metric of $L_{KR}$.
	
	{\bf Case 2}:
	One of $t_{xy},t_{xz},t_{yz}$ is not in $B_{xyz}$ (see Figure \ref{one-of-three}). Without loss of generality, we assume that $t_{xy}\notin B_{xyz}$. In this case, we do not need to consider what exactly is happening to $\Sigma_x,\Sigma_y$ and $\Sigma_z$ over $B_{xyz}$.
	
	Note that $\pi_B(t_{xy})=v_{xy}$. Let $t\in B_{v_{xy}}$ such that $d_B(B_{xyz},B_{v_{xy}})=d_B(B_{xyz},t)$. Let us fix $s\in\mfB\cap U_{xy}$ such that $t\in[s,t_{xy}]$. (We can get such $s$ as $\L^g|_{\mfB}\le A_0\le A$, see Definition \ref{girth-neck} for notation.) Since $s,t_{xy}\in U_{xy}$, by Lemma \ref{flaring-lemma} $(2)$, for all $\zeta\in[s,t_{xy}]$, $d^f(\Sigma_x(\zeta),\Sigma_y(\zeta))\le R_{\ref{flaring-lemma}}(\kappa,A)$. In particular, $d^f(\Sigma_x(t),\Sigma_y(t))\le R_{\ref{flaring-lemma}}(\kappa,A)$. Then by the fellow-travelling property (see Proposition \ref{fellow-travelling-property}), we have $$Hd\pr(c(x,z)|_{[t,t_{xz}]\cup[t_{xz},\bar{z}]},c(y,z)|_{[t,t_{yz}]\cup[t_{yz},\bar{z}]})\le D_{\ref{fellow-travelling-property}}(\kappa,A,R_{\ref{flaring-lemma}}(\kappa,A))=D_2\textrm{ (say)}.$$
	
	Again let $t_x,t_y$ be the nearest point projections of $\bar{x}$ and $\bar{y}$ respectively on $B_{v_{xy}}$. Then $$Hd\pr(c(x,y)|_{[t_x,\bar{x}]},c(x,z)|_{[t_x,\bar{x}]})\le2\kappa\dl_0\textrm{ and }Hd\pr(c(x,y)|_{[t_y,\bar{y}]},c(y,z)|_{[t_y,\bar{y}]})\le2\kappa\dl_0.$$
	
	Now we only need to analyse what is happening to the paths $c(x,y),c(x,z)$ and $c(y,z)$ over $B_{v_{xy}}$ to conclude the slimness, and here we go.
	
	\begin{figure}[h]
		\includegraphics[width=10cm]{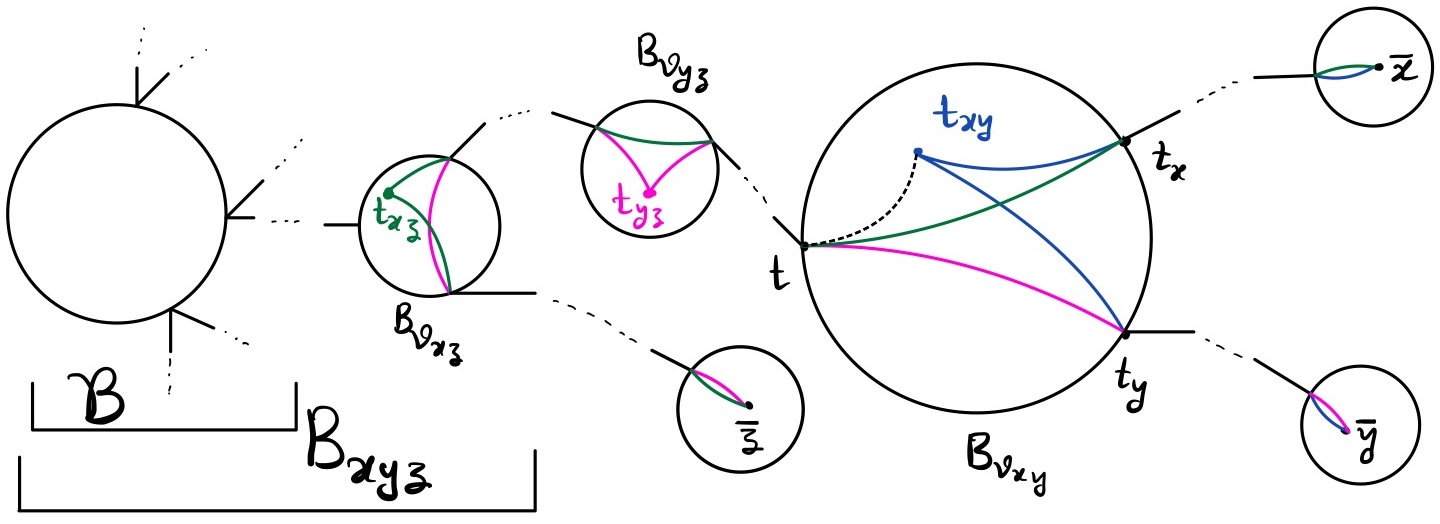}
		\centering
		\caption{Case $2$}
		\label{one-of-three}
	\end{figure}
	
{\bf\textit{A. The portion of the path $\bm{c(x,y)}$ over $\bm{B_{v_{xy}}}$ is uniformly close to $\bm{c(x,z)\cup c(y,z)}$}}:
	
Let $\xi\in c(x,y)\cap(\tilde{\al}_{xy}\cup\tilde{\gm}_{xy})$ such that $\eta=\pi_X(\xi)$. Then $\eta\in[t_{xy},t_x]\cup[t_{xy},t_y]$. First, we consider $\eta\in[t_{xy},t_x]$. Since $U_{xy}$ is $K_{\ref{neck-is-qc}}(\kappa,A)$-quasiconvex, $t_{xy}$ nearest point projection of $\bar{x}$ and $s\in U_{xy}$, so by \cite[Lemma $1.31 (2)$]{pranab-mahan}, the arc-length parametrization of $[t_x,t_{xy}]\cup[t_{xy},s]$ is a $(3+2K_{\ref{neck-is-qc}}(\kappa,A))$-quasigeodesic. In particular, $[t_x,t_{xy}]\cup[t_{xy},t]$ is $(3+2K_{\ref{neck-is-qc}}(\kappa,A))$-quasigeodesic. Therefore, by Lemma \ref{ml}, there is $D_3$ depending on $\dl_0$ and $3+2K_{\ref{neck-is-qc}}(\kappa,A)$ such that $d_B(\eta,\eta\pr)\le D_3$ for some  $\exists~\eta\pr\in[t_x,t]$. So by taking lift of $[\eta,\eta\pr]$ in $\Sigma_x$ (see Lemma \ref{qi-sec-inside-lad-len-lift} $(3)$), we get $d\pr(\xi,c(x,z))\le d\pr(\Sigma_x(\eta),\Sigma_x(\eta\pr))\le2\kappa D_3$.
	
Now suppose $\eta\in[t_{xy},t_y]$. Then the slimness of $\triangle(t_{xy},t,t_y)$ says that $\eta\in N_{\dl_0}([t_{xy},t]\cup[t,t_y])$. Let $\exists~\eta\pr\in[t,t_y]$ such that $d_B(\eta,\eta\pr)\le\dl_0$. Then by taking lift of $[\eta,\eta\pr]$ in $\Sigma_y$ (see Lemma \ref{qi-sec-inside-lad-len-lift} $(3)$), we get $d\pr(\xi,c(y,z))\le d\pr(\Sigma_y(\eta),\Sigma_y(\eta\pr))\le2\kappa\dl_0$. Now let $\eta\pr\in[t,t_{xy}]$ such that $d_B(\eta,\eta\pr)\le\dl_0$. Again, since $[t_x,t_{xy}]\cup[t_{xy},t]$ is $(3+2K_{\ref{neck-is-qc}}(\kappa))$-quasigeodesic, $\exists~\eta\prr\in[t,t_x]$ such that $d_B(\eta\pr,\eta\prr)\le D_3$. Taking lift of the geodesic $[\eta,\eta\pr]$ in $\Sigma_y$ and that of the geodesic $[\eta\pr,\eta\prr]$ in $\Sigma_x$, we get $d\pr(\Sigma_y(\eta),\Sigma_y(\eta\pr))\le2\kappa\dl_0$ and $d\pr(\Sigma_x(\eta\pr),\Sigma_x(\eta\prr))\le2\kappa D_3$. Recall that $\fa~\zeta\in[s,t_{xy}]$, $d^f(\Sigma_x(\zeta),\Sigma_y(\zeta))\le R_{\ref{flaring-lemma}}(\kappa,A)$; in particular, $d^f(\Sigma_x(\eta\pr),\Sigma_y(\eta\pr))\le R_{\ref{flaring-lemma}}(\kappa,A)$. So, by triangle inequality, $d\pr(\xi,c(x,z))\le d\pr(\Sigma_y(\eta),\Sigma_x(\eta\prr))\le 2\kappa\dl_0+R_{\ref{flaring-lemma}}(\kappa,A)+2\kappa D_3=D_4\textrm{ (say)}$.
	
	Again, if $\xi\in\mu_{xy}$, then $d\pr(\xi,c(x,z))\le2\kappa D_3+A\le2\kappa D_3+R_{\ref{flaring-lemma}}(\kappa,A)\le D_4$.

	{\bf\textit{B. The portion of the path $\bm{c(y,z)}$ over $\bm{B_{v_{xy}}}$ is uniformly close to $\bm{c(x,y)\cup c(x,z)}$}}: 
	
Note that the portion of $c(y,z)$ over $B_{v_{xy}}$ is $c(y,z)\cap\tilde{\al}_{yz}$. Let $\xi\in c(y,z)\cap\tilde{\al}_{yz}$ such that $\eta=\pi_X(\xi)$. Then $\eta\in[t_y,t]$, and the slimness of $\triangle(t_{xy},t_y,t)$ says that $\eta\in N_{\dl_0}([t_y,t_{xy}]\cup[t_{xy},t])$. First, we consider that $\exists~\eta\pr\in[t_y,t_{xy}]$ such that $d_B(\eta,\eta\pr)\le\dl_0$. So by taking lift of $[\eta,\eta\pr]$ in $\Sigma_y$ (see Lemma \ref{qi-sec-inside-lad-len-lift} $(3)$), we get $d\pr(\xi,c(x,y))\le d\pr(\Sigma_y(\eta),\Sigma_y(\eta\pr))\le2\kappa\dl_0$. Now, suppose $\exists~\eta\pr\in[t_{xy},t]$ such that $d_B(\eta,\eta\pr)\le\dl_0$. Recall that $[t_x,t_{xy}]\cup[t_{xy},t]$ is $(3+2K_{\ref{neck-is-qc}}(\kappa))$-quasigeodesic and so $\exists~\eta\prr\in[t_x,t]$ such that $d_B(\eta\pr,\eta\prr)\le D_3$ where $D_3$ is defined above in {\bf A}. Taking lifts of the geodesic $[\eta,\eta\pr]$ in $\Sigma_y$ and that of the geodesic $[\eta\pr,\eta\prr]$ in $\Sigma_x$, we get $d\pr(\Sigma_y(\eta),\Sigma_y(\eta\pr))\le2\kappa\dl_0$ and $d\pr(\Sigma_x(\eta\pr),\Sigma_x(\eta\prr))\le2\kappa D_3$. Also $\fa~\zeta\in[t_{xy},t],d^f(\Sigma_x(\zeta),\Sigma_y(\zeta))\le R_{\ref{flaring-lemma}}(\kappa,A)$; in particular, $d^f(\Sigma_y(\eta\pr),\Sigma_x(\eta\pr))\le R_{\ref{flaring-lemma}}(\kappa,A)$. Therefore, by triangle inequality, $d\pr(\xi,c(x,z))\le d\pr(\Sigma_y(\eta),\Sigma_x(\eta\prr))\le 2\kappa\dl_0+R_{\ref{flaring-lemma}}(\kappa,A)+2\kappa D_3=D_4$ where $D_4$ is defined above in {\bf A}.
	
	{\bf\emph{C. The portion of the path $\bm{c(x,z)}$ over $\bm{B_{v_{xy}}}$ is uniformly close to $\bm{c(x,y)\cup c(y,z)}$}}:
	
Note that the portion of $c(x,z)$ over $B_{v_{xy}}$ is $c(x,z)\cap\tilde{\al}_{xz}$. Let $\xi\in c(x,z)\cap\tilde{\al}_{xz}$ such that $\eta=\pi_X(\xi)$. Then $\eta\in[t_x,t]$, and so $\eta\in N_{\dl_0}([t_x,t_{xy}]\cup[t_{xy},t])$. If $\exists~\eta\pr\in[t_x,t_{xy}]$ such that $d_B(\eta,\eta\pr)\le\dl_0$, then by taking lift of the geodesic $[\eta,\eta\pr]$ in $\Sigma_x$, we get $d\pr(\xi,c(x,y))\le d\pr(\Sigma_x(\eta),\Sigma_x(\eta\pr))\le2\kappa \dl_0$. Now, let $\exists~\eta\pr\in[t_{xy},t]$ such that $d_B(\eta,\eta\pr)\le\dl_0$. Recall that $\fa~\zeta\in[t,t_{xy}]$, $d^f(\Sigma_x(\zeta),\Sigma_y(\zeta))\le R_{\ref{flaring-lemma}}(\kappa,A)$. Again, if we look at $\triangle(t_{xy},t,t_y),\exists~\eta\prr\in[t_{xy},t_y]\cup[t_y,t]$ such that $d_B(\eta\pr,\eta\prr)\le\dl_0$. If  $\eta\prr\in[t_{xy},t_y]$, then by taking lifts of geodesics $[\eta,\eta\pr]$ and $[\eta\pr,\eta\prr]$ in $\Sigma_x$ and $\Sigma_y$ respectively, we get $d\pr(\xi,c(x,y))\le d\pr(\Sigma_x(\eta),\Sigma_y(\eta\prr))\le d\pr(\Sigma_x(\eta),\Sigma_x(\eta\pr))+d^f(\Sigma_x(\eta\pr),\Sigma_y(\eta\pr))+d\pr(\Sigma_y(\eta\pr),\Sigma_y(\eta\prr))\le2\kappa\dl_0+R_{\ref{flaring-lemma}}(\kappa,A)+2\kappa\dl_0=D_5$ (say). If $\eta\prr\in[t_y,t]$, then the same inequality would imply that $\xi$ is $D_5$-close to $c(y,z)$ in the path metric of $L_{KR}$.
	
	Let $D\pr=max\{D_1,D_2,D_4,D_5\}+2D_{\ref{Hd-close-of-c(x,y)-c(x,y)}}(\kappa,A)$, representing the maximum of all constants obtained in Case $1$, Case $2$; additionally, considering  Corollary \ref{Hd-close-of-c(x,y)-c(x,y)}, we add $2D_{\ref{Hd-close-of-c(x,y)-c(x,y)}}(\kappa,A)$. Therefore, the triangle formed by the paths $c(x,y),c(x,z)$ and $c(y,z)$, which we started with to show the combing criterion, are $D\pr$-slim. Hence, by Proposition \ref{combing}, $L_{KR}$ is $\dl_{\ref{hyp_small_ladder}}=\dl_{\ref{combing}}(\psi,D\pr,R)$-hyperbolic, where $\psi$ is defined in Condition $(1)$, equation \ref{proper-fn}.
\end{proof}

\subsection{Hyperbolicity of ladders (general case)}\label{general-ladder}


\begin{lemma}[Bisection of ladders]\label{bisection-of-ladder}
There are constants $K_{\ref{bisection-of-ladder}}=K_{\ref{bisection-of-ladder}}(K)=C_{\ref{qi-sec-inside-lad-len-lift}}(K),~C_{\ref{bisection-of-ladder}}=C_{\ref{bisection-of-ladder}}(K,C,\epsilon)\ge C,~\epsilon_{\ref{bisection-of-ladder}}=\epsilon_{\ref{bisection-of-ladder}}(K,C,\epsilon)\ge\epsilon$ such that the following holds.
	
Suppose $z\in\L\cap X_{\mfB}$ and $\Sigma_z$ is a maximal $K$-qi section in $\L$ through $z$. Then $\Sigma_z$ divide the ladder $\L$ into two $(K_{\ref{bisection-of-ladder}},C_{\ref{bisection-of-ladder}},\epsilon_{\ref{bisection-of-ladder}})$-subladders, say $\L^+$ and $\L^-$, with a central base $\mfB$ such that
	\begin{eqnarray*}
		top(\L^+)&\sse&top(\L),\ \Sigma_z\sse bot(\L^+)\textrm{ and }\\ 
		bot(\L^-)&\sse&bot(\L),\ \Sigma_z\sse top(\L^-)
	\end{eqnarray*}   
\end{lemma}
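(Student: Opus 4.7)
The plan is to construct $\L^+$ and $\L^-$ by slicing the fiber geodesics of $\L$ at the section $\Sigma_z$, and then to verify the axioms of a ladder via the promotion criterion Lemma \ref{promoting-of-ladder}. Let $B_{\Sigma_z} \sse B_{\L}$ be the (tree-graded) subgraph over which the maximal $K$-qi section $\Sigma_z$ is defined; since $z \in \L_{\B}$, we have $\B \sse B_{\Sigma_z}$ (see Note \ref{maximal}). For every vertex $v \in T$ with $B_v \sse B_{\Sigma_z}$ and every $a \in B_v$, I set $\L^+_{a,v} := [top(\L_{a,v}), \Sigma_z(a)]^{fib}$ and $\L^-_{a,v} := [\Sigma_z(a), bot(\L_{a,v})]^{fib}$. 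For a vertex $w \in T_{\L}$ with $B_w \not\sse B_{\Sigma_z}$, maximality of $\Sigma_z$ means there is an edge $[v,w]$ of $T$ (with $v$ on the $\Sigma_z$-side) where $\Sigma_z(c_v)$ lies outside $[\tilde{x}_{c_v,v}, \tilde{y}_{c_v,v}]$. If $\Sigma_z(c_v) \in [x_{c_v,v}, \tilde{x}_{c_v,v})^{fib}$ (Case A) I assign the entire subladder beyond the edge $[v,w]$ to $\L^-$ (so $\L^-_{a,t} := \L_{a,t}$ for all such $t$); if $\Sigma_z(c_v) \in (\tilde{y}_{c_v,v}, y_{c_v,v}]^{fib}$ (Case B) I assign it analogously to $\L^+$.

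For the promotion, I verify the hypotheses of Lemma \ref{promoting-of-ladder} for $\L^+$; the argument for $\L^-$ is symmetric. Over each piece $B_v$ in the base of $\L^+$: if $B_v \sse B_{\Sigma_z}$, then $\L^+ \cap X_v$ is bounded by the two $K$-qi sections $top(\L_v)$ and $\Sigma_z|_{B_v}$, so Lemma \ref{qi-section-inside-ladder} upgrades it to a $C_{\ref{qi-section-inside-ladder}}(K)$-metric graph ladder; otherwise $\L^+ \cap X_v = \L_v$ which is already a $K$-metric graph ladder. Along an edge $[v,w]$ inside $T_{\L^+}$ with both $B_v, B_w \sse B_{\Sigma_z}$, the distance bounds for $top$ and $bot$ come from property $\mathcal{C}$ of $\Sigma_z$ and the axioms of $\L$; if $B_v \sse B_{\Sigma_z}$ but $B_w \not\sse B_{\Sigma_z}$ (only possible in Case B), then $bot(\L^+_{c_w,w}) = y_{c_w,w}$ is within $d^e$-distance $K$ of $\tilde{y}_{c_v,v} \in \L^+_{c_v,v}$ by condition (3) of Definition \ref{ladder}, and similarly for $top$, fitting the second clause of condition (1) of Lemma \ref{promoting-of-ladder}. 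The projection estimate $Hd^e(P_w(\L^+_{c_v,v}), \L^+_{c_w,w}) \le \epsilon_{\ref{bisection-of-ladder}}$ splits as $\L^+_{c_v,v} = [x_{c_v,v}, \tilde{y}_{c_v,v}]^{fib} \cup [\tilde{y}_{c_v,v}, \Sigma_z(c_v)]^{fib}$: the first piece is controlled by Lemma \ref{for-subladder}(2), while the second piece projects to a bounded neighborhood of $y_{c_w,w}$ by coboundedness of $([\tilde{y}_{c_v,v}, y_{c_v,v}]^{fib}, F_{c_w,w})$ (Lemma \ref{for-subladder}(1)).

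At the boundary of $T_{\L^+}$, namely an edge $[v,w]$ with $v \in T_{\L^+}$, $w \notin T_{\L^+}$ (which is exactly Case A together with edges already leaving $T_{\L}$), I verify coboundedness of the pair $(\L^+_{c_v,v}, F_{c_w,w})$ in the path metric of $F_{vw}$: since $\L^+_{c_v,v} \sse [x_{c_v,v}, \tilde{x}_{c_v,v}]^{fib}$, the containment gives $\mathrm{diam}^e\, P_{F_{c_w,w}}(\L^+_{c_v,v}) \le \mathrm{diam}^e\, P_{F_{c_w,w}}([x_{c_v,v}, \tilde{x}_{c_v,v}]^{fib})$, which is bounded by Lemma \ref{for-subladder}(1); Lemma \ref{one-bdded-gives-cobdded} then upgrades this one-sided bound to full coboundedness. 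The inclusions $top(\L^+) \sse top(\L)$ and $\Sigma_z \sse bot(\L^+)$ are immediate from the construction, the central base remains $\B$ since $\Sigma_z$ is defined over all of $\B$, and Lemma \ref{promoting-of-ladder} yields the new constants $K_{\ref{bisection-of-ladder}}, C_{\ref{bisection-of-ladder}}, \epsilon_{\ref{bisection-of-ladder}}$.

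The main obstacle I expect is the Case B junction, where $\Sigma_z$ lies strictly below $\tilde{y}_{c_v,v}$ in $\L_{c_v,v}$ and we must absorb the full piece $\L_w$ into $\L^+$. The $d^{fib}$-distance from $\Sigma_z(c_v)$ to $\tilde{y}_{c_v,v}$ can be arbitrarily large, so $bot(\L^+)$ does not form a globally close section across the edge $[v,w]$. The reason this is not fatal is that Lemma \ref{promoting-of-ladder} asks only that the endpoints of $\L^+_{c_w,w}$ lie in a uniform $d^e$-neighborhood of $\L^+_{c_v,v}$ \emph{as a set}, not that the top and bottom sections glue continuously across $[v,w]$; this containment holds because $\tilde{x}_{c_v,v}, \tilde{y}_{c_v,v} \in \L^+_{c_v,v}$ and both are within $K$ of $x_{c_w,w}, y_{c_w,w}$ respectively. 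Handling the companion Hd-projection estimate at this junction cleanly is the main technical piece, which as noted above decomposes via Lemma \ref{for-subladder} applied to the two subsegments of $\L^+_{c_v,v}$ lying on either side of $\tilde{y}_{c_v,v}$.
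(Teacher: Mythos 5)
Your construction matches the paper's almost exactly: slice the fiber geodesics of $\L$ at $\Sigma_z$ over the base $B_{\Sigma_z}$, assign "empty" or "full" past the boundary of $B_{\Sigma_z}$, and promote via Lemma~\ref{promoting-of-ladder} with $K' = C_{\ref{qi-section-inside-ladder}}(K)$ and $C',\epsilon'$ from Lemma~\ref{for-subladder}. However, there is a gap at the case split along a boundary edge $[v,w]$.

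You assert that maximality of $\Sigma_z$ forces $\Sigma_z(c_v)$ to lie outside $[\tilde{x}_{c_v,v},\tilde{y}_{c_v,v}]$, so that only your Case A ($\Sigma_z(c_v)<\tilde{x}_{c_v,v}$) and Case B ($\Sigma_z(c_v)>\tilde{y}_{c_v,v}$) need to be treated. That dichotomy is not justified. In the graph setting the monotonic map $\tilde{f}_{wv}:V(\L_{c_w,w})\to V(\L_{c_v,v})$ is not surjective in general (cf.\ the second part of Lemma~\ref{monotonic-map-between-qgs}), and by the compatibility convention in Remark~\ref{domain-of-qi-section} a $K$-qi section through $c_w$ must satisfy $\Sigma(c_v)=\tilde{f}_{wv}(\Sigma(c_w))$. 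Therefore maximality of $\Sigma_z$ only rules out $\Sigma_z(c_v)$ lying in the \emph{image} of $\tilde{f}_{wv}$, which is a discrete subset of $[\tilde{x}_{c_v,v},\tilde{y}_{c_v,v}]$ containing the two endpoints, not the full interval. The case $\Sigma_z(c_v)\in(\tilde{x}_{c_v,v},\tilde{y}_{c_v,v})$ strictly between two consecutive image points is entirely consistent with maximality, and your proof has no assignment for it: your Case B decomposition $\L^+_{c_v,v}=[x_{c_v,v},\tilde{y}_{c_v,v}]^{fib}\cup[\tilde{y}_{c_v,v},\Sigma_z(c_v)]^{fib}$ is predicated on $\Sigma_z(c_v)\ge\tilde{y}_{c_v,v}$ and does not make sense here.

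By contrast, the paper's split is based solely on the position of $\Sigma_z(c_v)$ relative to $\tilde{x}_{c_v,v}=\tilde{f}_{wv}(top(\L_{c_w,w}))$: empty if $\Sigma_z(c_v)<\tilde{x}_{c_v,v}$, full if $\Sigma_z(c_v)>\tilde{x}_{c_v,v}$ (the equality $\Sigma_z(c_v)=\tilde{x}_{c_v,v}$ being excluded since $\tilde{x}_{c_v,v}$ \emph{is} in the image). This covers the intermediate case by folding it into "full." To repair your argument you should either adopt this coarser dichotomy and re-verify the endpoint condition $y_{c_w,w}\in N^e_{K'}(\L^+_{c_v,v})$ in the intermediate sub-case (this requires an additional observation — e.g.\ that consecutive image points of $\tilde{f}_{wv}$ are within $d^{fib}$-distance $f(2K+1)$ of each other, so $\Sigma_z(c_v)$ is within a controlled fiber distance of the image), or supply a proof that the intermediate case genuinely cannot arise for the particular maximal sections used in the paper.
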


\begin{proof}
	Since the proofs are similar, we prove only for $\L^+$, say. Note that $\Sigma_z$ is a maximal $K$-qi section over some base $B_z\sse \pi^{-1}_B(T_{\L})$, say. Let $T_z=\pi_B(B_z)$. There are two kinds of segments in the fibers of $\L^+$ as follows.
	
	\emph{First kind}: For all $v\in T_z$ and for all $b\in B_v,~\L^+_{b,v}=[top(\L_{b,v}),\Sigma_z(b)]\sse \L_{b,v}$.
	
	\emph{Second kind}: Let $w\in T_{\L}\setminus T_z$ and $v\in T_z$ such that $d_T(v,w)=1$. Let $S$ be the connected component of $T\setminus\{v\}$ containing $\{w\}$. If we have an order $h_{wv}(top(\L_{\mfw,w}))<\Sigma_z(\mfv)\le top(\L_{\mfv,v})$ (see Figure \ref{order} left one), then $\L^+_{b,t}=\emptyset$ for $t\in S\textrm{ and } b\in B_t$. If the order is $bot(\L_{\mfv,v})<\Sigma_z(\mfv)\le h_{wv}(bot(\L_{\mfw,w}))$ (see Figure \ref{order} right one), then $\L^+_{b,t}=\L_{b,t}$ for $t\in T_{\L}\cap S\textrm{ and }b\in B_t$ with the same orientation as it was for $\L$. Also, the family of maps $\{h_{wv}\}$ for $\L^+$ are the restriction of that of $\L$.
	
	\begin{figure}[h]
		\includegraphics[width=10cm]{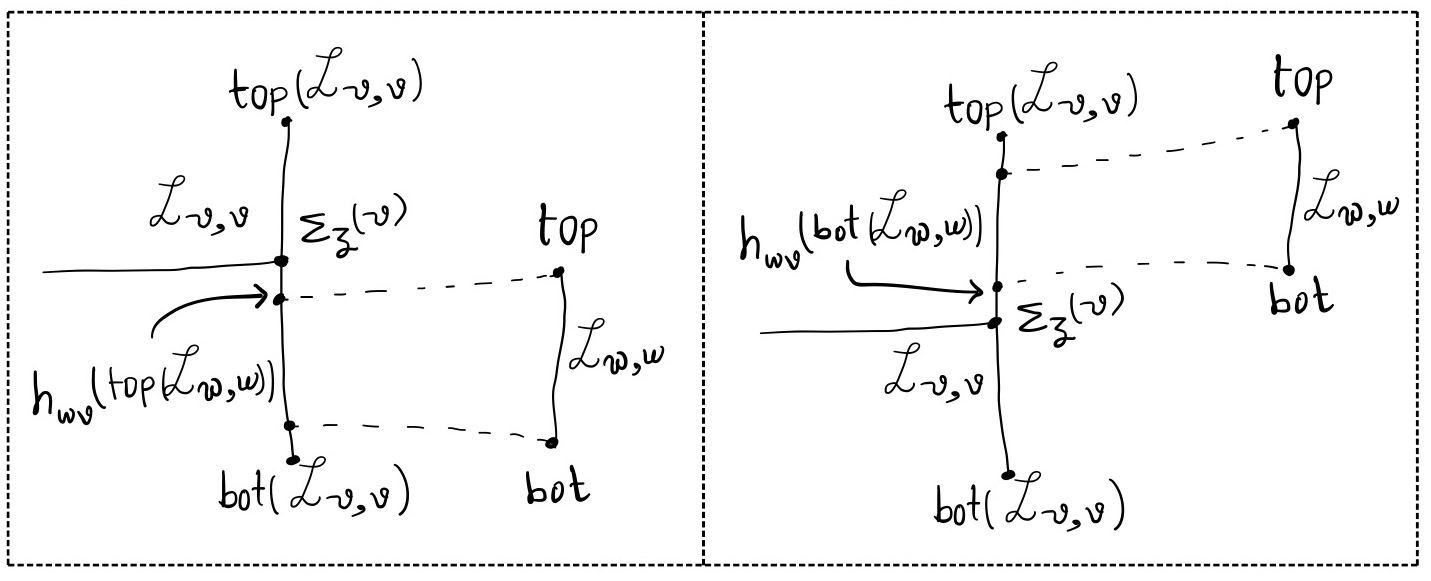}
		\centering
		\caption{}
		\label{order}
	\end{figure}
	
	Now with the help of Lemma \ref{promoting-of-ladder}, we show that union of these fiber geodesics form a ladder. In view of Lemma \ref{promoting-of-ladder}, we have to find $K\pr,C\pr,\epsilon\pr$. By Lemma \ref{qi-sec-inside-lad-len-lift} $(2)$, one observes that $K\pr=C_{\ref{qi-sec-inside-lad-len-lift}}(K)$, $C\pr=C_{\ref{for-subladder}}(K,C,\epsilon)$ and $\epsilon\pr=\epsilon_{\ref{for-subladder}}(K,C,\epsilon)$ serve our purpose.
	
Therefore, by Lemma \ref{promoting-of-ladder}, $\L^+$ is a $(K_{\ref{bisection-of-ladder}},C_{\ref{bisection-of-ladder}},\epsilon_{\ref{bisection-of-ladder}})$-subladder in $\L$, where $K_{\ref{bisection-of-ladder}}=k_{\ref{promoting-of-ladder}}(K\pr)$, $C_{\ref{bisection-of-ladder}}=c_{\ref{promoting-of-ladder}}(C\pr)$ and $\epsilon_{\ref{bisection-of-ladder}}=\varepsilon_{\ref{promoting-of-ladder}}(\epsilon\pr)$ for the above $K\pr,C\pr,\epsilon\pr$. Since the family of maps $\{h_{wv}\}$ for $\L^+$ are restriction, so $k_{\ref{promoting-of-ladder}}(K\pr)=C_{\ref{qi-sec-inside-lad-len-lift}}(K)$.
\end{proof}	
In the same line, we also have the following lemma. Since the proof is similar to that of Lemma  \ref{bisection-of-ladder}, we omit the proof.
\begin{lemma}[Trisection of ladders]\label{trisection-of-ladder}
There are constants $K_{\ref{trisection-of-ladder}}=K_{\ref{trisection-of-ladder}}(K)=C_{\ref{qi-sec-inside-lad-len-lift}}(K)$, $C_{\ref{trisection-of-ladder}}=C_{\ref{trisection-of-ladder}}(K,C,\epsilon)$ and $\epsilon_{\ref{trisection-of-ladder}}=\epsilon_{\ref{trisection-of-ladder}}(K,C,\epsilon)$ such that the following holds.
	
Suppose $x,y\in\L\cap X_{\mfB}$. Let $\Sigma_x,\Sigma_y$ be maximal $K$-qi sections through $x,y$ over $B_x,B_y$ respectively. We assume that $\fa~v\in\pi_B(B_x\cap B_y)\textrm{ and }\fa~b\in B_v,$ we have an order $bot(\L_{b,v})\le\Sigma_x(b)\le\Sigma_y(b)\le top(\L_{b,v})$ in $\L_{b,v}$. Then we have $(K_{\ref{trisection-of-ladder}},C_{\ref{trisection-of-ladder}},\epsilon_{\ref{trisection-of-ladder}})$-subladder in $\L$ bounded by $\Sigma_x,\Sigma_y$ with central base $B_x\cap B_y$.
\end{lemma}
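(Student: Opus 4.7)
The plan is to proceed in direct parallel with the bisection proof of Lemma \ref{bisection-of-ladder}, the only change being that we now carry two qi sections $\Sigma_x,\Sigma_y$ simultaneously rather than one. Equivalently, one could iterate Lemma \ref{bisection-of-ladder} twice: first bisect $\L$ along $\Sigma_x$ to obtain the subladder $\L^+$ on the $\Sigma_y$-side (possible since, by the hypothesis $\Sigma_x(b)\le\Sigma_y(b)$, the section $\Sigma_y$ lies in $\L^+$ over $\Pi_T(B_x\cap B_y)$), then bisect $\L^+$ along $\Sigma_y$ and keep the portion containing $\Sigma_x$. Since a $K$-qi section in $\L$ remains a $K_{\ref{bisection-of-ladder}}(K)$-qi section in $\L^+$ (and maximality in $\L$ forces maximality in $\L^+\subseteq\L$), this yields a subladder with parameters obtained by composing the bisection constants twice. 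This is the cleanest route, but it is a useful sanity check rather than the construction I would write out.

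The direct construction: set the new central base to be $B_x\cap B_y$, with new $top$ and $bot$ being the restrictions of $\Sigma_y$ and $\Sigma_x$ respectively. The fiber segments of the candidate subladder $\L^{xy}$ are defined in two stages. Over $B_x\cap B_y$, set $\L^{xy}_{b,v}=[\Sigma_x(b),\Sigma_y(b)]^{fib}$; by the hypothesis these are genuine subsegments of $\L_{b,v}$. Outside, fix $v\in\Pi_T(B_x\cap B_y)$ and an edge $[v,w]$ with $w\notin\Pi_T(B_x\cap B_y)$, and let $[c_v,c_w]$ be the corresponding edge in $B$. Compare the positions of the two points $\Sigma_x(c_v),\Sigma_y(c_v)\in\L_{c_v,v}$ with the pair $\tilde x_{c_v,v}=\tilde f_{wv}(top(\L_{c_w,w})),\tilde y_{c_v,v}=\tilde f_{wv}(bot(\L_{c_w,w}))$. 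If $[\Sigma_x(c_v),\Sigma_y(c_v)]^{fib}$ is disjoint from $[\tilde x_{c_v,v},\tilde y_{c_v,v}]^{fib}$, then set $\L^{xy}_{s,t}=\emptyset$ for every $t$ in the component $T_{vw}$ of $T\setminus\{v\}$ through $w$. Otherwise, take the intersection of these two segments and transport it via $\tilde f_{wv}^{-1}$ (up to bounded monotone adjustment) to obtain two new $K$-qi sections in $\L_w$ bounding $\L^{xy}\cap X_w$, then iterate this definition outward exactly as in the construction of $\L^+$ in Lemma \ref{bisection-of-ladder}.

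Having defined the fibers, the verification is almost mechanical. Over any vertex $v$ in the base of $\L^{xy}$, Lemma \ref{qi-section-inside-ladder} shows that $\L^{xy}\cap X_v$ is a $C_{\ref{qi-section-inside-ladder}}(K)$-metric graph ladder bounded by the two qi sections we have specified. At an edge $[v,w]$ along which $\L^{xy}$ terminates, Lemma \ref{for-subladder} provides the cobounded pair $(\L^{xy}_{c_v,v},F_{c_w,w})$ in $F_{vw}$ with constants depending only on $K,C,\epsilon$, and likewise controls the modified projection Hausdorff estimate needed for condition $(2)$ of Lemma \ref{promoting-of-ladder}. When $[v,w]$ stays inside the base of $\L^{xy}$, the fact that $\Sigma_x,\Sigma_y$ are $K$-qi sections with property $\mathcal C$ gives the Hausdorff-$K$ closeness of the endpoints of adjacent fiber segments. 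Feeding these data into Lemma \ref{promoting-of-ladder} produces $K_{\ref{trisection-of-ladder}}=k_{\ref{promoting-of-ladder}}(C_{\ref{qi-section-inside-ladder}}(K))$, $C_{\ref{trisection-of-ladder}}=c_{\ref{promoting-of-ladder}}(C_{\ref{for-subladder}}(K,C,\epsilon))$ and $\epsilon_{\ref{trisection-of-ladder}}=\varepsilon_{\ref{promoting-of-ladder}}(\epsilon_{\ref{for-subladder}}(K,C,\epsilon))$.

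The only genuinely new bookkeeping compared with Lemma \ref{bisection-of-ladder} is the case split at edges leaving the new base, where instead of comparing one point to the segment $[\tilde x_{c_v,v},\tilde y_{c_v,v}]^{fib}$ we compare two. This expands the casework from two subcases to essentially the five orderings of the four points $\Sigma_x(c_v),\Sigma_y(c_v),\tilde x_{c_v,v},\tilde y_{c_v,v}$ on the fiber geodesic $\L_{c_v,v}$; the monotonicity of $\tilde f_{wv}$ and the order hypothesis $\Sigma_x(b)\le\Sigma_y(b)$ keep this finite and uniform, which is what I expect to be the main (though still routine) obstacle.
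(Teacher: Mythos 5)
Your proposal follows the route the paper intends; the paper explicitly omits the proof of this lemma on the grounds that it is ``similar to'' the bisection Lemma~\ref{bisection-of-ladder}, and your direct construction is indeed the natural two-section analogue of that argument (fibers over the new central base $B_x\cap B_y$ being $[\Sigma_x(b),\Sigma_y(b)]$, the boundary edges handled via the two extreme points and the monotone maps $\tilde f_{wv}$, coboundedness at terminal edges via Lemma~\ref{for-subladder}, and the whole thing promoted via Lemma~\ref{promoting-of-ladder}); your constant book-keeping also matches the paper's Remark~\ref{bitri-remark}, which records $K_{\ref{trisection-of-ladder}}=K_{\ref{bisection-of-ladder}}=C_{\ref{qi-section-inside-ladder}}(K)$.

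Two small points worth tightening. First, in the iterated-bisection sanity check you only verify that $\Sigma_y$ lies in $\L^+$ over $\Pi_T(B_x\cap B_y)$; for the second bisection to be legitimate one needs $\Sigma_y\subseteq\L^+$ over all of $B_y$. This does hold, but it requires the observation that over any edge $[v,w]$ with $w\in\Pi_T(B_y)\setminus\Pi_T(B_x)$, the $\L^+$-branch cannot be empty: emptiness would force $\Sigma_x(c_v)$ strictly on the $top$-side of $\tilde x_{c_v,v}$, hence $\Sigma_y(c_v)$ as well by the order hypothesis, contradicting that $\Sigma_y$ extends through that edge. Second, the phrase ``transport via $\tilde f_{wv}^{-1}$ and iterate exactly as in $\L^+$'' glosses over the case where exactly one of $\Sigma_x,\Sigma_y$ extends into $T_{vw}$: there the fiber over $w$ is $[\Sigma_x(c_w),\,top(\L_{c_w,w})]$ (or the analogous segment for $\Sigma_y$), not the full $\L_{c_w,w}$ that the bisection's ``second kind'' produces, and only once both maximal sections have terminated does the construction revert to copying $\L$ wholesale. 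This is exactly what maximality and the order hypothesis guarantee, and is what your final paragraph about the finitely many orderings of $\Sigma_x(c_v),\Sigma_y(c_v),\tilde x_{c_v,v},\tilde y_{c_v,v}$ is implicitly handling; spelling that case split out once would close the argument cleanly.
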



\begin{lemma}\label{union_of_two_ladders}
	For all $R\ge2K_{\ref{bisection-of-ladder}}(K)$ there exists a constant $R_{\ref{union_of_two_ladders}}=R_{\ref{union_of_two_ladders}}(K,R)$ such that the following holds.
	
Let $x\in\L\cap X_{\mfB}$, and let $\Sigma_x$ be a maximal $K$-qi section in $\L$. Now we have two $(K_{\ref{bisection-of-ladder}},C_{\ref{bisection-of-ladder}},\epsilon_{\ref{bisection-of-ladder}})$-subladders, say $\L^+$ and $\L^-$, coming from Lemma \ref{bisection-of-ladder}. Then $N_R(\L^+)\cap N_R(\L^-)\sse N_{R_{\ref{union_of_two_ladders}}}(\Sigma_x)$ in both the path metric of $N_R(\L^+)$ and $N_R(\L^-)$.
\end{lemma}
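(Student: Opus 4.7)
\medskip

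\noindent\textbf{Proof proposal.} The plan is to first bound $d_X(z,\Sigma_x)$ uniformly for every $z\in N_R(\L^+)\cap N_R(\L^-)$ and then transfer the bound to the induced path metrics of $N_R(\L^\pm)$ using the qi embedding from Corollary \ref{ladder-is-qi-emb}: indeed, $\Sigma_x\sse\L^\pm\sse N_R(\L^\pm)$, so any $X$-bound $d_X(z,\Sigma_x)\le R''$ yields $d^\pm(z,\Sigma_x)\le L_{\ref{ladder-is-qi-emb}}(K_1,R)\,R''+L_{\ref{ladder-is-qi-emb}}(K_1,R)$ in each path metric, where $K_1=K_{\ref{bisection-of-ladder}}(K)$. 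To start, pick $z^\pm\in\L^\pm$ with $d(z,z^\pm)\le R$ in the respective path metrics, so that $d_X(z,z^\pm)\le R$ and $d_X(z^+,z^-)\le 2R$.

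\medskip

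\noindent The core step is to show that Mitra's retraction $\rho^-\colon X\to\L^-$ (Proposition \ref{mitra's-retraction-on-ladder}, which is $L_0:=L_{\ref{mitra's-retraction-on-ladder}}(K_1)$-coarsely Lipschitz on the $(K_1,\cdot,\cdot)$-subladder $\L^-$) sends $z^+$ uniformly close to $\Sigma_x$. Let $v^+=\Pi(z^+)$, $b^+=\Pi_B(z^+)$, and denote by $T_{\Sigma_x}$ the domain-tree of $\Sigma_x$. In the first case, $v^+\in T_{\Sigma_x}$: by Lemma \ref{bisection-of-ladder} (first kind), $\L^+_{b^+,v^+}=[top(\L_{b^+,v^+}),\Sigma_x(b^+)]^{fib}$ and $\L^-_{b^+,v^+}=[\Sigma_x(b^+),bot(\L_{b^+,v^+})]^{fib}$ are two geodesic subsegments of the common fiber geodesic sharing only the endpoint $\Sigma_x(b^+)$. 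Since $z^+$ lies on the first one, its nearest point projection onto the second (which is what $\rho^-$ computes fiberwise) is $\Sigma_x(b^+)$, up to a constant depending on $\dl_0$. In the second case, $v^+\notin T_{\Sigma_x}$: by the second-kind part of Lemma \ref{bisection-of-ladder}, $v^+$ lies in a component of $T\setminus T_{\Sigma_x}$ on which $\L^-$ is empty, and Mitra's retraction of Theorem \ref{mitra's-retraction-on-semicts-subsp} routes through the boundary vertex $v_0\in T_{\Sigma_x}$ adjacent to $v^+$'s component, followed by an edge-space projection onto $\L^-_{c_{v_0},v_0}=[\Sigma_x(c_{v_0}),bot(\L_{c_{v_0},v_0})]^{fib}$. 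Because $\L^-$ fails to extend past $v_0$ precisely when $\tilde f_{v_0^+v_0}(\L_{c_{v_0^+},v_0^+})$ lies in $\L_{c_{v_0},v_0}$ on the side of $\Sigma_x(c_{v_0})$ opposite to $\L^-_{c_{v_0},v_0}$, Lemma \ref{for-subladder}(1) (uniform coboundedness of $(\L^-_{c_{v_0},v_0},F_{c_{v_0^+},v_0^+})$ in $F_{v_0v_0^+}$) forces this projection to land within a bounded distance of the endpoint $\Sigma_x(c_{v_0})$. In either case, $d_X(\rho^-(z^+),\Sigma_x)\le D^*$ for some $D^*=D^*(K)$.

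\medskip

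\noindent Assembling: since $\rho^-(z^-)=z^-$ and $\rho^-$ is $L_0$-coarsely Lipschitz, $d_X(\rho^-(z^+),z^-)\le 2L_0R+L_0$, whence by the triangle inequality
\[
d_X(z,\Sigma_x)\;\le\;d_X(z,z^-)+d_X(z^-,\rho^-(z^+))+d_X(\rho^-(z^+),\Sigma_x)\;\le\;R+2L_0R+L_0+D^*\;=:R''.
\]
Converting to the path metrics via Corollary \ref{ladder-is-qi-emb} gives $R'_{\ref{union_of_two_ladders}}:=L_{\ref{ladder-is-qi-emb}}(K_1,R)\,R''+L_{\ref{ladder-is-qi-emb}}(K_1,R)$.

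\medskip

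\noindent The main obstacle is the second case ($v^+\notin T_{\Sigma_x}$): there one cannot simply argue in a single fiber and must carefully unwind the two-step construction of Mitra's retraction, combining Lemma \ref{for-subladder}(1) with the order information encoded in Lemma \ref{bisection-of-ladder} to guarantee that the cobounded projection lands near $\Sigma_x(c_{v_0})$ rather than at a generic point of $\L^-_{c_{v_0},v_0}$. Once this positional refinement is established, the symmetric argument (with $\rho^+$ applied to $z^-$) yields the analogous inequality in the path metric of $N_R(\L^-)$, and the two bounds together complete the proof.
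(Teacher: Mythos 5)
Your route is genuinely different from the paper's, and while it is plausible, it carries more machinery and an unproven step; let me compare.

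The paper's proof avoids your case distinction entirely by choosing the comparison point more cleverly. Given $y\in N_R(\L^+)\cap N_R(\L^-)$ with $y_1\in\L^+$, $y_2\in\L^-$ at $d_i$-distance at most $R$, the paper picks the vertex $u$ on $[\Pi(y_1),\Pi(y_2)]$ nearest to $\T$ (or any $u\in\T\cap[\Pi(y_1),\Pi(y_2)]$ when they meet) and a point $c\in B_u\cap[\tilde y_1,\tilde y_2]_B$. Because the maximality of $\Sigma_x$ forces $T_{\L^+}\cap T_{\L^-}=T_{\Sigma_x}$ (over any edge leaving $T_{\Sigma_x}$, at most one of $\L^\pm$ is nonempty, else $\Sigma_x$ would extend), and because any path from $\Pi(y_i)$ to $\T$ passes through $u$, one gets $u\in T_{\Sigma_x}$ automatically. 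One then lifts $[\tilde y_i,c]_B$ in the $K_1$-qi sections $\Sigma_{y_i}\sse\L^i$ to land at $\Sigma_{y_1}(c)\in\L^+_{c,u}$ and $\Sigma_{y_2}(c)\in\L^-_{c,u}$, and since these two fiber segments meet only at $\Sigma_x(c)$, the point $\Sigma_x(c)$ is \emph{between} them on the fiber geodesic, so $d^{fib}(\Sigma_{y_i}(c),\Sigma_x(c))\le d^{fib}(\Sigma_{y_1}(c),\Sigma_{y_2}(c))\le f(2R(4K_1+1))$. Everything is then already a bound in the path metric of $N_R(\L^i)$, with no need to pass through $d_X$ and Corollary~\ref{ladder-is-qi-emb}. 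Your approach instead pushes $z^+$ down to $\L^-$ via Mitra's retraction and argues that the image hugs $\Sigma_x$. Case~1 ($\Pi(z^+)\in T_{\Sigma_x}$) works cleanly, since fiberwise the projection onto $\L^-_{b^+,v^+}$ is literally the shared endpoint. But your case~2 rests on the claim that the NPP of $x'\in F_{c_{v_0^+},v_0^+}$ onto $\L^-_{c_{v_0},v_0}$ in $F_{v_0v_0^+}$ lands near $\Sigma_x(c_{v_0})$ specifically, and Lemma~\ref{for-subladder}(1) as stated only gives a diameter bound (coboundedness of the full arm $[y_{c_{v_0},v_0},\tilde y_{c_{v_0},v_0}]$ with $F_{c_{v_0^+},v_0^+}$), not the position. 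To get the position you would have to reopen the proof of Lemma~\ref{for-subladder}(1), argue that the projection of $F_{c_{v_0^+},v_0^+}$ onto the full arm lands near $\tilde y_{c_{v_0},v_0}$, and then invoke a subsegment-projection lemma (e.g.~in the spirit of \cite[Lemma~1.31]{pranab-mahan}) to conclude the projection onto $\L^-_{c_{v_0},v_0}\sse[\Sigma_x(c_{v_0}),y_{c_{v_0},v_0}]$ falls near its $\Sigma_x$-end. You have flagged this as the main obstacle, but it is a genuine extra step that the paper's tree-side choice of $u$ sidesteps. In short: both approaches hinge on the same structural fact (at most one of $\L^\pm$ escapes $T_{\Sigma_x}$ in any direction), but the paper exploits it on the base tree to collapse everything into a single-fiber comparison, whereas you use Mitra's retraction, which forces a two-case analysis and a positional refinement of the cobounded-projection estimate.
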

\begin{proof}
For ease of notation, let $\L^{(1)}=\L^+,~\L^{(2)}=\L^-$. Let $d_i$ be the induced path metric on $N_R(\L^i),~i=1,2$. Suppose $y\in N_R(\L^{(1)})\cap N_R(\L^{(2)})$ and $y_i\in\L^{(i)}$ such that $d_i(y,y_i)\le R,~i=1,2$. Then $d_X(y_1,y_2)\le2R$ and so $d_B(\bar{y}_1,\bar{y}_2)\le2R$, where $\pi_X(y_i)=\bar{y}_i,~i=1,2$. If $\mfT\cap[\pi(y_1),\pi(y_2)]=\emptyset$, we let $u\in T$ such that $d_T(\mfT,[\pi(y_1),\pi(y_2)])=d_T(\mfT,u)$; otherwise, we take $u\in\mfT\cap[\pi(y_1),\pi(y_2)]$ arbitrary. Fix $c\in B_u\cap[\bar{y}_1,\bar{y}_2]_B$. Then $d_B(\bar{y}_i,c)\le2R$. Let $t_i$ be the nearest point projection of $\pi(y_i)$ on $\mfT$ and $B_{y_i}=\mfB\cup\pi_B^{-1}([t_i,\pi(y_i)])$ for $i=1,2$. Then there is $K_{\ref{bisection-of-ladder}}(K)$-qi section, say $\Sigma_{y_i}$, over $B_{y_i}$ through $y_i$ in $\L^{(i)},~i=1,2$. Let $K_{\ref{bisection-of-ladder}}(K)=K_1$ Taking lifts of geodesic $[\bar{y}_i,c]_B$ in $\Sigma_{\bar{y}_i}$, we get $d_i(y_i,\Sigma_{y_i}(c))\le2K_1.2R=4K_1R,~i=1,2$. Then $d_X(\Sigma_{y_1}(c),\Sigma_{y_2}(c))\le d_X(\Sigma_{y_1}(c),y_1)+d_X(y_1,y_2)+d_X(y_2,\Sigma_{y_2}(c))\le2R(4K_1+1)$. So $d^f(\Sigma_{y_1}(c),\Sigma_{y_2}(c))\le \phi(2R(4K_1+1))$. Since $\Sigma_x(c)\in[\Sigma_{y_1}(c),\Sigma_{y_2}(c)]^f\sse \L_{c,u}$, so $d^f(\Sigma_{y_i}(c),\Sigma_x(c))\le \phi(2R(4K_1+1))$. Hence $d_i(y,\Sigma_x)\le d_i(y,\Sigma_x(c))\le d_i(y,y_i)+d_i(y_i,\Sigma_{y_i}(c))+d_i(\Sigma_{y_i}(c),\Sigma_x(c))\le R+4K_1 R+\phi(2R(4K_1+1)),~ i=1,2$. So, we can take $R_{\ref{union_of_two_ladders}}:=R(4K_1+1)+\phi(2R(4K_1+1))$.
\end{proof}

Now we are ready to state and proof of the main result of this subsection.

\begin{theorem}\label{general-ladder-is-hyp}
Let $K\ge1,~C\ge0$ and $\ep\ge0$. Suppose $\L_K$ is a $(K,C,\ep)$-ladder with a central base $\mathfrak B$. Then for all $R\ge2C^{(9)}_{\ref{qi-sec-inside-lad-len-lift}}(K)$, there exists $\dl_{\ref{general-ladder-is-hyp}}=\dl_{\ref{general-ladder-is-hyp}}(K,R)$ such that $L_{KR}:=N_R(\L_K)$ is $\dl_{\ref{general-ladder-is-hyp}}$-hyperbolic with respect to the path metric induced from $X$.
\end{theorem}

\begin{proof}
	
\underline{{\bf \emph{Subdivision of ladder}}}: We fix a fiber geodesic $\L_{a,u}$ for some $u\in\mfT$ and $a\in B_u$. Let $K_1=K_{\ref{trisection-of-ladder}}(K)$. We also fix $A_0>max\{\phi(2K+k_{\ref{monotonic-map-between-qgs}}(\dl\pr_0,L\pr_0,K)),\phi(4K(2R+1)+2R+1),\phi(8KR+2R),\phi(4KD_2+D_2)\}$ where $D_2$ is defined below in the verification of condition $(4)$ of Proposition \ref{combi-hyp-sps}. For $x\in\L_{a,u}$, $\Sigma_x$ denote a maximal $K$-qi section in $\L$ over $B_x$, say. Let $\gm:[0,l]\ri\L_{a,u}$ be the arc length parametrization such that $\gm(0)=bot(\L_{a,u})$ and $\gm(l)=top(\L_{a,u})$. Now we inductively subdivide $\L$ into small girth ladders as follows. First, inductively we construct a finite sequence of points on $\L_{a,u}$ and $K$-qi sections through that, which will help in subdivision. Note that set map from $\mfB$ to $bot(\L)\cap\pi_X^{-1}(\mfB)$ is $K$-qi section in $\L$. Set $x_0=\gm(0)$ and $\Sigma_{x_0}=bot(\L)\cap\pi_X^{-1}(\mfB)$. Suppose $x_i=\gm(t_i)$ has been constructed. Let $$\Omega_{i+1}=\{t\in(t_i,l]:\gm(t)=x\textrm{ and }d^f(\Sigma_{x_i}(s),\Sigma_x(s))>A_0,\fa~s\in B_{x_i}\cap B_x\}.$$If $\Omega_{i+1}=\emptyset$, then we define $x_{i+1}=\gm(l)$ and stop the process. Otherwise, we take $x_{i+1}= \gm(\textrm{min }\{\textrm{ inf }\Omega_{i+1} +A_0/2,l\})$, and $x\pr_{i+1}=\gm(\textrm{ inf }\Omega_{i+1}-A_0/2)$. The construction of these points and sections stop at $n$-th step if $x_n=\gm(l)$.
	
\emph{Claim}: Let $i>j$ and $d^f(\Sigma_{x_i}(t),\Sigma_{x_{j}}(t))>A_0$, $\fa~t\in B_{x_i}\cap B_{x_{j}}$. Then for $v\in\pi_B(B_{x_i}\cap B_{x_{j}})$ and $b\in B_v$, we have the order $bot(\L_{b,v})\le\Sigma_{x_i}(b)\le\Sigma_{x_{j}}(b)\le top(\L_{b,v})$ in the fiber geodesic $\L_{b,v}$.
	
\emph{Proof of the claim}: Indeed, because we have the family of order preserving monotonic maps $\{h_{wv}\}$, if $B_v$ is single vertex, then we are done. Otherwise, let $b,b\pr\in B_v$ such that $d_B(b,b\pr)=1$ and $bot(\L_{b\pr,v})\le\Sigma_{x_i}(b\pr)\le\Sigma_{x_{j}}(b\pr)\le top(\L_{b\pr,v})$ but $bot(\L_{b,v})\le\Sigma_{x_{j}}(b)<\Sigma_{x_i}(b)\le top(\L_{b,v})$. Let $\al=[bot(\L_{b\pr,u}),\Sigma_{x_{j}}(b\pr)]^f$$\sse \L_{b\pr,u}$ and $\bt=[bot(\L_{b,u}),\Sigma_{x_{j}}(b)]^f\sse \L_{b,u}$. Consider the $\dl\pr_0$-hyperbolic space $F_{b\pr b}:=\pi_X^{-1}([b\pr,b])$ (see Lemma \ref{com-two-hyp-sps}). Then we apply Lemma \ref{monotonic-map-between-qgs}, to $L\pr_0$-quasigeodesic $\al,~\bt$ in $\dl\pr_0$-hyperbolic space $F_{b\pr b}$. So there is a point $z\in\bt$ such that $d_{X_u}(\Sigma_{x_i}(b\pr),z)\le d_{F_{b\pr b}}(\Sigma_{x_i}(b\pr),z)\le k_{\ref{monotonic-map-between-qgs}}(\dl\pr_0,L\pr_0,K)$. Thus by triangle inequality we have $d_{X_u}(z,\Sigma_{x_i}(b))\le 2K+k_{\ref{monotonic-map-between-qgs}}(\dl\pr_0,L\pr_0,K)$, and so $d^f(z,\Sigma_{x_i}(b))\le \phi(2K+k_{\ref{monotonic-map-between-qgs}}(\dl\pr_0,L\pr_0,K))$. Since $\Sigma_{x_{j}}(b)\in[z,\Sigma_{x_i}(b)]^f$, so $d^f(\Sigma_{x_{j}}(b),\Sigma_{x_i}(b))\le d^f(\Sigma_{x_i}(b),z)\le \phi(2K+k_{\ref{monotonic-map-between-qgs}}(\dl\pr_0,L\pr_0,K))<A_0$ which contradicts to the fact that $d^f(\Sigma_{x_i}(t),\Sigma_{x_{j}}(t))>A_0$ for all $t\in B_{x_i}\cap B_{x_{j}}$.\qed\smallskip
	
Hence by Lemma \ref{trisection-of-ladder}, if $d^f(\Sigma_{x_i}(t),\Sigma_{x_{j}}(t))>A_0~\fa~t\in B_{x_i}\cap B_{x_{j}}$, the $K$-qi sections $\Sigma_{x_i}$ and $\Sigma_{x_{j}}$ bounds a $(K_1,C_1,\ep_1)$-subladder in $\L_K$ over the central base $B_{x_i}\cap B_{x_{j}}$, where $K_1=K_{\ref{trisection-of-ladder}}(K),~C_1=C_{\ref{trisection-of-ladder}}(K,C,\epsilon)$ and $\ep_1=\epsilon_{\ref{trisection-of-ladder}}(K,C,\epsilon)$. If $j=i+1$, we denote this subladder by $\L^{(i)}=\L(\Sigma_{x_i},\Sigma_{x_{i+1}})$.
	
	Again, if $d^f(\Sigma_{x_i}(t),\Sigma_{x_{i+1}}(t))>A_0$, $\fa~t\in B_{x_i}\cap B_{x_{i+1}}$, then $\Sigma_{x\pr_{i+1}}$ is a maximal $K$-qi section in $\L_K$ through $x\pr_{i+1}$ over $B_{x\pr_{i+1}}$. Also from the construction, $\exists~a\in B_{x_i}\cap B_{x\pr_{i+1}}$ and $\exists~b\in B_{x\pr_{i+1}}\cap B_{x_{i+1}}$ such that
	\begin{eqnarray}\label{girth-condition}
		d^f(\Sigma_{x_i}(a),\Sigma_{x\pr_{i+1}}(a))\le A_0\textrm{ and }d^f(\Sigma_{x\pr_{i+1}}(b),\Sigma_{x_{i+1}}(b))\le A_0.
	\end{eqnarray}
	It is very well possible that $\Sigma_{x\pr_{i+1}}$ does not lie fully in $\L^{(i)}$. In that case, considering Lemma \ref{qi-sec-inside-lad-len-lift} $(2)$, we adjust $\Sigma_{x\pr_{i+1}}$ to lie inside $\L^{(i)}$, turning it into a $C_{\ref{qi-sec-inside-lad-len-lift}}(K)$-qi section over possibly a smaller base than $B_{x\pr_{i+1}}$. (We refer to the proof of  Lemma \ref{qi-sec-inside-lad-len-lift} $(2)$, i.e., \cite[Lemma $3.1$]{pranab-mahan}.) We still denote this modified qi section as $\Sigma_{x\pr_{i+1}}$ and its base as $B_{x\pr_{i+1}}$. We note that this modification will not effect to the girth condition \ref{girth-condition}; and $K_1=K_{\ref{trisection-of-ladder}}(K)=C_{\ref{qi-sec-inside-lad-len-lift}}(K)$.
	
	Therefore, by Lemma \ref{bisection-of-ladder}, the $K_1$-qi section $\Sigma_{x\pr_{i+1}}$ subdivides the ladder $\L^{(i)}$ into two $(K_2,C_2,\ep_2)$-subladders, where $K_2=K_{\ref{bisection-of-ladder}}(K_1),C_2=C_{\ref{bisection-of-ladder}}(K_1,C_1,\ep_1)$ and $\ep_2=\epsilon_{\ref{bisection-of-ladder}}(K_1,C_1,\ep_1)$. Let us denote these subladders of $\L^{(i)}$ by  $\L^{i1}=\L(\Sigma_{x_i},\Sigma_{x\pr_{i+1}})$ and $\L^{i2}=\L(\Sigma_{x\pr_{i+1}},\Sigma_{x_{i+1}})$. Note that $K_2=C^{(2)}_{\ref{qi-sec-inside-lad-len-lift}}(K)$ and the ladders $\L^{i1}$ and $\L^{i2}$ satisfy the small girth condition \ref{girth-condition}.
	
	Therefore, the ladder $\L$ is subdivided into $(K_1,C_1,\ep_1)$-subladders $\L^{(i)},~0\le i\le n-1$. Also, $\L^{(i)}$'s are further subdivided into two $(K_2,C_2,\ep_2)$-subladders $\L^{i1},\L^{i2}$ in $\L^{(i)}$ except possibly for $i=n-1$.
	
	\begin{lemma}\label{x_i-x_{i+1}}
		Let $x\in\Sigma_{x_{i}}$ and $y\in\Sigma_{x_j}$ such that $d_X(x,y)\le D$ and $i\ne j$. Then there is a point $c\in B_{x_i}\cap B_{x_j}$ such that $d^f(\Sigma_{X_i}(c),\Sigma_{x_j}(c))\le \phi(4KD+D)$.
	\end{lemma} 
	
	\begin{proof}
		Let $\pi_X(x)=a$ and $\pi_X(y)=b$. Suppose $c\in[a,b]$ such that $a\in B_{x_i}\cap B_{x_j}$. Since $B_{x_i}$'s are isometrically embedded in $B$, $[a,c]_B\sse B_{x_i}$ and $[c,b]_B\sse B_{x_j}$. Now $d_B(a,b)\le d_X(x,y)\le D$ implies $d_B(a,c)\le D$ and $d_B(c,b)\le D$. By taking $K$-qi lift of $[a,c]_B$ and $[c,b]_B$ in $\Sigma_{x_i}$ and $\Sigma_{x_j}$ respectively, we have $d_X(x,\Sigma_{x_i}(c))\le2KD$ and $d_X(y,\Sigma_{x_j}(c))\le2KD$. Again by triangle inequality, $d_X(\Sigma_{x_i}(c),\Sigma_{x_j}(c))\le4KD+D$. Hence $d^f(\Sigma_{X_i}(c),\Sigma_{x_j}(c))\le\phi(4KD+D)$.
	\end{proof}	
	
\underline{{\bf \emph{Continuation of the proof of Theorem \ref{general-ladder-is-hyp}}}}: We use the following notations for the proof. $$X_i:=N_R(\L^{(i)}),~L^{i1}:=N_R(\L^{i1}),~L^{i2}:=N_R(\L^{i2}),~0\le i\le n-1$$
	
	From the construction, it follows that $L_{KR}=\cup_{i=0}^{n-1} X_i$. We will verify the conditions of Proposition \ref{combi-hyp-sps}.
	
	$(1)$ \emph{$X_i$'s are uniformly hyperbolic, $0\le i\le n-1$}.
	
	Note that $(\L^{(i)})^g|_{B_{x_i}\cap B_{x_{i+1}}}>A_0$ except possibly for $i=n-1$ (see Definition \ref{girth-neck} for notation). If $(\L^{(n-1)})^g|_{B_{x_{n-1}}\cap B_{x_n}}\le A_0$ then by Proposition \ref{hyp_small_ladder}, $X_{n-1}$ is $\dl_{\ref{hyp_small_ladder}}(K_1,A_0,R)$-hyperbolic. Otherwise, the ladder $\L^{(i)}$ is subdivided by a $K_1$-qi section $\Sigma_{x\pr_{i+1}}$ into two $(K_2,C_2,\ep_2)$-subladders, $\L^{i1}$ and $\L^{i2}$ such that their girth over central base $\le A_0$ (see inequation \ref{girth-condition}). Since $(X,B,T)$ satisfies flaring condition, by Proposition \ref{hyp_small_ladder}, $L^{i1}$ and $L^{i2}$ are $\dl_{\ref{hyp_small_ladder}}(K_2,A_0,R)$-hyperbolic. Note that $N_{2K_1}(\Sigma_{x\pr_{i+1}})$ is a connected subspace in $L^{i1}\cap L^{i2}$, and by Lemma \ref{union_of_two_ladders}, $L^{i1}\cap L^{i2}\sse N_{R_{\ref{union_of_two_ladders}}(K_1,R)}(N_{2K_1}(\Sigma_{x\pr_{i+1}}))$. Again the inclusions $N_{2K_1}(\Sigma_{x\pr_{i+1}})\ri L^{i1}$ and $N_{2K_1}(\Sigma_{x\pr_{i+1}})\ri L^{i2}$ are $K_1(2K_1+1)$-qi embeddings (see Lemma \ref{qi-sec-inside-lad-len-lift} $(3)$). So by Lemma \ref{hd-imp-qi}, $L^{i1}\cap L^{i2}$ is $L_1$-qi embedded in both $L^{i1}$ and $L^{i2}$ for some $L_1$ depending on $K_1(2K_1+1)$ and $R_{\ref{union_of_two_ladders}}(K_1,R)$. Therefore, by Remark \ref{combi-hyp-sps-2}, $L^{(i)}$ is $\dl\pr=\dl_{\ref{combi-hyp-sps-2}}(\dl_{\ref{hyp_small_ladder}}(K_2,A_0,R),L_1)$. Therefore, for $0\le i\le n-1$, $X_i$ is $\dl_1$-hyperbolic metric space, where  $\dl_1=max\{\dl\pr,\dl_{\ref{hyp_small_ladder}}(K_1,A_0,R)\}$.\smallskip
	
$(2)$ Let $0\le i\le n-2$. By Lemma \ref{qi-sec-inside-lad-len-lift} $(3)$, $N_{2K}(\Sigma_{x_{i+1}})$ is $K(2K+1)$-qi embedded in both $X_i$ and $X_{i+1}$. By \emph{Fact} $1$, $\Sigma_{x_i}$ and $\Sigma_{x_{i+2}}$ bounds $(K_1,C_1,\epsilon_1)$-ladder. So by Lemma \ref{union_of_two_ladders}, $X_i\cap X_{i+1}\sse N_{R_{\ref{union_of_two_ladders}}(K_1,R)}(\Sigma_{x_{i+1}})$. So by Lemma \ref{hd-imp-qi}, $X_i\cap X_{i+1}$ is $L_2$-qi embedded in both $X_i$ and $X_{i+1}$ for some $L_2$ depending on $K(2K+1)$ and $R_{\ref{union_of_two_ladders}}(K_1,R)$.\smallskip
	
$(3)$ Let $x\in X_i$, $y\in X_{i+1}$ and $\al$ be a path in $L_{KR}$ joining $x$ and $y$. 
	
\emph{Claim}: There is a point in $\al$ which is $R$-close to $\L^{(i)}$ and $\L^{(i+1)}$. This is enough for (3).
	
\emph{Proof of the claim}: Suppose this is not the case. Then there are points $z\in\al$, $z_i\in\L^{(i)}$ and $z_j\in\L^{(j)}$ such that $d_{X_i}(z,z_i)\le R$, $d_{X_j}(z,z_j)\le R$ and $j-i\ge2$. So $d_X(z_i,z_j)\le 2R$. Then by Lemma \ref{x_i-x_{i+1}}, $\exists~c\in B_{x_i}\cap B_{x_j}$ such that $d^f(\Sigma_{x_i}(c),\Sigma_{x_j}(c))\le\phi(8KR+2R)<A_0$ which contradicts to the construction of $\Sigma_{x_i}$'s.\qed\smallskip 
	
$(4)$ Now we want to prove that the pair $(Y_i,Y_{i+1})$ is uniformly cobounded for $1\le i\le n-2$ where $Y_i=X_{i-1}\cap X_i$ and $Y_{i+1}=X_i\cap X_{i+1}$. Since $X_i$'s are $\dl_1$-hyperbolic and the inclusion $N_{2K}(\Sigma_{x_i})\ri X_i$ is a $K(2K+1)$-qi embedding (see Lemma \ref{qi-sec-inside-lad-len-lift} $(3)$), then $\Sigma_{x_i}$'s are $K\pr$-quasiconvex in $X_i$, where $K\pr=K_{\ref{quasi-goes-to-quasi}}(\dl_1,K(2K+1),0)+2K$ (see Lemma \ref{quasi-goes-to-quasi} $(1)$). By similar argument, we have that $\Sigma_{x_{i+1}}$ is also $K\pr$-quasiconvex in $X_i$.
	
	We prove that the set of nearest point projections of $\Sigma_{x_i}$ on $\Sigma_{x_{i+1}}$ in the metric of $X_i$ is uniformly bounded; which will complete the proof. Indeed, let $\rho:\Sigma_{x_i}\ri\Sigma_{x_{i+1}}$ be a nearest point projection map in $X_i$ such that the diameter of $\rho(\Sigma_{x_i})$ is bonded by $D$ in the metric of $X_i$. Then by Lemma \ref{small-imp-small} there is $D_1$ depending on $\dl_1,K\pr$ and $D$ such that the pair $(\Sigma_{x_i},\Sigma_{x_{i+1}})$ is $D_1$-cobounded in $X_i$. By \emph{Fact} $1$, $\Sigma_{x_{i-1}}$ and $\Sigma_{x_{i+1}}$ bounds a $(K_1,C_1,\epsilon_1)$-ladder. So by Lemma \ref{union_of_two_ladders}, $Hd(Y_i,\Sigma_{x_i})$ and $Hd(Y_{i+1},\Sigma_{x_{i+1}})$ are bounded by $R_{\ref{union_of_two_ladders}}(K_1,R)$. Hence by Lemma \ref{proj-on-qc} $(2)$, the pair $(Y_i,Y_{i+1})$ is $D\pr$-cobounded where $D\pr=D_1+2E_{\ref{proj-on-qc}}(\dl_1,K\pr,R_{\ref{union_of_two_ladders}}(K_1,R))$.

	Let $\rho(y_j)=p_j$ for $y_j\in\Sigma_{x_i}$ and $p_j\in\Sigma_{x_{i+1}},~j=1,2$. We prove that $d_{X_i}(p_1,p_2)$ is bounded by $D$. By \cite[Lemma $1.31 (2)$]{pranab-mahan}, the arc-length parametrizations of $[y_1,p_1]_{X_i}\cup[p_1,p_2]_{X_i}$ and $[y_2,p_2]_{X_i}\cup[p_2,p_1]_{X_i}$ are $(3+2K\pr)$-quasigeodesic in $X_i$.
	
	\emph{Claim}: $d_{X_i}(p_1,p_2)\le L_{\ref{lo_vs_gl}}(\dl_1,3+2K\pr,3+2K\pr)=:D$.
	
	\emph{Proof of the claim}: On contrary, suppose $d_{X_i}(p_1,p_2)> L_{\ref{lo_vs_gl}}(\dl_1,3+2K\pr,3+2K\pr)$. Then by Lemma \ref{lo_vs_gl}, $[y_1,p_1]_{X_i}\cup[p_1,p_2]_{X_i}\cup[p_2,y_2]_{X_i}$ is $\lm$-quasigeodesic in $X_i$, where  $\lm=\lm_{\ref{lo_vs_gl}}(\dl_1,3+2K\pr,3+2K\pr)$. Now by the stability of quasigeodesic (see Lemma \ref{ml}) in $X_i$ and $K\pr$-quasiconvexity of $\Sigma_{x_i}$ in $X_i$, $\exists~z_1,z_2\in\Sigma_{x_i}$ such that $d_{X_i}(p_j,z_j)\le D_2$, where $D_2=D_{\ref{ml}}(\dl_1,\lm,\lm)+K\pr,~j=1,2$. In particular, $d_X(\Sigma_{x_i},\Sigma_{x_{i+1}})\le D_2$. Then by Lemma \ref{x_i-x_{i+1}}, there is  $c\in B_{x_i}\cap B_{x_{i+1}}$ such that $d^f(\Sigma_{x_i}(c),\Sigma_{x_{i+1}}(c))\le \phi(4KD_2+D_2)<A_0$  which contradicts to our construction of $\Sigma_{x_i}$'s.\qed\smallskip
	
	$(5)$ On contrary, suppose $d_{X_i}(Y_i,Y_{i+1})<1$. Then $d_{X_i}(\Sigma_{x_i},\Sigma_{x_{i+1}})\le 2R+1$. Then by Lemma \ref{x_i-x_{i+1}}, $\exists~c\in B_{x_i}\cap B_{x_{i+1}}$ such that $d^f(\Sigma_{x_i}(c),\Sigma_{x_{i+1}}(c))\le \phi(4K(2R+1)+2R+1)<A_0$ which contradicts to our construction of $\Sigma_{x_i}$'s.

	Therefore, we have shown that the collection $\{X_i:0\le i\le n-1\}$ satisfies all conditions of Proposition \ref{combi-hyp-sps}. Hence, $N_R(\L_K)=L_{RK}$ is $\dl_{\ref{general-ladder-is-hyp}}$-hyperbolic, where $\dl_{\ref{general-ladder-is-hyp}}=\dl_{\ref{combi-hyp-sps}}(\dl_1,K(2K+1),D\pr)$.
\end{proof}


\section{Hyperbolicity of flow spaces}\label{hyp-of-flow-sp}

Suppose $R=6\dl_0+\theta_{\ref{modified-proj-on-qg}}(\dl\pr_0,L\pr_0,\lambda\pr_0)+4\lambda\pr_0+8\dl\pr_0>R_{\ref{R-sep-D-cobdd}}(\dl\pr_0,\lambda\pr_0)=2\lambda\pr_0+5\dl\pr_0$ and $k=K_{\ref{qi-sec-inside-lad-len-lift}}$. Let $u\in T$ and $\F l_K(X_u)$ be the flow space of $X_u$ obtained for the parameters $k$ and $R$ (see Definition \ref{flow-space-def}). More precisely, $\F l_K(X_u)$ is a $(K,C,\epsilon)$-semicontinuous family, where $K=K_{\ref{const-of-flow-sp}}(k,R),~C=C_{\ref{const-of-flow-sp}}$ and $\epsilon=\epsilon_{\ref{const-of-flow-sp}}(R)$. This section is devoted to proving the uniform hyperbolicity of a uniform neighborhood of $\F l_K(X_u)$ with the induced path metric. {\bf In this section, we work with these flow spaces and these parameters}. {\bf So we reserve} $\bm{K,C}$ {\bf and} $\bm{\epsilon}$ {\bf for the above values}. The idea is to apply Bowditch's criterion (see Proposition \ref{combing}) to show that $N_L(\F l_K(X_u))$ is hyperbolic (see Theorem \ref{flow-sp-is-hyp}). Given a pair of points, we first find a ladder inside $\F l_K(X_u)$ containing those points (see Corollary \ref{ladder-in-flow-sp-for-two-pts}), and in that ladder, we take a fixed geodesic path joining those points for the family of paths to apply Proposition \ref{combing}. 
This strategy is elaborated in \cite[Chapter $5$]{ps-kap} when $X$ is a tree of metric spaces. In the line of finding ladder, we prove something more in the following Proposition which is kind of heart of this section. In view of Remark \ref{bigsmall-subsp}, for this section, we require the tree of metric bundles $(X,B,T)$ to satisfy $max\{C^{(9)}_{\ref{qi-sec-inside-lad-len-lift}}(k_{\ref{existence-of-tripod-ladder}}),\mathcal{R}_0(2k_{\ref{ladder-in-flow-sp-for-two-pts}}+1)\}=:\bm{k}_{\bm{*}}$
-flaring condition, where $\mathcal{R}_0=L_{\ref{mitra's-retraction-on-semicts-subsp}}(k_{\ref{ladder-in-flow-sp-for-two-pts}},c_{\ref{ladder-in-flow-sp-for-two-pts}},\varepsilon_{\ref{ladder-in-flow-sp-for-two-pts}})$ is defined in the proof of Lemma \ref{Li's-bar-are-Hd-close}, Case $(1)$.

\subsection{Existence of ladder in $\F l_K(X_u)$}

\begin{prop}\label{existence-of-tripod-ladder}
There are constants $k_{\ref{existence-of-tripod-ladder}}=k_{\ref{existence-of-tripod-ladder}}(K),~c_{\ref{existence-of-tripod-ladder}}=c_{\ref{existence-of-tripod-ladder}}(K)$ and $\varepsilon_{\ref{existence-of-tripod-ladder}}=\varepsilon_{\ref{existence-of-tripod-ladder}}(K)$ such that the following hold.
	
Suppose $x^i\in\F l_K(X_u)$ and $\Sigma_i$ is a $K$-qi section through $x^i$ over $B_{x^i}:=B_{[u,\pi(x^i)]}$ lying inside $\F l_K(X_u),$ $i=1,2,3$. Let $\mfB=\bigcap\limits_{i=1}^{3}B_{x^i}$ and $\mfT=\pi_B(\mfB)$. Then we have the following.
\begin{enumerate}
\item There is $(k_{\ref{existence-of-tripod-ladder}},c_{\ref{existence-of-tripod-ladder}},\varepsilon_{\ref{existence-of-tripod-ladder}})$-ladder $\L^i,~i=1,2,3$ containing $\Sigma_i$ with a central base $\mfB$ (possibly bigger) such that:
		
\begin{enumerate}
\item Let $S_i=\textrm{hull}(\pi(\L^i))$ and $B_i=\pi_B^{-1}(S_i),~i=1,2,3$, and $B_{123}=\cap_{i=1}^{3}B_i$ and $S_{123}=\cap_{i=1}^{3}S_i$. Then $\Xi=\{\bigcap\limits_{i=1}^{3}\L^i_{b,v}:v\in S_{123},~b\in B_v\}$ is a $k_{\ref{existence-of-tripod-ladder}}$-qi section over $B_{123}$ and $\Xi\sse N^f_{5\dl_0}(\F l_K(X_u))$.
			
\item $\Sigma_i\sse bot(\L^i)\sse\F l_K(X_u)$ and $\Xi\sse top(\L^i),~i=1,2,3$.
			
\item $\L^i\sse N^f_{6\dl_0}(\F l_K(X_u)),~i=1,2,3$.
\end{enumerate}
		
\item There exist $(k_{\ref{existence-of-tripod-ladder}},c_{\ref{existence-of-tripod-ladder}},\varepsilon_{\ref{existence-of-tripod-ladder}})$-ladder $\L^{ij}$ with central base $\mfB$ containing $\Sigma_i$ and $\Sigma_j$ such that $bot(\L^i)\sse top(\L^{ij})$, $bot(\L^j)\sse bot(\L^{ij})$. Also, $\L^{ij}\sse N^f_{2\dl_0}(\F l_K(X_u))$.
\end{enumerate}
	
Although $k_{\ref{existence-of-tripod-ladder}},~c_{\ref{existence-of-tripod-ladder}}$ and $\varepsilon_{\ref{existence-of-tripod-ladder}}$ depend on the constants $C,\epsilon$ and the other structural constants, we keep them implicit.
\end{prop}

\begin{proof}
The construction of $\L^i$, $\L^{ij}$ and $\Xi$ are by induction on $d_T(u,v)$, where $v\in T$. The proof is divided into four steps. In Step (0), we have explained initial step of induction; in Step (1), we have fixed some notations and mentioned some results which will be used in the subsequent construction; in Step (2), we have explained how to proceed the induction step; and finally, we conclude the proposition in Step (3).

{\bf Step (0): Initial step of induction}

As an initial step, we first explain how to get $\L^i$, $\L^{ij}$ and $\Xi$ in $X_u$. Note that $\Sigma_i\cap X_u$ is a $K$-qi section over $B_u$ in the metric of $X_u,~i=1,2,3$. Let $\Sigma_i\cap F_{b,u}=\{x^i_{b,u}\}$ where $b\in B_u$ and $i=1,2,3$. Then by Lemma \ref{centers-forms-qi-section}, for all $b\in B_u$, a $\dl_0$-center, say $z_{b,u}$, of geodesic triangle $\triangle_{b,u}=\triangle(x^1_{b,u},x^2_{b,u},x^3_{b,u})$ in the fiber $F_{b,u}$, forms a $k_{\ref{centers-forms-qi-section}}(K)$-qi section over $B_u$ in the metric of $X_u$. Let $Y_{b,u}:=\cup_{i=1}^{3}[z_{b,u},x^i_{b,u}]_{F_{b,u}},~b\in B_u$. We refer to $\cup_{b\in B_u}Y_{b,u}$ as a {\em tripod of ladder} over $B_u$ and $[z_{b,u},x^i_{b,u}]_{F_{b,u}}$ as legs of the tripod $Y_{b,u}$ with vertices $\{x^i_{b,u}:i=1,2,3\}$. Recall that $Q_{b,u}=\F l_K(X_u)\cap F_{b,u}$. Now $Q_{b,u}(=F_{b,u})$ is $2\dl_0$-quasiconvex in $F_{b,u}$ implies $[x^i_{b,u},x^j_{b,u}]_{F_{b,u}}\sse N^f_{2\dl_0}(\F l_K(X_u))$ for all distinct $i,j\in\{1,2,3\}$. So $\dl_0$-centers, $z_{b,u}$ of geodesic triangles $\triangle_{b,u}$ belong to $N^f_{5\dl_0}(\F l_K(X_u))$ and $Y_{b,u}=\cup_{i=1}^{3}[z_{b,u},x^i_{b,u}]_{F_{b,u}}\sse N^f_{6\dl_0}(Q_{b,u})$. Here $\L^i_{b,u}=[z_{b,u},x^i_{b,u}]_{F_{b,u}}$ with $top(\L^i_{b,u})=z_{b,u},~bot(\L^i_{b,u})=x^i_{b,u}$ for $i\in\{1,2,3\}$ and $\L^{ij}_{b,u}=[x^i_{b,u},x^j_{b,u}]_{F_{b,u}}$ with $top(\L^{ij}_{b,u})=bot(\L^i_{b,u})=x^i_{b,u},bot(\L^{ij}_{b,u})=bot(\L^j_{b,u})=x^j_{b,u}$ for all distinct $i,j\in\{1,2,3\}$. We note that $\{z_{b,u}:b\in B_u\}\sse\Xi$ (which we are constructing).

{\bf Step (1): Some facts and lemmata}
	
Now we assume the induction hypothesis. In other words, let $v,w\in T$ such that $d_T(u,v)=n,~d_T(u,w)=n+1$ and $d_T(v,w)=1$. Suppose we have constructed $\L^i,~L^{ij}$ and $\Xi$ over $B_t$ for all the vertices $t\in[u,v]$. Now we will explain how and when to extend $\L^i,~\L^{ij}$ and $\Xi$ inside $X_w$. Let $[\mfv,\mfw]$ be the edge joining $\mfv\in B_v$ and $\mfw\in B_w$.

We divide the construction into several cases and subcases. Before going into demonstration, let us fix some notations, collect some facts and lemmata (Lemmata \ref{other-pairs-are-cobounded} and \ref{Lij-is-cobounded}) which will be used in the construction.\smallskip

{\bf Notations}: {\em We use the following notations $\L^i_{a,t}:=\L^i\cap F_{a,t},~\L^{ij}_{a,t}:=\L^{ij}\cap F_{a,t}$ for $t\in T$ and $a\in B_t$. We denote the nearest point projection maps by $P_{\mfw}:F_{\mfv\mfw}\ri F_{\mfw,w}$ and $P_Y:F_{\mfv\mfw}\ri Y_{\mfv,v}$, and the modified projection $($see Definition \ref{modified-projection}$)$ map by $\bar{P}_Y:F_{\mfw,w}\ri Y_{\mfv,v}$, and let $\bar{Y}_{\mfv,v}:=\bar{P}_Y(F_{\mfw,w})$. Specifically, if $Y_{\mfv,v}\ne\emptyset$ then $\bar{Y}_{\mfv,v}$ is the quasiconvex hull $($in $Y_{\mfv,v}$-metric$)$ of three points $\bar{x}^i_{\mfv,v}$, $i=1,2,3$, such that $\bar{x}^i_{\mfv,v}$ is the nearest point projection of $x^i_{\mfv,v}$ $($in $Y_{\mfv,v}$-metric$)$ onto $\bar{Y}_{\mfv,v}$. Suppose $\tilde{x}^i_{\mfw,w}:=P_{\mfw}(\bar{x}^i_{\mfv,v})$ for all $i\in\{1,2,3\}$.

In the construction, we will see that either $Y_{\mfv,v}$ is a genuine tripod or a degenerate tripod (i.e., a geodesic segment). We denote the tripod, in the former case, by $Y_{\mfv,v}:=\cup_{i=1}^{3}[x^{i}_{\mfv,v},z_{\mfv,v}]^f$ $($in Case $1$ and Case $2$ below$)$, and in the later case, by $Y_{\mfv,v}:=[x^i_{\mfv,v},y^i_{\mfv,v}]^f$ $($in Case $3$ and Case $4$ below$)$ with $top(\L^i_{\mfv,v})=y^i_{\mfv,v}$ and $bot(\L^i_{\mfv,v})=x^i_{\mfv,v}$. Let $T_{wv}$ be the connected component of $T\setminus\{v\}$ containing $w$, and $B_{T_{wv}}=\pi_B^{-1}(T_{wv})$ and $X_{T_{wv}}=\pi^{-1}(T_{wv})$.}\smallskip

{\bf Some facts}:\smallskip

\emph{Fact} $(1)$: Suppose $\bar{Y}_{\mfv,v}\sse\L^i_{\mfv,v}$. Then $\bar{x}^{i-1}_{\mfv,v}=\bar{x}^{i+1}_{\mfv,v}$ ($i\pm1$ is considered in module $3$) and $\bar{x}^{i-1}_{\mfv,v}$ is the point closest to $z_{\mfv,v}$ in the induced path metric of $Y_{\mfv,v}$. Hence by Lemma \ref{modified-proj-on-qg} $(2)$ $(b)$, for $t\in\{i\pm1\}$, we have $$d_{\mfv\mfw}(P_{\mfw}(x^i_{\mfv,v}),\tilde{x}^i_{\mfw,w}),~ d_{\mfv\mfw}(P_{\mfw}(z_{\mfv,v}),P_{\mfw}(\bar{x}^{t}_{\mfw,w}))\textrm{ and }d_{\mfv\mfw}(P_{\mfw}(x^{i\pm1}_{\mfv,v}),P_{\mfw}(\bar{x}^{t}_{\mfw,w})$$ are bounded by $D_{\ref{modified-proj-on-qg}}(\dl\pr_0,L\pr_0,\lm\pr_0)=D$ (say) where $\tilde{x}^i_{\mfw,w}=P_{\mfw}(\bar{x}^i_{\mfv,v})$.\smallskip

\emph{Fact} $(2)$: Suppose $\bar{Y}_{\mfv,v}\nsubseteq\L^i_{\mfv,v}$ for any $i\in\{1,2,3\}$. Then by Lemma \ref{modified-proj-on-qg} $(2)$ $(a)$, we have $d_{\mfv\mfw}(P_{\mfw}(x^i_{\mfv,v}),\tilde{x}^i_{\mfw,w})\le D$, where $\tilde{x}^i_{\mfw,w}=P_{\mfw}(\bar{x}^i_{\mfv,v})$.\smallskip

\underline{Now for the following facts, we assume that the pair $(Y_{\mfv,v},F_{\mfw,w})$ is not $C$-cobounded in $F_{\mfv\mfw}$.} Then by Lemma \ref{R-sep-D-cobdd} (1), we have $d_{\mfv\mfw}(Y_{\mfv,v},F_{\mfw,w})\le2\lm\pr_0+5\dl\pr_0$ and so $Hd_{\mfv\mfw}(P_Y(F_{\mfw,w}),P_{\mfw}(Y_{\mfv,v}))\le 4\lm\pr_0+8\dl\pr_0$. 
\smallskip

\emph{Fact} $(3)$:  {\em For all $i\in\{1,2,3\}$, we have $d_{\mfv\mfw}(\bar{x}^i_{\mfv,v},\tilde{x}^i_{\mfw,w})\le K$ and $\tilde{x}^i_{\mfw,w}\in Q_{\mfw,w}$ where $\tilde{x}^i_{\mfw,w}=P_{\mfw}(\bar{x}^i_{\mfv,v})$.} Indeed, $\bar{x}^i_{\mfv,v}\in P_Y(F_{\mfw,w})$, and so $d_{\mfv\mfw}(\bar{x}^i_{\mfv,v},\tilde{x}^i_{\mfw,w})\le4\lm\pr_0+8\dl\pr_0\le K$. Again $d^f(\bar{x}^i_{\mfv,v},Q_{\mfv,v})\le6\dl_0$ (see initial step of induction) and since $R>6\dl_0+4\lambda\pr_0+8\dl\pr_0$, hence $\tilde{x}^i_{\mfw,w}\in Q_{\mfw,w}$ (see construction of flow spaces \ref{flowsps}).\smallskip

\emph{Fact} $(4)$: {\em If $\bar{Y}_{\mfv,v}\sse\L^i_{\mfv,v}$ for some $i\in\{1,2,3\}$ and $d_{\mfv\mfw}(x^{j}_{\mfv,v},Q_{\mfw,w})\le K$ for some $j\ne i$ then $d_{\mfv\mfw}(z_{\mfv,v},Q_{\mfw,w})\le K+\phi(2K)+2\dl_0$.} Indeed, $d_{\mfv\mfw}(P_Y\circ P_{\mfw}(x^{j}_{\mfv,v}),x^{j}_{\mfv,v})\le 2K$ and $P_Y\circ P_{\mfw}(x^{j}_{\mfv,v})\in\bar{Y}_{\mfv,v}$. So $d^f(P_Y\circ P_{\mfw}(x^{j}_{\mfv,v}),x^{j}_{\mfv,v})\le\phi(2K)$. Again $z_{\mfv,v}$ is $\dl_0$-center of geodesic triangle with vertices $\{x^i_{\mfv,v}:i=1,2,3\}$ in the fiber $F_{\mfv,v}$ so $d^f(z_{\mfv,v},[P_Y\circ P_{\mfw}(x^{j}_{\mfv,v}),x^{j}_{\mfv,v}]^f)\le2\dl_0$. Thus $d^f(z_{\mfv,v},x^{j}_{\mfv,v})\le\phi(2K)+2\dl_0$. Hence $d_{\mfv\mfw}(z_{\mfv,v},Q_{\mfw,w})\le K+\phi(2K)+2\dl_0$.\smallskip

\emph{Fact} $(5)$: {\em Fact $(3)$ also says that if $Y_{\mfv,v}=\L^i_{\mfv,v}=[x^i_{\mfv,v},y^i_{\mfv,v}]^f$ $($i.e., $Y_{\mfv,v}$ has one leg$)$ then $d_{\mfv\mfw}(\bar{x}^i_{\mfv,v},\tilde{x}^i_{\mfv,v})\le K$, $d_{\mfv\mfw}(\bar{y}^i_{\mfv,v},\tilde{y}^i_{\mfv,v})\le K$} where $\bar{Y}_{\mfv,v}=[\bar{x}^i_{\mfv,v},\bar{y}^i_{\mfv,v}]^f\sse\L^i_{\mfv,v}$ such that $\bar{x}^i_{\mfv,v}$ is close to $x^i_{\mfv,v}$ and $\tilde{y}^i_{\mfv,v}=P_{\mfw}(\bar{y}^i_{\mfv,v})$. Again by Fact $(1)$, $$d_{\mfv\mfw}(P_{\mfw}(x^i_{\mfv,v}),\tilde{x}^i_{\mfw,w})\le D\textrm{ and }d_{\mfv\mfw}(P_{\mfw}(y^i_{\mfv,v}),\tilde{y}^i_{\mfw,w})\le D.$$

\begin{lemma}\label{other-pairs-are-cobounded}
	Suppose $\bar{Y}_{\mfv,v}\sse\L^i_{\mfv,v}$ for some $i\in\{1,2,3\}$. Then there is a constant $C_{\ref{other-pairs-are-cobounded}}$ satisfying the following.
	
	$(1)$ \textup{(\hspace{-.08mm}\cite[Corollary $5.4$]{ps-kap})} For $t\in\{i\pm1\}$, the pair $(\L^t_{\mfv,v},F_{\mfw,w})$ is $C_{\ref{other-pairs-are-cobounded}}$-cobounded in the path metric of $F_{\mfv\mfw}$. $($Here $i\pm1$ is considered in module $3.)$
	
	$(2)$ The pair $(\L^{i+1i-1}_{\mfv,v},F_{\mfw,w})$ is $C_{\ref{other-pairs-are-cobounded}}$-cobounded in the path metric of $F_{\mfv\mfw}$.
\end{lemma}
\begin{proof}
Fix $t\in\{i\pm1\}$. By Fact $(1)$, $d_{\mfv\mfw}(P_{\mfw}(x^t_{\mfv,v}),P_{\mfw}(z_{\mfv,v}))\le2D$ (by triangle inequality). Again $d_{\mfv,\mfw}(P_{\mfw}(x^{i-1}_{\mfv,v}),P_{\mfw}(x^{i+1}_{\mfv,v}))\le2D$. Therefore, by Lemma \ref{cobounded-pairs}, we can take $C_{\ref{other-pairs-are-cobounded}}:=C_{\ref{cobounded-pairs}}(\dl\pr_0,L\pr_0,\lm\pr_0,2D)$.
\end{proof}	

	\begin{lemma}\label{Lij-is-cobounded}
	There is a constant $C_{\ref{Lij-is-cobounded}}$ satisfying the following.
	
	If the pair $(Y_{\mfv,v},F_{\mfw,w})$ is $C$-cobounded in $F_{\mfv\mfw}$, then the pairs $(\L^{ij}_{\mfv,v},F_{\mfw,w})$ and $(\L^i_{\mfv,v},F_{\mfw,w})$ are $C_{\ref{Lij-is-cobounded}}$-cobounded in $F_{\mfv\mfw}$ for all distinct $i,j\in\{1,2,3\}$.
\end{lemma}
\begin{proof}
	By Lemma \ref{cobounded-pairs}, we can take $C_{\ref{Lij-is-cobounded}}=C_{\ref{cobounded-pairs}}(\dl\pr_0,L\pr_0,\lm\pr_0,C)$.
\end{proof}
{\bf Step (2): Induction step}	
	
Now we return to the construction of $\L^i$, $\L^{ij}$ and $\Xi$. Note that we have induction hypothesis mentioned above. Depending on the coboundedness of the pair $(Y_{\mfv,v},F_{\mfw,w})$ and the emptiness of $\Sigma_i\cap F_{\mfw,w}$, we consider several cases and subcases. In those cases and subcases, we explain how and when to extend the tripod $Y_{\mfv,v}$, in particular, $\L^i,\L^{ij}$ and $\Xi$, first in $F_{\mfw,w}$ and then in the entire $X_w$. (Then in Lemmata \ref{Li} and \ref{Lij}, we will prove that $\L^i$ and $\L^{ij}$ are ladders, and in Corollary \ref{Xi-form-qi-section}, we will prove that $\Xi$ is a qi section.) In the end of some cases and subcases, we make some note which will be used in the proofs of Lemma \ref{Li} and Lemma \ref{Lij}. We recommend the reader first to read the construction and then look at those notes while reading Lemma \ref{Li} and Lemma \ref{Lij}. Also, all the time we refer to the Figure \ref{heart}.\smallskip

{\bf Case 1}: Suppose $Y_{\mfv,v}$ has all its three legs and the pair $(Y_{\mfv,v},F_{\mfw,w})$ is $C$-cobounded in $F_{\mfv\mfw}$. Depending on the emptiness of $\Sigma_i\cap F_{\mfw,w}$, we consider the following subcases.

\emph{Subcase} $(1A)$: Suppose $\Sigma_i\cap X_w=\emptyset$ for all $i\in\{1,2,3\}$. Then we set $\L^i$ and $\L^{ij}$ to be empty over $B_{T_{wv}}$ for all distinct $i,j\in\{1,2,3\}$.
	
\emph{Subcase} $(1B)$: Suppose $\Sigma_i\cap F_{\mfw,w}\ne\emptyset$ for all $i\in\{1,2,3\}$. For all $i\in\{1,2,3\}$, we set $\bar{x}^i_{\mfv,v}$ and $\tilde{x}^i_{\mfw,w}$ to be $\Sigma_i\cap F_{\mfv,v}$ and $\Sigma_i\cap F_{\mfw,w}$ respectively. Then we have tripod of ladder inside $X_w$ formed by three qi sections $\Sigma_i\cap X_w,~i=1,2,3$ as described in initial step of the induction.\smallskip
	
	
\emph{Subcase} $(1C)$: Suppose $\Sigma_{i\pm1}\cap X_w\ne\emptyset$ and $\Sigma_i\cap X_w=\emptyset$. We set $\bar{x}^{i\pm1}_{\mfv,v}=\Sigma_{i\pm1}(\mfv)$ and $\tilde{x}^{i\pm1}_{\mfw,w}=\Sigma_{i\pm1}(\mfw)$. We also set $\bar{x}^i_{\mfv,v}\in\L^i_{\mfv,v}$ to be the point closest to $x^i_{\mfv,v}$ in the induced path metric of $\L^i_{\mfv,v}$ such that $\bar{x}^i_{\mfv,v}$ is $D_1$-close (in $d_{\mfv\mfw}$-metric) to a point $\tilde{x}^i_{\mfv,v}\in Q_{\mfw,w}$. (Existence of $D_1$ and $\bar{x}^i_{\mfv,v}$ are explained below.) Let $\gm$ be a $K$-qi section through $\tilde{x}^i_{\mfw,w}$ lying inside $\F l_K(X_u)\cap X_w$ over $B_w$. Then we have tripod of ladder inside $X_w$ (as described in the initial step of the induction) formed by three qi sections $\Sigma_{i\pm1}\cap X_w$ and $\gm$.
	
{\em Existence of $D_1$ and $\bar{x}^i_{\mfv,v}$}: It is enough to prove that $d_{\mfv\mfw}(z_{\mfv,v},\Sigma_{i-1}(\mfw))\le D_1$. Note that the pair $(Y_{\mfv,v},F_{\mfw,w})$ is $C$-cobounded in $F_{\mfv\mfw}$ and $d_{\mfv\mfw}(\Sigma_{i\pm1}(\mfw),Y_{\mfv,v})\le K$. Hence by triangle inequality, we have $d_{\mfv\mfw}(\Sigma_{i+1}(\mfv),\Sigma_{i-1}(\mfv))\le4K+C$. Then $d^f(\Sigma_{i+1}(\mfv),\Sigma_{i-1}(\mfv))\le\phi(4K+C)$ and so $d^f(\Sigma_{i-1}(\mfv),z_{\mfv,v})\le \phi(4K+C)+\dl_0$ (as $z_{\mfv,v}$ is $\dl_0$-center geodesic triangle with vertices $\{\Sigma_i(\mfv):i=1,2,3\}$ in the fiber $F_{\mfv,v}$). Therefore, again by triangle inequality, we have $d_{\mfv\mfw}(\Sigma_{i-1}(\mfw),z_{\mfv,v})\le K+\phi(4K+C)+\dl_0=:D_1$.

		\begin{figure}[h]
		\includegraphics[width=10cm]{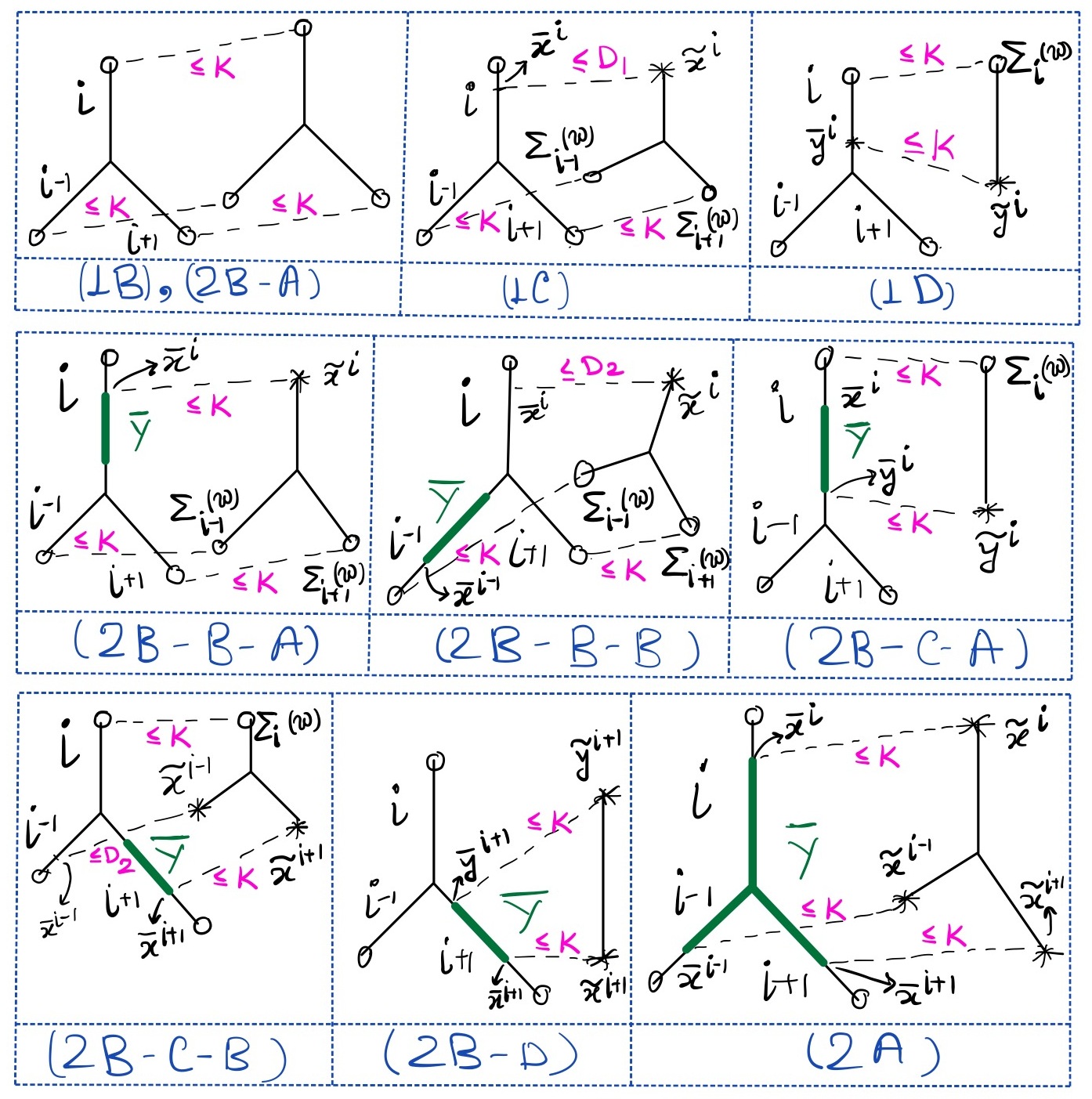}
		\centering
		\caption{\small For ease of notation, we only use $i$ and $\bar{x}^i$ to denote $\L^i_{\mfv,v}$ and $\bar{x}^i_{\mfv,v}$ respectively in the figure and so on. We also omit some not to make it clumsy. The left tripods are in $F_{\mfv,v}$ and the right tripods or degenerated tripods are in $F_{\mfw,w}$. Dotted path joining left and right tripods are geodesics in $F_{\mfv\mfw}$ whose distance is bounded above by $K$, $D_1$ or $D_2$ as shown in the picture.}
		\label{heart}
	\end{figure}
\vspace{-4mm}
\emph{Subcase} $(1D)$: Suppose $\Sigma_i\cap F_{\mfw,w}\ne\emptyset$ and $\Sigma_{i\pm1}\cap F_{\mfw,w}=\emptyset$. We set $\bar{x}^i_{\mfv,v}=\Sigma_i(\mfv)$ and $\tilde{x}^i_{\mfw,w}=\Sigma_i(\mfw)$. We also set $\bar{y}^i_{\mfv,v}\in\L^i_{\mfv,v}$ to be closest to $z_{\mfv,v}$ (in the fiber metric) which is $K$-close in $F_{\mfv\mfw}$-metric to a point in $\tilde{y}^i_{\mfw,w}\in Q_{\mfw,w}$. (Existence of such point is clear as $\Sigma_i\cap F_{\mfw,w}\ne\emptyset$.) 
Let $\gm$ be a $K$-qi section through $\tilde{y}^i_{\mfw,w}$ lying inside $\F l_K(X_u)\cap X_w$ over $B_w$.

In this case, we have degenerate tripod of ladder (i.e., special ladder) inside $X_w$ formed by two qi sections $\Sigma_i\cap X_w$ and $\gm$.

We set this as part of the ladder $\L^i$ with $top(\L^i_{a,w})=\gm(a)$ and $bot(\L^i_{a,w})=\Sigma_i(a), ~a\in B_w$. Also, we set $\L^{ii\pm1}\cap X_w=\L^i\cap X_w$ with same orientation as $\L^i$ has and $\L^{i-1i+1}$ is empty over $B_{T_{wv}}$.\smallskip
	
	\emph{Note $(1D)$}: Since the pair $(Y_{\mfv,v},F_{\mfw,w})$ is $C$-cobounded in $F_{\mfv\mfw}$-metric, so by the construction of $\tilde{y}^i_{\mfw,w}$, we have $d_{\mfv\mfw}(P_{\mfw}(z_{\mfv,v}),\tilde{y}^i_{\mfw,w})\le2K+C$ and $d_{\mfv\mfw}(P_{\mfw}(x^{i\pm1}_{\mfv,v}),\tilde{y}^i_{\mfw,w})\le2K+C$.\smallskip

{\bf Case 2}: Suppose $Y_{\mfv,v}$ has all its three legs and the pair $(Y_{\mfv,v},F_{\mfw,w})$ is not $C$-cobounded in $F_{\mfv\mfw}$. We consider the following two subcases depending on whether $\bar{Y}_{\mfv,v}\sse\L^j_{\mfv,v}$ for some $j\in\{1,2,3\}$ or $\bar{Y}_{\mfv,v}\nsubseteq\L^j_{\mfv,v}$ for any $j\in\{1,2,3\}$.\smallskip
	
\emph{Subcase} $(2A)$: Suppose $\bar{Y}_{\mfv,v}\nsubseteq\L^j_{\mfv,v}$ for any $j\in\{1,2,3\}$. We set $\bar{x}^i_{\mfv,v}=\Sigma_i(\mfv)$ if $\Sigma_i\cap X_v\ne\emptyset$. We also set $\tilde{x}^i_{\mfw,w}=\Sigma_i(\mfw)$ if $\Sigma_i\cap X_w\ne\emptyset$, otherwise, as mentioned in {\em notations}, we set $\tilde{x}^i_{\mfw,w}=P_{\mfw}(\bar{x}^i_{\mfv,v})$. We consider $K$-qi section through $\tilde{x}^i_{\mfw,w}$ lying inside $\F l_K(X_u)\cap X_w$ over $B_w$ if $\Sigma_i\cap X_w=\emptyset$; otherwise, the qi section to be $\Sigma_i\cap X_w$. Then we have tripod of ladder inside $X_w$ as described in the initial step of the induction formed by these three $K$-qi sections through $\tilde{x}^i_{\mfw,w}$.\smallskip

\emph{Note $(2A)$}: By Fact $(2)$, $d_{\mfv\mfw}(P_{\mfw}(\bar{x}^i_{\mfv,v}),\tilde{x}^i_{\mfw,w})\le max\{2K,D\}$ for $i=1,2,3$.\smallskip	
	
\emph{Subcase} $(2B)$: Suppose $\bar{Y}_{\mfv,v}\sse\L^j_{\mfv,v}$ for some $j\in\{1,2,3\}$. We fix such $j$. In this subcase, we consider further division $(2B$-$A)$, $(2B$-$B)$, $(2B$-$C)$, $(2B$-$D)$ as follows.\smallskip
	
$(2B$-$A)$: Suppose $\Sigma_i\cap X_w\ne\emptyset$ for all $i\in\{1,2,3\}$. Then we go back to Subcase $(1B)$.\smallskip
	
$(2B$-$B)$: Suppose $\Sigma_{i\pm1}\cap X_w\ne\emptyset$ and $\Sigma_i\cap X_w=\emptyset$. Here in Subcase $(2B)$, depending on $j$, we have further following division $(2B$-$B$-$A)$ and $(2B$-$B$-$B)$.\smallskip
	
$(2B$-$B$-$A)$: Let $\bar{Y}_{\mfv,v}\sse\L^i_{\mfv,v}$, i.e., $j=i$. Then by Fact $(3)$, $d_{\mfv\mfw}(\bar{x}^i_{\mfv,v},\tilde{x}^i_{\mfw,w})\le K$. Let $\gm$ be a $K$-qi section through $\tilde{x}^i_{\mfw,w}$ lying inside $\F l_K(X_u)\cap X_w$ over $B_w$. We set $\bar{x}^{i\pm1}_{\mfv,v}=\Sigma_{i\pm1}(\mfv)$ and $\tilde{x}^{i\pm1}_{\mfw,w}=\Sigma_{i\pm1}(\mfw)$. In this subcase, we will have tripod of ladder inside $X_w$ formed by three $K$-qi sections $\Sigma_{i\pm1}\cap X_w$ and $\gm$. To set $\L^i$ and $\L^{ij}$ over $B_w$, we go back to the initial step of induction with qi sections $\Sigma_{i\pm1}\cap X_w$ and $\gm$.\smallskip
	
\emph{Note $(2B$-$B$-$A)$}: By Fact $(1)$, $d_{\mfv\mfw}(P_{\mfw}(x^i_{\mfv,v}),\tilde{x}^i_{\mfw,w})\le D$.\smallskip
	
$(2B$-$B$-$B)$: Suppose $\bar{Y}_{\mfv,v}\sse\L^j_{\mfv,v},~j\in\{i\pm1\}$ (i.e. $j\ne i$). Then by Fact $(4)$, $d_{\mfv\mfw}(z_{\mfv,v},Q_{\mfw,w})\le K+\phi(2K)+2\dl_0=D_2$ (say). We set $\bar{x}^i_{\mfv,v}\in\L^i_{\mfv,v}$ to be closest to $x^i_{\mfv,v}$ (in the fiber metric) such that $\bar{x}^i_{\mfv,v}$ is $D_2$-close in $F_{\mfv\mfw}$-metric to a point $\tilde{x}^i_{\mfw,w}\in Q_{\mfw,w}$. Let $\gm$ be a $K$-qi section through $\tilde{x}^i_{\mfw,w}$ lying inside $\F l_K(X_u)\cap X_w$ over $B_w$. We set $\bar{x}^{i\pm1}_{\mfv,v}=\Sigma_{i\pm1}(\mfv)$ and $\tilde{x}^{i\pm1}_{\mfw,w}=\Sigma_{i\pm1}(\mfw)$. Then we have tripod of ladder inside $X_w$ (as discussed in the initial step of induction) formed by qi sections $\Sigma_{i\pm1}\cap X_w$ and $\gm$.\smallskip

	$(2B$-$C)$: Suppose $\Sigma_i\cap X_w\ne\emptyset$ and $\Sigma_{i\pm1}\cap X_w=\emptyset$. Here in Subcase $(2B)$, depending on $j$, we have further following division $(2B$-$C$-$A)$ and $(2B$-$C$-$B)$.\smallskip
	
$(2B$-$C$-$A)$: $\bar{Y}_{\mfv,v}\sse\L^i_{\mfv,v}$, i.e., $j=i$. We set $\bar{x}^i_{\mfv,v}=\Sigma_i(\mfv)$, $\tilde{x}^i_{\mfw,w}=\Sigma_i(\mfw)$, $\bar{y}^i_{\mfv,v}=\bar{x}^{i\pm1}_{\mfv,v}$ and $\tilde{y}^i_{\mfw,w}=P_{\mfw}(\bar{y}^i_{\mfv,v})$. Now by Fact $(3)$, $d_{\mfv\mfw}(\bar{y}^i_{\mfv,v},\tilde{y}^i_{\mfw,w})\le K$. Let $\gm$ be a $K$-qi section through $\tilde{y}^i_{\mfw,w}$ lying inside $\F l_K(X_u)\cap X_w$ over $B_w$. Then we will have degenerate tripod of ladder inside $X_w$ formed by qi sections $\Sigma_i\cap X_w$ and $\gm$.

We set this as part of the ladder $\L^i$ with $top(\L^i_{a,w})=\gm(a),~bot(\L^i_{a,w})=\Sigma_i(a),~a\in B_w$. Also, we set $\L^{ii\pm1}\cap X_w=\L^i\cap X_w$ with the same orientation as $\L^i\cap X_w$ has and $\L^{i+1i-1}\cap X_w$ is empty over $B_{T_{wv}}$.\smallskip
	
\emph{Note $(2B$-$C$-$A)$}: By Fact $(1)$, $d_{\mfv\mfw}(P_{\mfw}(x^{i\pm1}_{\mfv,v}),\tilde{y}^i_{\mfw,w})\le D$ and $d_{\mfv\mfw}(P_{\mfw}(z_{\mfv,v}),\tilde{y}^i_{\mfw,w})\le D$. Also, we have $d_{\mfv\mfw}(P_{\mfw}(x^i_{\mfv,v}),\tilde{x}^i_{\mfw,w})\le 2K.$

$(2B$-$C$-$B)$: Let $\bar{Y}_{\mfv,v}\sse\L^j_{\mfv,v},~j\in\{i\pm1\}$, i.e., $j\ne i$. By Fact $(4)$, $d_{\mfv\mfw}(z_{\mfv,v},Q_{\mfw,w})\le K+\phi(2K)+2\dl_0=D_2$ (say). We maintain the order of $+$ and $-$ depending on $j=i\pm1$. Take a point $\bar{x}^{i\mp1}_{\mfv,v}\in\L^{i\mp1}_{\mfv,v}$ closest to $x^{i\mp1}_{\mfv,v}$ (in the fiber metric) such that $\bar{x}^{i\mp1}_{\mfv,v}$ is $D_2$-close in $F_{\mfv\mfw}$-metric to a point $\tilde{x}^{i\mp1}_{\mfv,v}\in Q_{\mfw,w}$. Let $\gm$ and $\gm\pr$ be $K$-qi sections through $\tilde{x}^{i-1}_{\mfw,w}$ and $\tilde{x}^{i+1}_{\mfw,w}$ lying inside $\F l_K(X_u)\cap X_w$ over $B_w$. We set $\bar{x}^i_{\mfv,v}=\Sigma_i(\mfv)$, $\tilde{x}^i_{\mfw,w}=\Sigma_i(\mfw)$. Then we will have tripod of ladder inside $X_w$ (as described in the initial step of induction) formed by three qi sections $\Sigma_i\cap X_w$, $\gm$ and $\gm\pr$.\smallskip
	
	
	
$(2B$-$D)$: Suppose $\Sigma_i\cap X_w=\emptyset$ for all $i\in\{1,2,3\}$. Note that $\bar{Y}_{\mfv,v}\sse\L^j_{\mfv,v}$ (we are in Subcase $(2B)$) and $\bar{x}^{j-1}_{\mfv,v}=\bar{x}^{j+1}_{\mfv,v}$. We set  $\bar{y}^j_{\mfv,v}=\bar{x}^{j\pm1}_{\mfv,v}$ and $\tilde{y}^j_{\mfw,w}=P_{\mfw}(\bar{y}^j_{\mfv,v})$, and $\tilde{x}^{j}_{\mfw,w}=P_{\mfw}(\bar{x}^j_{\mfv,v})$. By Fact $(3)$, we have $d_{\mfv\mfw}(\bar{x}^j_{\mfv,v},\tilde{x}^j_{\mfw,w})\le K$ and $d_{\mfv\mfw}(\bar{y}^j_{\mfv,v},\tilde{y}^j_{\mfw,w})\le K$, and $\tilde{x}^j_{\mfw,w},\tilde{y}^j_{\mfw,w}\in Q_{\mfw,w}$. Then we have degenerate tripod of ladder inside $X_w$ formed by two $K$-qi sections, say $\gm$ and $\gm\pr$, through $\tilde{x}^j_{\mfw,w}$ and $\tilde{y}^j_{\mfw,w}$ lying inside $\F l_K(X_u)\cap X_w$ over $B_w$ respectively.

We set this as part of $\L^j$ with $top(\L^j_{a,w})=\gm\pr(a),~bot(\L^j_{a,w})=\gm(a),~a\in B_w$. Also, we set $\L^{jj\pm1}\cap X_w=\L^j\cap X_w$ with the same orientation as $\L^j\cap X_w$ has and $\L^{j+1j-1}\cap X_w$ is empty over $B_{T_{wv}}$.\smallskip
	
\emph{Note $(2B$-$D)$}: By Fact $(1)$, $d_{\mfv\mfw}(P_{\mfw}(x^{j\pm1}_{\mfv,v}),\tilde{y}^j_{\mfw,w})$, $d_{\mfv\mfw}(P_{\mfw}(x^j_{\mfv,v}),\tilde{x}^j_{\mfw,w})$ and $d_{\mfv\mfw}(z_{\mfv,v},\tilde{y}^j_{\mfw,w})$ are bounded by $D$.\smallskip

{\bf Case 3}: Suppose $Y_{\mfv,v}$ has only one leg, i.e., $Y_{\mfv,v}=\L^i_{\mfv,v}=[x^i_{\mfv,v},y^i_{\mfv,v}]^f$ for some $i\in\{1,2,3\}$ and the pair $(Y_{\mfv,v},F_{\mfw,w})$ is $C$-cobounded. So $\L^i\cap X_v$ is degenerate tripod of ladder bounded by two $K$-qi sections, say $\gm_1$ and $\gm_2$, lying inside $\F l_K(X_u)\cap X_v$ over $B_v$. Then from our construction in Case $1$ and Case 2, either $\gm_1$ (say) is $\Sigma_i\cap X_v$ or both $\gm_1$ and $\gm_2$ are not restriction of $\Sigma_i$.

In the later case, we set $\L^i$ to be empty over $B_{T_{wv}}$. Of course, all other $\L^j$ and $\L^{ij}$ are empty over $B_{T_{wv}}$.

Now for the former case, we assume that $\gm_1=\Sigma_i\cap X_v$. We consider the following two subcases depending on the emptiness of $\Sigma_i\cap X_w$.\smallskip
	
\emph{Subcase} $(3A)$: Suppose $\Sigma_i\cap X_w=\emptyset$. Then we set $\L^i$ to be empty over $B_{T_{wv}}$. Of course, all other $\L^j$ and $\L^{ij}$ are empty over $B_{T_{wv}}$.\smallskip	
	
\emph{Subcase} $(3B)$: Suppose $\Sigma_i\cap X_w\ne\emptyset$. Note that $top(\L^i_{\mfv,v})=y^i_{\mfv,v}$ and $bot(\L^i_{\mfv,v})=x^i_{\mfv,v}=\Sigma_i(\mfv)$. We set $\tilde{x}^i_{\mfw,w}=\Sigma_i(\mfw)$, and $\bar{y}^i_{\mfv,v}\in\L^i_{\mfv,v}$ is the closest to $y^i_{\mfv,v}$ (in the fiber metric) such that $\bar{y}^i_{\mfv,v}$ is $K$-close in $F_{\mfv\mfw}$-metric to a point $\tilde{y}^i_{\mfw,w}\in Q_{\mfw,w}$. (Existence of such points are clear as $\Sigma_i\cap F_{\mfw,w}\ne\emptyset$.) Let $\gm$ be a $K$-qi section through $\tilde{y}^i_{\mfw,w}$ lying inside $\F l_K(X_u)\cap X_w$ over $B_w$. Then we have degenerate tripod of ladder inside $X_w$ bounded by qi sections $\Sigma_i\cap X_w$ and $\gm$. We set this as part of $\L^i$ with $top(\L^i_{a,w})=\gm(a)$ and $bot(\L^i_{a,w})=\Sigma_i(a)$, $a\in B_w$. We also set $\L^i\cap X_w$ is the part of the same $\L^{ij}$ as it was over $B_v$ with orientation same as $\L^i\cap X_w$.\smallskip
	
\emph{Note} $(3B)$: Since $\Sigma_i\cap X_w\ne\emptyset$ and the pair $(Y_{\mfv,v},F_{\mfw,w})$ is $C$-cobounded in $F_{\mfv\mfw}$, we have $d_{\mfv\mfw}(P_{\mfw}(x^i_{\mfv,v}),\tilde{x}^i_{\mfw,w})\le 2K$ and $d_{\mfv\mfw}(P_{\mfw}(y^i_{\mfv,v}),\tilde{y}^i_{\mfw,w})\le2K+C$ respectively.\smallskip

{\bf Case 4}: Suppose $Y_{\mfv,v}$ has only one leg, i.e., $Y_{\mfv,v}=\L^i_{\mfv,v}=[x^i_{\mfv,v},y^i_{\mfw,w}]^f$ for some $i\in\{1,2,3\}$ and the pair $(Y_{\mfv,v},F_{\mfw,w})$ is not $C$-cobounded in $F_{\mfv\mfw}$. Then we have two extreme points $\bar{x}_{\mfv,v}$ and $\bar{y}_{\mfv,v}$ of $\bar{Y}_{\mfv,v}\sse\L^i_{\mfv,v}$, which are $K$-close to points $\tilde{x}^i_{\mfw,w}$ and $\tilde{y}^i_{\mfw,w}$ of $Q_{\mfw,w}$ respectively in $F_{\mfv\mfw}$-metric (see Fact $(5)$). From our construction in Case 2, we can have only $\Sigma_i\cap X_w\ne\emptyset$. If $\Sigma_i\cap X_w\ne\emptyset$, then we set $\bar{x}^i_{\mfv,v}=\Sigma_i(\mfv)$ and $\tilde{x}^i_{\mfw,w}=\Sigma_i(\mfw)$. Suppose $\gm$ is a $K$-qi section through $\tilde{x}^i_{\mfw,w}$ lying inside $\F l_K(X_u)\cap X_w$ if $\Sigma_i\cap X_w=\emptyset$; otherwise,  $\gm=\Sigma_i\cap X_w$. Let $\gm\pr$ be a $K$-qi section through $\tilde{y}^i_{\mfw,w}$ lying inside $\F l_K(X_u)\cap X_w$ over $B_w$. Then we have degenerate tripod of ladder inside $X_w$ bounded by $\gm$ and $\gm\pr$ with orientation is same as described in Subcase $(3B)$. Further, we set $\L^i\cap X_w$ is the part of the same $\L^{ij}$ as it was over $B_v$ with the orientation same as $\L^i\cap X_w$.\smallskip
	
\emph{Note for Case 4}: We have $d_{\mfv\mfw}(P_{\mfw}(x^i_{\mfv,v}),\tilde{x}^i_{\mfw,w})\le \max\{2K,D\}$ and $d_{\mfv\mfw}(P_{\mfw}(y^i_{\mfv,v}),\tilde{y}^i_{\mfw,w})\le D$ (see Fact $(5)$).\medskip


Suppose (1) $\L^i=\cup_{v\in T,~b\in B_v}\L^i_{b,v}$ and $\L^{ij}=\cup_{v\in T,~b\in B_v}\L^{ij}_{b,v}$ where $\L^{ij}_{b,v}=[bot(\L^i_{b,v}),bot(\L^j_{b,v})]^f$ for all distinct $i,j\in\{1,2,3\}$; (2) $S_i:=\textrm{hull}(\pi(\L^i))$ and $B_i:=\pi_B^{-1}(S_i)$ for all $i\in\{1,2,3\}$, and $S_{123}=\cap_{1\le i\le 3}S_i$ and $B_{123}=\cap_{1\le i\le 3}B_i$; and (3) $\Xi=\{z_{b,v}:v\in S_{123},~b\in B_v\}$ where $z_{b,v}$ is $\dl_0$-center of geodesic triangle in the fiber $F_{b,v}$ with vertices $\{bot(\L^i_{b,v}):i=1,2,3\}$. From the construction, we have $\Xi\sse N^f_{5\dl_0}(\F l_K(X_u))$, $\L^i\sse N^f_{6\dl_0}(\F l_K(X_u))$ and $\L^{ij}\sse N^f_{2\dl_0}(\F l_K(X_u))$ for all distinct $i,j\in\{1,2,3\}$.

{\bf Step (3): Conclusion}

Step (2) completes the construction of $\L^i$ and $\L^{ij}$ for all distinct $i,j\in\{1,2,3\}$ as an union of geodesic segments in fibers. We are yet to show that they form ladders (Lemma \ref{Li} and Lemma \ref{Lij}) using Lemma \ref{promoting-of-ladder}. Before proving these lemmata, we first prove that $\Xi$ is a qi section.

Let $v\in S_{123}$. As we saw in the initial step of induction that $\{z_{b,v}:b\in B_v\}$ form a $k_{\ref{centers-forms-qi-section}}(K)$-qi section in $X_v$ over $B_v$. Therefore, to show $\Xi=\{z_{b,v}:v\in S_{123},~b\in B_v\}$ form a (uniform) qi section over $B_{123}$ (Corollary \ref{Xi-form-qi-section}), we only need to prove $d_{\mfv\mfw}(z_{\mfv,v},z_{\mfw,w})$ is uniformly bounded, where $v,w\in S_{123}$, $d_T(v,w)=1$ and $[\mfv,\mfw]$ is the edge joining $\mfv\in B_v$ and $\mfw\in B_w$. This is proved in the following lemma.
	
\begin{lemma}\textup{(\hspace{-.08mm}\cite[Lemma $5.8$]{ps-kap})}\label{zc_v-zc_w}
There exists a constant $k_{\ref{zc_v-zc_w}}$ such that $d_{\mfv\mfw}(z_{\mfv,v},z_{\mfw,w})\le k_{\ref{zc_v-zc_w}}$, where $v,w\in S_{123}$ such that $d_T(v,w)=1$ and $[\mfv,\mfw]$ is the edge joining $\mfv\in B_v$ and $\mfw\in B_w$.
\end{lemma}
\begin{proof}
This situation happens in Subcases $(1B)$, $(1C)$, $(2B$-$A)$, $(2B$-$B$-$A)$, $(2B$-$B$-$B)$, $(2B$-$C$-$B)$ and $(2A)$. We denote $\triangle_v$ and $\hat{\triangle}_v$ for geodesic triangle with vertices $\{\bar{x}^i_{\mfv,v}:i=1,2,3\}$ in $F_{\mfv,v}$ and $F_{\mfv\mfw}$ respectively. We also denote $\triangle_w$ and $\hat{\triangle}_w$ for geodesic triangle with vertices $\{\tilde{x}^i_{\mfw,w}:i=1,2,3\}$ in $F_{\mfw,w}$ and $F_{\mfv\mfw}$ respectively. In all these cases, $d_{\mfv\mfw}(\bar{x}^i_{\mfv,v},\tilde{x}^i_{\mfw,w})\le max\{D_1,D_2,K\}=D_3$ (say) for all $i\in\{1,2,3\}$. (See the {\em Notes} made after each of these cases and $D_2$ is mentioned in $(2B$-$B$-$B)$, $(2B$-$C$-$B)$.) Since $z_{\mfv,v}$ is $\dl_0$-center of $\triangle(x^1_{\mfv,v},x^2_{\mfv,v},x^3_{\mfv,v})$ in $F_{\mfv,v}$, so $z_{\mfv,v}$ is $3\dl_0$-center of $\triangle_v$ in $F_{\mfv,v}$. Thus $z_{\mfv,v}$ is $(3\dl_0+D_{\ref{ml}}(\dl_0,L\pr_0,L\pr_0))$-center $\hat{\triangle}_v$ in $F_{\mfv\mfw}$ (as the inclusion $F_{\mfv,v}\ri F_{\mfv\mfw}$ is a $L\pr_0$-qi embedding, see Lemma \ref{com-two-hyp-sps}). Since the corresponding end points of the triangles $\hat{\triangle}_v$ and $\hat{\triangle}_w$ are $D_3$-distance apart from each other in the path metric of $F_{\mfv\mfw}$, then by slimness of quadrilateral in $F_{\mfv\mfw}$, we get $z_{\mfv,v}$ is $(3\dl_0+D_{\ref{ml}}(\dl_0,L\pr_0,L\pr_0)+2\dl\pr_0+D_3)$-center of $\hat{\triangle}_w$. Again, as $z_{\mfw,w}$ is $\dl_0$-center of $\triangle_w$ and so is $(\dl_0+D_{\ref{ml}}(\dl_0,L\pr_0,L\pr_0))$-center of $\hat{\triangle}_w$. Thus $z_{\mfv,v}$ and $z_{\mfw,w}$ are two $D_4$-center of $\hat{\triangle}_w$ in the path metric of $F_{\mfv\mfw}$, where $D_4=3\dl_0+D_{\ref{ml}}(\dl_0,L\pr_0,L\pr_0)+2\dl\pr_0+D_3$. Hence, by \cite[Lemma $1.76$]{ps-kap}, we have $d_{\mfv\mfw}(z_{\mfv,v},z_{\mfw,w})\le2D_3+9\dl\pr_0=:k_{\ref{zc_v-zc_w}}$.
	\end{proof}
	
	\begin{cor}\label{Xi-form-qi-section}
We have a constant $k_{\ref{Xi-form-qi-section}}=max\{k_{\ref{centers-forms-qi-section}}(K),k_{\ref{zc_v-zc_w}}\}$ such that $\Xi=\bigcup\limits_{b\in B_v,~v\in S_{123}}\{z_{b,v}\}$ forms a $k_{\ref{Xi-form-qi-section}}$-qi section over $B_{123}$ with  $\Xi\sse N^f_{6\dl_0}(\F l_K(X_u))$.
	\end{cor}
Now we show that $\L^i$ and $\L^{ij}$ form ladders.
	
\begin{lemma}\label{Li}
There are constants $k_{\ref{Li}},~c_{\ref{Li}}$ and $\varepsilon_{\ref{Li}}$ such that $\L^i$ is a $(k_{\ref{Li}},c_{\ref{Li}},\varepsilon_{\ref{Li}})$-ladder with a central base $\mfB$ containing $\Sigma_i$, $i=1,2,3$.
\end{lemma}
	
\begin{proof}
We will show that $\L^i=\cup_{v\in T,~b\in B_v}\L^i_{b,v}$ satisfies all the conditions of Lemma \ref{promoting-of-ladder}, and that completes the proof.

\emph{Condition} (1): Note that by Lemma \ref{qi-sec-inside-lad-len-lift} (2), for all $v\in S_i$, $\L^i\cap X_v$ is a special $C_{\ref{qi-sec-inside-lad-len-lift}}(k_{\ref{centers-forms-qi-section}}(K))$-ladder over $B_v$ (see Definition \ref{K-metric-graph-bundle}). Let $v,w\in S_i$ such that $d_T(v,w)=1$ and $[\mfv,\mfw]$ is the edge joining $\mfv\in B_v$ and $\mfw\in B_w$. Suppose $d_T(u,v)<d_T(u,w)$.

According to our notation, $bot(\L^i_{\mfv,v})=x^i_{\mfv,v},~bot(\L^i_{\mfw,w})=\tilde{x}^i_{\mfw,w}$, and if $w\in S_{123}$, $top(\L^i_{\mfv,v})=z_{\mfv,v},~top(\L^i_{\mfw,w})=z_{\mfw,w}$; otherwise, $top(\L^i_{\mfw,w})=\tilde{y}^i_{\mfw,w}$.

If $v,w\in \mfT$, then $d_{\mfv\mfw}(top(\L^i_{\mfv,v}),top(\L^i_{\mfw,w}))=d_{\mfv\mfw}(z_{\mfv,v},z_{\mfw,w})\le k_{\ref{zc_v-zc_w}}$ (by Lemma \ref{zc_v-zc_w}) and $d_{\mfv\mfw}(bot(\L^i_{\mfv,v}),bot(\L^i_{\mfw,w}))=d_{\mfv\mfw}(\Sigma_i(\mfv),\Sigma_i(\mfw))\le K$.

Otherwise, suppose $d_T(v,\mfT)<d_T(w,\mfT)$.

Now let $v,w\in S_{123}$, i.e., both $Y_{\mfv,v}$ and $Y_{\mfw,w}$ have three legs. This happens in $(1B)$, $(1C)$, $(2B$-$A)$, $(2B$-$B$-$A)$, $(2B$-$B$-$B)$, $(2B$-$C$-$B)$ and $(2A)$. Note that $\tilde{x}^i_{\mfw,w}=bot(\L^i_{\mfw,w}),z_{\mfw,w}=top(\L^i_{\mfw,w})$. In all these cases, $d_{\mfv\mfw}(\bar{x}^i_{\mfv,v},\tilde{x}^i_{\mfw,w})\le max\{D_1,D_2,K\}=D_3$ (say). (See the {\em Notes} made after each of these cases and Figure \ref{heart}, and $D_2$ is mentioned in $(2B$-$B$-$B)$, $(2B$-$C$-$B)$.) Again $z_{\mfw,w}$ is $k_{\ref{zc_v-zc_w}}$-close to $\L^i_{\mfv,v}$. Note that max$\{k_{\ref{zc_v-zc_w}},D_3\}=k_{\ref{zc_v-zc_w}}$.

Now let $v\in S_{123}$ and $w\notin S_{123}$, i.e., $Y_{\mfv,v}$ has three legs but $Y_{\mfw,w}$ has one. This happens in $(1D)$, $(2B$-$C$-$A)$, $(2B$-$D)$. Then (by our construction) $top(\L^i_{\mfw,w})$ and $bot(\L^i_{\mfw,w})$ are $K$-close to $\L^i_{\mfv,v}$ in $d_{\mfv\mfw}$-metric.

Finally, $v,w\notin S_{123}$, i.e., both $Y_{\mfv,v}=\L^i_{\mfv,v}$ and $Y_{\mfw,w}=\L^i_{\mfw,w}$ have one leg (see Case 3 and Case 4), then $top(\L^i_{\mfw,w})$ and $bot(\L^i_{\mfw,w})$ are $K$-close to $\L^i_{\mfv,v}$.

Therefore, for the condition $(1)$ of Lemma \ref{promoting-of-ladder}, we can take $K\pr=max\{C_{\ref{qi-sec-inside-lad-len-lift}}(K_{\ref{centers-forms-qi-section}}(K)),k_{\ref{zc_v-zc_w}},K\}$.\smallskip
		
\emph{Condition} (2): Let $v,w\in S_i$ such that $d_T(v,w)=1$ and $d_T(v,\mfT)<d_T(w,\mfT)$. To show a uniform bound on $Hd_{\mfv\mfw}(P_{\mfw}(\L^i_{\mfv,v}),\L^i_{\mfw,w})$, we first show that $$d_{\mfv\mfw}(P_{\mfw}(bot(\L^i_{\mfv,v})),bot(\L^i_{\mfw,w})) \textrm{ and } d_{\mfv\mfw}(P_{\mfw}(top(\L^i_{\mfv,v})),top(\L^i_{\mfw,w}))$$ are uniformly bounded, and then we apply \cite[Corollary $1.116$]{ps-kap}. 
		
Suppose both $Y_{\mfv,v}$ and $Y_{\mfw,w}$ have three legs. This happens in $(1B)$, $(1C)$, $(2B$-$A)$, $(2B$-$B$-$A)$, $(2B$-$B$-$B)$, $(2B$-$C$-$B)$ and $(2A)$. In all these cases, $d_{\mfv\mfw}(P_{\mfw}(z_{\mfv,v}),z_{\mfw,w})\le2k_{\ref{zc_v-zc_w}}$ (by Lemma \ref{zc_v-zc_w}). So we need a bound only for $d_{\mfv\mfw}(P_{\mfw}(x^i_{\mfv,v}),\tilde{x}^i_{\mfw,w})$. In $(1B)$ and $(2B$-$A)$, $d_{\mfv\mfw}(P_{\mfw}(x^i_{\mfv,v}),\tilde{x}^i_{\mfw,w})\le2K$. In $(1C)$, $d_{\mfv\mfw}(P_{\mfw}(x^i_{\mfv,v}),\tilde{x}^i_{\mfw,w})\le max\{2K,2D_1+C\}$. In $(2B$-$B$-$A)$ and $(2A)$, $d_{\mfv\mfw}(P_{\mfw}(x^i_{\mfv,v}),\tilde{x}^i_{\mfw,w})\le max\{2K,D\}$. In $(2B$-$B$-$B)$ and $(2B$-$C$-$B)$, $d_{\mfv\mfw}(P_{\mfw}(x^i_{\mfv,v}),\tilde{x}^i_{\mfw,w})\le max\{2K,2(K+\phi(2K)+\dl_0)\}$.
		
Suppose $Y_{\mfv,v}$ has three legs and $Y_{\mfw,w}$ has one. This happens in $(1D)$, $(2B$-$C$-$A)$, $(2B$-$D)$. Then $d_{\mfv\mfw}(P_{\mfw}(x^i_{\mfv,v}),\tilde{x}^i_{\mfw,w})$ and $d_{\mfv\mfw}(P_{\mfw}(z_{\mfv,v}),\tilde{y}^i_{\mfw,w})$ are bounded by max$\{2K+C, 2K,D\}$ (see the {\em Notes} made after each of these cases and Figure \ref{heart}).
		
Finally, we assume that both $Y_{\mfv,v}$ and $Y_{\mfw,w}$ have one leg (see Case $3$ and Case $4$). Then $d_{\mfv\mfw}(P_{\mfw}(x^i_{\mfv,v}),\tilde{x}^i_{\mfw,w})$ and $d_{\mfv\mfw}(P_{\mfw}(y_{\mfv,v}),\tilde{y}^i_{\mfw,w})$ are bounded by $max\{2K,2K+C,2K+D\}$ (see {\em Note $(3B)$} and {\em Note for Case $4$}).
		
Let $D_5$ be the maximum of all the above constants. Now by \cite[Corollary $1.116$]{ps-kap}, there is a constant $C_1$ depending on $\dl\pr_0,\lm\pr_0$ and $L\pr_0$ such that $$Hd_{\mfv\mfw}(P_{\mfw}(\L^i_{\mfv,v}),[P_{\mfw}(bot(\L^i_{\mfv,v})),P_{\mfw}(top(\L^i_{\mfv,v}))]^f)\le C_1.$$ Since $\L^i_{\mfw,w}$ are $L\pr_0$-quasigeodesic in $F_{\mfv\mfw}$, by slimness of quadrilateral in $F_{\mfv\mfw}$, there is a constant $\ep\pr$ depending on $\dl\pr_0$, $D_5$, $C_1$ and $L\pr_0$ such that $Hd_{\mfv\mfw}(P_{\mfw}(\L^i_{\mfv,v}),\L^i_{\mfw,w})\le\epsilon\pr$.\smallskip
		
\emph{Condition} (3): Suppose $v\in S_i$ and $w\notin S_i$ such that $d_T(v,w)=1$ and $[\mfv,\mfw]$ is the edge joining $\mfv\in B_v$ and $\mfw\in B_w$. That is $\L^i_{\mfw,w}=\emptyset$. Then the pair $(\L^i_{\mfv,v},F_{\mfw,w})$ is $C\pr=max\{C_{\ref{other-pairs-are-cobounded}},C_{\ref{Lij-is-cobounded}}\}$-cobounded in $F_{\mfv\mfw}$ (see Lemmata \ref{other-pairs-are-cobounded} and \ref{Lij-is-cobounded}).
		
Therefore, to conclude the lemma, we can take $k_{\ref{Li}}=k_{\ref{promoting-of-ladder}}(K\pr)$, $c_{\ref{Li}}=c_{\ref{promoting-of-ladder}}(C\pr)$ and $\varepsilon_{\ref{Li}}=\varepsilon_{\ref{promoting-of-ladder}}(\epsilon\pr)$ for the above values of $K\pr,C\pr$ and $\epsilon\pr$.
	\end{proof}
	
	\begin{lemma}\label{Lij}
There are constants $k_{\ref{Lij}},~c_{\ref{Lij}}$ and $\varepsilon_{\ref{Lij}}$ such that $\L^{ij}$ is a $(k_{\ref{Lij}},c_{\ref{Lij}},\varepsilon_{\ref{Lij}})$-ladder with a central base $\mfB$ containing $\Sigma_i$ and $\Sigma_j$ for all distinct $i,j\in\{1,2,3\}$.
	\end{lemma}
	\begin{proof}
We will show that $\L^{ij}=\cup_{v\in T,~b\in B_v}\L^{ij}_{b,v}$ satisfies all the conditions of Lemma \ref{promoting-of-ladder}, and that completes the proof.

Suppose $v\in \pi(\L^{ij})$. In the construction, we have seen that $\L^{ij}\cap X_v$ matches with $\L^i\cap X_v$ or $\L^j\cap X_v$ unless both $\L^i\cap X_v$ and $\L^j\cap X_v$ are nonempty, i.e., we have genuine tripod in $X_v$. Note that we already have seen in Lemma \ref{Li} that $\L^i=\cup_{v\in T,~b\in B_v}\L^i_{b,v}$ satisfies Conditions (1), (2) and (3) of Lemma \ref{promoting-of-ladder} for some constants. Also, for all $v\in\pi(\L^{ij})$, $\L^{ij}\cap X_v$ is a special $C_{\ref{qi-sec-inside-lad-len-lift}}(K)$-ladder over $B_v$ (see Definition \ref{K-metric-graph-bundle}), and for all $v,w\in\mfT$ with $d_T(u,v)<d_T(u,w)$, we have $d_{\mfv\mfw}(x^i_{\mfv,v},\tilde{x}^i_{\mfw,w})\le K$ and $d_{\mfv\mfw}(x^j_{\mfv,v},\tilde{x}^j_{\mfw,w})\le K$. Hence, for Conditions (1) and (2) of Lemma \ref{promoting-of-ladder}, we only need to give the required bound in the following situations.

Suppose $v,w\in \pi(\L^{ij})$ such that $d_T(v,w)=1$, $d_T(\mfT,v)<d_T(\mfT,w)$ and $[\mfv,\mfw]$ is the edge joining $\mfv\in B_v$ and $\mfw\in B_w$.

{\em Case $(a)$}: Both $Y_{\mfv,v}$ and $Y_{\mfw,w}$ have three legs.

{\em Case $(b)$}: $Y_{\mfv,v}$ has three legs but $Y_{\mfw,w}$ has one.
		
\emph{Condition} (1): Let $v,w\in \pi(\L^{ij})$ be as above. Note that $Hd^f(\L^i_{\mfv,v}\cup\L^j_{\mfv,v},\L^{ij}_{\mfv,v})\le2\dl_0$. In Case $(a)$, by Lemma \ref{Li}, the points $\tilde{x}^i_{\mfw,w}$ and $\tilde{x}^j_{\mfw,w}$ are $K'$-close (in $d_{\mfv\mfw}$-metric) to $\L^i_{\mfv,v}\cup\L^j_{\mfv,v}$, where $K'$ is as in the proof of Lemma \ref{Li}. For Case $(b)$, without loss of generality, we assume that $Y_{\mfw,w}=\L^i_{\mfw,w}$. Hence $\L^{ii\pm1}_{\mfw,w}=\L^i_{\mfw,w}$ and $\L^{i-1i+1}_{\mfw,w}=\emptyset$. (See Subcases $(1D)$, $(2B$-$D)$ and $(2B$-$C$-$A)$.) In these cases, $\tilde{x}^i_{\mfw,w}$ and $\tilde{y}^i_{\mfw,w}$ are $K$-close to $\L^i_{\mfv,v}$. Note that $K'>K$.

So in Case $(a)$ (resp. in Case $(b)$), there are points, $x_1,y_1\in\L^{ij}_{\mfv,v}$ such that $d_{\mfv\mfw}(\tilde{x}^i_{\mfw,w},x_1)\le K'+3\dl_0$ and $d_{\mfv\mfw}(\tilde{x}^j_{\mfw,w},y_1)\le K'+3\dl_0$ (resp. $d_{\mfv\mfw}(\tilde{x}^i_{\mfw,w},x_1)\le K'+3\dl_0$ and $d_{\mfv\mfw}(\tilde{y}^i_{\mfw,w},y_1)\le K'+3\dl_0$) with an order $bot(\L^j_{\mfv,v})=bot(\L^{ij}_{\mfv,v})\le y_1\le x_1\le top(\L^{ij}_{\mfv,v})=bot(\L^i_{\mfv,v})$.
		
Therefore, for the condition $(1)$ of Lemma \ref{promoting-of-ladder}, we can take $K\pr_1=max\{K'+3\dl_0,C_{\ref{qi-sec-inside-lad-len-lift}}(K)\}$.\smallskip
		
\emph{Condition} (2): Let $v,w\in \pi(\L^{ij})$ be as above. We will prove that $Hd_{\mfv\mfw}(P_{\mfw}(\L^{ij}_{\mfv,v}),\L^{ij}_{\mfw,w})$ is uniformly bounded.
		
If both $Y_{\mfv,v}$ and $Y_{\mfw,w}$ have one leg, then $Hd_{\mfv\mfw}(P_{\mfw}(\L^{ij}_{\mfv,v}),\L^{ij}_{\mfw,w})\le\varepsilon_{\ref{Li}}$ (by Lemma \ref{Li}). If $Y_{\mfv,v}$ has three legs but $Y_{\mfw,w}$ has one (Case $(b)$), then $\L^s_{\mfw,w}=\emptyset$ for $s$ is equal to either $i$ or $j$. So by Lemma \ref{Li}, diam$(P_{\mfw}(\L^s_{\mfv,v}))\le c_{\ref{Li}}$ in $d_{\mfv\mfw}$-metric. Again if both $Y_{\mfv,v}$ and $Y_{\mfw,w}$ have three legs (Case $(a)$), $Hd_{\mfv\mfw}(P_{\mfw}(\L^s_{\mfv,v}),\L^s_{\mfw,w})\le\varepsilon_{\ref{Li}}$ for $s\in\{i,j\}$. Now $Hd^f(\L^i_{\mfw,w}\cup\L^j_{\mfw,w},\L^{ij}_{\mfw,w})\le2\dl_0$ implies that, in either case, $Hd_{\mfv\mfw}(P_{\mfw}(\L^i_{\mfv,v}\cup\L^j_{\mfv,v}),\L^{ij}_{\mfw,w})\le\varepsilon_{\ref{Li}}+2\dl_0+c_{\ref{Li}}$. Again $Hd^f(\L^i_{\mfv,v}\cup\L^j_{\mfv,v},\L^{ij}_{\mfv,v})\le2\dl_0$ and so by Lemma \ref{proj-on-qc} $(1)$, $Hd_{\mfv\mfw}(P_{\mfw}(\L^i_{\mfv,v}\cup\L^j_{\mfv,v}),P_{\mfw}(\L^{ij}_{\mfv,v}))\le C_{\ref{proj-on-qc}}(\dl\pr_0,\lambda\pr_0).(2\dl_0+1)$. Therefore, combining these inequalities, we have $$Hd_{\mfv\mfw}(P_{\mfw}(\L^{ij}_{\mfv,v}),\L^{ij}_{\mfw,w})\le\epsilon$$where $\epsilon=\varepsilon_{\ref{Li}}+2\dl_0+c_{\ref{Li}}+C_{\ref{proj-on-qc}}(\dl\pr_0,\lambda\pr_0).(2\dl_0+1)$.
		
Therefore, for condition $(2)$ of Lemma \ref{promoting-of-ladder}, we take $\epsilon\pr=max\{\epsilon,\varepsilon_{\ref{Li}}\}=\epsilon$.\smallskip
		
\emph{Condition} (3): So for the condition $(3)$ of Lemma \ref{promoting-of-ladder}, we can take  $C\pr=max\{C_{\ref{other-pairs-are-cobounded}},C_{\ref{Lij-is-cobounded}}\}$ (see Lemmata \ref{other-pairs-are-cobounded} and \ref{Lij-is-cobounded}).
		
Hence, to conclude the lemma, we can take $k_{\ref{Lij}}=k_{\ref{promoting-of-ladder}}(K\pr_1)$, $c_{\ref{Lij}}=c_{\ref{promoting-of-ladder}}(C\pr)$ and $\varepsilon_{\ref{Lij}}=\varepsilon_{\ref{promoting-of-ladder}}(\epsilon\pr)$ for the above values of $K\pr_1,~C\pr$ and $\epsilon\pr$.
	\end{proof}

\underline{{\bf Conclusion of Proposition \ref{existence-of-tripod-ladder}}}: 	
	
Therefore, we complete Proposition \ref{existence-of-tripod-ladder} by taking $k_{\ref{existence-of-tripod-ladder}}=max\{k_{\ref{Li}},k_{\ref{Lij}}\}$, $c_{\ref{existence-of-tripod-ladder}}=max\{c_{\ref{Li}},c_{\ref{Lij}}\}$ and $\varepsilon_{\ref{existence-of-tripod-ladder}}=max\{\varepsilon_{\ref{Li}},\varepsilon_{\ref{Lij}}\}$. Lemma \ref{Li}, Corollary \ref{Xi-form-qi-section} and Lemma \ref{Lij}, respectively, complete the proof of Proposition \ref{existence-of-tripod-ladder} $(1)$, $(1)$ $(a)$ and $(2)$. Note that Proposition \ref{existence-of-tripod-ladder} $(1)$ $(b)$ and $(c)$ follows from the construction of $\L^i$ and $\Xi$.
\end{proof}

As a consequence of Proposition \ref{existence-of-tripod-ladder}, we have the following.

\begin{cor}\label{ladder-in-flow-sp-for-two-pts}
Let $x,y\in\F l_K(X_u)$. Suppose $\Sigma_x$ and $\Sigma_y$ are $K$-qi sections through $x$ and $y$ lying inside $\F l_K(X_u)$ over $B_{[u,\pi(x)]}$ and $B_{[u,\pi(y)]}$ respectively. Then there is a $(k_{\ref{ladder-in-flow-sp-for-two-pts}},c_{\ref{ladder-in-flow-sp-for-two-pts}},\varepsilon_{\ref{ladder-in-flow-sp-for-two-pts}})$-ladder, say $\L_{xy}$, with a central base (possibly bigger than) $B_u$ containing the sections $\Sigma_x,\Sigma_y$ such that $bot(\L_{xy})\sse\F l_K(X_u)$, $top(\L_{xy})\sse\F l_K(X_u)$ and $\L_{xy}\sse N^f_{2\dl_0}(\F l_K(X_u))$, where $k_{\ref{ladder-in-flow-sp-for-two-pts}}=k_{\ref{existence-of-tripod-ladder}},~c_{\ref{ladder-in-flow-sp-for-two-pts}}=c_{\ref{existence-of-tripod-ladder}}$ and $\varepsilon_{\ref{ladder-in-flow-sp-for-two-pts}}=\varepsilon_{\ref{existence-of-tripod-ladder}}$.
\end{cor}

\subsection{Hyperbolicity of $\cup_{i=1}^{3}\L^i$}
In the following Lemma \ref{paths-are-slim}, we will investigate the (uniform) hyperbolicity of a (uniform) neighborhood of $\cup_{i=1}^{3}\L^i$ in a bit larger neighborhood of $\F l_K(X_u)$, where $\L^i$'s are the ladders obtained in Proposition \ref{existence-of-tripod-ladder}.

\begin{lemma}\label{paths-are-slim}
Given $R\ge2C^{(9)}_{\ref{qi-sec-inside-lad-len-lift}}(k_{\ref{existence-of-tripod-ladder}})$, there are constants $\dl_{\ref{paths-are-slim}}=\dl_{\ref{paths-are-slim}}(k_{\ref{existence-of-tripod-ladder}},R)$ and $L_{\ref{paths-are-slim}}=L_{\ref{paths-are-slim}}(k_{\ref{existence-of-tripod-ladder}},R)$ such that the following hold.
	\begin{enumerate}
\item $Y:=N_{R+2\dl_0}(\cup_{i=1}^{3}\L^i)$ is a $\dl_{\ref{paths-are-slim}}$-hyperbolic subspace (with the induced path metric) of $N_{(R+8\dl_0)}(\F l_K(X_u))$.
		
\item The inclusion $L^{ij}:=N_R(\L^{ij})\ri Y$ is a $L_{\ref{paths-are-slim}}$-qi embedding with their induced path metrics.
	\end{enumerate}
\end{lemma}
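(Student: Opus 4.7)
\medskip

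\noindent\textbf{Plan of proof.} The two parts will be handled separately: part (1) by a ladder-plus-combination argument, and part (2) by a Mitra-retraction argument.

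For part (1), the first step is to apply Proposition \ref{general-ladder-is-hyp} to each individual $\L^i$. By Lemma \ref{Li}, each $\L^i$ is a $(k_{\ref{existence-of-tripod-ladder}}, c_{\ref{existence-of-tripod-ladder}}, \varepsilon_{\ref{existence-of-tripod-ladder}})$-ladder; the radius $L+\dl_0 \ge 2 C^{9}_{\ref{qi-section-inside-ladder}}(k_{\ref{existence-of-tripod-ladder}})$ and the uniform $C^{9}_{\ref{qi-section-inside-ladder}}(k_{\ref{existence-of-tripod-ladder}})$-flaring hypothesis put us in the setting of Proposition \ref{general-ladder-is-hyp}, so each $N_{L+\dl_0}(\L^i)$ is uniformly $\dl_{\ref{general-ladder-is-hyp}}$-hyperbolic in its induced path metric. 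The next step is to glue the three hyperbolic pieces: by construction (Corollary \ref{Xi-form-qi-section}) the three $\L^i$ share the common top qi section $\Xi$ over $B_{123}$, and by Lemma \ref{length-of-lift} the $2k_{\ref{existence-of-tripod-ladder}}$-neighborhood of $\Xi$ is uniformly qi-embedded in each $N_{L+\dl_0}(\L^i)$. I then view $Y$ as a tripod of hyperbolic spaces meeting along $N_{L+\dl_0}(\Xi)$, and iterate Proposition \ref{combination-of-hyp-sps} twice (glue $\L^1$ and $\L^2$ along $\Xi$, then glue $\L^3$). At each stage the qi-embedded-intersection condition is ensured by Lemma \ref{length-of-lift}, and the cobounded-pair condition reduces (via Lemma \ref{one-bdded-gives-cobdded} together with Corollary \ref{Lij-is-cobounded} and Lemma \ref{other-pairs-are-cobounded}) to the fact that $\Xi$ and the bottom sections $\Sigma_{x^i}$ are quasiconvex and their projections onto each other are uniformly bounded.

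For part (2), I use a Mitra-style retraction. By Lemma \ref{Lij}, $\L^{ij}$ is a $(k_{\ref{Lij}}, c_{\ref{Lij}}, \varepsilon_{\ref{Lij}})$-ladder, hence in particular a semicontinuous subgraph of $\mathsf{X}$. Proposition \ref{mitra's-retraction-on-semicts-subsp} produces a uniformly coarsely Lipschitz retraction $\rho:X\to\L^{ij}$. Since $Y\sse X$ and $d_Y\ge d_X$ on $Y\times Y$, the restriction $\rho|_Y:Y\to\L^{ij}$ is again uniformly coarsely Lipschitz with respect to $d_Y$. On the other hand, Proposition \ref{ladder-is-proper-emb} gives uniform proper embedding $L^{ij}\hookrightarrow X$, and combining with $d_Y\ge d_X$ yields uniform proper embedding $L^{ij}\hookrightarrow Y$. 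Applying Lemma \ref{lrape_qi-emb} with ambient space $Y$, subspace $\L^{ij}$, and neighborhood radius $L$ concludes that $L^{ij}\hookrightarrow Y$ is a uniform qi-embedding.

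\medskip

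\noindent\textbf{Main obstacle.} The harder of the two parts is (1): three hyperbolic ladder-neighborhoods meeting along a single qi section $\Xi$ form a tripod rather than a chain, so Proposition \ref{combination-of-hyp-sps} does not apply verbatim. The technical issue is that outside the central base $B_{123}$ the ladders $\L^i$ may extend into overlapping regions (as recorded by the various sub-cases in the construction of Proposition \ref{existence-of-tripod-ladder}), so the pairwise intersection $N_{L+\dl_0}(\L^i)\cap N_{L+\dl_0}(\L^j)$ is not simply $N_{L+\dl_0}(\Xi)$. To handle this, I would first show by a Hausdorff-closeness estimate (using that $z_{b,v}$ is a $\dl_0$-center so that $\L^i_{b,v}\cup\L^j_{b,v}$ is $2\dl_0$-close to $\L^{ij}_{b,v}$ in each fiber) that $N_{L+\dl_0}(\L^i\cup\L^j)$ is uniformly Hausdorff close to $L^{ij}$; combined with Lemma \ref{Lij} and Proposition \ref{general-ladder-is-hyp}, this gives uniform hyperbolicity of the pairwise unions. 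The tripod combination is then genuinely the iteration of two chain combinations along the common qi-embedded $\Xi$.
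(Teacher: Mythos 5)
Your overall strategy for part (1) --- apply Proposition \ref{general-ladder-is-hyp} to each $\L^i$ and then iterate Proposition \ref{combination-of-hyp-sps} twice along the shared qi-section $\Xi$ --- is exactly the paper's approach. However, there are two genuine problems in the way you propose to carry it out.

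First, your resolution of the ``Main obstacle'' is false. You want to show that $N_{L+\dl_0}(\L^i\cup\L^j)$ is uniformly Hausdorff close to $L^{ij}$, but this fails outside the common base $B_{123}$. Concretely, in sub-case (1B) of the construction of Proposition \ref{existence-of-tripod-ladder} one has $\L^{i+1}_w\ne\emptyset$ over an entire subtree $T_{vw}$ while $\L^{i,i+1}$ is declared empty over $B_{T_{vw}}$; since $T_{vw}$ can be arbitrarily deep, $\L^i\cup\L^{i+1}$ is not at bounded Hausdorff distance from $\L^{i,i+1}$. Fortunately no such Hausdorff estimate is needed: condition (3) of Proposition \ref{combination-of-hyp-sps} only asks for \emph{containment} of the intersection $L^1\cap L^2$ in a uniform neighborhood of $\Xi$, not two-sided closeness. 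The paper establishes this (the ``Finding $s$'' computation) by taking $x\in L^1\cap L^2$ with $x_i\in\L^i$ close to $x$, projecting to the base, flowing the qi-sections $\gm_i$ down to a fiber $F_{\tilde c,u_x}$ over a common point $\tilde c\in B_{u_x}$, and then exploiting that $z_{\tilde c,u_x}\in\Xi$ is a $\dl_0$-center of the fiber-triangle whose vertices are $bot(\L^j_{\tilde c,u_x})$; this forces $c_1,c_2$ to be fiber-close to $z_{\tilde c,u_x}$, hence $x$ is uniformly close to $\Xi$ in both $L^1$ and $L^2$. You should replace your Hausdorff claim with this flow-and-center argument. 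Second, your appeal to a cobounded-pair condition (via Lemma \ref{one-bdded-gives-cobdded}, Corollary \ref{Lij-is-cobounded}, Lemma \ref{other-pairs-are-cobounded}) is a red herring: in each gluing step $n=2$, so conditions (2) and (5) of Proposition \ref{combination-of-hyp-sps} are vacuous, and the paper makes no use of coboundedness here.

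For part (2) your Mitra-retraction argument is workable but much more roundabout than necessary, and requires care: the restriction $\rho|_Y$ is coarsely Lipschitz with respect to $d_X$ rather than $d_Y$, so you must observe that the proof of Lemma \ref{lrape_qi-emb} only uses the bound $d_X(\rho(v_i),\rho(v_{i+1}))\le 2C$ together with proper embedding of the target, before it applies as you wish. The paper instead uses a one-line nesting argument: since $\L^{ij}\sse N^{fib}_{\dl_0}(\L^i\cup\L^j)$ one has $L^{ij}\sse Y\sse X$, the inclusion $L^{ij}\hookrightarrow X$ is a uniform qi-embedding by Corollary \ref{ladder-is-qi-emb}, and a qi-embedding into the outer space automatically restricts to a qi-embedding into any intermediate subspace because the induced path metrics are nested. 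You should use this direct route.
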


\begin{proof}
{\em (1)} Note that $\L^i$ is a $(k_{\ref{existence-of-tripod-ladder}},c_{\ref{existence-of-tripod-ladder}},\varepsilon_{\ref{existence-of-tripod-ladder}})$-ladder such that $\L^i\sse N^f_{6\dl_0}(\F l_K(X_u))$ for $i=1,2,3$, and so $Y\sse N_{(R+8\dl_0)}(\F l_K(X_u))$. Let $L^i=N_{R+2\dl_0}(\L^i),~i=1,2,3$. Since the tree of metric bundles $(X,B,T)$ satisfies $C^{(9)}_{\ref{qi-sec-inside-lad-len-lift}}(k_{\ref{existence-of-tripod-ladder}})$-flaring condition, by Theorem \ref{general-ladder-is-hyp}, $L^i$ is $\dl_1$-hyperbolic, where $\dl_1=\dl_{\ref{general-ladder-is-hyp}}(k_{\ref{existence-of-tripod-ladder}},R+2\dl_0)$. Now we apply Proposition \ref{combi-hyp-sps} twice on $L^i$'s, $i=1,2,3$; first on $L^1\cup L^2$ and then on $(L^1\cup L^2)\cup L^3$ to show $Y$ is hyperbolic. 
	
	\underline{{\bf \emph{$\bm{L^1\cup L^2}$ is Hyperbolic}}}: We verify the conditions of Proposition \ref{combi-hyp-sps} for $n=2$ (see Remark \ref{combi-hyp-sps-2}).
	
	$(1)$ $L^1$ and $L^2$ are $\dl_1$-hyperbolic.

	$(2)$ Note that $N_{2k_{\ref{existence-of-tripod-ladder}}}(\Xi)\sse L^1\cap L^2$, and by Lemma \ref{qi-sec-inside-lad-len-lift} $(3)$, $N_{2k_{\ref{existence-of-tripod-ladder}}}(\Xi)$ is $k_{\ref{existence-of-tripod-ladder}}(2k_{\ref{existence-of-tripod-ladder}}+1)$-qi embedded in both $L^1$ and $L^2$. Let $N^i_D(N_{2k_{\ref{existence-of-tripod-ladder}}}(\Xi))$ denote the $D$-neighborhood of $N_{2k_{\ref{existence-of-tripod-ladder}}}(\Xi)$ in $L^i$-metric for $i=1,2$. If $L^1\cap L^2\sse N^i_D(N_{2k_{\ref{existence-of-tripod-ladder}}}(\Xi))$, then by Lemma \ref{hd-imp-qi}, $L^1\cap L^2$ is $L_1$-qi embedded in both $L^1$ and $L^2$ for some $L_1$ depending on $D$ and $k_{\ref{existence-of-tripod-ladder}}(2k_{\ref{existence-of-tripod-ladder}}+1)$. Now we will find $D$.

	\emph{Finding} $D$: Let $x\in L^1\cap L^2$. Then $\exists~ x_i\in\L^i$ such that $d_{L^i}(x,x_i)\le R+2\dl_0,~i=1,2$. So $d_X(x_1,x_2)\le2(R+2\dl_0)$ and $d_B(a_1,a_2)\le2(R+2\dl_0)$, where $\pi_X(x_i)=a_i,i=1,2$. Let $v$ be the center of the tripod with vertices $\pi(x_1),\pi(x_2),u$ in $T$. If any one of $\pi(x_1)$, $\pi(x_2)$ is $u$ then we set $v$ to be $u$. Let $c\in B_v\cap[a_1,a_2]$. Then $d_B(a_i,c)\le2(R+2\dl_0), i=1,2$. Suppose $\gm_i$ is $k_{\ref{existence-of-tripod-ladder}}$-lift through $x_i$ of geodesic $[a_i,c]$ lying inside $\L^i$. Let $\gm_i(a_i)=c_i, i=1,2$. So $d_{L^i}(x_i,c_i)\le4(R+2\dl_0)k_{\ref{existence-of-tripod-ladder}}, i=1,2$. Thus $d_X(c_1,c_2)\le d_X(c_1,x_1)+d_X(x_1,x_2)+d_X(x_2,c_2)\le2(R+2\dl_0)(4k_{\ref{existence-of-tripod-ladder}}+1)$, and so $d^f(c_1,c_2)\le \phi(2(R+2\dl_0)(4k_{\ref{existence-of-tripod-ladder}}+1))=D_1$ (say). We also note that $v\in\mfB$ where $\mfB$ is a central base for all $\L^i$'s. Since $z_{c,v}\in\Xi$ is $\dl_0$-center of geodesic triangle with vertices $\{bot(\L^j_{c,v});j=1,2,3\}$ in $F_{c,v}$, so $z_{c,v}$ is $3\dl_0$-close (in fiber distance) to a point $c_3\in[c_1,c_2]^f$. Then, for $i=1,2$, $d_{L^i}(c_i,z_{c,v})\le d^f(c_i,z_{c,v})\le d^f(c_i,c_3)+d^f(c_3,z_{c,v})\le D_1+3\dl_0$. Hence, for $i=1,2$, $d_{L^i}(x,z_{c,v})\le d_{L^i}(x,x_i)+d_{L^i}(x_i,c_i)+d_{L^i}(c_i,z_{c,v})\le R+2\dl_0+4(R+2\dl_0)k_{\ref{existence-of-tripod-ladder}}+D_1+3\dl_0=:D$. Therefore, $L^1\cap L^2\sse N^i_D(\Xi)\sse N^i_D(N_{2k_{\ref{existence-of-tripod-ladder}}}(\Xi))$, where $N^i_D(\zeta)$ denotes $D$-neighborhood around $\zeta\in L^i$ in the path metric of $L^i$, $i=1,2$.
	
	
	Therefore, by Remark \ref{combi-hyp-sps-2}, we conclude that $L^1\cup L^2$ is $\dl_2$-hyperbolic, where $\dl_2=\dl_{\ref{combi-hyp-sps-2}}(\dl_1,L_1)$.
	
	The exact proof as above works mutatis mutandis for $(L^1\cup L^2)\cup L^3$, and we conclude that Y is uniformly hyperbolic with the induced path metric. Let the hyperbolicity constant for $Y$ be $\dl_{\ref{paths-are-slim}}$.
	
{\em (2)} Note that $L^{ij}\sse Y\sse N_{(R+8\dl_0)}(\F l_K(X_u))$ as $\L^{ij}\sse N^f_{\dl_0}(\L^i\cup\L^j)$. Since $\L^{ij}$ is a $(k_{\ref{existence-of-tripod-ladder}},c_{\ref{existence-of-tripod-ladder}},\varepsilon_{\ref{existence-of-tripod-ladder}})$-ladder (see Proposition \ref{existence-of-tripod-ladder}), then by Corollary \ref{semicts-is-qi-emb}, the inclusion $L^{ij}\ri X$ is a $L_{\ref{semicts-is-qi-emb}}(k_{\ref{existence-of-tripod-ladder}},R)$-qi embedding and so is the inclusion $L^{ij}\ri Y$. Therefore, we can take $L_{\ref{paths-are-slim}}=L_{\ref{semicts-is-qi-emb}}(k_{\ref{existence-of-tripod-ladder}},R)$.
\end{proof}	

\subsection{Geodesics in $Fl_{KL}(X_u)$ are coarsely well-defined}

Given a pair of points in $\F l_K(X_u)$, we get a ladder according to Corollary \ref{ladder-in-flow-sp-for-two-pts}. But this ladder is far from being canonical. In the following proposition, we show that different ladders for different choices of qi sections for the same pair of points give rise to uniform Hausdorff-close geodesic paths joining those points in the respective ladders. In fact, this is more generally true, but we prove it according to our requirements. 

\begin{prop}\label{paths-are-Hd-close}
There is a constant $D_{\ref{paths-are-Hd-close}}=D_{\ref{paths-are-Hd-close}}(k_{\ref{ladder-in-flow-sp-for-two-pts}},c_{\ref{ladder-in-flow-sp-for-two-pts}},\varepsilon_{\ref{ladder-in-flow-sp-for-two-pts}})$ such that the following holds.
	
Let $x,y\in\F l_K(X_u)$. Suppose $\Sigma_x$ and $\Sigma_y$ are $K$-qi sections through $x$ and $y$ lying inside $\F l_K(X_u)$ over $B_{[u,\pi(x)]}$ and $B_{[u,\pi(y)]}$ respectively. Let $\L^1_{xy}$ and $\L^2_{xy}$ be two $(k_{\ref{ladder-in-flow-sp-for-two-pts}},c_{\ref{ladder-in-flow-sp-for-two-pts}},\varepsilon_{\ref{ladder-in-flow-sp-for-two-pts}})$-ladders containing $\Sigma_x,\Sigma_y$ (see Corollary \ref{ladder-in-flow-sp-for-two-pts}). Further, we assume that $c^1(x,y)$ and $c^2(x,y)$ are geodesics joining $x,y$ inside $L^1_{xy}:=N_{r_2}(\L^1_{xy})$ and $L^2_{xy}:=N_{r_2}(\L^2_{xy})$ respectively, where the constant $r_2$ is defined in Lemma \ref{Li-bar-is-qi-emb-in-Li} below. Then $c^1(x,y)$ and $c^2(x,y)$ are $D_{\ref{paths-are-Hd-close}}$-Hausdorff-close in $X$.
\end{prop}
\begin{proof}
In the proof, we omit the subscript $xy$ when denoting the ladders to avoid excessive notation. We denote $\L^i:=\L^i_{xy}$, $L^i:=N_{r_2}(\L^i)$ and the fibers of $\L^i$ by $\L^i_{b,v}\sse F_{b,v},i=1,2$. Let $S_i=\textrm{hull}(\pi(\L^i))$ and $B_i=\pi_B^{-1}(S_i)$, $i=1,2$. Note that both the ladders $\L^1,~\L^2$ contain $\Sigma_x$ and $\Sigma_y$, so for $v\in\pi(\Sigma_x)\cup\pi(\Sigma_y)\textrm{ and }b\in B_v$, $d^f(\L^1_{b,v},\L^2_{b,v})\le\dl_0$; in particular, the pair $(\L^1_{b,v},\L^2_{b,v})$ is $5\dl_0$-close in $F_{b,v}$. 
	
The proof is divided into two steps. In Step $1$, we find a common base $\bar{B}$ and develop lemmata which are needed in Step $2$. In Step $2$, we show that there is a common subspace containing $x,y$ which is qi embedded in both $L^1,L^2$. Finally, we conclude the proposition.

{\bf Step 1}: {\bf \emph{Construction of common base $\bm{\bar{B}}$}}: Let $B\pr=\{b\in B_v:d^f(\L^1_{b,v},\L^2_{b,v})\le5\dl_0, v\in S_1\cap S_2\}$. Note that $B_u\sse\pi_X(\Sigma_x)\cup\pi_X(\Sigma_y)\sse B\pr$. Let $B\pr_v=\textrm{hull}(B\pr)\cap B_v$ and $\bar{B}_v=N_{\dl_0}(B\pr_v)\cap B_v$, where $v\in\pi_B(B\pr)$. Suppose $\bar{B}_1=\cup_{v\in\pi_B(B\pr)}^{}\bar{B}_v\cup\textrm{hull}(B\pr)$, and so $\pi_B(\bar{B}_1)$ is a subtree of $S_1\cap S_2$. Then by \cite[Lemma $1.93$]{ps-kap} and the fact that $B_v$'s are isometrically embedded in $B$, we note that $\bar{B}_1$ is $(1,6\dl_0)$-qi embedded in $B$. Finally, we will add a few more vertices and edges to $\bar{B}_1$ to complete the construction of $\bar{B}$. Suppose $v\in\pi_B(\bar{B}_1),w\notin\pi_B(\bar{B}_1)$ and $w\in S_1\cap S_2$ such that $d_T(v,w)=1$. Let $[\mfv,\mfw]$ be the edge joining $\mfv\in B_v$ and $\mfw\in B_w$. Further, we assume that $\mfv\in\bar{B}_v$ and the pair $(\L^1_{\mfv,v},\L^2_{\mfv,v})$ is $5\dl_0$-close in the fiber $F_{\mfv,v}$. Then we include only the vertex $\mfw$ and the edge $[\mfv,\mfw]$ to $\bar{B}_1$. We will use the same notation for these extra vertices, i.e., here $\bar{B}_w=\bar{B}\cap B_w=\{\mfw\}$. Notice that $\bar{B}$ is still $(1,6\dl_0)$-qi embedded in $B$. Let $\bar{S}=\pi_B(\bar{B})$.
	
Let $v\in S_1\cap S_2$ and $b\in B_v$. Let $P^i_{b,v}:F_{b,v}\ri\L^i_{b,v}$ be a nearest point projection map in $F_{b,v}$ (see Lemma \ref{proj-on-qc} $(1)$) and $\bar{P}^i_{b,v}:F_{b,v}\ri\L^i_{b,v}$ be modified projection map (see Definition \ref{modified-projection}) corresponding to $P^i_{b,v}$, $i=1,2$. We denote $\bar{\L}^1_{b,v}:=\bar{P}^1_{b,v}(\L^2_{b,v})\sse\L^1_{b,v}$ and $\bar{\L}^2_{b,v}:=\bar{P}^2_{b,v}(\L^1_{b,v})\sse\L^2_{b,v}$, and $\bar{\L}^i:=\bigcup\limits_{b\in\bar{B}_v,~v\in\bar{S}}\bar{\L}^i_{b,v},i=1,2$.

	\begin{note}\label{note}
$(i)$ Let $v\in \pi_B(B\pr)$ and $b\in B\pr\cap B_v$. Then $Hd^f(P^1_{b,v}(\L^2_{b,v}),P^2_{b,v}(\L^1_{b,v}))\le2\dl_0+5\dl_0=7\dl_0$ (by Remark \ref{3-re-in-one} $(2)$). Thus  $Hd^f(\bar{\L}^1_{b,v},\bar{\L}^2_{b,v})\le13\dl_0$ by Remark \ref{modi-geo}. However, we will prove below in Lemma \ref{Li's-bar-are-Hd-close} that $\fa~ v\in\bar{S}\textrm{ and }\fa~ b\in\bar{B}_v$, $Hd^f(\bar{\L}^1_{b,v},\bar{\L}^2_{b,v})$ is uniformly bounded.
		
$(ii)$ If $v\in S_1\cap S_2$, then by Lemma \ref{modified-projection-gives-qi-sections}, $\bar{\L}^{i}\cap X_v:=\bigcup_{b\in B_v}\bar{\L}^i_{b,v}$ forms a (uniformly) special $C_{\ref{qi-sec-inside-lad-len-lift}}(K_{\ref{modified-projection-gives-qi-sections}}(K))$-ladder (see Definition \ref{K-metric-graph-bundle}) bounded by $K_{\ref{modified-projection-gives-qi-sections}}(K)$-qi sections, $i=1,2$.
	\end{note}

	\begin{lemma}\label{Li's-bar-are-Hd-close}
With the above notations, there is a constant $R_{\ref{Li's-bar-are-Hd-close}}$ such that $\fa~ v\in\bar{S}\textrm{ and }\fa~ b\in\bar{B}_v$, $$Hd^f(\bar{\L}^1_{b,v},\bar{\L}^2_{b,v})\le R_{\ref{Li's-bar-are-Hd-close}}.$$
	\end{lemma}
	\begin{proof}
Let $v\in\bar{S},~c\in\bar{B}_v$. We divide the proof into following cases. First three cases deal with the vertices $v\in\pi_B(\bar{B}_1)=\pi_B(B\pr)$ and Case 4 with the extra vertices.
		
{\bf Case 1:} Suppose $c\in\textrm{hull}(B\pr)$ such that $c\in[a,b]_B$ for some $a,b\in B\pr$ and $a\in B_u$. Let $w=\pi_B(b),v=\pi_B(c)$. We prove that $d^f(\L^1_{c,v},\L^2_{c,v})$ is uniformly bounded and hence we are through by Remark \ref{3-re-in-one} $(2)$ and Remark \ref{modi-geo}. As the pair $(\L^1_{b,w},\L^2_{b,w})$ is $5\dl_0$-close, we take $x\in\L^1_{b,w}$ such that $d^f(x,\L^2_{b,w})\le5\dl_0$. Consider $k_{\ref{ladder-in-flow-sp-for-two-pts}}$-qi section, say $\gm_x$, through $x$ over $B_{[u,w]}$ in the ladder $\L^1$ (since $\L^i$'s are $(k_{\ref{ladder-in-flow-sp-for-two-pts}},c_{\ref{ladder-in-flow-sp-for-two-pts}},\varepsilon_{\ref{ladder-in-flow-sp-for-two-pts}})$-ladders). Now we apply Mitra's retraction (see Theorem \ref{mitra's-retraction-on-semicts-subsp}), $\rho_{\L^2}$ on $\gm_x$ and get a $\mathcal{R}_0(2k_{\ref{ladder-in-flow-sp-for-two-pts}}+1)$-qi section, say $\gm\pr_x$, over $B_{[u,w]}$ in $\L^2$, where $\mathcal{R}_0:=L_{\ref{mitra's-retraction-on-semicts-subsp}}(k_{\ref{ladder-in-flow-sp-for-two-pts}},c_{\ref{ladder-in-flow-sp-for-two-pts}},\varepsilon_{\ref{ladder-in-flow-sp-for-two-pts}})$. Then we have two $\mathcal{R}_0(2k_{\ref{ladder-in-flow-sp-for-two-pts}}+1)$-qi sections $\gm_x,\gm\pr_x$ over $B_{[u,w]}$ such that (as $a\in B_u$) $d^f(\gm_x(s),\gm\pr_x(s))\le5\dl_0,~s\in\{a,b\}$. Again the tree of metric bundles $(X,B,T)$ satisfies $\mathcal{R}_0(2k_{\ref{ladder-in-flow-sp-for-two-pts}}+1)$-flaring condition. Thus the restriction of $\gm_x,\gm\pr_x$ on geodesic $[a,b]$ and Lemma \ref{flaring-lemma} $(2)$ imply $d^f(\gm_x(c),\gm\pr_x(c))\le R_1$, where $R_1=R_{\ref{flaring-lemma}}(L_0(2k_{\ref{ladder-in-flow-sp-for-two-pts}}+1),5\dl_0)$. Hence $d^f(\L^1_{c,v},\L^2_{c,v})\le R_1$ and so by Remark \ref{3-re-in-one} $(2)$ and Remark \ref{modi-geo}, $Hd^f(\bar{\L}^1_{c,v},\bar{\L}^2_{c,v})\le8\dl_0+R_1$.
		
		{\bf Case 2:} Suppose $c\in\textrm{hull}(B\pr)$ such that $c\in[a,b]_B$ for some $a,b\in B\pr$ and none of $a,b$ belong to $B_u$. Let $v=\pi_B(c)$. More precisely, we assume that $v$ is the center of the geodesic triangle $\triangle(\pi_B(a),u,\pi_B(b))$, otherwise, it will land in Case $1$. Let $a\pr\in B_u$. Then by $\dl_0$-slimness of the geodesic triangle $\triangle(a,b,a\pr)$ and without loss of generality, we may assume that $d_B(c,c\pr)\le\dl_0$ for some $c\pr\in[a\pr,b]$. So by Case $1$, we have $d^f(\L^1_{c\pr,v},\L^2_{c\pr,v})\le R_1$. Let $x\in\L^1_{c\pr,v},y\in\L^2_{c\pr,v}$ such that $d^f(x,y)\le R_1$. Now we take $k_{\ref{ladder-in-flow-sp-for-two-pts}}$-qi lifts, say $\gm_1$, and $\gm_2$ of geodesic $[c,c\pr]$ in $\L^1$ and $\L^2$ through $x$ and $y$ respectively. Then $d_X(\gm_1(c),\gm_2(c))\le d_X(\gm_1(c),x)+d_X(x,y)+d_X(y,\gm_2(c))\le2k_{\ref{ladder-in-flow-sp-for-two-pts}}\dl_0+R_1+2k_{\ref{ladder-in-flow-sp-for-two-pts}}\dl_0=4k_{\ref{ladder-in-flow-sp-for-two-pts}}\dl_0+R_1$. Thus $d^f(\L^1_{c,v},\L^2_{c,v})\le \phi(4k_{\ref{ladder-in-flow-sp-for-two-pts}}\dl_0+R_1)$, where fibers are $\phi$-properly embedded in $X$. Therefore, by Remark \ref{3-re-in-one} $(2)$ and Remark \ref{modi-geo}, $Hd^f(\bar{\L}^1_{c,v},\bar{\L}^2_{c,v})\le8\dl_0+\phi(4k_{\ref{ladder-in-flow-sp-for-two-pts}}\dl_0+R_1)$.
		
{\bf Case 3:} Suppose $c\in\bar{B}_1$ and $\pi_B(c)=v$. Then by construction of $\bar{B}_1$, there exists $c_1\in B\pr_v\sse \textrm{hull}(B\pr)$ such that $d_B(c,c_1)=d_{B_v}(c,c_1)\le\dl_0$. Since $c_1\in$ hull$(B\pr)$, by Case 2, we know that $d^f(\L^1_{c_1,v},\L^2_{c_1,v})\le \phi(4k_{\ref{ladder-in-flow-sp-for-two-pts}}\dl_0+R_1)$. Now we use the same argument used in the last part of Case 2. Let $x\in\L^1_{c_1,v},y\in\L^2_{c_1,v}$ such that $d^f(x,y)\le \phi(4k_{\ref{ladder-in-flow-sp-for-two-pts}}\dl_0+R_1)$. By taking $k_{\ref{ladder-in-flow-sp-for-two-pts}}$-qi lifts through points $x$ and $y$ of the geodesic $[c,c_1]$ in the ladders $\L^1$ and $\L^2$ respectively, one can conclude that $d^f(\L^1_{c,v},\L^2_{c,v})\le \phi(4k_{\ref{ladder-in-flow-sp-for-two-pts}}\dl_0+\phi(4k_{\ref{ladder-in-flow-sp-for-two-pts}}\dl_0+R_1))$. Hence, by Remark \ref{3-re-in-one} $(2)$ and Remark \ref{modi-geo}, $Hd^f(\bar{\L}^1_{c,v},\bar{\L}^2_{c,v})\le8\dl_0+\phi(4k_{\ref{ladder-in-flow-sp-for-two-pts}}+\phi(4k_{\ref{ladder-in-flow-sp-for-two-pts}}\dl_0+R_1))$.
		
{\bf Case 4:} Finally, we will show the required Hausdorff bound for the points in $\bar{B}$ which do not appear in earlier cases. Suppose $c\in\bar{B}\setminus\bar{B}_1$. Let $\pi_B(c)=w$ and $[v,w]$ be the edge such that $d_T(u,v)<d_T(u,w)$. Let $[\mfv,\mfw]$ be the edge joining $\mfv\in B_v$ and $\mfw\in B_w$. Note that $c=\mfw$. Then by the construction of $\mfw\in\bar{B},~d^f(\L^1_{\mfv,v},\L^2_{\mfv,v})\le5\dl_0$ and so $Hd^f(\bar{\L}^1_{\mfv,v},\bar{\L}^2_{\mfv,v})\le13\dl_0$ (see Remark \ref{3-re-in-one} $(2)$). Suppose $x\in\bar{\L}^1_{\mfv,v}$ and $y\in\bar{\L}^2_{\mfv,v}$ such that $d^f(x,y)\le13\dl_0$. We consider $x\pr\in\L^1_{\mfw,w},~y\pr\in\L^2_{\mfw,w}$ such that $d_{\mfv\mfw}(P_{\mfw}(x),x\pr)\le\varepsilon_{\ref{ladder-in-flow-sp-for-two-pts}}$ and $d_{\mfv\mfw}(P_{\mfw}(y),y\pr)\le\varepsilon_{\ref{ladder-in-flow-sp-for-two-pts}}$ (since $\L^i$'s are $(k_{\ref{ladder-in-flow-sp-for-two-pts}},c_{\ref{ladder-in-flow-sp-for-two-pts}},\varepsilon_{\ref{ladder-in-flow-sp-for-two-pts}})$-ladders). Again $P_{\mfw}$ is $L\pr_1$-coarsely Lipschitz retraction in the metric $F_{\mfv\mfw}$ (see Lemma \ref{com-two-hyp-sps}$~(2)$) and so $d_{\mfv\mfw}(P_{\mfw}(x),P_{\mfw}(y))\le L\pr_1d_{\mfv\mfw}(x,y)+L\pr_1\le L\pr_1(13\dl_0+1)$. Thus by triangle inequality, $d_{\mfv\mfw}(x\pr,y\pr)\le2\varepsilon_{\ref{ladder-in-flow-sp-for-two-pts}}+L\pr_1(13\dl_0+1)$ $\Rightarrow d^f(x\pr,y\pr)\le \phi(2\varepsilon_{\ref{ladder-in-flow-sp-for-two-pts}}+L\pr_1(13\dl_0+1))$. Now $x\pr\in\L^1_{\mfw,w},~y\pr\in\L^2_{\mfw,w}$ and so by Remark \ref{3-re-in-one} $(2)$ and Remark \ref{modi-geo}, we have $d^f(\bar{\L}^1_{\mfw,w},\bar{\L}^2_{\mfw,w})\le8\dl_0+\phi(2\varepsilon_{\ref{ladder-in-flow-sp-for-two-pts}}+L\pr_1(13\dl_0+1))$.
		
		Therefore, we can take $R_{\ref{Li's-bar-are-Hd-close}}$ to be  the maximum of all four constants we get in four cases, i.e., $R_{\ref{Li's-bar-are-Hd-close}}=max\{\phi(2k_{\ref{ladder-in-flow-sp-for-two-pts}}+\phi(2k_{\ref{ladder-in-flow-sp-for-two-pts}}\dl_0+R_1)),8\dl_0+\phi(2\varepsilon_{\ref{existence-of-tripod-ladder}}+L\pr_1(13\dl_0+1))\}$.
	\end{proof}
	
	Next we show that $\bar{\L}^i$'s are more general ladders. More precisely, they are semicontinuous families and they need not satisfy the condition $(4)$ of Definition \ref{semicts-subgrphs} trivially. In other words, according to the notation of Definition \ref{semicts-subgrphs}, we possibly have $B\pr\subsetneq \pi_B^{-1}(T_{\mfY})$), and $\bar{\L}^i$ behave like ladders. 
	
	\begin{lemma}\label{Li-bar-behaves-like-ladder}
There are constants $k_{\ref{Li-bar-behaves-like-ladder}},~c_{\ref{Li-bar-behaves-like-ladder}}$ and $\varepsilon_{\ref{Li-bar-behaves-like-ladder}}$ such that the following hold.
		
Suppose $[v,w]$ is an edge in $T$ such that $d_T(u,v)<d_T(u,w)$ and $[\mfv,\mfw]$ is the edge joining $\mfv\in B_v$ and $\mfw\in B_w$. Let $\mfv,\mfw\in\bar{B}$. Then:
\begin{enumerate}
\item $\bar{\L}^i_{\mfw,w}\sse N^{\mfv\mfw}_{k_{\ref{Li-bar-behaves-like-ladder}}}(\bar{\L}^i_{\mfv,v})$.
			
\item $Hd_{\mfv\mfw}(P_{\mfw}(\bar{\L}^i_{\mfv,v}),\bar{\L}^i_{\mfw,w})\le\varepsilon_{\ref{Li-bar-behaves-like-ladder}}$.
			
\item Suppose $a\in\bar{B},~b\notin\bar{B}$ with $d_B(a,b)=1$ and $\pi_B(a)=s,~\pi_B(b)=t$. Then diam$^f(\bar{\L}^i_{a,s})\le c_{\ref{Li-bar-behaves-like-ladder}}$ if $s=t$ or the pair $(\bar{\L}^i_{a,s},F_{b,t})$ is $c_{\ref{Li-bar-behaves-like-ladder}}$-cobounded in the path metric of $F_{ab}:=\pi_X^{-1}([a,b])$ if $s\ne t$.
\end{enumerate}
In particular, $\bar{\L}^i$ is a $(k_{\ref{Li-bar-behaves-like-ladder}},c_{\ref{Li-bar-behaves-like-ladder}},\varepsilon_{\ref{Li-bar-behaves-like-ladder}})$-semicontinuous family, $i=1,2$.
\end{lemma}
	
\begin{proof}
$(1)$ We will only prove for $i=1$ as the proof for $i=2$ involves a simple change of indices. Since $\L^i$'s are $(k_{\ref{ladder-in-flow-sp-for-two-pts}},c_{\ref{ladder-in-flow-sp-for-two-pts}},\varepsilon_{\ref{ladder-in-flow-sp-for-two-pts}})$-ladders, for $x\in\bar{\L}^1_{\mfw,w}$, $\exists~x_1\in\L^1_{\mfv,v}$ such that $d_{\mfv\mfw}(x,x_1)\le k_{\ref{ladder-in-flow-sp-for-two-pts}}$. Let $y\in\bar{\L}^2_{\mfw,w},~y_1\in\L^2_{\mfv,v}$ such that $d^f(x,y)\le R_{\ref{Li's-bar-are-Hd-close}}$ (by Lemma \ref{Li's-bar-are-Hd-close}) and $d_{\mfv\mfw}(y,y_1)\le k_{\ref{ladder-in-flow-sp-for-two-pts}}$. Then $d_{\mfv\mfw}(x_1,y_1)\le d_{\mfv\mfw}(x_1,x)+d^f(x,y)+d_{\mfv\mfw}(y,y_1)\le2k_{\ref{ladder-in-flow-sp-for-two-pts}}+R_{\ref{Li's-bar-are-Hd-close}}$. Hence $d^f(x_1,y_1)\le R_2$, where $R_2=\phi(2k_{\ref{ladder-in-flow-sp-for-two-pts}}+R_{\ref{Li's-bar-are-Hd-close}})$. Note that $P^1_{\mfv,v}:F_{\mfv,v}\ri\L^1_{\mfv,v}$ is a nearest point projection map in the metric of $F_{\mfv,v}$. For simplicity, let $P=P^1_{\mfv,v}$. Then $d^f(P(x_1),P(y_1))=d^f(x_1,P(y_1))\le C_{\ref{proj-on-qc}}(\dl_0,\dl_0)(R_2+1)$ (see Lemma \ref{proj-on-qc} $(1)$). So, $d^f(x_1,\bar{\L}^1_{\mfv,v})\le C_{\ref{proj-on-qc}}(\dl_0,\dl_0)(R_2+1)$. Hence $d_{\mfv\mfw}(x,\bar{\L}^1_{\mfv,v})\le d_{\mfv\mfw}(x,x_1)+d^f(x_1,\bar{\L}^1_{\mfv,v})\le k_1$, where $k_1=k_{\ref{ladder-in-flow-sp-for-two-pts}}+C_{\ref{proj-on-qc}}(\dl_0,\dl_0)(R_2+1)$. Therefore, $\bar{\L}^i_{\mfw,w}\sse N^{\mfv\mfw}_{k_1}(\bar{\L}^i_{\mfv,v})$. Thus $k_1$ works for $(1)$ but for the second part of this lemma, we have defined $k_{\ref{Li-bar-behaves-like-ladder}}>k_1$ in the end.\smallskip
		
$(2)$ Here also we will only prove for $i=1$. Let $x\in\bar{\L}^1_{\mfv,v}$ and we take $y\in\bar{\L}^2_{\mfv,v}$ such that $d^f(x,y)\le R_{\ref{Li's-bar-are-Hd-close}}$ (by Lemma \ref{Li's-bar-are-Hd-close}). Suppose $x\pr\in\L^1_{\mfw,w},~y\pr\in\L^2_{\mfw,w}$ such that $d_{\mfv\mfw}(P_{\mfw}(x),x\pr)\le\varepsilon_{\ref{ladder-in-flow-sp-for-two-pts}}$ and $d_{\mfv\mfw}(P_{\mfw}(y),y\pr)\le\varepsilon_{\ref{ladder-in-flow-sp-for-two-pts}}$ (since $\L^i$'s are $(k_{\ref{ladder-in-flow-sp-for-two-pts}},c_{\ref{ladder-in-flow-sp-for-two-pts}},\varepsilon_{\ref{ladder-in-flow-sp-for-two-pts}})$-ladders). Again $P_{\mfw}$ is $L\pr_1$-coarsely Lipschitz in the metric of $F_{\mfv\mfw}$, so $d_{\mfv\mfw}(P_{\mfw}(x),P_{\mfw}(y))\le L\pr_1d_{\mfv\mfw}(x,y)+L\pr_1\le L\pr_1(R_{\ref{Li's-bar-are-Hd-close}}+1)$. Therefore, (by triangle inequality) $d_{\mfv\mfw}(x\pr,y\pr)\le2\varepsilon_{\ref{ladder-in-flow-sp-for-two-pts}}+L\pr_1(R_{\ref{Li's-bar-are-Hd-close}}+1)$ $\Rightarrow d^f(x\pr,y\pr)\le \phi(2\varepsilon_{\ref{ladder-in-flow-sp-for-two-pts}}+L\pr_1(R_{\ref{Li's-bar-are-Hd-close}}+1))=R_3$ (say). Note that $P^1_{\mfw,w}:F_{\mfw,w}\ri\L^1_{\mfw,w}$ is a nearest point projection map in the metric of $F_{\mfw,w}$. For simplicity, let $P=P^1_{\mfw,w}$. Since $d^f(x\pr,y\pr)\le R_3$, then by Lemma \ref{proj-on-qc} $(1)$, $d^f(x\pr,P(y\pr))=d^f(P(x\pr),P(y\pr))\le C_{\ref{proj-on-qc}}(\dl_0,\dl_0)(R_3+1)=R_4$ (say). So $d^f(x\pr,\bar{\L}^1_{\mfw,w})\le R_4$. Hence $d_{\mfv\mfw}(P_{\mfw}(x),\bar{\L}^1_{\mfw,w})\le d_{\mfv\mfw}(P_{\mfw}(x),x\pr)+d_{\mfv\mfw}(x\pr,\bar{\L}^1_{\mfw,w})\le\varepsilon_{\ref{ladder-in-flow-sp-for-two-pts}}+R_4=\varepsilon_1$ (say), i.e., $P_{\mfw}(\bar{\L}^1_{\mfv,v})\sse N^{\mfv\mfw}_{\varepsilon_1}(\bar{\L}^1_{\mfw,w})$.
		
For the other inclusion, let $x\in\bar{\L}^1_{\mfw,w}$. Then by $(1)$, $\exists~ x_1\in\bar{\L}^1_{\mfv,v}$ such that $d_{\mfv\mfw}(x,x_1)\le k_1$. Then $d_{\mfv\mfw}(P_{\mfw}(x_1),x)\le2k_1$. Hence $\bar{\L}^1_{\mfw,w}\sse N^{\mfv\mfw}_{2k_1}(\bar{\L}^1_{\mfw,w})$.
		
Therefore, we can take $\varepsilon_{\ref{Li-bar-behaves-like-ladder}}:=max\{\varepsilon_1,2k_1\}$.\smallskip

$(3)$ Suppose $s=t$. Then $a,b\in B_s$. Since $b\notin\bar{B}$, so $d^f(\L^1_{b,s},\L^2_{b,s})>5\dl_0$. Then by Remark \ref{3-re-in-one} $(1)$, diam$^f(\bar{\L}^i_{b,s})\le8\dl_0$ for $i=1,2$. Let $\bar{\L}^i_{a,s}=[\eta^i_{a,s},\zeta^i_{a,s}]$ for $i=1,2$. Since $d_B(a,b)=1$, by Note \ref{note} $(ii)$, $d_X(\eta^i_{a,s},\bar{\L}^i_{b,s})\le2K_{\ref{modified-projection-gives-qi-sections}}(K)$ and $d_X(\zeta^i_{a,s},\bar{\L}^i_{b,s})\le2K_{\ref{modified-projection-gives-qi-sections}}(K)$ for $i=1,2$. Then by triangle inequality, $d_X(\eta^i_{a,s},\zeta^i_{a,s})\le4K_{\ref{modified-projection-gives-qi-sections}}(K)+8\dl_0$ for $i=1,2$. Therefore, diam$^f(\bar{\L}^i_{a,s})\le \phi(4K_{\ref{modified-projection-gives-qi-sections}}(K)+8\dl_0)$ for $i=1,2$.
		
Now suppose $s\ne t$. Note that since $a\in\bar{B},~b\notin\bar{B}$, then $d_T(u,s)<d_T(u,t)$ and $[a,b]$ is the edge joining $a\in B_s$, $b\in B_t$. If $t\notin S_1\cup S_2$, then the pair $(\L^i_{a,s},F_{b,t})$ is $c_{\ref{ladder-in-flow-sp-for-two-pts}}$-cobounded in $F_{ab}$. So by Lemma \ref{small-imp-small}, there is a constant $C_1$ depending on $\dl\pr_0,\lm\pr_0$ and $c_{\ref{ladder-in-flow-sp-for-two-pts}}$ such that the pair $(\bar{\L}^i_{a,s},F_{b,t})$ is $C_1$-cobounded in $F_{ab}$.
		
Now let $t\in S_1\cap S_2$. Since $b\notin \bar{B}$, then by the construction of $\bar{B}$, $d^f( \L^1_{a,s},\L^2_{a,s})>5\dl_0$. Thus by Remark \ref{3-re-in-one} $(1)$, diam$^f(\bar{\L}^i_{a,s})\le8\dl_0$ for $i=1,2$. Therefore, by Lemma \ref{small-imp-small}, there is a constant $C_2$ depending on $\dl\pr_0,\lm\pr_0$ and $8\dl_0$ such that the pair $(\bar{\L}^i_{a,s},F_{b,t})$ is $C_2$-cobounded in $F_{ab}$ for $i=1,2$.
		
Finally, we assume that $t$ belong to only one of the $S_1$, $S_2$. Without of loss of generality, let $t\in S_1$ but $t\notin S_2$. Note that $s\in S_2$. Then the pair $(\L^2_{a,s},F_{b,t})$ is $c_{\ref{ladder-in-flow-sp-for-two-pts}}$-cobounded in $F_{ab}$. So by Lemma \ref{small-imp-small}, the pair $(\bar{\L}^2_{a,s},F_{b,t})$ is $C_1$-cobounded in $F_{ab}$ (where $C_1$ is defined above). Again, $Hd^f(\bar{\L}^1_{a,s},\bar{\L}^2_{a,s})\le R_{\ref{Li's-bar-are-Hd-close}}$ and a nearest point projection map $P:F_{ab}\ri F_{b,t}$ is $L\pr_1$-coarsely Lipschitz (see Lemma \ref{com-two-hyp-sps}$~(2)$) together imply that the diameter (in the metric of $F_{ab}$) of $\{P(\bar{\L}^1_{a,s})\}$ is bounded by $2L\pr_1(R_{\ref{Li's-bar-are-Hd-close}}+1)+C_1=D$ (say). Then by Lemma \ref{small-imp-small}, there is a constant $C_3$ depending on $\dl\pr_0$, $\lm\pr_0$ and $D$ such that the pair $(\bar{\L}^1_{a,s},F_{b,t})$ is $C_3$-cobounded in $F_{ab}$.
		
Therefore, we can take $c_{\ref{Li-bar-behaves-like-ladder}}=max\{\phi(4K_{\ref{modified-projection-gives-qi-sections}}(K)+8\dl_0),C_1,C_2,C_3\}$.
		
For second part, we note that $\bar{B}$ is $(1,6\dl_0)$-qi embedded in $B$. Now by Note \ref{note} $(ii)$, $\bar{\L}^i\cap X_v$ is special $C_{\ref{qi-sec-inside-lad-len-lift}}(K_{\ref{modified-projection-gives-qi-sections}}(K))$-ladder in $X_v,v\in S_1\cap S_2$. Therefore, by Lemma \ref{promoting-of-ladder}, $\bar{\L}^i$ is $(k_{\ref{Li-bar-behaves-like-ladder}},c_{\ref{Li-bar-behaves-like-ladder}},\varepsilon_{\ref{Li-bar-behaves-like-ladder}})$-ladder, where $k_{\ref{Li-bar-behaves-like-ladder}}=k_{\ref{promoting-of-ladder}}(k\pr)>k_1$ and $k\pr=max\{k_1,C_{\ref{qi-sec-inside-lad-len-lift}}(K_{\ref{modified-projection-gives-qi-sections}}(K))\}$, and $i=1,2$.
	\end{proof}

\begin{lemma}\label{Li-bar-is-qi-emb-in-Li}
Let $r_1=max\{2k_{\ref{Li-bar-behaves-like-ladder}},2\dl_0+1\},~r_2=max\{2C^{(9)}_{\ref{qi-sec-inside-lad-len-lift}}(k_{\ref{ladder-in-flow-sp-for-two-pts}}),R_{\ref{Li's-bar-are-Hd-close}}+r_1+\dl_0\}$ and $\bar{L}^i:=N_{r_1}(\bar{\L}^i),~L^i:=N_{r_2}(\L^i),~i=1,2$. Then there is a constant $L_{\ref{Li-bar-is-qi-emb-in-Li}}$ such that the inclusion $\bar{L}^i\ri L^i$ is a $L_{\ref{Li-bar-is-qi-emb-in-Li}}$-qi embedding. 
	\end{lemma}
	\begin{proof}
Note that $\bar{\L}^i$ is $(k_{\ref{Li-bar-behaves-like-ladder}},c_{\ref{Li-bar-behaves-like-ladder}},\varepsilon_{\ref{Li-bar-behaves-like-ladder}})$-semicontinuous family (see Lemma \ref{Li-bar-behaves-like-ladder}) and $\bar{L}^i\sse L^i$. Then by Corollary \ref{semicts-is-qi-emb}, the inclusion $\bar{L}^i\ri X$ is a $L_{\ref{semicts-is-qi-emb}}(k_{\ref{Li-bar-behaves-like-ladder}},r_1)$-qi embedding and so is the inclusion $\bar{L}^i\ri L^i,i=1,2$. Therefore, we can take $L_{\ref{Li-bar-is-qi-emb-in-Li}}=L_{\ref{semicts-is-qi-emb}}(k_{\ref{Li-bar-behaves-like-ladder}},r_1)$.
	\end{proof}
	
{\bf Step 2}: {\bf We fix $\bm{r_1}$ and $\bm{r_2}$ as in Lemma \ref{Li-bar-is-qi-emb-in-Li} for the rest of the proof}. Here we construct a common qi embedded subspace of $L^1$ and $L^2$ containing both $x$ and $y$ and which will show that $c^1(x,y)$ and $c^2(x,y)$ are uniformly Hausdorff-close.
	
{\bf \emph{Construction of common qi embedded subspace of $\bm{L^1}$ and $\bm{L^2}$}}: Let $v\in\bar{S}\textrm{ and } b\in\bar{B}_v$. Suppose $\mathcal{Z}_{b,v}:=\textrm{hull}(\bar{\L}^1_{b,v}\cup\bar{\L}^2_{b,v})\sse F_{b,v},~\mathcal{Z}:=\bigcup\mathcal{Z}_{b,v}$ and $Z:=N_{r_1}(\mathcal{Z})$, where the quasiconvex hull and its neighborhood is taken in the corresponding fiber. We also have $Hd^f(\bar{\L}^1_{b,v},\bar{\L}^2_{b,v})\le R_{\ref{Li's-bar-are-Hd-close}}$ (by Lemma \ref{Li's-bar-are-Hd-close}). Suppose $L^i$ and $\bar{L}^i$ are as in Lemma \ref{Li-bar-is-qi-emb-in-Li}. Then $\bar{L}^i\sse Z\sse N_{r_1+R_{\ref{Li's-bar-are-Hd-close}}+\dl_0}(\bar{\L}^i)=N_{R_{\ref{Li's-bar-are-Hd-close}}+\dl_0}(\bar{L}^i)$ in the metric $L^i$ and so $Hd_{L^i}(\bar{L}^i,Z)\le R_{\ref{Li's-bar-are-Hd-close}}+\dl_0$ for $i=1,2$. The subspace $Z$ is our required common subspace, and in the below lemma we will see that it is uniformly qi embedded in $L^i$ for $i=1,2$.
	
	\begin{lemma}\label{Y-is-qi-emb-in-Li}
		With the above notations, there is a (uniform) constant $L_{\ref{Y-is-qi-emb-in-Li}}$ such that the inclusion $Z\ri L^i$ is a $L_{\ref{Y-is-qi-emb-in-Li}}$-qi embedding in the path metric of $L^i,~i=1,2$.
	\end{lemma}
	
	\begin{proof}
By Lemma \ref{Li-bar-is-qi-emb-in-Li}, the inclusion $\bar{L}^i\ri L^i$ is a $L_{\ref{Li-bar-is-qi-emb-in-Li}}$-qi embedding. Since $Hd_{L^i}(\bar{L}^i,Z)\le R_{\ref{Li's-bar-are-Hd-close}}+\dl_0$, with the reference to \cite[Lemma $1.19$]{ps-kap}, our task is to show that $Z$ is uniformly properly embedded in $L^i$.
		
\emph{$Z$ is properly embedded in $L^i$}: Let $x,y\in Z$ and $r\in\R_{\ge0}$ such that $d_{L^i}(x,y)\le r$. Then $\exists~ x_1,y_1\in\bar{L}^i$ such that $d_Z(x,x_1)\le R_{\ref{Li's-bar-are-Hd-close}}+\dl_0,~d_Z(y,y_1)\le R_{\ref{Li's-bar-are-Hd-close}}+\dl_0$. So, $d_{L^i}(x_1,y_1)\le2(R_{\ref{Li's-bar-are-Hd-close}}+\dl_0)+r$ and by Lemma \ref{Li-bar-is-qi-emb-in-Li}, $d_Z(x_1,y_1)\le d_{\bar{L}^i}(x_1,y_1)\le (2(R_{\ref{Li's-bar-are-Hd-close}}+\dl_0)+r)L_{\ref{Li-bar-is-qi-emb-in-Li}}+L_{\ref{Li-bar-is-qi-emb-in-Li}}^2=D(r)$ (say). Thus,
\begin{eqnarray*}
d_Z(x,y)&\le&d_Z(x,x_1)+d_Z(x_1,y_1)+d_Z(y_1,y)\\
&\le&R_{\ref{Li's-bar-are-Hd-close}}+\dl_0+D(r)+R_{\ref{Li's-bar-are-Hd-close}}+\dl_0\\
&=&2(R_{\ref{Li's-bar-are-Hd-close}}+\dl_0)+D(r)=:g(r)\textrm{ (say) }
\end{eqnarray*}
So $Z$ is $g$-properly embedded in $L^i$ for the function $g:\R_{\ge0}\ri\R_{\ge0}$ sending $r\mapsto 2(R_{\ref{Li's-bar-are-Hd-close}}+\dl_0)+D(r)$. Therefore, for $i=1,2$, the inclusion $Z\ri L^i$ is a $L_{\ref{Y-is-qi-emb-in-Li}}$-qi embedding for some constant $L_{\ref{Y-is-qi-emb-in-Li}}$ depending on $L_{\ref{Li-bar-is-qi-emb-in-Li}},~R_{\ref{Li's-bar-are-Hd-close}}+\dl_0$ and $g$ (see \cite[Lemma $1.19$]{ps-kap}).
	\end{proof}
	
\underline{{\bf \emph{Conclusion of the proof of Proposition \ref{paths-are-Hd-close}}}} : Let $c^*(x,y),~c^1(x,y)$ and $c^2(x,y)$ be geodesic paths in $Z,~L^1$ and $L^2$ respectively joining $x,y$. Since $(X,B,T)$ satisfies $C^{(9)}_{\ref{qi-sec-inside-lad-len-lift}}(k_{\ref{ladder-in-flow-sp-for-two-pts}})$-flaring condition. So by Theorem \ref{general-ladder-is-hyp}, $L^i$ is $\dl_{\ref{general-ladder-is-hyp}}(k_{\ref{ladder-in-flow-sp-for-two-pts}},r_2)$-hyperbolic, $i=1,2$. Therefore, by the stability of quasigeodesic, $Hd_X(c^1(x,y),c^2(x,y))\le D_{\ref{paths-are-Hd-close}}:=2D_{\ref{ml}}(\dl_{\ref{general-ladder-is-hyp}}(k_{\ref{ladder-in-flow-sp-for-two-pts}},r_2),L_{\ref{Y-is-qi-emb-in-Li}},L_{\ref{Y-is-qi-emb-in-Li}})$.
\end{proof}

\subsection{Final proof: $Fl_{KL}(X_u)$ is hyperbolic}
Now, we are at a stage to show the hyperbolicity of a large enough neighborhood of $\mathcal Fl_K(X_u)$ with the induced path metric.

\begin{theorem}\label{flow-sp-is-hyp}
Suppose $\F l_K(X_u)$ is the flow space of $X_u$ with the parameters $k$ and $R$ as mentioned in the Introduction of this Section \ref{hyp-of-flow-sp}. Let $r_2$ be the constant as in Lemma \ref{Li-bar-is-qi-emb-in-Li}. Then for any $L\ge r_2+8\dl_0$ there is $\dl_{\ref{flow-sp-is-hyp}}=\dl_{\ref{flow-sp-is-hyp}}(K,L)$ such that $Fl_{KL}(X_u)$ $:=N_L(\F l_K(X_u))$ is $\dl_{\ref{flow-sp-is-hyp}}$-hyperbolic.
\end{theorem}

\begin{proof}
We show that $Fl_{KL}(X_u)$ satisfies all the conditions of Proposition \ref{combing}. Note that $\F l_K(X_u)$ is $L$-dense in $Fl_{KL}(X_u)$. For a point $x\in\F l_K(X_u)$, we fix once and for all a $K$-qi section $\Sigma_x$ through $x$ over $B_x:=B_{[u,\pi(x)]}$ lying inside $\F l_K(X_u)$. Now given a pair $(x^1,x^2)$ of distinct points in $\F l_K(X_u)$, by Corollary \ref{ladder-in-flow-sp-for-two-pts}, there is a $(k_{\ref{ladder-in-flow-sp-for-two-pts}},c_{\ref{ladder-in-flow-sp-for-two-pts}},\varepsilon_{\ref{ladder-in-flow-sp-for-two-pts}})$-ladder, say $\tilde{\L}^{12}$, containing $\Sigma_{x^1},~\Sigma_{x^2}$ such that $top(\tilde{\L}^{12})\sse\F l_K(X_u),~bot(\tilde{\L}^{12})\sse\F l_K(X_u)$ and $\tilde{\L}^{12}\sse N^f_{2\dl_0}(\F l_K(X_u))$.
	
We take $\tilde{c}(x^1,x^2)$ a geodesic path joining $x^1,x^2$ in $\tilde{L}^{12}:=N_{r_2}(\tilde{\L}^{12})$. For a given pair of points, once and for all, we fix this ladder and the geodesic path. These paths serve as family of paths for Proposition \ref{combing}.
	
Let us start with three points $x^i\in\F l_K(X_u),~i=1,2,3$ and geodesic paths $\tilde{c}(x^i,x^j)$ in the respective ladders $\tilde{L}^{ij}:=N_{r_2}(\tilde{\L}^{ij})$ for all distinct $i,j\in\{1,2,3\}$. Note that $\tilde{L}^{ij}\sse Fl_{KL}(X_u)$.
	
{\bf Condition (1)}: As $\tilde{L}^{ij}$ is $L_{\ref{semicts-is-qi-emb}}(k_{\ref{ladder-in-flow-sp-for-two-pts}},r_2)$-qi embedded in $X$ and so is in $Fl_{KL}(X_u)$. Then the path $\tilde{c}(x^i,x^j)$ is $h$-properly embedded in $Fl_{KL}(X_u)$, where $h:\R_{\ge0}\ri\R_{\ge0}$ sending $r\mapsto rL_{\ref{semicts-is-qi-emb}}(k_{\ref{ladder-in-flow-sp-for-two-pts}},r_2)+(L_{\ref{semicts-is-qi-emb}}(k_{\ref{ladder-in-flow-sp-for-two-pts}},r_2))^2$.
	
{\bf Condition (2)}: By Proposition \ref{existence-of-tripod-ladder}, given any three points $x^i,i=1,2,3$, there are $(k_{\ref{existence-of-tripod-ladder}},c_{\ref{existence-of-tripod-ladder}},\varepsilon_{\ref{existence-of-tripod-ladder}})$-ladders, say $\L^{ij}$, containing $\Sigma_i,~\Sigma_j$ such that $top(\L^{ij})\sse\F l_K(X_u),~bot(\L^{ij})\sse\F l_K(X_u)$ and $\L^{ij}\sse N^f_{2\dl_0}(\F l_K(X_u))$. Let $c(x^i,x^j)$ be a geodesic path joining $x^i,x^j$ in $L^{ij}:=N_{r_2}(\L^{ij})\sse Fl_{KL}(X_u)$. Note that $k_{\ref{existence-of-tripod-ladder}}=k_{\ref{ladder-in-flow-sp-for-two-pts}},~c_{\ref{existence-of-tripod-ladder}}=c_{\ref{ladder-in-flow-sp-for-two-pts}}$ and $\varepsilon_{\ref{existence-of-tripod-ladder}}=\varepsilon_{\ref{ladder-in-flow-sp-for-two-pts}}$ (see Lemma \ref{ladder-in-flow-sp-for-two-pts}). So by Proposition \ref{paths-are-Hd-close},  $Hd_X(\tilde{c}(x^i,x^j),c(x^i,x^j))\le D$, where $D=D_{\ref{paths-are-Hd-close}}(k_{\ref{ladder-in-flow-sp-for-two-pts}},c_{\ref{ladder-in-flow-sp-for-two-pts}},\varepsilon_{\ref{ladder-in-flow-sp-for-two-pts}})$. Thus by Proposition \ref{fsandgss-are-proper-emb}, their Hausdorff distance is bounded by $\eta_1(D)$ in the path metric of $Fl_{KL}(X_u)$, where $\eta_1:=\eta_{\ref{fsandgss-are-proper-emb}}(K,L)$.
	
Again by Lemma \ref{paths-are-slim}, there is a $\dl_{\ref{paths-are-slim}}(k_{\ref{existence-of-tripod-ladder}},r_2)$-hyperbolic space $Y(:=N_{r_2+2\dl_0}(\cup_{i=1}^{3}\L^i))$ such that the inclusion $L^{ij}\ri Y$ is $L_{\ref{paths-are-slim}}(k_{\ref{existence-of-tripod-ladder}},r_2)$-qi embedding. Also note that $Y\sse N_{(r_2+8\dl_0)}(\F l_K(X_u))\sse Fl_{KL}(X_u)$. Let $\dl_1=\dl_{\ref{paths-are-slim}}(k_{\ref{existence-of-tripod-ladder}},r_2)$ and $L_1=L_{\ref{paths-are-slim}}(k_{\ref{existence-of-tripod-ladder}},r_2)$. Then by Lemma \ref{ml}, the triangle formed by the paths $c(x^i,x^j)$, for all distinct $i,j\in\{1,2,3\}$, are $D_1$-slim in the path metric of $Y$ and so is in the path metric of $Fl_{KL}(X_u)$, where $D_1:=2D_{\ref{ml}}(\dl_1,L_1,L_1)+\dl_1$.
	
Hence the triangle formed by the paths $\tilde{c}(x^i,x^j)$, for all distinct $i,j\in\{1,2,3\}$, are $D_2$-slim in the path metric of $Fl_{KL}(X_u)$, where $D_2:=2\eta_1(D)+D_1$.
	
Therefore, by Proposition \ref{combing}, $Fl_{KL}(X_u)$ is $\dl_{\ref{flow-sp-is-hyp}}$-hyperbolic metric space with the induced path metric from $X$, where $\dl_{\ref{flow-sp-is-hyp}}=\dl_{\ref{combing}}(h,D_2,L)$, and $h$ and $D_2$ are defined above.
\end{proof}


\section{Hyperbolicity of {\boldmath$N_D(\F l_K(X_u)\cup\F l_K(X_v))$}}\label{union-of-two-flow-sp-is-hyp}

Let $u,v\in T$, and $\F l_K(X_u)$ and $\F l_K(X_v)$ are the flow spaces as described in the first paragraph of Section \ref{hyp-of-flow-sp}. We also assume that $\F l_K(X_u)\cap X_v\ne\emptyset$. In this section, we will prove that $N_D(\F l_K(X_u)\cup\F l_K(X_v))$ is uniformly hyperbolic where $D\ge0$ is large enough. For a vertex $w\in T$, we use the notation $\bm{Fl_{KD}(X_w):=N_D(\F l_K(X_w))}$ for $D\ge0$. Here, we require the tree of metric bundles $(X,B,T)$ to satisfy $(2(L\pr)^2(2K+1)+L\pr)$-flaring condition where $L\pr=L_{\ref{mitra's-retraction-on-fsandgss}}(K)$. 


So far we have the following properties $(\H0)-(\H6)$. We will use them in this section.

$(\mathcal{H}0)$ Suppose $w,w\pr\in T$ and $e$ is the edge on $[w,w\pr]$ incident on $w\pr$. Let $T\pr$ be the maximal subtree of $T$ containing $w\pr$ not containing $e$. Then $\F l_K(X_w)\cap X_{T\pr}\sse \F l_K(X_{w\pr})\cap X_{T\pr}$ by the construction of flow spaces. \smallskip

$(\mathcal{H}1)$ For all $w\in T$, we have $L\pr:=L_{\ref{mitra's-retraction-on-fsandgss}}(K)$-coarsely Lipschitz retraction $\rho_w:X\ri \F l_K(X_w)$ such that for all $x\in \F l_K(X_w)$, $\pi_X(\rho_w(x))=\pi_X(x)$ (see Proposition \ref{mitra's-retraction-on-fsandgss}).\smallskip 

$(\mathcal{H}2)$ Let $w\in T$. For all $x\in\F l_K(X_w)$ there is a $K$-qi section lying in $\F l_K(X_w)\cap\pi^{-1}([w,\pi(x)])$ through $x$ over $B_{[w,\pi(x)]}$.\smallskip

$(\mathcal{H}3)$ Let $w\in T$. For $L\ge2K$, there is a proper map $\eta(L):\R_{\ge0}\ri\R_{\ge0}$ such that the inclusion $Fl_{KL}(X_w)\ri X$ is a $\eta(L)$-proper embedding (see Proposition \ref{fsandgss-are-proper-emb}).\smallskip


$(\mathcal{H}4)$ Let $\mathcal{G}=\{\gm:\gm \textrm{ is a }(2KL\pr+L\pr)\textrm{-qi section over } B_{[u,v]}\}$. Note that $\G\ne\emptyset$ as $\F l_K(X_u)\cap X_v\ne\emptyset$. For $w\in [u,v]$, $b\in B_w$, let $H_{b,w}=\textrm{hull}\{\gm(b):\gm\in\G\}\sse F_{b,w}$ and $H=\bigcup_{w\in [u,v],~b\in B_w} H_{b,w}$. (Here quasiconvex hull is considered in the corresponding fiber.) Then by Lemma \ref{getting-metric-graph-bundles}, $H$ is $\kappa$-metric bundle over $B_{[u,v]}$ where $\kappa=K_{\ref{getting-metric-graph-bundles}}(2KL\pr+L\pr)\ge2KL\pr+L\pr$. Now we consider flow of $H$ with parameters $\kappa$ and $\kappa$ (see Definition \ref{gen-flow-sp-1}). According to our notation (see Notation \ref{iteration-fn-for-qi-sec}), we have the flow space $\F l_{\kappa^{(1)}}(H)$ satisfying the following. Let $w\in T$ and $T_{uvw}$ be the tripod with vertices $u,v,w$. Since $\kappa\ge2KL\pr+L\pr$, by  Lemma \ref{flow-sp-contains-qi-sec}, for any $(2KL\pr+L\pr)$-qi section $\gm$ over $B_{T_{uvw}}$, $\gm\sse\F l_{\kappa^{(1)}}(H)$.\smallskip

By Notation \ref{iteration-fn-for-qi-sec}, we also have flow spaces $\F l_{\kappa^{(2)}}(X_u)$ containing $\F l_{K}(X_u)\cup \F l_{\kappa^{(1)}}(H)$ and $\F l_{\kappa^{(2)}}(X_v)$ containing $\F l_{K}(X_v)\cup \F l_{\kappa^{(1)}}(H)$.\smallskip


$(\mathcal{H}5)$ Let $w\in T$ and let $R_0$ be large enough so that $Fl_{\kappa^{(2)}R_0}(X_w)$ is $\dl$-hyperbolic for some $\dl\ge0$ (see Theorem \ref{flow-sp-is-hyp}).\smallskip

$(\mathcal{H}6)$ Since flow spaces are semicontinuous family, for all $L\ge max\{2\kappa^{(1)},2\dl_0+1\}$ there is a constant $\bar{L}(L)$ such that the inclusions $Fl_{KL}(X_u)\ri X$, $Fl_{KL}(X_v)\ri X$ and $Fl_{\kappa^{(1)}L}(H)\ri X$ are $\bar{L}(L)$-qi embeddings (see Corollary \ref{semicts-is-qi-emb} and Remark \ref{3-in-one}).\smallskip

We know that uniform neighborhood of flow spaces are uniformly properly embedded in the total space (see $(\H3)$). In the following proposition, we prove the same for the union of two intersecting flow spaces.
\begin{prop}\label{union-of-two-flow-sp-is-proper-emb}
Let $k_{\ref{union-of-two-flow-sp-is-proper-emb}}=2(L\pr)^2(2K+1)+L\pr$. For all $L\ge M_{k_{\ref{union-of-two-flow-sp-is-proper-emb}}}(\ge2K)$ there exists a proper map $\eta_{\ref{union-of-two-flow-sp-is-proper-emb}}=\eta_{\ref{union-of-two-flow-sp-is-proper-emb}}(K,L):\R_{\ge0}\ri\R_{\ge0}$ such that the inclusion $N_L(\F l_K(X_u)\cup\F l_K(X_v))\ri X$ is a $\eta_{\ref{union-of-two-flow-sp-is-proper-emb}}$-proper embedding, where $M_{k_{\ref{union-of-two-flow-sp-is-proper-emb}}}$ is coming from $k_{\ref{union-of-two-flow-sp-is-proper-emb}}$-flaring condition.
\end{prop}

\begin{proof}
Our proof goes in the same methodology as in the book \cite{ps-kap} for trees of metric spaces (see \cite[Subsection $6.1.1$]{ps-kap}). We denote the induced path metric on $ Fl_{KL}(X_u)\cup Fl_{KL}(X_v)$ by $d\pr$. We divide the proof by reducing the tree $T$ to intervals and the general tree in the following three cases. We first prove in all the cases that for $r\in\R_{\ge0}$ and $x,y\in\F l_K(X_u)\cup\F l_K(X_v)$ with $d_X(x,y)=r$, $d\pr(x,y)$ is bounded in terms of $r$. In the end, we prove for the points in $ Fl_{KL}(X_u)\cup Fl_{KL}(X_v)$.
	
Let $x,y\in\F l_K(X_u)\cup\F l_K(X_v)$ such that $d_X(x,y)\le r$. Suppose $\pi(x)=u\pr,\pi(y)=v\pr,\pi_X(x)=\bar{x},~\pi_X(y)=\bar{y}$. As $L\ge 2K$, we may assume that $x\in\F l_K(X_u)\setminus\F l_K(X_v)$ and $y\in\F l_K(X_v)\setminus\F l_K(X_u)$, otherwise, by $(\H3)$, $d\pr(x,y)\le\eta(L)(r)$.
	
{\bf Case 1}: We first assume that $T=[u,v]$. Then $u\pr\ne v$, otherwise, $x\in X_v\sse\F l_K(X_v)$. Also $v\pr\ne u$, otherwise, $y\in X_u\sse\F l_K(X_u)$. Depending on positions of $u\pr,v\pr,x$ and $y$, we consider following subcases.\smallskip
	
\emph{Subcase} (1A): Suppose $u\pr=v\pr$ and $x=\rho_u(y)$ (see $(\H1)$). Consider a $K$-qi section, say $\gm_y$, over $B_{[v\pr,v]}$ through $y$ in $\F l_K(X_v)$ (see $(\H2)$). Since $\rho_u$ is $L\pr$-coarsely Lipschitz retraction (see $(\H1)$) and $\F l_K(X_u)\cap X_v\ne\emptyset$, so applying $\rho_u$ on $\gm_y$, we get a $(2KL\pr+L\pr)$-qi section, say $\bar{\gm}_y$, in $\F l_K(X_u)$ over $B_{[v\pr,v]}$ such that $x=\rho_u(y)=\bar{\gm}_y(\bar{y})$. Let $b$ be the nearest point projection of $\bar{x}$ on $B_v$. (Note that such $b$ exists as $v\pr=u\pr\ne v$.) Applying $\rho_v$ (see $(\H1)$) on $\bar{\gm}_y$, we get a $2.(2KL\pr+L\pr)L\pr+L\pr=k_{\ref{union-of-two-flow-sp-is-proper-emb}}$-qi section, say $\bar{\bar{\gm}}_y$, in $\F l_K(X_v)$ over $B_{[v\pr,v]}$. Note that $\bar{\gm}_y(b)=\bar{\bar{\gm}}_y(b)$ (as $\bar{\gm}_y(b)\in X_v$). Let $\bar{\bar{\gm}}_y(\bar{x})=y\pr$. Then $\rho_v(x)=y\pr$ and since $\rho_v(y)=y$, we have $d_X(y\pr,y)=d_X(\rho_v(x),\rho_v(y))\le L\pr d_X(x,y)+L\pr\le L\pr(r+1)$. Thus $d_X(x,y\pr)\le d_X(x,y)+d_X(y,y\pr)\le r(L\pr+1)+L\pr$ and so $d^f(x,y\pr)\le \phi(r(L\pr+1)+L\pr)$, where the fibers are $\phi$-properly embedded in total space. Here we have two $k_{\ref{union-of-two-flow-sp-is-proper-emb}}$-qi sections $\bar{\gm}_y$ and $\bar{\bar{\gm}}_y$ over $B_{[v\pr,v]}$ such that $\bar{\gm}_y(b)=\bar{\bar{\gm}}_y(b)$ and $d^f(\bar{\gm}_y(\bar{x}),\bar{\bar{\gm}}_y(\bar{x}))=d^f(x,y\pr)\le \phi(r(L\pr+1)+L\pr)$. Let $a$ be the point on $[\bar{x},b]_B$ closest to $\bar{x}$ such that $d^f(\bar{\gm}_y(a),\bar{\bar{\gm}}_y(a))\le M_{k_{\ref{union-of-two-flow-sp-is-proper-emb}}}$. Since $L\ge M_{k_{\ref{union-of-two-flow-sp-is-proper-emb}}}$, $d\pr(\bar{\gm}_y(a),\bar{\bar{\gm}}_y(a))\le M_{k_{\ref{union-of-two-flow-sp-is-proper-emb}}}$. Again, the tree of metric bundles $(X,B,T)$ satisfies flaring condition, so by Lemma \ref{flaring-lemma} $(1)$, $d_B(\bar{x},a)\le\tau_{\ref{flaring-lemma}}(k_{\ref{union-of-two-flow-sp-is-proper-emb}},C)$, where $C=max\{M_{k_{\ref{union-of-two-flow-sp-is-proper-emb}}},\phi(r(L\pr+1)+L\pr)\}$. Let $C_1=\tau_{\ref{flaring-lemma}}(k_{\ref{union-of-two-flow-sp-is-proper-emb}},C)$. Then by taking lifts of geodesic $[\bar{x},a]_B$ in $\bar{\gm}_y$ and $\bar{\bar{\gm}}_y$ (see Lemma \ref{qi-sec-inside-lad-len-lift} $(3)$), we get $d\pr(x,\bar{\gm}_y(a))\le 2k_{\ref{union-of-two-flow-sp-is-proper-emb}}C_1$ and $d\pr(y,\bar{\bar{\gm}}_y(a))\le2k_{\ref{union-of-two-flow-sp-is-proper-emb}}C_1$. Hence $d\pr(x,y)\le d\pr(x,\bar{\gm}_y(a))+d\pr(\bar{\gm}_y(a),\bar{\bar{\gm}}_y(a))+d\pr(\bar{\bar{\gm}}_y(a),y)\le 4k_{\ref{union-of-two-flow-sp-is-proper-emb}}C_1+M_{k_{\ref{union-of-two-flow-sp-is-proper-emb}}}=:\eta_1(r)$ for some $\eta_1:\R_{\ge0}\ri\R_{\ge0}$ sending $r\mapsto 4k_{\ref{union-of-two-flow-sp-is-proper-emb}}C_1+M_{k_{\ref{union-of-two-flow-sp-is-proper-emb}}}$.\smallskip

\emph{Subcase} (1B): Let $y\pr=\rho_u(y)$. In this subcase, $y\pr$ need not be equal to $x$. Since $\rho_u(x)=x$, $d_X(x,y\pr)=d_X(\rho_u(x),\rho_u(y))\le L\pr r +L\pr=L\pr(r+1)$. So $ d_X(y\pr,y)\le d_X(y\pr,x)+d_X(x,y)\le L\pr(r+1)+r$. Since $L\ge2K$, $ Fl_{KL}(X_u)$ is $\eta(L)$-properly embedded in $X$ (see $(\H3)$). So $d\pr(x,y\pr)\le\eta(L)(L\pr(r+1))$.
Note that $\pi_X(y\pr)=\pi_X(y)$ (as $\F l_K(X_u\cap X_v\ne\emptyset$) and $y\pr\in\F l_K(X_u)$. Then by \emph{Subcase} $(1A)$, $d\pr(y,y\pr)\le\eta_1(L\pr(r+1)+r)$. Hence $d\pr(x,y)\le d\pr(x,y\pr)+d\pr(y\pr,y)\le\eta(L)(L\pr(r+1))+\eta_1(L\pr(r+1)+r)$.
	
We assume $\bm{\zeta_1(r)}:=max\{\eta_1(r),\eta(L)(L\pr(r+1))+\eta_1(L\pr(r+1)+r)\}$, maximum distortion in this Case $1$ for some $\zeta_1:\R_{\ge0}\ri\R_{\ge0}$.
	
{\bf Case 2}: We now assume that $T=[w,w\pr]\supsetneq[u,v]$ such that $u$ is closest to $w$. Then $v\pr\notin[w,u]$, otherwise, by $(\H0)$, $y\in\F l_K(X_u)$. Also, $u\pr\notin[v,w\pr]$, otherwise, by $(\H0)$, $x\in\F l_K(X_v)$. We consider the following subcases depending on the position of $u\pr$ and $v\pr$.
	
{\em Let $S=[u,v]$ and $X_S:=\pi^{-1}(S)$. Let $d\prr$ denote the induced path metric on $L$-neighborhood (in $X_S$-metric) of $(\F l_K(X_u)\cup\F l_K(X_v))\cap X_S$ inside $X_S$. We note that the restriction $\pi_X|_{X_S}:X_S\ri B_S$ also satisfies the flaring condition $($Remark \ref{bigsmall-subsp} $(b))$.}\smallskip
	
\emph{Subcase} (2A): Suppose $u\pr\in[w,u]$ and $v\pr\in[v,w\pr]$. Let $b\pr$ be the nearest point projection of $\bar{x}$ on $B_u$ and $b\prr$ be that of $\bar{y}$ on $B_v$. Then $d_B(\bar{x},\bar{y})\le d_X(x,y)\le r$ implies $d_B(\bar{x},b\pr)\le r$, $d_B(b\pr,b\prr)\le r$ and $d_B(b\prr,\bar{y})\le r$. Let $\gm_x$ be $K$-qi lift through $x$ of geodesic $[\bar{x},b\pr]_B$ in $\F l_K(X_u)$ and $\gm_y$ be that through $y$ of $[b\prr,\bar{y}]_B$ in $\F l_K(X_v)$ (see $(\H2)$). Let $\gm_x(b\pr)=x\pr$ and $\gm_y(b\prr)=y\pr$. Then $d\pr(x,x\pr)\le2Kr$ and $d\pr(y,y\pr)\le2Kr$ (see Lemma \ref{qi-sec-inside-lad-len-lift} $(3)$). So by triangle inequality, $d_X(x\pr,y\pr)\le4Kr+r$, and that implies  $d_{X_S}(x\pr,y\pr)\le\eta_{\ref{subsp-is-proper-emb}}(4Kr+r)$ (see Proposition $\ref{subsp-is-proper-emb}$). Then by Case $1$, $d\prr(x\pr,y\pr)\le\zeta_1(\eta_{\ref{subsp-is-proper-emb}}(4Kr+r))$. Therefore, $d\pr(x,y)\le d\pr(x,x\pr)+d\prr(x\pr,y\pr)+d\pr(y\pr,y)\le 4Kr+\zeta_1(\eta_{\ref{subsp-is-proper-emb}}(4Kr+r))$.\smallskip
	
\emph{Subcase} (2B): Suppose $u\pr\in[u,v]\setminus\{u,v\}$ and $v\pr\in[v,w\pr]$. Let $b\prr$ be the nearest point projection of $\bar{y}$ on $B_v$. Then $d_B(\bar{x},\bar{y})\le d_X(x,y)\le r$ implies $d_B(\bar{y},b\prr)\le r$ (as $u\pr\in[u,v]\setminus\{u,v\})$. Let $\gm_y$ be a $K$-qi lift of the geodesic $[b\prr,\bar{y}]_B$ in $\F l_K(X_v)$ through $y$ (see $(\H2)$) and let $\gm_y(b\prr)=y\pr$. Then $d\pr(y,y\pr)\le 2Kr$ (see Lemma \ref{qi-sec-inside-lad-len-lift} $(3)$). Again, $d_X(y\pr,x)\le d_X(y\pr,y)+d_X(y,x)\le 2Kr+r$. So by Proposition \ref{subsp-is-proper-emb}, $d_{X_S}(y\pr,x)\le\eta_{\ref{subsp-is-proper-emb}}(2Kr+r)$. Hence by  \emph{Subcase} $(1A)$, $d\prr(x,y\pr)\le\zeta_1(\eta_{\ref{subsp-is-proper-emb}}(2Kr+r))$. So $d\pr(x,y)\le d\pr(x,y\pr)+d\pr(y\pr,y)\le d\prr(x,y\pr)+d\pr(y\pr,y)\le \zeta_1(\eta_{\ref{subsp-is-proper-emb}}(2Kr+r))+2Kr$.\smallskip
	
\emph{Subcase} (2C): Suppose $u\pr\in[w,u]$ and $v\pr\in[u,v]\setminus\{u,v\}$. Then this is a symmetry of \emph{Subcase} $(2B)$. So $d\pr(x,y)\le\zeta_1(\eta_{\ref{subsp-is-proper-emb}}(2Kr+r))+2Kr$.\smallskip
	
\emph{Subcase} (2D): Finally, we assume that $u\pr,v\pr\in[u,v]\setminus\{u,v\}$. Then by Proposition \ref{subsp-is-proper-emb}, $d_{X_S}(x,y)\le\eta_{\ref{subsp-is-proper-emb}}(r)$. So by Case $1$, $d\prr(x,y)\le\zeta_1(\eta_{\ref{subsp-is-proper-emb}}(r))$. Hence $d\pr(x,y)\le d\prr(x,y)\le\zeta_1(\eta_{\ref{subsp-is-proper-emb}}(r))$. 
	
We assume $\bm{\zeta_2(r)}:=max\{4Kr+\zeta_1(\eta_{\ref{subsp-is-proper-emb}}(4Kr+r)),\zeta_1(\eta_{\ref{subsp-is-proper-emb}}(2Kr+r))+2Kr, \zeta_1(\eta_{\ref{subsp-is-proper-emb}}(r))\}$, maximum distortion in this Case $2$ for some $\zeta_2:\R_{\ge0}\ri\R_{\ge0}$.

{\bf Case 3}: Here we consider the general case, i.e., $T$ is any tree. Depending on the position of $u,v,u\pr$ and $v\pr$, we consider the following subcases.
	
{\em Let $S$ be an interval in $T$ containing $u,v$ and $X_S:=\pi^{-1}(S)$. We denote the induced path metric on $L$-neighborhood (in $X_S$-metric) of $(\F l_K(X_u)\cup\F l_K(X_v))\cap X_S$ inside $X_S$ by $d\prr$. We will use this notation below. We note that the restriction $\pi_X|_{X_S}:X_S\ri B_S$ also satisfies the flaring condition $($see Remark \ref{bigsmall-subsp} $(b))$.}\smallskip
	
\emph{Subcase} (3A): Suppose $u,v,u\pr$ and $v\pr$ lie on an interval in $T$. We fix one such interval $S$ in $T$ containing $u,v,u\pr,v\pr$. So $d_{X_S}(x,y)\le\eta_{\ref{subsp-is-proper-emb}}(r)$ (by Proposition \ref{subsp-is-proper-emb}). Now we restrict the flow spaces to $X_S$. Then by Case $2$, $d\prr(x,y)\le\zeta_2(\eta_{\ref{subsp-is-proper-emb}}(r))$. So $d\pr(x,y)\le d\prr(x,y)\le\zeta_2(\eta_{\ref{subsp-is-proper-emb}}(r))$.
	
Now we consider the subcases when all of $u,v,u\pr$ and $v\pr$ do not lie on an interval.\smallskip
	
\emph{Subcase} (3B): Suppose there is no interval containing $u,v$ that contains both $u\pr,v\pr$; but there is an interval containing $u,v$ which contains one of $u\pr,v\pr$. We give a proof when an interval containing $u,v$ also contains $u\pr$, and leave the other case because its involves only a change in indices. We fix one such interval $S$ in $T$ containing $u,v$ and $u\pr$. Let $t$ be the nearest point projection of $v\pr$ on $S$ in $d_T$-metric and  $b\prr$ be that of $\bar{y}$ on $B_t$ in $d_B$-metric. Since $d_X(x,y)\le r$, then $d_B(\bar{y},b\prr)\le d_B(\bar{y},\bar{x})\le r$. Let $\gm_y$ be a $K$-qi lift of the geodesic $[b\prr,\bar{y}]_B$ through $y$ in $\F l_K(X_v)$ (see $(\H2)$). Suppose $\gm_y(b\prr)=y\pr$. Then $d\pr(y,y\pr)\le2Kr$ (see Lemma \ref{qi-sec-inside-lad-len-lift} $(3)$). Again $d_X(y\pr,x)\le d_X(y\pr,y)+d_X(y,x)\le2Kr+r$, and so by Proposition \ref{subsp-is-proper-emb}, $d_{X_S}(y\pr,x)\le\eta_{\ref{subsp-is-proper-emb}}(2Kr+r)$. Now we restrict the flow spaces to $X_S$. Hence by Case $2$, $d\prr(y\pr,x)\le\zeta_2(\eta_{\ref{subsp-is-proper-emb}}(2Kr+r))$. Therefore, $d\pr(x,y)\le d\pr(x,y\pr)+d\pr(y\pr,y)\le d\prr(x,y\pr)+d\pr(y\pr,y)\le \zeta_2(\eta_{\ref{subsp-is-proper-emb}}(2Kr+r))+2Kr$.\smallskip

	
\emph{Subcase} (3C): Suppose there is no interval containing $u,v$ that contains either of $u\pr,v\pr$. We fix $S=[u,v]$. Let $t_1$ and $t_2$ be the nearest point projections of $u\pr$ and $v\pr$ on $S$ respectively. Then $t_1,t_2\in[u,v]\setminus\{u,v\}$, otherwise, it will land in Subcase $(3B)$. We divide the proof into two cases depending on whether $t_1,t_2$ are same or not.\smallskip
	
{\bf (a)} Suppose $t_1\ne t_2$. Let $b\pr$  be the nearest point projection of $\bar{x}$ on $B_{t_1}$ and $b\prr$ be that of $\bar{y}$ on $B_{t_2}$. Since $d_B(\bar{x},\bar{y})\le d_X(x,y)\le r$, then $d_B(\bar{x},b\pr)\le r$ and $d_B(\bar{y},b\prr)\le r$. Let $\gm_x$ be a  $K$-qi lift of the geodesic $[\bar{x},b\pr]_B$ through $x$ in $\F l_K(X_u)$ and $\gm_y$ be that of $[\bar{y},b\prr]_B$ through $y$ in $\F l_K(X_v)$ (see $(\H2)$). Let $x\pr=\gm_x(b\pr)$ and $y\pr=\gm_y(b\prr)$. Then $d\pr(x,x\pr)\le2Kr$ and $d\pr(y,y\pr)\le2Kr$. So $d_X(x\pr,y\pr)\le d_X(x\pr,x)+d_X(x,y)+d_X(y,y\pr)\le4Kr+r$. Then by Proposition \ref{subsp-is-proper-emb}, $d_{X_S}(x\pr,y\pr)\le\eta_{\ref{subsp-is-proper-emb}}(4Kr+r)$. Note that $x\pr\in\F l_K(X_u)$ and $y\pr\in\F l_K(X_v)$. Now we restrict the flow spaces to $X_S=X_{[u,v]}$. Hence by Case $1$, $d\prr(x\pr,y\pr)\le\zeta_1(\eta_{\ref{subsp-is-proper-emb}}(4Kr+r))$, and so $d\pr(x\pr,y\pr)\le\zeta_1(\eta_{\ref{subsp-is-proper-emb}}(4Kr+r))$. Therefore, $d\pr(x,y)\le d\pr(x,x\pr)+d\pr(x\pr,y\pr)+d\pr(y\pr,y)\le 4Kr+\zeta_1(\eta_{\ref{subsp-is-proper-emb}}(4Kr+r))$.\smallskip

	
{\bf (b)} Suppose $t_1=t_2=t$ (say). Let $s$ be the center of $\bigtriangleup(u\pr,t,v\pr)$ and $c\in[\bar{x},\bar{y}]\cap B_s$. Since $d_B(\bar{x},\bar{y})\le d_X(x,y)\le r$, so $d_B(\bar{x},c)\le r$ and $d_B(c,\bar{y})\le r$. Let $\gm_1$ be a $K$-qi lift through $x$ of the geodesic $[\bar{x},c]_B$ in $\F l_K(X_u)$ and $\gm_2$ be that through $y$ of the geodesic $[\bar{y},c]_B$ in $\F l_K(X_v)$ (see $(\H2)$). Let $x_1=\gm_1(c)$ and $y_1=\gm_2(c)$. Then $d\pr(x,x_1)\le2Kr$ and $d\pr(y,y_1)\le2Kr$.
	
Now we only need to show that $d\pr(x_1,y_1)$ is bounded in terms of $r$. We will apply the same trick as in Case $1$. Let $\gm_y$ be a $K$-qi section over $B_{[s,v]}$ through $y_1$ in $\F l_K(X_v)$ (see $(\H2)$). Now we apply $\rho_u$ (see $(\H1)$) on $\gm_y$ and get a $L\pr(2K+1)$-qi section, say $\bar{\gm}_y$, over $B_{[s,v]}$ in $\F l_K(X_u)$ (see Figure \ref{proj}). (This is possible as $[s,v]\sse \pi(\F l_K(X_u))$.) By triangle inequality, $d_X(x_1,y_1)\le4Kr+r$. Let $\rho_u(y_1)=x_2$. Since $\rho_u(x_1)=x_1$, then $d_X(x_1,x_2)=d_X(\rho_u(x_1),\rho_u(y_1))\le L\pr d_X(x_1,y_1)+L\pr\le L\pr(4Kr+r+1)$. Then $d_X(y_1,x_2)\le d_X(y_1,x_1)+d_X(x_1,x_2)\le4Kr+r+L\pr(4Kr+r+1)=(4Kr+r)(L\pr+1)+L\pr$. Again we apply $\rho_v$ (see $(\H1)$) on $\bar{\gm}_y$ and get a $k_{\ref{union-of-two-flow-sp-is-proper-emb}}$-qi section, say $\bar{\bar{\gm}}_y$, over $B_{[s,v]}$ in $\F l_K(X_v)$ (see Figure \ref{proj}). Let $\rho_v(x_2)=y_2$. Since $\rho_v(y_1)=y_1$, $d_X(y_1,y_2)\le d_X(\rho_v(y_1),\rho_v(x_2))\le L\pr d_X(y_1,x_2)+L\pr\le L_1$, where $L_1=L\pr((4Kr+r)(L\pr+1)+L\pr)+L\pr$.  Then $d_X(x_2,y_2)\le d_X(x_2,y_1)+d_X(y_1,y_2)\le L_2$, where $L_2=(4Kr+r(L\pr+1)+L\pr+L_1$. So $d^f(x_2,y_2)\le\phi(L_2)$.
	
	\begin{figure}[h]
		\includegraphics[width=10cm]{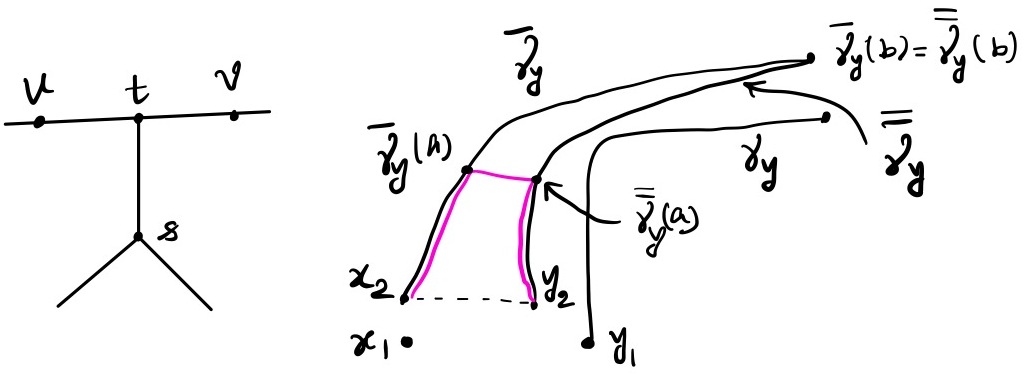}
		\centering
		\caption{}
		\label{proj}
	\end{figure}
	
Note that $d_X(x_1,x_2)\le L\pr(4Kr+r+1)$ and $d_X(y_1,y_2)\le L_1$. Again $x_1,x_2\in\F l_K(X_u)$ and $y_1,y_2\in\F l_K(X_v)$, so by $(\H3)$, $d\pr(x_1,x_2)\le\eta(L)(L\pr(4Kr+r+1))$ and $d\pr(y_1,y_2)\le\eta(L)(L_1)$. So to get a bound on $d\pr(x_1,y_1)$, we need to get a bound on $d\pr(x_2,y_2)$; which we will show now.
	
	Let $b$ be the nearest point projection of $c$ on $B_v$. Then $\bar{\gm}_y(b)\in X_v$ and so $\bar{\gm}_y(b)=\bar{\bar{\gm}}_y(b)$. Note that $\bar{\gm}_y$ and $\bar{\bar{\gm}}_y$ are two $k_{\ref{union-of-two-flow-sp-is-proper-emb}}$-qi sections over $B_{[s,v]}$ such that $d^f(\bar{\gm}_y(c),\bar{\bar{\gm}}_y(c))=d^f(x_2,y_2)\le \phi(L_2)$ and $\bar{\gm}_y(b)=\bar{\bar{\gm}}_y(b)$. Now we restrict the qi sections $\bar{\gm}_y$ and $\bar{\bar{\gm}}_y$ on the geodesic $[c,b]_B\sse B$. Let $a$ be the point on $[c,b]$ closest to $c$ such that $d^f(\bar{\gm}_y(a),\bar{\bar{\gm}}_y(a))\le M_{k_{\ref{union-of-two-flow-sp-is-proper-emb}}}$. Since the tree of metric bundles $(X,B,T)$ satisfies flaring condition, by Lemma \ref{flaring-lemma} $(1)$, $d_B(c,a)\le\tau_{\ref{flaring-lemma}}(k_{\ref{union-of-two-flow-sp-is-proper-emb}},D)$, where $D=max\{M_{k_{\ref{union-of-two-flow-sp-is-proper-emb}}},\phi(L_2)\}$. Let $D_1=\tau_{\ref{flaring-lemma}}(k_{\ref{union-of-two-flow-sp-is-proper-emb}},D)$. Then by taking $k_{\ref{union-of-two-flow-sp-is-proper-emb}}$-qi lifts of the geodesic $[c,a]_B$ in $\bar{\gm}_y$ and $\bar{\bar{\gm}}_y$, we get $d\pr(x_2,\bar{\gm}_y(a))\le2k_{\ref{union-of-two-flow-sp-is-proper-emb}}D_1$ and $d\pr(\bar{\bar{\gm}}_y(a),y_2)\le2k_{\ref{union-of-two-flow-sp-is-proper-emb}}D_1$ (note that $\bar{\gm}_y(c)=x_2,~\bar{\bar{\gm}}_y(c)=y_2$). Again $L\ge M_{k_{\ref{union-of-two-flow-sp-is-proper-emb}}}$ implies $d\pr(\bar{\gm}_y(a),\bar{\bar{\gm}}_y(a))\le M_{k_{\ref{union-of-two-flow-sp-is-proper-emb}}}$. Hence $d\pr(x_2,y_2)\le4k_{\ref{union-of-two-flow-sp-is-proper-emb}}D_1+M_{k_{\ref{union-of-two-flow-sp-is-proper-emb}}}$ (by triangle inequality).
	
	Again by triangle inequality, $d\pr(x_1,y_1)\le d\pr(x_1,x_2)+d\pr(x_2,y_2)+d\pr(y_2,y_1)\le L_3$, where $L_3=\eta(L)(L\pr(4Kr+r+1))+4k_{\ref{union-of-two-flow-sp-is-proper-emb}}D_1+M_{k_{\ref{union-of-two-flow-sp-is-proper-emb}}}+\eta(L)(L_1)$. So $d\pr(x,y)\le d\pr(x,x_1)+d\pr(x_1,y_1)+d\pr(y_1,y)\le 4Kr+L_3$.
	
	Let $\bm{\zeta_3(r)}:=max\{\zeta_2(\eta_{\ref{subsp-is-proper-emb}}(r)),~\zeta_2(\eta_{\ref{subsp-is-proper-emb}}(2Kr+r))+2Kr,~ \zeta_1(\eta_{\ref{subsp-is-proper-emb}}(4Kr+r))+4Kr,~4Kr+L_3\}$, maximum distortion in this Case $3$ for some $\zeta_3:\R_{\ge0}\ri\R_{\ge0}$.
	
	Let $\zeta:\R_{\ge0}\ri\R_{\ge0}$ such that $\zeta(r):=max\{\eta(L)(r),\zeta_1(r),\zeta_2(r),\zeta_3(r)\}$ for $r\in\R_{\ge0}$. We have proved that if $x,y\in\F l_K(X_u)\cup\F l_K(X_v)$ are at most $r$-distance apart in the metric of $X$, then they are at most $\zeta(r)$-distance apart in the induced path metric on $ Fl_{KL}(X_u)\cup Fl_{KL}(X_v)$. Now we take points $x,y\in Fl_{KL}(X_u)\cup Fl_{KL}(X_v)$ such that $d_X(x,y)\le r$ for $r\in\R_{\ge0}$. Let $x_1,y_1\in\F l_K(X_u)\cup\F l_K(X_v)$ such that $d\pr(x,x_1)\le L$ and $d\pr(y,y_1)\le L$. Then $d_X(x,y)\le d_X(x,x_1)+d_X(x_1,y_1)+d_X(y_1,y)\le r+2L$. So $d\pr(x_1,y_1)\le\zeta(r+2L)$. Hence $d\pr(x,y)\le d\pr(x,x_1)+d\pr(x_1,y_1)+d\pr(y_1,y)\le 2L+\zeta(r+2L)$.
	
	Therefore, we can take $\eta_{\ref{union-of-two-flow-sp-is-proper-emb}}:\R_{\ge0}\ri\R_{\ge0}$ sending $r\mapsto\zeta(r+2L)+2L$.
\end{proof}

To show the hyperbolicity of $N_D(\F l_K(X_u)\cup\F l_K(X_v))$, we construct a bigger uniformly hyperbolic space $Y=Y_1\cup Y_2$ containing both $\F l_K(X_u)$ and $\F l_K(X_v)$ as uniformly quasiconvex subsets. As $\F l_K(X_u)\cap X_v\ne\emptyset$, so a uniform neighborhood of $\F l_K(X_u)\cup\F l_K(X_v)$ in $Y$ is uniformly hyperbolic. Let $N\pr_D(y)$ denote $D$-neighborhood of $y\in Y$ in the path metric of $Y$. Next, we show that $N\pr_D(\F l_K(X_u)\cup\F l_K(X_v))\sse Y$ and $N_D(\F l_K(X_u)\cup\F l_K(X_v))\sse X$ are (uniformly) quasiisometric, and that completes the proof.

\underline{{\bf \emph{Construction of the space $Y$}}}: By $(\H5)$, $Fl_{\kappa^{(2)}R_0}(X_u)$ is a $\dl$-hyperbolic metric space. Now by $(\H6)$, $Fl_{KR_0}(X_u)$ is $\bar{L}(R_0)$-qi embedded in $X$ and so is in $Fl_{\kappa^{(2)}R_0}(X_u)$. Then by Lemma \ref{quasi-goes-to-quasi} $(1)$, there is $K_1$ depending on $\dl$ and $\bar{L}(R_0)$ such that
$Fl_{KR_0}(X_u)$ is $K_1$-quasiconvex in $Fl_{\kappa^{(2)}R_0}(X_u)$. So $\F l_K(X_u)$ is $(K_1+R_0)$-quasiconvex in $Fl_{\kappa^{(2)}R_0}(X_u)$. Also by $(\H6)$, $Fl_{\kappa^{(1)}R_0}(H)$ is $\bar{L}(R_0)$-qi embedded in $X$ and so is in $Fl_{\kappa^{(2)}R_0}(X_u)$. Then by Lemma \ref{quasi-goes-to-quasi} $(1)$, there is $K_2$ depending on $\dl$ and $\bar{L}(R_0)$ such that $Fl_{\kappa^{(1)}R_0}(H)$ is $K_2$-quasiconvex in $Fl_{\kappa^{(2)}R_0}(X_u)$. So $\F l_{\kappa^{(1)}}(H)$ is $(K_2+R_0)$-quasiconvex in $Fl_{\kappa^{(2)}R_0}(X_u)$. Let $K_3=max\{K_1+R_0,K_2+R_0\}$.
As $\F l_K(X_u)\cap\F l_{\kappa^{(1)}}(H)\ne\emptyset$, so $\F l_K(X_u)\cup\F l_{\kappa^{(1)}}(H)$ is $(K_3+\dl)$-quasiconvex in $Fl_{\kappa^{(2)}R_0}(X_u)$. Let $Y\pr_{1R}:=N\pr_R(\F l_K(X_u)\cup\F l_{\kappa^{(1)}}(H))$ be $R$-neighborhood of $\F l_K(X_u)\cup\F l_{\kappa^{(1)}}(H)\sse Fl_{\kappa^{(2)}R_0}(X_u)$ in the induced path metric on $Fl_{\kappa^{(2)}R_0}(X_u)$ where
\begin{eqnarray}\label{value-of-R}
	\bm{R}&=&max\{K_3+\dl+1,~M_{k_{\ref{union-of-two-flow-sp-is-proper-emb}}}\}
\end{eqnarray}
Hence, by Lemma \ref{qi-emb-in-Y-X}, there is a constant $L_1$ depending on $\dl$, $K_3$ and $R$ such that the inclusion $Y\pr_{1R}\ri Fl_{\kappa^{(2)}R_0}(X_u)$ is a $L_1$-qi embedding.

{\bf We fix this $\bm{R}$ for the rest of this section}.  Thus there is $\dl_1$ depending on $\dl$ and $L_1$ such that $Y\pr_{1R}$ is $\dl_1$-hyperbolic with the induced path metric. Again, the inclusion $Fl_{\kappa^{(2)}R_0}(X_u)\ri X$ is a $\bar{L}(R_0)$-qi embedding (see $(\H6)$). Thus the inclusion $Y\pr_{1R}\ri X$ is a $L_2$-qi embedding for some $L_2$ depending on $L_1$ and $\bar{L}(R_0)$.

Let $Y\pr_{2R}:=N\pr_R(\F l_K(X_v)\cup\F l_{\kappa^{(1)}}(H))$ be $R$-neighborhood of $\F l_K(X_v)\cup\F l_{\kappa^{(1)}}(H)$ $\sse Fl_{\kappa^{(2)}R_0}(X_v)$ in the induced path metric on $Fl_{\kappa^{(2)}R_0}(X_v)$. Then by similar argument, we can show that $Y\pr_{2R}$ is $\dl_1$-hyperbolic and the inclusion $Y\pr_{2R}\ri Fl_{\kappa^{(2)}R_0}(X_v)$ is a $L_2$-qi embedding.

We take $Y:=Y_{1R}\cup Y_{2R}$ where $Y_{1R}:=N_R(\F l_K(X_u)\cup\F l_{\kappa^{(1)}}(H))\sse X$ and $Y_{2R}:=N_R(\F l_K(X_v)\cup\F l_{\kappa^{(1)}}(H))\sse X$. Note that these neighborhoods are considered in $X$.\smallskip

\underline{{\bf \emph{Hyperbolicity of $Y$}}}:
\begin{lemma}\label{Yir-is-hyp}
There is a constant $\dl_{\ref{Yir-is-hyp}}\ge0$ depending on $\dl_1,~L_2$ and $R$ such that $Y_{iR}$ is $\dl_{\ref{Yir-is-hyp}}$-hyperbolic metric space with the induced path metric, where $i=1,2$.
\end{lemma}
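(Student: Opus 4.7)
The plan is to deduce the uniform hyperbolicity of $Y_{ir}$ from the $\dl_1$-hyperbolicity of $Y'_{ir}$ established in the paragraph preceding the lemma. The strategy is to show that $Y_{ir}$ and $Y'_{ir}$ are uniformly Hausdorff close in $X$ and both uniformly properly embedded in $X$; Lemma \ref{Hd-close-subspace-is-QI} then produces a uniform quasi-isometry between them (with their induced path metrics), and Lemma \ref{qi-emb-in-hyp-sp-implies-hyp} transfers hyperbolicity. I treat $i=1$; the case $i=2$ is identical.

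First I would observe that $Y'_{1r}\subseteq Y_{1r}$ as subsets of $X$, since $d_X\le d_{U_{\kappa^{(2)}L_0}}$, and that for any $y\in Y_{1r}$ some $z\in\U_K\cup\H_{\kappa^{(1)}}\subseteq Y'_{1r}$ satisfies $d_X(y,z)\le r$, giving $\mathrm{Hd}_X(Y_{1r},Y'_{1r})\le r$. Crucially, the same $z$ witnesses that every $X$-geodesic from $y$ to $z$ lies entirely inside $Y_{1r}$, because each point on such a geodesic is within $r$ of $z\in\U_K\cup\H_{\kappa^{(1)}}$; hence $d_{Y_{1r}}(y,z)\le r$.

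Next I would bootstrap proper embedding of $Y_{1r}$ in $X$ from that of $Y'_{1r}$. Since $Y'_{1r}$ is $L_1$-qi embedded in $U_{\kappa^{(2)}L_0}$ and the latter is qi embedded in $X$ (Corollary \ref{flow-sp-is-qi-emb}), by Lemma \ref{composition-of-qi-emb-is-qi-emb} the inclusion $Y'_{1r}\hookrightarrow X$ is $L_2$-qi. For $x,y\in Y_{1r}$ with $d_X(x,y)\le n$, pick $x',y'\in\U_K\cup\H_{\kappa^{(1)}}$ with $d_X(x,x'),d_X(y,y')\le r$; then $d_{Y_{1r}}(x,x'),d_{Y_{1r}}(y,y')\le r$ by the previous paragraph, while $d_X(x',y')\le n+2r$ yields $d_{Y'_{1r}}(x',y')\le L_2(n+2r)+L_2$. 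Since $Y'_{1r}\subseteq Y_{1r}$, any path realising the latter distance lies in $Y_{1r}$, so $d_{Y_{1r}}(x,y)$ is bounded by a proper function of $n$.

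With both $Y_{1r}$ and $Y'_{1r}$ uniformly properly embedded in $X$ and $r$-Hausdorff close there, Lemma \ref{Hd-close-subspace-is-QI} supplies a uniform quasi-isometry between the two path metric spaces; combined with the $\dl_1$-hyperbolicity of $Y'_{1r}$, Lemma \ref{qi-emb-in-hyp-sp-implies-hyp} yields a uniform hyperbolicity constant $\dl_{\ref{Yir-is-hyp}}$. The only subtle step is the "$X$-geodesic stays inside $Y_{1r}$" observation, used to bootstrap proper embedding from $Y'_{1r}$ to $Y_{1r}$ without re-running a case analysis in the spirit of Proposition \ref{union-of-two-flow-sp-is-proper-emb}; this observation uses only the triangle inequality together with the definition $Y_{1r}=N_r(\U_K\cup\H_{\kappa^{(1)}})$, so it should go through cleanly.
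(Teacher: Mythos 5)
Your argument is correct, and its core idea matches the paper's: transfer the $\dl_1$-hyperbolicity of $Y'_{ir}$ to $Y_{ir}$ by showing the two are uniformly quasi-isometric in their induced path metrics. The difference is one of packaging. The paper observes directly that the inclusion $Y'_{ir}\hookrightarrow Y_{ir}$ is itself a $(L_2,L_2,r)$-quasi-isometry: the $L_2$-qi-embedding bound is a sandwich
$d_{Y_{ir}}\le d_{Y'_{ir}}\le L_2\,d_X+L_2^2\le L_2\,d_{Y_{ir}}+L_2^2$
using that $Y'_{ir}\subseteq Y_{ir}\subseteq X$ and that $Y'_{ir}\hookrightarrow X$ is $L_2$-qi, and the $r$-coarse surjectivity is exactly the $X$-geodesic-stays-inside observation you flagged as the subtle step. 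You instead route through Lemma \ref{Hd-close-subspace-is-QI}, which forces you to first establish proper embedding of $Y_{ir}$ in $X$. That step is correct, but note it is precisely the content of the paper's Lemma \ref{Yir-is-proper-emb}, which appears \emph{after} the lemma you are proving; the paper's order is deliberate, as its direct argument here does not need the proper embedding of $Y_{ir}$. So your route works but front-loads and duplicates the next lemma of the paper, whereas the paper's route is shorter and logically cleaner: no auxiliary map $f$, no appeal to Lemma \ref{Hd-close-subspace-is-QI}, and no separate proper-embedding argument needed at this point.
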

\begin{proof}
Since the inclusion $Y\pr_{iR}\ri X$ is a $L_2$-qi embedding, so is the inclusion $Y\pr_{iR}\ri Y_{iR}$. Also the inclusion $Y\pr_{iR}\ri Y_{iR}$ is $R$-coarsely surjective. Hence the inclusion $Y\pr_{iR}\ri Y_{iR}$ is a $(L_2,L_2,R)$-quasiisometry for $i=1,2$ (see Subsection \ref{some-basic-concepts}). Since the hyperbolicity is quasiisometry invariant and $Y\pr_{iR}$ is $\dl_1$-hyperbolic, $Y_{iR}$ is $\dl_{\ref{Yir-is-hyp}}$-hyperbolic for $i=1,2$, where $\dl_{\ref{Yir-is-hyp}}$ depends on $\dl_1,L_2,R$.
\end{proof}

\begin{lemma}\label{Yir-is-proper-emb}
There exists a proper function $\eta_{\ref{Yir-is-proper-emb}}:\R_{\ge0}\ri\R_{\ge0}$ depending on $L_2$ and $R$ such that the inclusion $Y_{iR}\ri X$ is a $\eta_{\ref{Yir-is-proper-emb}}$-proper embedding, where $i=1,2$.
\end{lemma}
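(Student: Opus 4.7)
The plan is to exploit the fact that $Y\pr_{ir}$ sits inside $Y_{ir}$ as an $r$-dense subset (measured in the ambient metric of $X$) and that $Y\pr_{ir}\hookrightarrow X$ is already known to be an $L_2$-qi embedding from the construction of $Y$. I handle the case $i=1$; the case $i=2$ is identical after swapping $\U_K$ and $\V_K$.

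First I would compare the two spaces set-theoretically. Because the induced path metric on $U_{\kappa^{(2)}L_0}$ dominates the restriction of $d_X$, every $U_{\kappa^{(2)}L_0}$-neighborhood of a set is contained in the corresponding $X$-neighborhood; hence $Y\pr_{1r}\sse Y_{1r}$ as subsets of $X$. Moreover, for any $x\in Y_{1r}$ there is $x_1\in\U_K\cup\H_{\kappa^{(1)}}$ with $d_X(x,x_1)\le r$, and a geodesic in $X$ from $x$ to $x_1$ remains in $Y_{1r}$ (each of its points is within $d_X$-distance $r$ of $x_1$), which gives $d_{Y_{1r}}(x,x_1)\le r$. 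Since any path in $Y\pr_{1r}$ is also a path in the larger graph $Y_{1r}$, the two induced path metrics satisfy $d_{Y_{1r}}\le d_{Y\pr_{1r}}$ on $Y\pr_{1r}$.

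Next, given $x,y\in Y_{1r}$ with $d_X(x,y)\le n$, I would pick points $x_1,y_1\in\U_K\cup\H_{\kappa^{(1)}}\sse Y\pr_{1r}$ with $d_{Y_{1r}}(x,x_1)\le r$ and $d_{Y_{1r}}(y,y_1)\le r$ as in the previous paragraph. The triangle inequality in $X$ gives $d_X(x_1,y_1)\le n+2r$. Applying the $L_2$-qi embedding $Y\pr_{1r}\hookrightarrow X$,
\[
d_{Y\pr_{1r}}(x_1,y_1)\le L_2\,d_X(x_1,y_1)+L_2^2\le L_2(n+2r)+L_2^2.
\]
Combining with $d_{Y_{1r}}\le d_{Y\pr_{1r}}$ on $Y\pr_{1r}$ and the triangle inequality in $Y_{1r}$ yields
\[
d_{Y_{1r}}(x,y)\le 2r+L_2(n+2r)+L_2^2,
\]
so one may take $\eta_{\ref{Yir-is-proper-emb}}(n):=2r+L_2(n+2r)+L_2^2$.

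I do not anticipate any serious obstacle; the only point requiring a little care is verifying the inequality $d_{Y_{1r}}\le d_{Y\pr_{1r}}$ between the two induced path metrics, which amounts to the tautology that paths realising the $Y\pr_{1r}$-metric are admissible paths in $Y_{1r}$. The rest is simply unpacking the definitions and invoking the qi embedding $Y\pr_{1r}\hookrightarrow X$ already produced during the construction of $Y$.
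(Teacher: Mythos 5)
Your argument is essentially identical to the paper's: pick $x_1,y_1\in\U_K\cup\H_{\kappa^{(1)}}$ within distance $r$ of $x,y$, use the triangle inequality in $X$, apply the $L_2$-qi embedding $Y\pr_{ir}\hookrightarrow X$, and combine via the triangle inequality in $Y_{ir}$, arriving at the same distortion function $\eta_{\ref{Yir-is-proper-emb}}(n)=(n+2r)L_2+L_2^2+2r$. The only difference is that you spell out the justifications of $d_{Y_{ir}}(x,x_1)\le r$ and $d_{Y_{ir}}\le d_{Y\pr_{ir}}$ on $Y\pr_{ir}$, which the paper uses implicitly.
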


\begin{proof}
	We prove it only for $i=1$ as the proof for $i=2$ is similar. We denote the induced path metric on $Y_{1R}$ and $Y\pr_{1R}$ by $d_1$ and $d\pr_1$ respectively. Let $x,y\in Y_{1R}$ such that $d_X(x,y)=r$ for $r\in\R_{\ge0}$. We take points $x_1,y_1\in\F l_K(X_u)\cup\F l_{\kappa^{(1)}}(H)\sse Y\pr_{1R}$ such that $d_1(x,x_1)\le R,~d_1(y,y_1)\le R$. So $d_X(x_1,y_1)\le r+2R$. Since $Y\pr_{1R}$ is $L_2$-qi embedded in $X$, then $d\pr_1(x_1,y_1)\le(r+2R)L_2+L_2^2$. Since $Y\pr_{1R}\sse Y_{1R}$ and so $d_1(x,y)\le d_1(x,x_1)+d\pr_1(x_1,y_1)+d_1(y_1,y)\le(r+2R)L_2+L_2^2+2R=:\eta_{\ref{Yir-is-proper-emb}}(r)$.
\end{proof}


\begin{lemma}\label{containment}
Let $d_i$ denote the induced path metric on $Y_{iR}$ for $i=1,2$. There is a constant $D_{\ref{containment}}\ge0$ depending on $R$ and $L'$ (see Introduction of Section \ref{union-of-two-flow-sp-is-hyp} for $L'$) such that $Y_{1R}\cap Y_{2R}\sse N^i_{D_{\ref{containment}}}(Y_0)$, where $N^i_{D_{\ref{containment}}}(Y_0)$ denotes the $D_{\ref{containment}}$-neighborhood of $Y_0$ in $d_i$-metric.
\end{lemma}

\begin{proof}
	Let $x\in Y_{1R}\cap Y_{2R}$. Then there exist $x_1\in\F l_K(X_u)\cup\F l_{\kappa^{(1)}}(H)$ and $x_2\in\F l_K(X_v)\cup\F l_{\kappa^{(1)}}(H)$ such that $d_i(x,x_i)\le R,~i=1,2$. So $d_X(x_1,x_2)\le2R$. Without loss of generality, we assume that $x_1\in\F l_K(X_u)\setminus\F l_{\kappa^{(1)}}(H),~x_2\in\F l_K(X_v)\setminus\F l_{\kappa^{(1)}}(H)$, otherwise, $x\in Y_0:=Fl_{\kappa^{(1)}R}(H)$. Let $\pi_X(x_i)=\bar{x}_i,~\pi(x_i)=t_i$ for $i=1,2$. Let $w_i$ be the nearest point projection of $t_i$ on $[u,v],i=1,2$. 
	We consider the following two cases, depending on whether $w_1=w_2$ or $w_1\ne w_2$.
	
{\bf Case $\bm{1}$}: Suppose $w_1\ne w_2$. Let $\bar{y}_i$ be the nearest point projection of $\bar{x}_i$ on $B_{w_i}$ for $i=1,2$. Then $d_B(\bar{x}_1,\bar{x}_2)\le d_X(x_1,x_2)\le2R$ implies $d_B(\bar{x}_i,\bar{y}_i)\le2R$ for $i=1,2$. Let $\gm_{x_1}$ be a $K$-qi section through $x_1$ inside $\F l_K(X_u)$ over $B_{[t_1,u]}$ and $\gm_{x_2}$ be that through $x_2$ inside $\F l_K(X_v)$ over $B_{[t_2,v]}$ (see $(\H2)$). Suppose $\gm_{x_i}(\bar{y}_i)=y_i,~ i=1,2$. Then by taking lift of the geodesic $[\bar{x}_i,\bar{y}_i]_B$ in $\gm_{x_i}$, we get $d_i(x_i,y_i)\le2K.2R=4KR$ (see Lemma \ref{qi-sec-inside-lad-len-lift} $(3)$) for $i=1,2$. Now we restrict the $K$-qi section $\gm_{x_2}$ over $B_{[w_2,v]}$ and apply $\rho_u$ (see $(\H1)$) on this restriction of $\gm_{x_2}$ over $B_{[w_2,v]}$. We set this projection as $\bar{\gm}_2$. Note that $B_{[w_2,v]}\sse \pi(\F l_K(X_u))$, then $\bar{\gm}_2$ is a $(2KL\pr+L\pr)$-qi section over $B_{[w_2,v]}$ inside $\F l_K(X_u)$. As $\F l_K(X_u\cap X_v\ne\emptyset$, then we can extend $\bar{\gm}_2$ to a $(2KL\pr+L\pr)$-qi section over $B_{[u,v]}$. Then in particular, we have $\bar{\gm}_2\sse H$ (see $(\H4)$). Again, $d_X(y_1,y_2)\le d_X(y_1,x_1)+d_X(x_1,x_2)+d_X(x_2,y_2)\le8KR+2R$. Note that $\rho_u(y_1)=y_1$ and $\rho_u(y_2)=\bar{\gm}_2(\bar{y}_2)$. Since  $\rho_u$ is $L\pr$-coarsely Lipschitz retraction (see $(\H1)$), $d_X(y_1,\bar{\gm}_2(\bar{y}_2))\le L\pr d_X(y_1,y_2)+L\pr\le L\pr(8KR+2R)+L\pr$. Since $y_1,\bar{\gm}_2(\bar{y}_2)\in\F l_K(X_u)\sse Y_{1R}$, by Lemma \ref{Yir-is-proper-emb}, $d_1(y_1,\bar{\gm}_2(\bar{y}_2))\le\eta_{\ref{Yir-is-proper-emb}}(L\pr(8KR+2R)+L\pr)$. Now $\bar{\gm}_2\sse H$ implies $d_1(x,Y_0)\le d_1(x,\bar{\gm}_2(\bar{y}_2))\le d_1(x,x_1)+d_1(x_1,y_1)+d_1(y_1,\bar{\gm}_2(\bar{y}_2))\le R+4KR+\eta_{\ref{Yir-is-proper-emb}}(L\pr(8KR+2R)+L\pr)$.
	
Again, $d_X(x_2,\bar{\gm}_2(\bar{y}_2))\le d_X(x_2,x_1)+d_X(x_1,y_1)+d_X(y_1,\bar{\gm}_2(\bar{y}_2))\le2R+4KR+L\pr(8KR+2R)+L\pr$. Since $x_2\in\F l_K(X_v)\sse Y_{2r}$ and $\bar{\gm}_2\sse H\sse Y_{2r}$, so by Lemma \ref{Yir-is-proper-emb}, $d_2(x_2,\bar{\gm}_2(\bar{y}_2))\le \eta_{\ref{Yir-is-proper-emb}}(2R+4KR+L\pr(8KR+2R)+L\pr)$. Thus $d_2(x,Y_0)\le d_2(x,x_2)+d_2(x_2,\bar{\gm}_2(\bar{y}_2))\le R+\eta_{\ref{Yir-is-proper-emb}}(2R+4KR+L\pr(8KR+2R)+L\pr)$.
	
	{\bf Case $\bm{2}$}: Suppose $w_1=w_2$. Let $w$ be the center of the tripod $\triangle(t_1,t_2,w_1)$. Suppose $\bar{y}_i$ is the nearest point projection of $\bar{x}_i$ on $B_w$ for $i=1,2$. Let $\gm_{x_1}$ be a $K$-qi section through $x_1$ inside $\F l_K(X_u)$ over $B_{[t_1,u]}$ and $\gm_{x_2}$ be that through $x_2$ inside $\F l_K(X_v)$ over $B_{[t_2,v]}$ (see $(\H2)$). Again $d_B(\bar{x}_1,\bar{x}_2)\le2R$ implies $d_B(\bar{x}_i,\bar{y}_i)\le2R$ for $i=1,2$.  Let $\gm_{x_i}(\bar{y}_i)=y_i,~i=1,2$. Then by taking lift of the geodesic $[\bar{x}_i,\bar{y}_i]_B$ in $\gm_{x_i}$, we have $d_i(x_i,y_i)\le2K.2R=4KR$ (see Lemma \ref{qi-sec-inside-lad-len-lift} $(3)$). Now let us restrict the $K$-qi section, $\gm_{x_2}$, over $B_{[w,v]}$ and apply $\rho_u$ (see $(\H1)$) on this restriction of $\gm_{x_2}$ over $B_{[w,v]}$. We denote the image under $\rho_u$ by $\gm_2$. Since $B_{[w,v]}\sse \pi(\F l_K(X_u))$, $\gm_2$ is a $(2KL\pr+L\pr)$-qi section over $B_{[w,v]}$. Let  $T_{uvw}$ is the tripod in $T$ with vertices $u,v,w$ and $B_{T_{uvw}}:=\pi_B^{-1}(T_{uvw})$. As $\F l_K(X_u)\cap X_v\ne\emptyset$, we can extend $\gm_2$ to a $(2KL\pr+L\pr)$-qi section over $B_{T_{uvw}}$. Then in particular, we have $\gm_2\sse \F l_{\kappa^{(1)}}(H)$ (see $(\H4)$). Now we apply line by line argument as in $\textrm{Case }1$. Note that $d_X(y_1,y_2)\le d_X(y_1,x_1)+d_X(x_1,x_2)+d_X(x_2,y_2)\le8KR+2R$ and $\rho_u(y_1)=y_1$, $\rho_u(y_2)=\gm_2(\bar{y}_2)$. So $d_X(y_1,\gm_2(\bar{y}_2))\le L\pr d_X(y_1,y_2)+L\pr\le L\pr(8KR+2R)+L\pr$. Since $y_1,\gm_2(\bar{y}_2)\in\F l_K(X_u\sse Y_{1R}$, by Lemma \ref{Yir-is-proper-emb}, $d_1(y_1,\gm_2(\bar{y}_2))\le\eta_{\ref{Yir-is-proper-emb}}(L\pr(8KR+2R)+L\pr)$. Again, we have $\gm_2\sse\F l_{\kappa^{(1)}}(H)$, so $d_1(x,Y_0)\le d_1(x,x_1)+d_1(x_1,y_1)+d_1(y_1,\gm_2(\bar{y}_2))\le R+4KR+\eta_{\ref{Yir-is-proper-emb}}(L\pr(8KR+2R)+L\pr)$.
	
	Again, $d_X(x_2,\gm_2(\bar{y}_2))\le d_X(x_2,x_1)+d_X(x_1,y_1)+d_X(y_1,\gm_2(\bar{y}_2))\le2R+4KR+L\pr(8KR+2R)+L\pr$. Since $x_2\in\F l_K(X_v)\sse Y_{2R}$ and $\gm_2\sse\F l_{\kappa^{(1)}}(H)\sse Y_{2R}$, so by Lemma \ref{Yir-is-proper-emb}, $d_2(x_2,\gm_2(\bar{y}_2))\le \eta_{\ref{Yir-is-proper-emb}}(2R+4KR+L\pr(8KR+2R)+L\pr)$. Thus $d_2(x,Y_0)\le d_2(x,x_2)+d_2(x_2,\gm_2(\bar{y}_2))\le R+\eta_{\ref{Yir-is-proper-emb}}(2R+4KR+L\pr(8KR+2R)+L\pr)$.
	
	Therefore, we can take $D_{\ref{containment}}=max\{R+4KR+\eta_{\ref{Yir-is-proper-emb}}(L\pr(8KR+2R)+L\pr),R+\eta_{\ref{Yir-is-proper-emb}}(2R+4KR+L\pr(8KR+2R)+L\pr)\}$.
\end{proof}

\begin{lemma}\label{Y-is-hyperbolic}
There is a uniform constant $\dl_{\ref{Y-is-hyperbolic}}=\dl_{\ref{Y-is-hyperbolic}}(R)$ such that $Y$ is $\dl_{\ref{Y-is-hyperbolic}}$-hyperbolic metric space.
\end{lemma}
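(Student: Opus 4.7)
The plan is to apply Proposition \ref{combination-of-hyp-sps} to the decomposition $Y=Y_{1r}\cup Y_{2r}$ with $n=2$, using $Y_0=N_r(\H_{\kappa^{(1)}})$ as the common ``edge subgraph'' playing the role of $Y_1$ in that proposition. All five hypotheses of Proposition \ref{combination-of-hyp-sps} have essentially already been established by the preceding lemmas, so the work is to package them.

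First, hypothesis (1) (uniform hyperbolicity of the pieces) is exactly Lemma \ref{Yir-is-hyp}, giving the constant $\dl_{\ref{Yir-is-hyp}}$. Hypothesis (2) is automatic since we only have two pieces; and for the same reason hypothesis (5) (coboundedness of consecutive $Y_i$'s) is vacuous when $n=2$. Hypothesis (3) (connectedness of $Y_0$ inside $Y_{1r}\cap Y_{2r}$, and $Y_{1r}\cap Y_{2r}\sse N^i_{s}(Y_0)$ in each $Y_{ir}$-metric) is precisely the content of Lemma \ref{containment} with the constant $D=s_{\ref{containment}}$; note that $Y_0$ is path-connected since it is an $r$-neighborhood of the semicontinuous subgraph $\H_{\kappa^{(1)}}=\F l_{\kappa^{(1)}}(H)$ and $r\ge 2\kappa^{(1)}$ (see Proposition \ref{semicts-is-proper-emb}).

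The remaining input is hypothesis (4): the inclusions $Y_0\hookrightarrow Y_{ir}$ are uniform qi embeddings, and $Y_0\hookrightarrow X$ is $g$-proper for some proper map $g$. The former is Lemma \ref{Y0-qi-in-Yir}, with constant $L_{\ref{Y0-qi-in-Yir}}=L_{\ref{fsandgss-are-qi-emb}}(\kappa^{(1)},r)$. For the proper-embedding piece, note that since $r\ge\max\{2\kappa^{(1)},4\dl_0+1\}$, Corollary \ref{fsandgss-are-qi-emb} applied to the semicontinuous subgraph $\H_{\kappa^{(1)}}$ gives that $Y_0=N_r(\H_{\kappa^{(1)}})\hookrightarrow X$ is itself a $L_{\ref{fsandgss-are-qi-emb}}(\kappa^{(1)},r)$-qi embedding. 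A qi embedding is in particular a proper embedding with affine distortion function, so we may take $g(n)=L_{\ref{fsandgss-are-qi-emb}}(\kappa^{(1)},r)\,n+L_{\ref{fsandgss-are-qi-emb}}(\kappa^{(1)},r)^2$.

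With all five hypotheses verified, Proposition \ref{combination-of-hyp-sps} yields a hyperbolicity constant
\[
\dl_{\ref{Y-is-hyperbolic}}=\dl_{\ref{combination-of-hyp-sps}}\bigl(\dl_{\ref{Yir-is-hyp}},\,L_{\ref{Y0-qi-in-Yir}},\,s_{\ref{containment}},\,*,\,g\bigr),
\]
for the metric graph $Y$, depending only on the structural data already accumulated (hence ultimately on $K$ and $r$). The main ``obstacle'' is really bookkeeping: one must make sure the constants produced by Lemmas \ref{Yir-is-hyp}--\ref{containment} are coherent with the hypotheses of Proposition \ref{combination-of-hyp-sps}, in particular that the qi-embedding constant for $Y_0\hookrightarrow Y_{ir}$ and the coarse-surjectivity constant $s_{\ref{containment}}$ on both sides of $Y_{1r}\cap Y_{2r}$ are uniform in $i\in\{1,2\}$. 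Both are handled symmetrically in the earlier lemmas, so no new estimate is needed.
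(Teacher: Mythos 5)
Your proof is correct and follows essentially the same route as the paper's: apply Proposition \ref{combination-of-hyp-sps} with $n=2$ to $Y=Y_{1r}\cup Y_{2r}$, feeding in Lemmas \ref{Yir-is-hyp}, \ref{Y0-qi-in-Yir}, \ref{containment} and Corollary \ref{fsandgss-are-qi-emb} exactly as the paper does, arriving at the same hyperbolicity constant $\dl_{\ref{combination-of-hyp-sps}}(\dl_{\ref{Yir-is-hyp}},L_{\ref{Y0-qi-in-Yir}},s_{\ref{containment}},*,g)$. The only cosmetic difference is that you spell out why (2) and (5) are vacuous for $n=2$ and why qi-embedding of $Y_0$ in the ambient $X$ suffices for the proper-embedding requirement, which is implicit in the paper.
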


\begin{proof}
	We verify all the conditions of Proposition \ref{combi-hyp-sps} for $n=2$ (see Remark \ref{combi-hyp-sps-2}). Note that $Y=Y_{1R}\cup Y_{2R}$.
	
	$(1)$ $Y_{iR},~i=1,2$ are $\dl_{\ref{Yir-is-hyp}}$-hyperbolic.
	
	$(2)$ $Y_0$ is $\bar{L}(R)$-qi embedded in $X$ (see $(\H6)$), so is in both $Y_{1R}$ and $Y_{1R}$. Again $Y_{1R}\cap Y_{1R}\sse N_{D_{\ref{containment}}}(Y_0)$ (see Lemma \ref{containment}), so by Lemma \ref{hd-imp-qi}, the inclusion $Y_{1R}\cap Y_{1R}\ri Y_{iR}$ is a $L_{\ref{hd-imp-qi}}(\bar{L}(R),D_{\ref{containment}})$-qi embedding for $i=1,2$.
	%
	%
	%
	
	%
	%
	
	Therefore, $Y$ is $\dl_{\ref{Y-is-hyperbolic}}:=\dl_{\ref{combi-hyp-sps-2}}(\dl_{\ref{Yir-is-hyp}},L_{\ref{hd-imp-qi}}(\bar{L}(R),D_{\ref{containment}}),1)$-hyperbolic.
\end{proof}

\begin{lemma}\label{Y-is-proper-emb}
The inclusion $Y\ri X$ is a $\eta_{\ref{Y-is-proper-emb}}=\eta_{\ref{Y-is-proper-emb}}(R)$-proper embedding for some uniform proper function $\eta_{\ref{Y-is-proper-emb}}:\R_{\ge0}\ri\R_{\ge0}$.
\end{lemma}

\begin{proof}
	Let $r\in\R_{\ge0}$ and $x,y\in Y$ such that $d_X(x,y)\le r$. Then $\exists~x_1,y_1\in\F l_K(X_u)\cup\F l_{\kappa^{(1)}}(H)\cup\F l_K(X_v)$ such that $d_Y(x,x_1)\le R$ and $d_Y(y,y_1)\le R$. So by triangle inequality, $d_X(x_1,y_1)\le r+2R$. Without loss of generality, we may assume that $x_1\in\F l_K(X_u)$ and $y_1\in\F l_K(X_v)$. Otherwise, by Lemma \ref{Yir-is-proper-emb}, $d_Y(x_1,y_1)\le\eta_{\ref{Yir-is-proper-emb}}(r+2R)$. Again $R\ge M_{k_{\ref{union-of-two-flow-sp-is-proper-emb}}}$, and thus by Proposition \ref{union-of-two-flow-sp-is-proper-emb}, $d_Y(x_1,y_1)\le\eta_{\ref{union-of-two-flow-sp-is-proper-emb}}(K,R)(r+2R)$. Therefore, in either case, $d_Y(x,y)\le2R+max\{\eta_{\ref{Yir-is-proper-emb}}(r+2R),\eta_{\ref{union-of-two-flow-sp-is-proper-emb}}(K,R)(r+2R)\}=:\eta_{\ref{Y-is-proper-emb}}(r)$.
\end{proof}

\begin{prop}\label{union-of-flow-sp-is-hyp}
There exists a uniform constant $D_{\ref{union-of-flow-sp-is-hyp}}(R)$ such that for all $D\ge D_{\ref{union-of-flow-sp-is-hyp}}(R)$ we have $\dl_{\ref{union-of-flow-sp-is-hyp}}=\dl_{\ref{union-of-flow-sp-is-hyp}}(D)$ for which $N_D(\F l_K(X_u)\cup\F l_K(X_v))$ is a $\dl_{\ref{union-of-flow-sp-is-hyp}}$-hyperbolic metric space with the induced path metric from $X$.
\end{prop} 

\begin{proof}
In the proof, we define $D_{\ref{union-of-flow-sp-is-hyp}}$. For $D\ge D_{\ref{union-of-flow-sp-is-hyp}}$, we denote the induced path metric on $N_D(\F l_K(X_u)\cup\F l_K(X_v))$ by $\bar{d}$. By $(\H6)$, $Fl_{KR}(X_u)$ is $\bar{L}(R)$-qi embedded in $X$ and so is in $Y$. Hence $Fl_{KR}(X_u)$ is $K_1$-quasiconvex in $Y$ for some $K_1$ depending on $\dl_{\ref{Y-is-hyperbolic}}(R)$ and $\bar{L}(R)$. So $\F l_K(X_u$ is $K_2$-quasiconvex in $Y$, where $K_2=K_1+R$. Also, by the similar argument $\F l_K(X_v)$ is $K_2$-quasiconvex in $Y$. Since $Y$ is $\dl_{\ref{Y-is-hyperbolic}}(R)$-hyperbolic and $\F l_K(X_u)\cap\F l_K(X_v)\ne\emptyset$, so $\F l_K(X_u)\cup\F l_K(X_v)$ is $(K_2+\dl_{\ref{Y-is-hyperbolic}}(R))$-quasiconvex in $Y$. Let $N\pr_D(\F l_K(X_u)\cup\F l_K(X_v))$ be the $D$-neighborhood of $\F l_K(X_u)\cup\F l_K(X_v)$ (inside $Y$) in the path metric on $Y$. We set $D_{\ref{union-of-flow-sp-is-hyp}}>K_2+\dl_{\ref{Y-is-hyperbolic}}(R)+1$. Thus for $D\ge D_{\ref{union-of-flow-sp-is-hyp}}$, (by Lemma \ref{qi-emb-in-Y-X}) the inclusion $N\pr_D(\F l_K(X_u)\cup\F l_K(X_v))\ri Y$ is a $L_1$-qi embedding, where $L_1=L_{\ref{qi-emb-in-Y-X}}(\dl_{\ref{Y-is-hyperbolic}}(R),K_2+\dl_{\ref{Y-is-hyperbolic}}(R),D)$. Therefore, $N\pr_D(\F l_K(X_u)\cup\F l_K(X_v))$ is $\dl_1$-hyperbolic, where $\dl_1$ depends on $\dl_{\ref{Y-is-hyperbolic}}(R)$ and $L_1$.
	
Now we show that the subset $N\pr_D(\F l_K(X_u)\cup\F l_K(X_v))\sse N_D(\F l_K(X_u)\cup\F l_K(X_v))$ satisfies all the conditions of Proposition \ref{combing}. Note that $N\pr_D(\F l_K(X_u)\cup\F l_K(X_v))$ is a $D$-dense in $N_D(\F l_K(X_u)\cup\F l_K(X_v))$ in the $\bar{d}$-metric. For any pair $(x,y)$ of distinct points in $N\pr_D(\F l_K(X_u)\cup\F l_K(X_v))$, we fix once and for all a geodesic path, say $c(x,y)$, joining $x$ and $y$ in the $\dl_1$-hyperbolic space $N\pr_D(\F l_K(X_u)\cup\F l_K(X_v))$. These paths serve as family of paths for Proposition \ref{combing}. Then any triangle formed by these paths are $\dl_1$-slim in the induced path metric of $N\pr_D(\F l_K(X_u)\cup\F l_K(X_v))$ and so is in $N_D(\F l_K(X_u)\cup\F l_K(X_v))$. Hence we are left to show the proper embedding of these paths in $N_D(\F l_K(X_u)\cup\F l_K(X_v))$. Indeed, suppose $x,y\in N\pr_D(\F l_K(X_u)\cup\F l_K(X_v))$ such that $\bar{d}(x,y)\le r$ for some $r\in\R_{\ge0}$. Then $d_X(x,y)\le r$ and by Lemma \ref{Y-is-proper-emb}, $d_Y(x,y)\le\eta_{\ref{Y-is-proper-emb}}(r)$. Since $N\pr_D(\F l_K(X_u)\cup\F l_K(X_v))$ is $L_1$-qi embedded in $Y$, the path $c(x,y)$ is $\eta_1$-properly embedded, where $\eta_1:\R_{\ge0}\ri\R_{\ge0}$ sending $r\mapsto\eta_{\ref{Y-is-proper-emb}}(r)L_1+L^2_1$.
	
Therefore, $N_D(\F l_K(X_u)\cup\F l_K(X_v))$ is $\dl_{\ref{union-of-flow-sp-is-hyp}}$-hyperbolic metric space with the induced path metric from $X$, where $\dl_{\ref{union-of-flow-sp-is-hyp}}=\dl_{\ref{combing}}(\eta_1,\dl_1,D)$.
\end{proof}

\section{Proof of Theorem \ref{main-theorem-com}}\label{hyp-tmb-sec}
{\bf Strategy.} We think of the tree of metric bundles $(X,B,T)$ as a tree of metric spaces $\pi:X\ri T$ as explained in Remark \ref{crusial-words}. Suppose $u\in V(T)$, and $\F l_K(X_u)$ is the flow space of $X_u$ considered in Section \ref{hyp-of-flow-sp} (with the parameters mentioned there). Then we will show below that the subspaces $\F l_K(X_u),~u\in V(T)$ satisfy conditions $(\mathcal P0)-(\mathcal P4)$ of Theorem \ref{treeofsps-com-thm} (see Section \ref{hyp-tree-sps}). This will conclude Theorem \ref{main-theorem-com}.\smallskip

$\bm{(\mathcal P0)}$: The flow space $\F l_K(X_u)$ was constructed by induction on $d_T(u,v),~v\in V(T)$. Recall that if an edge $[v,w]\sse\pi(\F l_K(X_u))$, then $\F l_K(X_u)\cap F_{\mfw,w}$ is a quasiconvex hull in $F_{\mfw,w}$ of the set $N_R(\F l_K(X_u)\cap F_{\mfv,v})\cap F_{\mfw,w}$ where the neighborhood is considered in $F_{\mfv,\mfw}$-metric. On the other hand, by definition $\F l_K(X_v)\cap F_{\mfw,w}$ is the quasiconvex hull in $F_{\mfw,w}$ of the set $N_R(F_{\mfv,v})\cap F_{\mfw,w}$. Hence, irrespective of whether the edge $[v,w]$ belongs to $\pi(\F l_K(X_u))$ or not, $\F l_K(X_u)\cap F_{\mfw,w}\sse\F l_K(X_v)\cap F_{\mfw,w}$, and so $\F l_K(X_u)\cap X_w\sse\F l_K(X_v)\cap X_w$. Therefore, $\F l_K(X_u),~u\in V(T)$ satisfy the condition $(\mathcal P0)$.\smallskip

$\bm{(\mathcal P1)}$: By Proposition \ref{mitra's-retraction-on-fsandgss}, we can take $L\pr=L_{\ref{mitra's-retraction-on-fsandgss}}(K)$ such that for all $u\in V(T)$, we have $L'$-coarsely Lipschitz retraction $\rho_u:X\map\F l_K(X_u)$ where $\rho_u=\rho_{\ref{mitra's-retraction-on-fsandgss}}$. Again we take $C=D_{\ref{R-sep-D-cobdd}}(\dl\pr_0,L\pr_0)$ for which $diam\{\rho_u(X_S)\}\le C$ for any subtree $S\sse T$ such that $S\cap\pi(\F l_K(X_u))=\emptyset$.\smallskip

We set $L>max\{D_{\ref{union-of-flow-sp-is-hyp}}(R),r_2\}$ where $R$ is defined in equation \ref{value-of-R} (see Section \ref{union-of-two-flow-sp-is-hyp}) and $r_2$ as in Theorem \ref{flow-sp-is-hyp}.

$\bm{(\mathcal P2)}$: By Proposition \ref{fsandgss-are-proper-emb}, for all $u\in V(T)$, the inclusion $Fl_{KL}(X_u)\map X$ is a $\eta_1$-proper embedding where $\eta_1=\eta_{\ref{fsandgss-are-proper-emb}}(K,L):\R_{\ge0}\map\R_{\ge0}$.\smallskip

$\bm{(\mathcal P3)}$: Suppose $u,v\in V(T)$ such that $\F l_K(X_u)\cap X_v\ne\emptyset$. Then by Proposition \ref{union-of-two-flow-sp-is-proper-emb}, the inclusion $N_L(\F l_K(X_u))\cup N_L(\F l_K(X_v))\map X$ is a $\eta_2$-proper embedding where $\eta_2=\eta_{\ref{union-of-two-flow-sp-is-proper-emb}}(K,L):\R_{\ge0}\map\R_{\ge0}$.

We define $\eta:\R_{\ge0}\map\R_{\ge0}$ such that $\eta(r)=max\{\eta_1(r),\eta_2(r)\}$, $r\in\R_{\ge0}$.\smallskip

$\bm{(\mathcal P4)}$: By Proposition \ref{union-of-flow-sp-is-hyp}, for all such $u,v$ as in $(\mathcal P3)$ above, the space $N_L(\F l_K(X_u)\cup\F l_K(X_v))$ is $\dl$-hyperbolic where $\dl=\dl_{\ref{union-of-flow-sp-is-hyp}}(L)$.


\section[Applications]{Applications to complexes of groups}\label{applications}

We refer to \cite[Chapter III$.C$]{bridson-haefliger}, \cite{haefliger-cplx} and \cite{corson1} for basic notions of developable complexes of groups. All the groups we consider here are finitely generated. 

The construction of a tree of metric bundles for a given complex of groups as in the setup $\mathcal{C}$ (see Definition \ref{setup C}) follows from the idea of  \cite{corson1}, \cite{haefliger-cplx}. We briefly discuss the same below.

Suppose $\Y$ is a finite connected simplicial complex and $\G(\Y)$ is a developable complex of groups over $\Y$. Let $G$ be the fundamental group of $\G(\Y)$. As in \cite{corson1} and more generally, in \cite[Theorem $3.4.1$]{haefliger-cplx}, we consider a cellular aspherical realization (see \cite[Definition $3.3.4$]{haefliger-cplx}) $\mathcal{X}$ of the complex of groups $\G(\Y)$ with cellular map $p:\mathcal{X}\ri\Y$. Note that $\mathcal{X}$ is constructed by gluing along the Eilenberg-Mac Lane complexes of the local groups of the complex of groups $\G(\Y)$; where for each local group $G_{\sigma}$ corresponding to a face $\sigma$ of $\Y$, the $0$-skeleton of $K(G_{\sigma},1)$ is a point $x_{\sigma}$ and the $1$-skeletons form wedge of circles where each circle corresponds to an element in a fixed finite generating set of $G_{\sigma}$. Now we consider the universal covering $\pi:\tilde{\mathcal{X}}\ri\mathcal{X}$ with the standard CW-complex structure on $\tilde{\mathcal{X}}$ coming from $\mathcal{X}$. We identify $G$ with the group of deck transformation of the covering map $\pi:\tilde{\mathcal{X}}\ri\mathcal{X}$. Let $y\in\Y$ and $\sigma$ be the face containing $y$ in its interior, and let $a$ be the barycenter of $\sigma$. Now we collapse each connected component of $\{(p\circ\pi)^{-1}(y)\}$ to a point. Note that since $\G(\Y)$ is developable,  the inclusion $p^{-1}(a)\ri \mathcal{X}$ is $\pi_1$-injective and hence $\{(p\circ\pi)^{-1}(y)\}$ are copies of universal cover of $\mathcal{X}_a:=p^{-1}(a)$. We do this for all $y\in\Y$. Let $\B$ be the space we get after collapsing and $q:\tilde{\mathcal{X}}\ri\B$ be the quotient map. Here $\B$ is called the {\em universal cover} of $\G(\Y)$. Thus we get a quotient map $\bar{\pi}:\B\ri\B/G=\Y$ and a commutative diagram as follows.

\begin{figure}[H]
	\begin{tikzcd}
		\tilde{\mathcal{X}} \arrow{r}{q} \arrow[swap]{d}{\pi} \arrow[dr, phantom, "\curvearrowright"] & \B \arrow{d}{\bar{\pi}} \\
		\mathcal{X} \arrow{r}{p}& \Y
	\end{tikzcd}
	\centering
	\caption{}
	\label{}
\end{figure}


Let $X:=\tilde{\mathcal{X}}^{(1)}$ and $B:=\B^{(1)}$, where $Z^{(1)}$ denotes the $1$-skeleton of a CW-complex $Z$. Assume that each edge in $X$ and $B$ has length $1$. Put the length metric on $X$ and $B$. Since the groups are finitely generated, by covering space argument, we have the following facts.

\begin{enumerate}
	\item The restriction of the map $q$ on $X$, $q|_X:X\ri B$ is $G$-equivariant, surjective and $1$-Lipschitz.	
	
	\item The action of $G$ on $X$ is proper and cocompact, and the action of $G$ on $B$ is cocompact (but not necessarily proper unless all local groups are finite).
	
	\item There is an isomorphism of graphs $r:B/G\ri\Y^{(1)}$ such that for all $\sigma_0\in\Y^{(0)}$ and $a\in \{r^{-1}(\sigma_0)\}$, $G_a$ (the stabilizer subgroup of $a\in B^{(0)}$) is conjugate to $G_{\sigma_0}$ in $G$.
	
	
	\item Let $a\in B^{(0)}$ and  $F_a:=(q|_X)^{-1}(a)$. Then the action of $G_a$ (the stabilizer subgroup of $a$) on $F_a^{(0)}$ is transitive and on $F_a^{(1)}$ is uniformly cofinite. In particular, $G_a$ is uniformly quasiisometric to $F_a$ (with the induced path metric from $X$).
	
	\item Since a finitely generated subgroup of a finitely generated group is properly embedded (with respect to their finite generating sets), for all $a\in B^{(0)}$, the inclusion $F_a\ri X$ is uniformly properly embedding, where $F_a:=(q|_X)^{-1}(a)$.
\end{enumerate}

\underline{Complexes of groups as in the setup $\mathcal{C}$}: Now suppose $\G(\Y,Y)$ is a complex of groups over $\Y$ as in Definition \ref{setup C}. 
Then we have a natural graph of groups structure on $\G(\Y,Y)$, say $(\G,Y)$, over $Y$ such that the vertex groups are $G_s:=\pi_1(\G_s(\Y_s)),\fa~ s\in Y^{(0)}$ and the edge groups are $G_e$ for $e=[u,v]$ joining two vertices $u,v\in\Y$ so that restriction of $p_Y$ on $e$ is injective. Note that the monomorphisms from edge groups to the corresponding vertex groups are the restriction of $\G(\Y,Y)$.

As a corollary of \cite[Proposition $3.9$, III$.C$]{bridson-haefliger}, we have the following lemma.

\begin{lemma}\label{com-grps-based-graph-dev}
	The complex of groups $\G(\Y,Y)$ is developable.
\end{lemma}
\subsection{Trees of metric bundles coming from complexes of groups}\label{graphs-tree-gra-grphs-comp}
Consider the above discussion on complexes of groups $\G(\Y,Y)$ as in the setup $\mathcal{C}$ (Definition \ref{setup C}). For now onward, we denote the restriction map $q|_X$ by $\pi_X$. Let $s\in Y^{(0)}$ and $G_s=\pi_1(\G_s(\Y_s))$. Consider the corresponding graph of groups $(\G,Y)$ as explained above. Then note that $X$ is the corresponding tree of metric spaces over the Base-Serre tree of the graph of groups $(\G,Y)$. 
Let $T$ be the Base-Serre tree and $\pi:X\ri T$ be the projection map. Again vertex spaces of $X$ are acted (properly and cocompactly) upon by the conjugates of $G_s$ in $G$, $s\in Y^{(0)}$. Now by condition $(2)$ of Definition \ref{setup C}, it follows from \cite[Section $3.3$]{ps-krishna} that the vertex spaces of $\pi:X\ri T$ are metric graph bundles (see \cite[Definition 1.5]{pranab-mahan}) with uniform parameters. For instance, if $s\in Y^{(0)}$, $g\in G$, $u=gG_s\in T^{(0)}$ and $B_s$ is the $1$-skeleton of the universal cover of $G_s(\Y_s)$, then $X_u:=\pi^{-1}(u)$ is the metric graph bundle over $B_u=gB_s\sse B$ and the subgroup $gG_sg^{-1}$ acts on $X_u$ properly and cocompactly. Also, note that $T$ is obtained by collapsing the universal cover of $\G_s(\Y_s)$ (for $s\in\Y^{(0)}$) and its $G$-translates (in $B$) to points. Let $\pi_B:B\ri T$ be the projection map. Again the maps $\pi_X:X\ri B$ and $\pi_B:B\ri T$ are $G$-equivariant. Therefore, we have the following.

\begin{prop}\label{main-const-of-treeofmetricbundles}
Suppose $G$ is the fundamental group of $\G(\Y,Y)$. Then there is a natural tree of metric bundles $(X,B,T)$ and an action of $G$ by isometries on both $X$ and $B$ such that the following hold.
\begin{enumerate}
\item The map $\pi_X$ is $G$-equivariant.
		
\item  The action of $G$ on $X$ is proper and cocompact, and the action of $G$ on $B$ is cocompact (but not necessarily proper unless all local groups are finite).
		
\item There is an isomorphism of graphs $r:B/G\ri\Y^{(1)}$ such that for all $\sigma_0\in\Y^{(0)}$ and $a\in \{r^{-1}(\sigma_0)\}$, $G_a$ (the stabilizer subgroup of $a\in B^{(0)}$) is conjugate to $G_{\sigma_0}$ in $G$.
		
		
\item Let $a\in B^{(0)}$ and $u=\pi_B(a),~F_{a,u}:=\pi_X^{-1}(a)=q^{-1}(a)^{(1)}$. Then the action of $G_a$ on $F_{a,u}^{(0)}$ is transitive and on $F_{a,u}^{(1)}$ is uniformly cofinite. In particular, if $\sigma_0\in\Y^{(0)}$ and $G_{\sigma_0}$ is hyperbolic, then for all $a\in\{r^{-1}(\sigma_0)\},~F_{a,u}$ is uniformly hyperbolic, where $u=\pi_B(a)$.
\end{enumerate} 
\end{prop}
It is worth mentioning that condition $(2)$ of the setup $\mathcal{C}$ (see Definition \ref{setup C}) in Proposition \ref{main-const-of-treeofmetricbundles} is necessary to get trees of metric bundles. 
Also note that $\pi_X:X\ri B$ satisfies axiom {\bf H} (see Definition \ref{axiomH}) by the assumption on $\G(\Y,Y)$ (see Remark \ref{nonelementary barycenter map}).\smallskip

Now we are ready to prove the main application of Theorem \ref{main-theorem-com}.\smallskip

{\bf \emph{Proof of Theorem \ref{application-2}}} : It follows from Proposition \ref{main-const-of-treeofmetricbundles} and Theorem \ref{main-theorem-com}.\qed\smallskip

{\bf \emph{Proof of Corollary \ref{necessity-flaring-com-grp}}}: It follows from Proposition \ref{main-const-of-treeofmetricbundles} and Remark \ref{necessity-of-flaring}.\qed

\subsection{Examples} Now we will see an example (Example \ref{exp explanation}) that satisfies all the assumptions of our main theorem (Theorem \ref{main-theorem-com}). We mention here that we not proving the hyperbolicity of groups in Example \ref{exp explanation} by applying Theorem \ref{main-theorem-com} instead Theorem  \ref{hyp exp} (see Remark \ref{hard remark}). 
In the setting of relatively hyperbolic groups, Matsuda and Oguni (\cite{mats-oguni}) constructed these types of groups to show the nonexistence of Cannon--Thurston maps. For completeness, we are including a proof of Theorem \ref{hyp exp} following the argument given in \cite{mats-oguni}.

Recall that a subgroup $A$ of a group $B$ is said to be {\em malnormal} (respectively, {\em weakly malnormal}) if for all $b\in B\setminus A$ we have $A\cap bAb^{-1}=\{1\}$ (respectively, $A\cap bAb^{-1}$ is finite).

\begin{theorem}\textup{(\cite{mats-oguni})}\label{hyp exp}
Suppose $G_1$ and $G_2$ are hyperbolic groups and $H$ is an infinite common subgroup in them giving the free product with amalgamation $G=G_1*_HG_2$. Let $H$ be weakly malnormal and quasiconvex in $G_1$. Then $G$ is hyperbolic. Moreover, $G_2$ is weakly malnormal and quasiconvex in $G$.	
\end{theorem}

\begin{proof}
Since $H$ is infinite, weakly malnormal and quasiconvex in $G_1$, so by \cite[Theorem 7.11]{bowditch-relhyp-publish}, $G_1$ is hyperbolic relative to $H$. Then by \cite[Theorem 0.1 (2)]{dahmani-com}, $G$ is hyperbolic relative to $G_2$. Again, since $G_2$ is hyperbolic, so $G$ is hyperbolic by \cite[Corollary 2.41]{osin-2006}. On the other hand, since $G$ is hyperbolic and hyperbolic relative to $G_2$, so $G_2$ is quasiconvex and weakly malnormal in $G$ by \cite[Theorem 1.5]{osin-bddgen}.
\end{proof}
In the following example, $F_i$ and $S_g$ denote respectively the free group of rank $i\in\N$ and closed orientable surface of genus $g\ge2$. 
\begin{example}\label{exp explanation}
Let $\pi_1(S_g)\rtimes F_m$ and $F_n\rtimes F_{m'}$ be hyperbolic groups where $n,m,m'\in\N$ and $n\ge3$. Suppose $F_2$ is a common subgroup of $\pi_1(S_g)$ and $F_n$ $($normal subgroup of $F_n\rtimes F_{m'})$ giving the amalgamation $G=(\pi_1(S_g)\rtimes F_{m})*_{F_2}(F_n\rtimes F_{m'})$. By \textup{\cite[Theorem 6.7]{kapovich-nonqc}}, after passing to a subgroup, if necessary, we assume that $F_2$ is malnormal and quasiconvex in $\pi_1(S_g)\rtimes F_m$. Then by Theorem \ref{hyp exp}, the group $G$ is hyperbolic.\smallskip


Let $\pi_X:X\map B$ be the tree of metric bundles induced by the amalgamation $G$ as explained in Example \ref{trees of metric bundles exps}. Now we will see that it satisfies all the conditions of Theorem \ref{main-theorem-com} as follows. 

$(1)$ Since fibers are uniformly quasiisometric to $\pi_1(S_g)$ or $F_n$, then fibers are all uniformly hyperbolic. Again, since $\pi_1(S_g)$ and $F_n$ are nonelementary hyperbolic groups, the barycenter maps for the fibers are uniformly coarsely surjective.

$(2)$ It is standard that finitely generated subgroups of a free group and a surface group are quasiconvex. So the subgroup $F_2$ is quasiconvex in both $\pi_1(S_g)$ and $F_n$ $($normal subgroup of $F_n\rtimes F_{m'})$.

$(3)$ Note that bases of metric bundles corresponding to $\pi_1(S_g)\rtimes F_m$ and $F_n\rtimes F_{m'}$ are isometric copies of Cayley graphs of $F_m$ and $F_{m'}$ respectively. Then the space $B$ is constructed by gluing these Cayley graphs in a tree-like way $($see the construction in Example \ref{trees of metric bundles exps}$)$. So $B$ is hyperbolic.

$(4)$ Since $G$ is hyperbolic group, by Remark \ref{necessity-of-flaring}, the tree of metric bundles $\pi_X:X\map B$ satisfies the Bestvina--Feighn hallway flaring condition.
\end{example}

Note that the groups $\pi_1(S_g)\rtimes F_m$ and $F_n\rtimes F_{m'}$ used in Example \ref{exp explanation} are known due to \cite{mosher-hypextns} and \cite{BFH-lam} respectively. We also mention that the amalgamating subgroup $F_2$ in Example \ref{exp explanation} is quasiconvex in both $\pi_1(S_g)\rtimes F_m$ and $F_n\rtimes F_{m'}$ under suitable assumption on quotient groups $F_m$ and $F_{m'}$ as follows (see \cite{mj-rafi}, also \cite{scottswar,mitra-pams}): $F_m$ and $F_{m'}$ are convex cocompact in Teichm\text{\"u}ller space and outer space respectively. In the later case, one assume that $F_{m'}$ is purely hyperbolic. Crucial point here is that the amalgamating subgroup $F_2$ has infinite index in both $\pi_1(S_g)$ and $F_n$.

\begin{remark}\label{hard remark}
Potential examples of hyperbolic groups, which could be obtained as applications of Theorem \ref{main-theorem-com}, can be constructed similarly to Example \ref{trees of metric bundles exps}. However, verifying the Bestvina–Feighn hallway flaring condition is challenging.
\end{remark}

\bibliographystyle{amsalpha}
\bibliography{Ubib}

\end{document}